%
%
%





\ifx \processToSlides \undefined
  \ifx\usepackage\RequirePackage        
    \let\usingAmsArtXII\usepackage      
  \fi
\else
  \documentclass[12pt]{amsart}
  \usepackage{amssymb,amscd}

\usepackage[all]{xy}
  \usepackage[letterpaper,margin=2cm,includeheadfoot]{geometry}
  \def \useHugeSize {}
  \def \numberingIsThrough {}
\fi

\ifx \labelsONmargin\undefined

  \ifx\RequirePackage\undefined
    \expandafter\ifx\csname amsart.sty\endcsname\relax
        \documentstyle[12pt,amssymb,amscd]{amsart}
    \fi


    \def\mathbb{\Bbb}
    \def\mathfrak{\frak}
    \def\mathbf{\bold}
    \ifx\boldsymbol\undefined   
      \def\boldsymbol#1{{\bold #1}}
    \fi

    \def\mathbit{\boldsymbol}

    \makeatletter
    \newenvironment{proof}{%
         \@ifnextchar[{%
                       \expandafter\let\expandafter\end@proof
                         \csname endpf*\endcsname
                         \my@proof
                      }{\let\end@proof\endpf\pf}%
        }{\end@proof}
    \def\my@proof[#1]{\@nameuse{pf*}{#1}}
    \makeatother

    \def\xrightarrow[#1]#2{@>{#2}>{#1}>}
    \def\xleftarrow[#1]#2{@<{#2}<{#1}<}

    \def\providecommand#1{\def#1}
    \def\emph#1{{\em #1}}
    \def\textbf#1{{\bf #1}}

    \def\mathring{\overset{\,\,{}_\circ}}
  \else
      \ifx\usepackage\RequirePackage
      \else     
        \documentclass[12pt]{amsart}            
        \usepackage{amssymb,amscd}
        \let\usingAmsArtXII\usepackage

      \fi
      \ifx \mathring\undefined          
        \DeclareMathAccent{\mathring}{\mathalpha}{operators}{"17}
      \fi


      \long\def\FAKEendPROOF{\endtrivlist}
      \ifx\endproof\FAKEendPROOF
          \def\endproof{\qed\endtrivlist}
      \fi


      \ifx\mathbit\undefined    
        \DeclareMathAlphabet{\mathbit}{OML}{cmm}{b}{it}
      \fi


      \ifx \usingAmsArtXII \undefined
      \else                             
        \advance\oddsidemargin -0.1\textwidth
        \evensidemargin\oddsidemargin
        \textwidth=1.2\textwidth 
      \fi

      \def\Sb#1\endSb{_{\substack{#1}}}
      \def\Sp#1\endSp{^{\substack{#1}}}

  \fi
%
\makeatletter
%
\@ifundefined{DeclareFontShape} 
     {\@ifundefined{selectfont}
             {
                \message{We need AmS-LaTeX, this version of LaTeX does not 
                        support it}\@@end
             }{
                \def\mathcal{\cal}
                
                \def\pcyr{%
                        \def\default@family{UWCyr}%
                        \let\oldSl@\sl
                        \def\sl{\def\default@shape{it}\oldSl@}%
                        \cyracc
                        \language\Russian\family{UWCyr}\selectfont
                }
             }}{
                \DeclareFontEncoding{OT2}{}{\noaccents@}
                \DeclareFontSubstitution{OT2}{cmr}{m}{n}
                \DeclareFontFamily{OT2}{cmr}{\hyphenchar\font45 }
                \DeclareFontShape{OT2}{cmr}{m}{n}{%
                     <5><6><7><8><9><10>gen*wncyr %
                     <10.95><12><14.4><17.28><20.74><24.88> wncyr10 %
                }{}
                \DeclareFontShape{OT2}{cmr}{m}{it}{%
                     <5><6><7><8><9><10> gen * wncyi%
                     <10.95><12><14.4><17.28><20.74><24.88> wncyi10%
                }{}
                \DeclareFontShape{OT2}{cmr}{bx}{n}{%
                     <5><6><7><8><9><10> gen * wncyb%
                     <10.95><12><14.4><17.28><20.74><24.88> wncyb10%
                }{}
                \DeclareFontShape{OT2}{cmr}{m}{sl}{%
                     <-> ssub * cmr/m/it%
                }{}
                \DeclareFontShape{OT2}{cmr}{m}{sc}{%
                     <5><6><7><8><9><10>%
                     <10.95><12><14.4><17.28><20.74><24.88> wncysc10%
                }{}
                \DeclareFontFamily{OT2}{cmss}{\hyphenchar\font45 }
                \DeclareFontShape{OT2}{cmss}{m}{n}{%
                     <8><9><10> gen * wncyss%
                     <10.95><12><14.4><17.28><20.74><24.88> wncyss10%
                }{}
                
                \def\cyrencodingdefault{OT2}
                \def\pcyr{%
                        \cyracc
                        \let\encodingdefault\cyrencodingdefault
                        \language\Russian\fontencoding{OT2}\selectfont
                }
             }   

\@ifundefined{theorembodyfont} 
     {
        \def\theorembodyfont#1{\relax}
        \makeatletter
          \let\@@th@plain\th@plain
          \def\th@plain{ \@@th@plain \slshape }
        \makeatother
        \let\normalshape\relax
     }{}   

\ifx\cprime\undefined
     \def\cprime{$'$}
\fi
%
%
\makeatletter
%
%
  \def\@sect@my#1#2#3#4#5#6[#7]#8{%
\ifnum #2>\c@secnumdepth
   \let\@svsec\@empty
 \else
   \refstepcounter{#1}%
\edef\@svsec{\ifnum#2<\@m
             \@ifundefined{#1name}{}{\csname #1name\endcsname\ }\fi
\noexpand\rom{\csname the#1\endcsname.}\enspace}\fi
 \@tempskipa #5\relax
 \ifdim \@tempskipa>\z@ 
   \begingroup #6\relax
   \@hangfrom{\hskip #3\relax\@svsec}{\interlinepenalty\@M #8\par}%
   \endgroup
   \if@article\else\csname #1mark\endcsname{%
        \ifnum \c@secnumdepth >#2\relax\csname the#1\endcsname. \fi#7}\fi
\ifnum#2>\@m \else
       \let\@tempf\\ \def\\{\protect\\}\addcontentsline{toc}{#1}%
{\ifnum #2>\c@secnumdepth \else
             \protect\numberline{%
               \ifnum#2<\@m
               \@ifundefined{#1name}{}{\csname #1name\endcsname\ }\fi
               \csname the#1\endcsname.}\fi
           #8}\let\\\@tempf
     \fi
 \else
  \def\@svsechd{#6\hskip #3\@svsec
    \@ifnotempty{#8}{\ignorespaces#8\unskip
       \ifnum\spacefactor<1001.\fi}%
        \ifnum#2>\@m \else
          \let\@tempf\\ \def\\{\protect\\}\addcontentsline{toc}{#1}%
            {\ifnum #2>\c@secnumdepth \else
              \protect\numberline{%
                \ifnum#2<\@m
                \@ifundefined{#1name}{}{\csname #1name\endcsname\ }\fi
                \csname the#1\endcsname.}\fi
             #8}\let\\\@tempf\fi}%
 \fi
\@xsect{#5}}
  \ifx\RequirePackage\undefined
  \let\@sect\@sect@my             
  \fi
%
%
  \ifx\RequirePackage\undefined 
  \def\th@remark@my{\theorempreskipamount6\p@\@plus6\p@
    \theorempostskipamount\theorempreskipamount
    \def\theorem@headerfont{\it}\normalshape}
    \let\th@remark\th@remark@my
  \else                         
    \let\o@@remark\th@remark

    \ifx\theorempostskipamount\undefined                
    \else                                               
      \def\th@remark{\o@@remark
        \ifdim\theorempostskipamount < 2pt\relax
          \theorempostskipamount\theorempreskipamount
             \multiply\theorempostskipamount\tw@
             \divide\theorempostskipamount\thr@@
        \fi
      }
    \fi
  \fi
%
\let\myLabel\@gobble
\def\labelsONmargin{\@mparswitchfalse\def\myLabel##1{\@bsphack\marginpar
                                  {\normalshape\tiny\rm Label ##1}\@esphack}}
\ifx \url\undefined
  \def\url#1{{\tt #1}}%
\fi
\def\PREpmodSKIP{\allowbreak  \if@display\mkern18mu\else\mkern8mu\fi}

%
%
\def\cyracc{\def\u##1{
                \if \i##1\char"1A%
                \else \if I##1\char"12%
                \else \accent"24 ##1\fi\fi }%
\def\"##1{\if e##1{\char"1B}%
                \else \if E##1{\char"13}%
                \else \accent"7F ##1\fi\fi }%
\def\9##1{\if##1z\char"19 
\else\if##1Z\char"11 
\else\if##1E\char"03 
\else\if##1e\char"0B 
\else\if##1u\char"18 
\else\if##1U\char"10 
\else\if##1A\char"17 
\else\if##1a\char"1F 
\else\if##1p\char"7E 
\else\if##1P\char"5E 
\else\if##1Q\char"5F 
\else\if##1q\char"7F 
\else\if##1i\char"1A 
\else\if##1I\char"12 
\else\if##1N\char"7D 
\fi
\fi
\fi
\fi
\fi
\fi
\fi
\fi
\fi
\fi
\fi
\fi
\fi
\fi
\fi
}%
\def\cydot{{\kern0pt}}}%
\def\cydot{$\cdot$}

\ifx\Russian\undefined
        \def\Russian{0\relax
    \message{Don't know the hyphenation rules for Russian^^J
                        Please do INITeX with `input  russhyph' in the 
                        command line}%
                \gdef\Russian{0\relax}%
        }
\fi
%
\ifx\RequirePackage\undefined                   
  \def\@putname#1#2#3#4{\def\@@ref{#3}\let\old@bf\bf
        \def\bf##1{\old@bf\if?\noexpand##1?{#4}\else##1\fi}%
        #1{#2}%
        \let\bf\old@bf}
\else                                           
  \def\@putname#1#2#3#4{\def\@@ref{#3}\let\old@bf\bf    
        \let\old@reset@font\reset@font                  
        \def\bf##1{\old@bf\if?\noexpand##1?{#4}\else##1\fi}%
        \def\reset@font##1##2{\old@reset@font##1\if?\noexpand##2?{#4}\else##2\fi}#1{#2}%
        \let\bf\old@bf\let\reset@font\old@reset@font}
\fi
\let\my@ref=\ref
\def\ref#1{\@putname\my@ref{#1}{#1}{\tiny\rm\@@ref}}
\let\my@pageref=\pageref
\def\pageref#1{\@putname\my@pageref{#1}{#1}{\tiny\rm\@@ref}}
\let\my@cite=\cite
\def\cite#1{\@putname\my@cite{#1}{\@citeb}{\tiny\rm\@@ref}}
\makeatother
\tracingstats=2 \relax 
\fi
\relax 

\ifx \SKIPstatementDEFS \undefined
  \theoremstyle{plain} 
  \theorembodyfont{\sl}
\fi

\ifx\useDoubleSpacing\undefined
\else
  \usepackage{setspace}\doublespacing
\fi

\ifx \address \undefined
  \if \institute \undefined \else       
     \def\address{\institute}
     \ifx \email \undefined
        \let\email\texttt
     \fi
  \fi
\fi

\let\emphOrig\emph
\ifx \doEmphToIndex \undefined
  
\else
  \usepackage{makeidx}
  \makeindex

  \def\eatToBar#1|{}
  \def\emphToIndexSLASH#1\/{\index{#1}\eatToBar}
  \def\emphToIndexDOTSLASH#1.\/{\emphToIndexSLASH #1\/}
  \def\emphAndIndex#1{\emphOrig{#1}{\emphToIndexDOTSLASH #1.\/|}}
  \let\emph\emphAndIndex
\fi

\ifx \SKIPstatementDEFS \undefined

\ifx \numberingIsThrough \undefined
\numberwithin{equation}{section}
\fi

\theorembodyfont{\rm}
\theoremstyle{definition}

\ifx \numberingIsThrough \undefined
\newtheorem{definition}{Definition}[section]
\else
\newtheorem{definition}{Definition}
\fi

\theoremstyle{remark}

\newtheorem{remark}[definition]{Remark} 

\ifx \numberingIsThrough \undefined
\newtheorem{note}{Note}[section] 
\newtheorem{summary}{Summary}[section] 
\else

\fi

\theoremstyle{plain} 
\theorembodyfont{\sl}

\newtheorem{theorem}[definition]{Theorem}
\newtheorem{lemma}[definition]{Lemma}
\newtheorem{corollary}[definition]{Corollary}
\newtheorem{proposition}[definition]{Proposition}

\newcommand{\Hom}{\operatorname{Hom}}
\newcommand{\Ker}{\operatorname{ker}}

\newcommand{\Ext}{\operatorname{Ext}}

\newcommand{\fg}{\mathfrak{g}}
\newcommand{\fq}{\mathfrak{q}}
\newcommand{\fp}{\mathfrak{p}}
\newcommand{\fk}{\mathfrak{k}}
\newcommand{\fs}{\mathfrak{s}}
\newcommand{\fr}{\mathfrak{r}}
\newcommand{\fn}{\mathfrak{n}}
\newcommand{\fb}{\mathfrak{b}}
\newcommand{\fu}{\mathfrak{u}}
\newcommand{\fm}{\mathfrak{m}}
\newcommand{\fl}{\mathfrak{l}}
\newcommand{\fh}{\mathfrak{h}}

\fi 

\author[Ivan Penkov]{\;Ivan Penkov}

\address{
Ivan Penkov
\newline Jacobs University Bremen
\newline Campus Ring 1
\newline 28759 Bremen, Germany}
\email{i.penkov@jacobs-university.de}

\author[Vera Serganova]{\;Vera Serganova}

\address{
Vera Serganova
\newline Department of Mathematics
\newline University of California Berkeley
\newline Berkeley CA 94720, USA}
\email{serganov@math.berkeley.edu}

\usepackage[mathscr]{euscript}
\usepackage{enumerate,color}

\def\clplus{\hbox{$\subset${\raise0.3ex\hbox{\kern -0.55em ${\scriptscriptstyle +}$}}\ }}
\def\crplus{\hbox{$\supset${\raise1.15pt\hbox{\kern -0.55em ${\scriptscriptstyle +}$}}\ }}

\begin{document}
\bibliographystyle{amsplain}

\ifx\useHugeSize\undefined
\else
\Huge
\fi

\relax

\title{Large annihilator category $\mathcal O$ for $\mathfrak{sl}(\infty), \mathfrak{o}(\infty), \mathfrak{sp}(\infty)$}

\date{ \today }

\begin{abstract} We construct a new analogue of the BGG category $\mathcal O$ for the infinite-dimensional Lie algebras
  $\fg=\mathfrak{sl}(\infty),\mathfrak{o}(\infty), \mathfrak{sp}(\infty)$. A main difference with the categories studied in \cite{Nam} and \cite{CP}
  is that all objects of our category satisfy the large  annihilator condition introduced in \cite{DPS}.       Despite the fact  that the splitting Borel subalgebras $\fb$ of $\fg$ are not conjugate, one can eliminate the dependency on  the choice of $\fb$  and introduce a universal  highest weight category  $\mathcal {OLA}$ of $\fg$-modules, the letters $\mathcal{LA}$ coming from "large annihilator".
          The subcategory of integrable objects in  $\mathcal {OLA}$ is precisely the category
  $\mathbb T_{\fg}$ studied in \cite{DPS}. We investigate the structure of $\mathcal {OLA}$, and in particular compute  the
  multiplicities of simple objects in standard objects and the multiplicities of standard objects in indecomposable injectives. We also complete the annihilators in $U(\mathfrak{g})$ of simple objects of $\mathcal{OLA}$.
\end{abstract}

\maketitle

\medskip\noindent {\footnotesize 2010 AMS Subject classification: Primary 17B65, 16S37, 17B55} \\
\noindent {\footnotesize Keywords: BGG category $\mathcal{O}$, finitary Lie algebra, highest weight category, large annihilator condition, standard object, stable Kazhdan-Lusztig multiplicity, Kostka numbers.}

\section{Introduction}

	Let  $\mathfrak{gl}(\infty)$ denote the Lie algebra of finitary infinite matrices over $\mathbb{C}$, and let $\mathfrak{sl}(\infty)\subset\mathfrak{gl}(\infty)$ be the Lie subalgebra of traceless matrices.  One can consider the representation theory of $\mathfrak{sl}(\infty)$ as a way to study stabilization phenomena for representations of the Lie algebras $\mathfrak{sl}(n)$ when $n\to\infty$.  In fact, the very language of representation theory suggests what kind of stabilization features it is natural to consider.  In particular, the theory of tensor $\mathfrak{sl}(\infty)$-modules developed in~\cite{PStyr} shows that Weyl's semisimplicity theorem for $\mathfrak{sl}(n)$ does not stabilize when $n\to\infty$.	 This is because some morphisms of tensor modules over $\mathfrak{sl}(n)$ ``persist at $\infty$'' while others do not.  For instance, the tautological morphism $\mathfrak{sl}(n)\to \mathfrak{gl}(n)$ persists at infinity and induces the tautological injective morphism $\mathfrak{sl}(\infty)\to \mathfrak{gl}(\infty)$. However the morphism of  $\mathfrak{sl}(n)$-modules $\mathbb{C}\to\mathfrak{gl}(n)$ which induces the splitting $\mathfrak{gl}(n)=\mathfrak{sl}(n)\oplus\mathbb{C}$ is lost ``at $\infty$''  as $\mathfrak{gl}(\infty)$ has no nonzero invariants as a module over $\mathfrak{sl}(\infty)$. Similarly, if one considers the Lie algebras $\mathfrak{o}(2n)$ or $\mathfrak{sp}(2n)$, and denotes their natural representations by $V_{2n}$, the respective morphisms ${\bf S}^2(V_{2n})\to \mathbb{C}$ and ${\bf\Lambda}^2(V_{2n})\to \mathbb{C}$ persist at $\infty$, while the (respective) morphisms $\mathbb{C}\to {\bf S}^2(V_{2n})$ and $\mathbb{C}\to {\bf\Lambda}^2(V_{2n})$ are lost at $\infty$.

An intrinsic viewpoint on these phenomena is presented in the  paper [DPS] where a category of tensor modules $\mathbb{T}_{\mathfrak{g}}$ is introduced, and it is established that the tensor products of copies of the natural and conatural representations are injective objects of this category.
	
	Let $\mathfrak{g}=\mathfrak{sl}(\infty), \mathfrak{o}(\infty), \mathfrak{sp}(\infty)$. The purpose of the present paper is to introduce and study an interesting category $\mathcal{OLA}$ of $\mathfrak{g}$-modules which is an analogue of Bernstein-Gelfand-Gelfand's category $\mathcal{O}$ \cite{BGG}, and contains the category of tensor modules $\mathbb{T}_{\mathfrak{g}}$ as a full subcategory. In the papers \cite{Nam} and \cite{CP}, other "analogues at $\infty$" of the category $\mathcal{O}$ have been studied, however these categories are essentially different from the category $\mathcal{OLA}$. In particular, the integrable subcategories of the categories studied in \cite{Nam} and \cite{CP} are semisimple.
	
	  Recall that the category $\mathbb{T}_{\mathfrak{g}}$ consists of integrable $\mathfrak{g}$-modules (i.e., modules which decompose as sums of finite-dimensional modules over any finite-dimensional simple subalgebra of $\mathfrak{g}$) of finite length, satisfying the following three equivalent conditions:	
	\begin{enumerate}[(a)]
		\item $M$ is a weight module for any splitting Cartan subalgebra of $\mathfrak{g}$ (absolute weight module);
		\item $M$ is $\big(\mathrm{Aut}\,\mathfrak{g}\big)^\circ$-invariant, where $\big(\mathrm{Aut}\,\mathfrak{g}\big)^\circ$ is the connected component of the group of automorphisms of $\mathfrak{g}$;
\item the annihilator $\mathrm{Ann}_{\mathfrak{g}}\,m$ of every vector $m\in M$ contains the derived algebra of the centralizer of a finite-dimensional Lie subalgebra of $\mathfrak{g}$.
\end{enumerate}
When one tries to extend $\mathbb{T}_{\mathfrak{g}}$  to an analogue of the BGG category $\mathcal{O}$, one notices that conditions (a) and (b) must be dropped as they no longer hold in the BGG category $\mathcal{O}$.  On the other hand, condition (c) is empty for category $\mathcal{O}$, and therefore, it is the only condition among the three that can lead to an interesting ``category $\mathcal{O}$ for $\mathfrak{g}$''. 
	

More precisely, we fix splitting Cartan and Borel subalgebras $\mathfrak{h}\subset \mathfrak{b}=\mathfrak{h}{\crplus} \mathfrak{n}$ and define the category $\mathcal{OLA}_{\mathfrak{b}}$  by the conditions that its objects are $\mathfrak{h}$-semisimple, satisfy condition c), and are locally finite under the action of any element of $\mathfrak{n}$. The first problem we address, is the dependence  of $\mathcal{OLA}_{\mathfrak{b}}$ on $\mathfrak{b}$. The BGG category $\mathcal{O}$ is independent, up to equivalence, on the choice of a Borel subalgebra as all Borel subalgebras of a finite-dimensional reductive Lie algebra are conjugate. In our case the situation is more complicated and the main result of Section 3 is that there exist Borel subalgebras $\mathfrak{b}$, called perfect, such that for any other splitting Borel subalgebra $\mathfrak{b}'\subset\fg$ the category  $\mathcal{OLA}_{\mathfrak{b}'}$ is naturally equivalent to  $\mathcal{OLA}_{\mathfrak{b}}$ or to a proper full subcategory of  $\mathcal{OLA}_{\mathfrak{b}}$.

In Sections 4-6 we fix a perfect Borel subalgebra $\mathfrak{b}$ of $\fg$ and study  the category  $\mathcal{OLA}=\mathcal{OLA}_{\mathfrak{b}}$. We show that every simple object of  $\mathcal{OLA}$ is a highest weight module and that  $\mathcal{OLA}$ is a highest weight category.  We also prove that every finitely generated object of $\mathcal{OLA}$  has finite length and that any object of  $\mathcal{OLA}$ has an exhaustive socle filtration. Furthermore, we describe the blocks of $\mathcal{OLA}$ and prove that any finitely generated object of $\mathcal{OLA}$ has nonzero annihilator in $U(\mathfrak{g})$. These results manifest further differences with the categories studied in \cite{Nam} and \cite{CP}.

Let us point out that, as a highest weight category, $\mathcal{OLA}$ admits only standard objects and no costandard objects. Costandard objects (analogues of Verma modules) are replaced by certain approximations which do not  "converge" in $\mathcal{OLA}$, nevertheless provide stable Kazhdan-Lusztig multiplicities for a version of BGG-reciprocity which we establish. The indecomposable injectives in  $\mathcal{OLA}$ admit finite filtrations whose successive  quotients are standard objects, while the standard objects have infinite filtrations whose quotients are simple objects. It is essential that the multiplicities of simple objects in standard objects are  finite. 
Interestingly, these latter multiplicities are  a mixture of finite-dimensional Kazhdan-Lusztig numbers and Kostka numbers.

{\bf Acknowledgments.} We are thankful to the referee for pointing out several inaccuracies in the first version of paper, and also for making suggestions for improving its readability. IP has been supported in part by DFG grants PE 980/6-1 and PE 980/7-1. VS has been supported in part by NSF grant 1701532.

        \section{The Set-Up}

The base field is $\mathbb{C}$.  
The notations {\bf S}($\cdot$) and { $\bf\Lambda$}($\cdot$) stand respectively for symmetric and exterior algebra. The superscript $^*$ indicates dual space. Span over a monoid $A$ is denoted by $\langle\cdot \rangle_{A}$.  If $\mu$ is a partition, then $\mathbb S_\mu$ denotes the Schur functor associated with $\mu$. In  particular, $\mathbb{S}_{(k)}(\cdot)= {\bf S}^k(\cdot)$ and $\mathbb{S}_{\underbrace{(1,1,\dots, 1)}_{\text{k times}}}(\cdot)={\bf \Lambda}^k(\cdot)$. The sign \clplus  stands for semidirect sum of Lie algebras (the round part points to the respective ideal).

 We fix a nondegenerate pairing of countable-dimensional vector spaces \texttt{p}$:V\times V_*\to \mathbb{C}$, and define the
Lie algebra $\mathfrak{gl}(\infty)$ as the Lie algebra arising from the associative algebra $V\otimes V_*$.   Both spaces $V$ and $V_*$ carry obvious structures
of $\mathfrak{gl}(\infty)$-modules.  It is a  well known fact (going back to G. Mackey \cite{Mac}) that there exist dual bases $\{v_i\}_{i\in I}$ of $V$ and $\{w_i\}_{i\in I}$ of $V_*$
(i.e. a basis $\{v_i\}_{i\in I}$ of $V$ and a basis $\{w_i\}_{i\in I}$ of $V_*$ such that $\texttt{p}(v_i,w_j)=\delta_{ij}$, where $\delta_{ij}$ is Kronecker's delta) where $I$ is a fixed countable set.  Then clearly $\mathfrak{gl}(\infty)=\langle v_i\otimes w_j|{i,j\in I}\rangle_\mathbb C$.

 By $\mathfrak{sl}(\infty)$ we denote the
Lie algebra $\ker$\texttt{p}; this is a codimension-$1$ Lie subalgebra of $\mathfrak{gl}(\infty)$. Moreover, we fix the abelian subalgebra $$\mathfrak{h}:=\langle h_i:=v_i\otimes w|{i\in I}\rangle_\mathbb C\cap\mathfrak{sl}(\infty)\subset \mathfrak{sl}{(\infty)}.$$

Next, assume that $V$ is endowed with non-degenerate symmetric or antisymmetric form $\texttt{b}:V\otimes V\to\mathbb C$.
If $\texttt{b}$ is symmetric, we define the Lie algebra $\mathfrak {o}(\infty)$ as the vector space ${\bf\Lambda}^2(V)$ with commutator satisfying
$$[u\wedge  v, w \wedge z]=-\texttt{b}(u,w)v\wedge z+ \texttt{b}(u,z)v\wedge w+\texttt{b}(v,w)u\wedge z- \texttt{b}(v,z)u\wedge w.$$
According to \cite{Mac} there exist a basis $\{u, v_i,w_i\}_{i\in I}$ of $V$ such that
\begin{equation}\label{1}
\texttt{b}(u,v_i)=\texttt{b}(u,w_j)=\texttt{b}(v_i,v_j)=\texttt{b}(w_i,w_j)=0,\quad  \texttt{b}(u,u)=1, \quad \texttt{b}(v_i,w_j)=\delta_{ij},
\end{equation}
and a basis $\{v_i,w_i\}_{i\in I}$  of $V$ such that
\begin{equation}\label{2}
\texttt{b}(v_i,v_j)=\texttt{b}(w_i,w_j)=0,\quad \texttt{b}(v_i,w_j)=\delta_{ij}.
\end{equation}
In both cases, we set
$$\mathfrak h:=\langle h_i:=v_i\wedge w_i|{i\in I}\rangle_\mathbb C\subset \mathfrak{o}(\infty).$$

If $\texttt{b}$ is antisymmetric, we define the Lie algebra $\mathfrak{sp}(\infty)$ as the space ${\bf S}^2(V)$ with commutator satisfying
$$[u  v, w  z]=\texttt{b}(u,w)v z+ \texttt{b}(u,z)v w+\texttt{b}(v,w)u z+ \texttt{b}(v,z)u w.$$
Furthermore, there exists a basis  $\{v_i,w_i\}_{i\in I}$
of $V$ satisfying (\ref{2}).
We set
$$\mathfrak h:=\langle h_i:=v_i w_i|{i\in I}\rangle_\mathbb C\subset\mathfrak{sp}(\infty).$$

We denote by $\mathfrak g$ one of the Lie algebras $\mathfrak{sl}(\infty)$,  $\mathfrak {o}(\infty)$ or $\mathfrak{sp}(\infty)$. In all four cases above, $\mathfrak{h}$ is a \emph{splitting Cartan subalgebra of} $\mathfrak{g}$ according to \cite{DPS}.
Furthermore, $\mathfrak g$ has a root decomposition
$$\mathfrak g=\mathfrak h\oplus\bigoplus_{\alpha\in\Delta}\mathfrak g_{\alpha},$$
where $\Delta$ is the \textit{root system of} $\mathfrak g $.
We define $\varepsilon_i\in\mathfrak h^*$ by setting
$$\varepsilon_i(h_j):=\delta_{ij}.$$
Then the root system  of $\mathfrak{sl}(\infty)$ is  $$\Delta=A_{\infty}=\{\varepsilon_i-\varepsilon_j\,|\,i,j\in I, i\neq j\},$$
and the root system of $\mathfrak{sp}(\infty)$ is
$$\Delta=C_\infty =\{\varepsilon_i-\varepsilon_j\,|\,i,j\in I, i\neq j\}\cup\{\pm(\varepsilon_i+\varepsilon_j)\,|\,i,j\in I\}.$$
The Lie algebra $\mathfrak{o}(\infty)$ has two root systems depending on whether $\mathfrak h$ is of type $B$ or type $D$:
$$\Delta=B_\infty=\{\varepsilon_i-\varepsilon_j\,|\,i,j\in I, i\neq j\}\cup\{\pm(\varepsilon_i+\varepsilon_j)\,|\,i,j\in I, i\neq j\}\cup\{\pm\varepsilon_i\,|\,i\in I\}$$
if (\ref{1}) holds, and
$$\Delta=D_\infty=\{\varepsilon_i-\varepsilon_j\,|\,i,j\in I, i\neq j\}\cup\{\pm(\varepsilon_i+\varepsilon_j)\,|\,i,j\in I, i\neq j\}$$
if (\ref{2}) holds.

For a $\fg$-module $M$ which is semisimple as an $\fh$-module, we put $$\operatorname{supp}M:=\{\lambda\in \fh^*\,| M_{\lambda}:=\{m\in M| hm=\lambda(h)m \; \forall h\in \fh\}\neq 0\}.$$

Next, set

\[\tilde I := \left\{ \begin{array}{cc}
	\{\varepsilon_i\,|\,i\in I\}, &\textnormal{ for }\mathfrak g=\mathfrak{sl}(\infty) \\
	\{\pm\varepsilon_i\,|\,i\in I\}, &\textnormal{ for }
\mathfrak g=\mathfrak{o}(\infty) \textnormal {, } \mathfrak{sp}(\infty).
	\end{array}\right. \]
Note that $V$, as well as $V_*$ for $\fg=\mathfrak{sl}(\infty)$ is a $\fg$-module which is semisimple as an $\fh$-module. We refer to $V$ (respectively, $V_*$) as the {\it natural} (respectively, {\it conatural}) {\it $\fg$-module}. In all cases except $\Delta=B_{\infty}$, we have  $\operatorname{supp}V=\tilde I$. If $\Delta=B_{\infty}$ then $\operatorname{supp}V=\tilde I \sqcup 0$. Finally, $\operatorname{supp}V_*=-\tilde I$ for $\fg=\mathfrak{sl}(\infty)$ (note that the pairing \texttt{p} makes $V_*$ a $\fg$-submodule of $V^* = \operatorname{Hom}_{\mathbb{C}}(V,\mathbb{C}$)).

For $\mathfrak g=\mathfrak{o}(\infty), \mathfrak{sp}(\infty)$ we call a subset $J$ of $\tilde I$ \emph{symmetric} if $J=-J$.
For any  subset $J\subset \tilde I$, which we assume symmetric if $\mathfrak{g}=\mathfrak{o}(\infty)$, $\mathfrak{sp}(\infty)$, put 
$$\Delta_J:=\Delta\cap\langle J\rangle_\mathbb Z,$$
and let $\mathfrak{g}_J$ be the root subalgebra of $\fg$ generated by $\fg_{\alpha}$ for  $\alpha\in\Delta_J$. By $\fg^c_J$ we denote the centralizer of
$\fg_{\tilde I\setminus J}$ in $\fg$. For the root systems $C_\infty$ and $D_\infty$  we have $\fg^c_J=\fg_{J}$. This holds also for $A_\infty$  under the assumption that $J$ is not cofinite in $\tilde I$, otherwise $\fg_{J}=[\fg^c_J,\fg^c_J]$.
For the root system $B_\infty$, we have $\fg^c_J\subset\fg_J$: if $\fg_J$ has root system $B_{|J|/2}$, then $\fg^c_J$ has root system $D_{|J|/2}$ where $|J|=\operatorname{card}J$ (if $|J|<\infty$, the
root systems $B_{|J|/2}$ and $D_{|J|/2}$ are the classical finite root systems of respective types $B$ or $D$).

A \emph{splitting Borel subalgebra} $\mathfrak b$ containing $\mathfrak h$ \cite{DP}, has the form
$$\mathfrak b=\mathfrak h\oplus\bigoplus_{\alpha\in\Delta^+}\mathfrak g_{\alpha}$$
for an arbitrary decomposition $\Delta=\Delta^+ \sqcup \Delta^-$ such that $\Delta^-=-\Delta^+$ and $\alpha+\beta\in \Delta^+$ whenever $\alpha, \beta\in \Delta^+$, $\alpha+\beta\in\Delta$.

All splitting Borel subalgebras containing $\mathfrak h$ are in a natural bijection with the set of total orders $\prec$ on $\tilde I$, 
subject to the condition that $a\prec b$ implies $-b\prec -a$ in the case $\mathfrak g=\mathfrak{o}(\infty)$ or $\mathfrak{sp}(\infty)$. In what follows, we call such
orders \emph{symmetric} or $\mathbb Z_2$-\emph{linear}. Indeed, given a (symmetric) total order $\prec$ on $\tilde I$, we set 

\[\Delta^+ := \left\{ \begin{array}{cc}
	\{\varepsilon_i-\varepsilon_j\,|\, \varepsilon_i<\varepsilon_j\} &\textnormal{ if } \Delta=A_{\infty}, \\
	\{\alpha\,|\, \alpha\prec -\alpha\}\sqcup\{\alpha+\beta\,|\,\alpha\prec -\alpha,\beta\prec -\beta\}\sqcup&\\
\sqcup\{\alpha-\beta\,|\,\alpha\prec\beta\}\textnormal{ for } \alpha, \beta\in \tilde I & \textnormal{ if }
\Delta=B_\infty,\\
\{2\alpha\,|\,\alpha\prec -\alpha\}\sqcup\{\alpha+\beta\,|\,\alpha\prec -\alpha,\beta\prec -\beta\}\sqcup&\\
\sqcup\{\alpha-\beta\,|\,\alpha\prec\beta\} \textnormal{ for } \alpha, \beta\in \tilde I &\textnormal{ if }
\Delta=C_\infty,\\
\{\alpha+\beta\,|\,\alpha\prec -\alpha,\beta\prec -\beta\}\sqcup&\\
\sqcup\{\alpha-\beta\,|\,\alpha\prec\beta\}\textnormal{ for } \alpha, \beta\in \tilde I &\textnormal{ if }
\Delta=D_\infty.
	\end{array}\right. \]
In the remainder of the paper we assume that all total orders $\prec$ on $\tilde I$  considered are symmetric for $\fg=\mathfrak{o}(\infty)$, $\mathfrak{sp}(\infty)$.

Given a total order $\prec$ on the set $\tilde I$, we define subsets $S_{max}$ and $S_{min}$ of $\tilde I$ as follows: $S_{min}$ (respectively, $S_{max}$) is the set of all $\alpha\in \tilde I$ such
that there exists a cofinite subset $A\subset \tilde I$ in which  $\alpha$ is minimal (respectively, maximal). 
Note that for $\mathfrak g=\mathfrak{o}(\infty)$, $\mathfrak{sp}(\infty)$, we have $S_{min}=-S_{max}$.
A total order $\prec$ on $\tilde I$ is  {\it ideal} if both $S_{min}$ and $S_{max}$ are infinite; a total order $\prec$ on $\tilde I$ is  {\it perfect} if it is ideal and $\tilde I=S_{min}\cup S_{max}$.  The corresponding Borel subalgebras are also called  \textit{ideal} or \textit{perfect}. Note that all perfect total orders on $\tilde I$ are isomorphic, which implies that all perfect Borel subalgebras are conjugate under $\textup{Aut}\mathfrak{g}$. 

A root $\alpha \in \Delta^+$ is \emph{simple} if $\alpha$ cannot be decomposed as a sum $\beta+\gamma$ for $\beta, \gamma \in \Delta^+$. If  a root  can be written as a linear combination of simple roots we call it a $\mathfrak{b}$-\emph{finite} root. All other roots are {\it infinite} by definition. For instance, if $\fb$ is perfect with positive roots $\varepsilon_i - \varepsilon_j$ for $\varepsilon_i\prec\varepsilon_j$ for $\fg=\mathfrak{sl}{(\infty)}$, then the $\fb$-finite roots are of the form $\varepsilon_i - \varepsilon_j$ for $\varepsilon_i, \varepsilon_j\in S_{min}$ or $\varepsilon_i, \varepsilon_j\in S_{max}$.

If $M$ is a $\fg$-module for $\fg=\mathfrak{sl}{(\infty)}, \mathfrak{o}{(\infty)}, \mathfrak{sp}{(\infty)} $, or for a finite-dimensional Lie algebra $\fg$, the Fernando-Kac subalgebra $\fg[M]$ of $\fg$ consists of all vectors $g\in\fg$ which act locally finitely on $M$, i.e. such that $\textup{dim}(\langle m,g{m},g^2{m},\dots\rangle_\mathbb C)<\infty$ for any $m\in M$. The fact that $\fg[M]$ is indeed a Lie subalgebra has been proved independently in \cite{K} and \cite{Fe}.

We say that a $\fg$-module $M$ satisfies the \emph{large annihilator condition} if, for any $m\in M$, the annihilator in $\fg$ of $m$ contains the  commutator subalgebra of the centralizer of a finite-dimensional Lie subalgebra of $\fg$ (i.e. if $M$ satisfies condition $($c$)$ from the Introduction).

Finally, recall that the \emph{socle}  of a $\fg$-module $M$, soc$M$, is the sum of all simple submodules of M. It is a standard fact that soc$M$ is the largest semisimple submodule of $M$. The \emph{socle fltration} of $M$ is
$$ 0\subset \textup{soc}M=\textup{soc}^0M\subset \textup{soc}^1M\subset\textup{soc}^2M \subset \dots $$
where $ \textup{soc}^iM:=\pi^{-1}_i( \textup{soc}(M/ \textup{soc}^{i-1}M))$ and $\pi_i:M\to M/ \textup{soc}^{i-1}M$ is the canonical homomorphism. We say that the socle filtration of $M$ is \emph{exhaustive} if $M=\bigcup_{i\geq0} \textup{soc}^iM$.

\section{The category $\mathcal{OLA_{\mathfrak b}}$}

Let $\fh$ be the fixed splitting Cartan subalgebra of $\fg$, see Section 2, and $\fb=\fh\oplus\fn$ be a fixed splitting Borel subalgebra containing $\fh$ and corresponding to a total order $\prec$ on $\tilde I$. We define $\mathcal{OLA_{\mathfrak b}}$ as the full subcategory of the category of all $\mathfrak{g}$-modules, consisting of $\fg$-modules $M$ satisfying the following conditions:
\begin{enumerate}[(i)]
\item $M$ satisfies the large annihilator condition;
\item $M$ is $\mathfrak{h}$-semisimple;
\item every $x\in\mathfrak n$ acts locally nilpotently on $M$.
\end{enumerate}

The first problem we address, is to what extent $\mathcal{OLA}_{\mathfrak b}$ depends on the choice of $\mathfrak{b}$.

Set $S:=S_{min}\cup S_{max}\subset\tilde I$.  For $a$, $b\in \mathbb{Z}_{\geq0}$, define $S_{min}(a)\subset S_{min}$ and $S_{max}(b)\subset S_{max}$ to be respectively the first $a$ elements of $S_{min}$ and the last $b$ elements of $S_{max}$.  Here we assume $S_{min}(0)=S_{max}(0)=\emptyset$.
 Put $\fg_{a,b}:=\fg_{\tilde I\setminus (S_{min}(a)\cup S_{max}(b))}$,
 where for $\mathfrak g=\mathfrak{o}(\infty)$ or $\mathfrak{sp}(\infty)$ we suppose that $a=b$ and  that all subsets of $\tilde I$ we consider are symmetric.

The large annihilator condition can be rewritten in the form
\begin{equation}\label{anncond}
 \text{ for every}\,\,m\in M\,\ \text{there exists a cofinite set}\,\, J\subset \tilde I \,\text{such that}\,\,\fg^c_{J}m=0. 
  \end{equation}

\begin{lemma}\label{fk} Let $M\in\mathcal{OLA}_{\mathfrak b}$.
\begin{enumerate}[(a)]
\item If $M$ is finitely generated, then there exist $a,b\in\mathbb Z_{\geq0}$  such that  $\fg_{a,b}\subset \fg{[M]}$.

\item  For an arbitrary $M$, we have $\mathfrak g_{\tilde I\setminus S} \subset \fg{[M]}$. 
\end{enumerate}
\end{lemma}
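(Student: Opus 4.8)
The plan is to prove (a) first and then deduce (b) from it by applying (a) to cyclic submodules.

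For (a), write $M=U(\fg)\{m_1,\dots,m_k\}$. Applying the large annihilator condition in the form \eqref{anncond} to each generator and intersecting, I obtain a single cofinite $J\subset\tilde I$ with $\fg^c_J m_i=0$ for all $i$; set $F:=\tilde I\setminus J$, a finite set. Two easy observations put many elements into $\fg[M]$. First, $\fh\subset\fg[M]$ since $M$ is $\fh$-semisimple, and $\fn\subset\fg[M]$ since by (iii) every element of $\fn$ is locally nilpotent, hence locally finite. Second, any \emph{root vector} $x$ that kills all the $m_i$ lies in $\fg[M]$: using the identity $x^p u=\sum_{l=0}^p\binom{p}{l}\big((\operatorname{ad}x)^l u\big)x^{p-l}$ in $U(\fg)$ and $xm_i=0$, only the $l=p$ term survives, so $x^p(u\,m_i)=\big((\operatorname{ad}x)^p u\big)m_i$; since a root vector is $\operatorname{ad}$-locally nilpotent on $\fg$ (any $y$ lies with $x$ in a finite-dimensional subalgebra, in which $x$ is nilpotent) it is $\operatorname{ad}$-locally nilpotent on $U(\fg)$, whence $x^p(u\,m_i)=0$ for $p\gg 0$; as every element of $M$ is a finite sum of terms $u\,m_i$, $x$ is locally nilpotent on $M$. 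In particular all root spaces of $\fg^c_J$ lie in $\fg[M]$.

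Next I choose $a,b$ so that $S_{min}(a)\supseteq F\cap S_{min}$ and $S_{max}(b)\supseteq F\cap S_{max}$ (for $\fg=\mathfrak o(\infty),\mathfrak{sp}(\infty)$ I first replace $J$ by the cofinite symmetric set $J\cap(-J)$ and take $a=b$). Writing $\tilde I':=\tilde I\setminus(S_{min}(a)\cup S_{max}(b))$, every index of $\tilde I'$ lying in $S=S_{min}\cup S_{max}$ then automatically lies in $J$, since $\tilde I'\cap S_{min}=S_{min}\setminus S_{min}(a)\subseteq S_{min}\setminus F\subseteq J$ and likewise for $S_{max}$. Hence the only ``bad'' indices of $\tilde I'$ (those in $F$) lie in $F\cap(\tilde I\setminus S)$, and each such index has infinitely many elements of $\tilde I$, hence cofinitely many of $J$, both below and above it in $\prec$. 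The positive root vectors of $\fg_{a,b}=\fg_{\tilde I'}$ lie in $\fn\subset\fg[M]$, so it remains to place the negative ones. For $\mathfrak{sl}(\infty)$, writing $e_{ij}:=v_i\otimes w_j$, a negative root vector of $\fg_{a,b}$ has the form $e_{qp}$ with $\varepsilon_p\prec\varepsilon_q$ and $\varepsilon_p,\varepsilon_q\in\tilde I'$. If bad indices occur I rewrite $e_{qp}$ as a bracket of elements already in $\fg[M]$: choosing $\varepsilon_t\in J$ with $\varepsilon_t\succ\varepsilon_q$ gives $e_{qp}=[e_{qt},e_{tp}]$ with $e_{qt}\in\fn$, and choosing $\varepsilon_s\in J$ with $\varepsilon_s\prec\varepsilon_p$ gives $e_{tp}=[e_{ts},e_{sp}]$ with $e_{ts}\in\fg_J$ killing the generators and $e_{sp}\in\fn$. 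Since $\fg[M]$ is a Lie subalgebra (Fernando--Kac), this yields $e_{qp}\in\fg[M]$, and therefore $\fg_{a,b}\subset\fg[M]$.

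For (b), let $m\in M$ be arbitrary and apply (a) to the cyclic submodule $M':=U(\fg)m$: there are $a',b'$ with $\fg_{a',b'}\subset\fg[M']$, so every element of $\fg_{a',b'}$ acts locally finitely on $m\in M'$. Since $S_{min}(a')\cup S_{max}(b')\subseteq S$, we have $\tilde I\setminus S\subseteq\tilde I\setminus(S_{min}(a')\cup S_{max}(b'))$, and consequently $\fg_{\tilde I\setminus S}\subseteq\fg_{a',b'}$. Thus every element of $\fg_{\tilde I\setminus S}$ acts locally finitely on $m$; as $m$ was arbitrary, $\fg_{\tilde I\setminus S}\subseteq\fg[M]$. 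The main obstacle is the middle step of (a): a negative root vector of $\fg_{a,b}$ need not annihilate the generators, and a single $\mathfrak{sl}(2)$- or Heisenberg-subalgebra computation does not force it to act nilpotently. The decisive point is geometric---having removed the finitely many offending indices that lie in $S_{min}$ or $S_{max}$, every remaining bad index sits in the ``middle'' $\tilde I\setminus S$ and is thus flanked on both sides by cofinitely many good indices of $J$, which is exactly what makes the bracket factorization available. For $\mathfrak o(\infty),\mathfrak{sp}(\infty)$ the same scheme works once all sets are kept symmetric and $a=b$; the one genuinely extra case is the short roots $\pm\varepsilon_i$ of $B_\infty$ (where $\fg^c_J\subsetneq\fg_J$), where a negative short root vector must first be expressed through a long negative root vector and a short positive one before the flanking argument applies.
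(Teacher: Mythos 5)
Your argument is correct and is essentially the paper's own proof: the large annihilator condition puts $\fg^c_J$ inside $\fg[M]$ (the paper gets this from local finiteness of $\operatorname{ad}x$ on $U(\fg)$, you from local nilpotence of root vectors, via the same binomial identity), condition (iii) puts $\fn$ there, and the Fernando--Kac theorem together with the fact that $\fn$ and $\fg^c_J$ generate a subalgebra containing some $\fg_{a,b}$ --- a step the paper declares ``easy to check'' and you carry out by explicit bracket factorizations --- finishes (a), with (b) deduced exactly as you do it, from $\fg_{\tilde I\setminus S}\subseteq\fg_{a,b}$. The only slip is in your case analysis: when $\varepsilon_q\in J$ but $\varepsilon_p$ is bad, an element $\varepsilon_t\in J$ with $\varepsilon_t\succ\varepsilon_q$ need not exist (e.g.\ if $\varepsilon_q$ lies among the top elements of $S_{max}$), so in that case you should instead use the single bracket $e_{qp}=[e_{qs},e_{sp}]$ with $\varepsilon_s\in J$, $\varepsilon_s\prec\varepsilon_p$, which is still available by your own flanking observation.
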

\begin{proof} 
  It suffices to prove (a) for a cyclic module. Let $M$ be generated by a vector $m\in M$. By (\ref{anncond}) there exists a cofinite set $J\subset \tilde I$
  such that $\fg^c_{J}m=0$.
  Since the action of $\operatorname{ad} x$ on $U(\fg)$ is locally finite for all  $x\in\fg$, and $M=U(\fg)m$, we conclude that $\fg_{J}^c\subset\fg[M]$.
  On the other hand, by (iii) we have $\fn\subset\fg[M]$. It is easy to check that the subalgebra of $\fg$ generated by $\fn$ and $\fg_{J}^c$ equals 
  $\fg_{J'}$ where $J'$ is the minimal interval containing $J$. By the cofiniteness of $J'$ we get  $\fg_{J'}=\fg_{a,b}$ for some  $a,b\in\mathbb Z_{\geq0}$.
  Hence $\fg_{a,b}\subset \fg{[M]}$.

(b) is a consequence of (a) since $\fg_{\tilde I\setminus S}$ equals the intersection $\bigcap_{a,b}\fg_{a,b}$.
\end{proof}

\begin{theorem}\label{qr} Assume that   $\mathfrak b$ is ideal, and let $J$ and $K$ be infinite (symmetric) subsets of $\tilde I$ such that
  $\tilde I=J\sqcup K$.  Suppose further that $S\subset K$, and set   $\mathfrak b_K:=\mathfrak g_K\cap \mathfrak b$. Let $\mathcal{OLA}_{\mathfrak b_K}$ be the 
category of $\mathfrak g_K$-modules satisfying the conditions (i)--(iii) with respect to $\mathfrak{b}_K$. 

 (a)The categories $\mathcal{OLA}_{\mathfrak b_K}$ and $\mathcal{OLA}_{\mathfrak b}$ are equivalent.

(b) If the root system of $\fg$ is $B_{\infty}$, there is also an equivalence of the categories $\mathcal{OLA}_{\mathfrak b_K^c}$ and $\mathcal{OLA}_{\mathfrak b}$
where $\fb^c_K:=\fb\cap\fg^c_K$.
\end{theorem}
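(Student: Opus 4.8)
The plan is to exhibit mutually quasi-inverse exact functors between $\mathcal{OLA}_{\mathfrak b}$ and $\mathcal{OLA}_{\mathfrak b_K}$ (resp. $\mathcal{OLA}_{\fb^c_K}$ in case (b)), the central one being passage to $\fg_J$-invariants, where $J=\tilde I\setminus K$. Two facts from the set-up make this work. First, since $S\subset K$ we have $J\subset\tilde I\setminus S$, so Lemma~\ref{fk}(b) gives $\fg_J\subset\fg_{\tilde I\setminus S}\subset\fg[M]$ for every $M\in\mathcal{OLA}_{\mathfrak b}$; combined with (ii) and (iii), this says that every object of $\mathcal{OLA}_{\mathfrak b}$ is an integrable $\fg_J$-module. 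Second, a root computation shows that in types $A_\infty$, $C_\infty$, $D_\infty$ the subalgebras $\fg_J$ and $\fg_K$ commute: every root has support of cardinality at most two, and a root of $\fg_J$ and a root of $\fg_K$ have disjoint supports in the disjoint sets $J$ and $K$, so their sum is neither zero nor a root. In type $B_\infty$ this fails precisely because the elements of $\tilde I$ are themselves (short) roots: for $a\in J$, $b\in K$ one has $a-b\in\Delta$, whence $[\fg_a,\fg_{-b}]\neq0$. This is exactly why in (b) one replaces $\fg_K$ by the centralizer $\fg^c_K$ of $\fg_{\tilde I\setminus K}=\fg_J$, whose root system $D_{|K|/2}$ has no short roots and which commutes with $\fg_J$ by construction.

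I would then define $\Phi(M):=M^{\fg_J}$ and check that it lands in the target category. By the commutation just established, $M^{\fg_J}$ is stable under $\fg_K$ (resp. $\fg^c_K$); it is $\fh$-semisimple and locally $\fn\cap\fg_K$-nilpotent because these properties are inherited from $M$; and it satisfies the large annihilator condition relative to $\fg_K$, since intersecting the cofinite set of (\ref{anncond}) for $M$ with $K$ produces a cofinite subset of $K$ annihilating each invariant vector. The one delicate point is exactness of $\Phi$: invariants are only left exact in general. I would remove the obstruction by showing that on $\mathcal{OLA}_{\mathfrak b}$ the genuine $\fg_J$-invariants split off as a functorial direct summand, so that $\Phi$ agrees with the exact projection onto this summand; this uses the structure theory of integrable large-annihilator modules from \cite{DPS} (restricted to $\fg_J$, an object of $\mathcal{OLA}_{\mathfrak b}$ lies in a completion of $\mathbb T_{\fg_J}$, in which the relevant $\mathrm{Ext}^1$ into the trivial module vanishes).

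For the quasi-inverse I would use the adjunction attached to invariants. Viewing $N\in\mathcal{OLA}_{\mathfrak b_K}$ as a $\fg_J\oplus\fg_K$-module with trivial $\fg_J$-action, I define $\Psi(N)$ to be the largest object of $\mathcal{OLA}_{\mathfrak b}$ generated by a copy of $N$ placed in its $\fg_J$-invariants, cut out inside a suitable (co)induced module by conditions (i)--(iii). The verification reduces to showing that the unit $N\to\Phi\Psi(N)$ and the counit $\Psi\Phi(M)\to M$ are isomorphisms. The essential inputs are: (1) $M^{\fg_J}\neq0$ for $M\neq0$, which holds because $\fg_J$ is an infinite-dimensional simple Lie algebra with no nonzero finite-dimensional modules, so the $\fg_J$-socle of any nonzero integrable $\fg$-module contains a trivial summand; and (2) $M=U(\fg)\cdot M^{\fg_J}$, i.e. the invariants generate $M$ over $\fg$. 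Here the ideal hypothesis on $\mathfrak b$ is decisive: because $S\subset K$ and $\prec$ is ideal, the mixed root spaces $\fm=\bigoplus_{\alpha}\fg_\alpha$ (over roots $\alpha$ whose support meets both $J$ and $K$) connect $J$-directions to $K$-directions in both orders, so repeated application of $\fm$ recovers every $\fg_J$-isotypic component of $M$ from $M^{\fg_J}$.

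The main obstacle is step (2) together with full faithfulness: proving that $\Phi$ induces bijections on Hom-spaces and that the counit is an isomorphism. This is where one must control how an object of $\mathcal{OLA}_{\mathfrak b}$ is assembled from its $\fg_J$-invariants through the mixed action, and it is exactly here that the ideal structure of $\mathfrak b$ and the finiteness consequences of the large annihilator condition from Lemma~\ref{fk} are indispensable; by comparison, the well-definedness and exactness of $\Phi$ are relatively routine. In case (b) the argument goes through verbatim with $\fg_K$ replaced by $\fg^c_K$, the only change being the bookkeeping of short roots noted above.
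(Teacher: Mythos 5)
You correctly guess that the equivalence is implemented by taking invariants with respect to the subalgebra complementary to $\fg_K$, but your proposal does not prove the theorem, and two of its supporting claims fail. First, part (a) is asserted for \emph{all} root systems, including $B_\infty$; your functor $M\mapsto M^{\fg_J}$ does not even land in $\fg_K$-modules in type $B_\infty$, since, as you note yourself, $[\fg_J,\fg_K]\neq 0$ there. Part (b) is an \emph{additional} equivalence for $B_\infty$, not a replacement for (a): the paper handles (a) uniformly in all types by taking invariants with respect to the centralizer $\fg^c_J$ of $\fg_K$ (so that $M^{\fg^c_J}$ is automatically $\fg_K$-stable), and reserves $\fg_J$-invariants for part (b). Second, your justification of the nonvanishing statement (1) is false as stated: it is not true that the socle of every nonzero integrable $\fg_J$-module contains a trivial summand --- the natural module $V_J$ is integrable, simple, nontrivial, and satisfies $V_J^{\fg_J}=0$. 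That nonzero objects of $\mathcal{OLA}_{\fb}$ have nonzero invariants is a \emph{consequence} of the equivalence (equivalently, of the structure theory of \cite{PS}), not an elementary fact one can feed into its proof.

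More seriously, the steps you yourself single out as ``the main obstacle'' --- that $M=U(\fg)\cdot M^{\fg_J}$, full faithfulness, and the construction of a quasi-inverse ``cut out inside a suitable (co)induced module'' --- are exactly the content of the theorem, and your outline gives no argument for them. The paper never attacks them head-on. Instead, using Lemma \ref{fk}(a) it writes $\mathcal{OLA}_{\fb}=\lim_{\longrightarrow}\mathcal{OLA}^{a,b}_{\fb}$ and $\mathcal{OLA}_{\fb_K}=\lim_{\longrightarrow}\mathcal{OLA}^{a,b}_{\fb_K}$, reducing the problem to showing that each induced functor $\Phi^{a,b}_K$ is an equivalence. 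Upon restriction to $\fg_{a,b}$, the corresponding invariants functor $\tilde{\mathbb T}_{\fg_{a,b}}\to\tilde{\mathbb T}_{\fg_{a,b,K}}$ is already known to be an equivalence of symmetric monoidal categories by Lemmas 5.13 and 5.14 of \cite{PS}; this imports precisely the nonvanishing, generation and full-faithfulness statements you were missing. The equivalence is then lifted by observing that an object of $\mathcal{OLA}^{a,b}_{\fb}$ is the same datum as an object $M\in\tilde{\mathbb T}_{\fg_{a,b}}$ together with a morphism $\varphi:M\otimes\fr\to M$ satisfying tensor identities, where $\fg=\fg_{a,b}\oplus\fr$ with $\fr$ an $\operatorname{ad}\fg_{a,b}$-stable complement; a monoidal equivalence carrying $\fr$ to $\fr_K$ identifies such pairs on the two sides. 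Without this reduction (or a genuine proof of your steps (1), (2) and of full faithfulness), your outline remains a plan rather than a proof; note also that exactness of $\Phi$, on which you spend effort, need not be established separately --- it is automatic once the equivalence is proved.
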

\begin{proof} (a) Consider the functor $$\Phi_K:\mathcal{OLA}_{\mathfrak b}\to\mathcal{OLA}_{\mathfrak b_K},\, \Phi_K(M):=M^{\fg^c_J},$$ where the superscript $(\cdot)^{\fg^c_J}$ indicates taking invariants.
We shall prove that $\Phi_K$ is an equivalence of categories.

Let $\mathcal{OLA}^{a,b}_{\fb}$ denote the full subcategory of $\mathcal{OLA}_{\fb}$ consisting of modules such that $\fg_{a,b}\subset\fg[M]$.
By Lemma \ref{fk}(a), $$\mathcal{OLA}_{\fb}=\lim_{\longrightarrow} \mathcal{OLA}^{a,b}_{\fb}.$$ Similarly, we define the category $\mathcal{OLA}^{a,b}_{\fb_K}$
as the subcategory of modules $M$ satisfying $\fg_{a,b,K}:=\fg_K\cap\fg_{a,b}\subset\fg[M]$ . Then
$$\mathcal{OLA}_{\fb_K}=\lim_{\longrightarrow} \mathcal{OLA}^{a,b}_{\fb_K}.$$

Clearly, $\Phi_K$ induces well-defined functors $$\Phi_K^{a,b}:\mathcal{OLA}^{a,b}_{\fb}\to \mathcal{OLA}^{a,b}_{\fb_K},$$ and it suffices to prove that $\Phi_K^{a,b}$ are equivalences of categories for all
$a,b\in\mathbb Z_{\geq 0}$. Denote by $\tilde{\mathbb{T}}_{\fg_{a,b}}$ the inductive completion of the category $\mathbb{T}_{\fg_{a,b}}$.
Then, for any fixed $a$, $b\in \mathbb{Z}_{\geq 0}$, we have the following commutative diagram of functors
$$\begin{CD}\mathcal{OLA}^{a,b}_{\fb}@>\Phi_K^{a,b}>>\mathcal{OLA}^{a,b}_{\fb_K}  \\@V\operatorname{Res}_{\fg_{a,b}}VV @VV\operatorname{Res}_{\fg_{a,b,K}}V\\ 
\tilde{\mathbb T}_{\fg_{a,b}}@>\Phi_K^{a,b}>>\tilde{\mathbb T}_{\fg_{a,b,K}}.\end{CD}$$

We claim that $\Phi_K^{a,b}$ is an equivalence of symmetric monoidal categories downstairs. This follows directly from Lemma 5.13 and 5.14 in  \cite{PS} which  prove
that $\Phi_K^{a,b}$ establishes an equivalence between ${\mathbb T}_{\fg_{a,b}}$ and ${\mathbb T}_{\fg_{a,b,K}}$. The passage to the respective inductive completions   
$\tilde{\mathbb T}_{\fg_{a,b}}$ and $\tilde{\mathbb T}_{\fg_{a,b,K}}$ is automatic because  $\Phi_K^{a,b}$ commutes with direct limits.

Next we show that $\Phi_K^{a,b}$ remains an equivalence upstairs. Indeed,
consider the decomposition (of vector spaces) $\fg=\fg_{a,b}\oplus\fr$, where $\fr$ is a $\fg_{a,b}$-stable subspace. The objects of $\mathcal{OLA}^{a,b}_{\fb}$ are pairs
$(M,\varphi)$ where $M\in\tilde{\mathbb T}_{\fg_{a,b}}$ and
$\varphi:M\otimes\fr\to M$ is a morphism satisfying a certain set of tensor identities. Note that $\fg_K=\Phi_K(\fg)$, and set $\fr_K:=\Phi_K(\fr)$. We have
$\fg_K=\fg_{a,b,K}\oplus\fr_K$.
The objects of $\mathcal{OLA}^{a,b}_{\fb_K}$ are pairs $(N,\psi)$ where 
$N\in\tilde{\mathbb T}_{\fg_{a,b,K}}$ and $\psi:N\otimes\fr_K\to N$ is a morphism satisfying the same set of tensor identities. Obviously,
$\Phi_K^{a,b}(\fr)=\fr_K$ and $\Phi_K^{a,b}(\varphi)=\psi$. This completes the proof of (a).

To prove (b), define the functor $$\Phi'_K:\mathcal{OLA}_{\mathfrak b}\to\mathcal{OLA}_{\mathfrak b^c_K},\, \Phi'_K(M)=M^{\fg_J},$$  The proof that $\Phi'_K$ is an
equivalence of categories is similar to the proof of (a).
\end{proof}
\begin{corollary}\label{cor:perfect} If $\fb\subset\fg$ is an ideal subalgebra, the category $\mathcal{OLA}_{\mathfrak b}$ is equivalent to the category 
$\mathcal{OLA}_{\mathfrak b'}$ for a perfect subalgebra $\fb'\subset\fg$.
\end{corollary}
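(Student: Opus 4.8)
The plan is to reduce to Theorem~\ref{qr} by finding, for any ideal Borel subalgebra $\fb$, a suitable partition of $\tilde I$ that lands us in a perfect order. The key observation is that an ideal order already has both $S_{min}$ and $S_{max}$ infinite, so the ``defect'' that prevents $\fb$ from being perfect is exactly the set $\tilde I \setminus S$ of elements lying in neither $S_{min}$ nor $S_{max}$. First I would set $K := S$ and $J := \tilde I \setminus S$. One must check that both are infinite and (in the orthogonal/symplectic case) symmetric: $S$ is infinite because $\fb$ is ideal, $S = S_{min} \cup S_{max}$ is symmetric because $S_{min} = -S_{max}$ for $\fg = \mathfrak{o}(\infty), \mathfrak{sp}(\infty)$, and $J$ is symmetric as the complement of a symmetric set. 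The hypothesis $S \subset K$ of Theorem~\ref{qr} is then automatic since $K = S$.

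If $J = \tilde I \setminus S$ is infinite, Theorem~\ref{qr}(a) applies directly and yields an equivalence $\mathcal{OLA}_{\fb} \simeq \mathcal{OLA}_{\fb_K}$, where $\fb_K = \fg_K \cap \fb = \fg_S \cap \fb$. So it remains to argue that $\fb' := \fb_K$ is perfect as a Borel subalgebra of $\fg_K = \fg_S$. The induced order on $K = S$ is the restriction of $\prec$; I would verify that with respect to this restricted order one has $S_{min}(\fg_S) = S_{min}$ and $S_{max}(\fg_S) = S_{max}$, and hence $S = S_{min} \cup S_{max}$ covers all of $\tilde I_{\fg_S} = S$. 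This is precisely the perfection condition. The point is that passing to $\fg_S$ discards exactly the ``middle'' elements that obstructed perfection, while the minimal and maximal cofinite-extremal elements of $S$ are the same as those of $\tilde I$ because removing the non-extremal set $J$ does not disturb which elements are eventually minimal or maximal in cofinite subsets.

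The remaining case is $J = \tilde I \setminus S$ finite. Here $\fb$ is already perfect up to a finite discrepancy, and in fact if $J$ is finite then $\tilde I = S \cup (\text{finite set})$; I would handle this by observing that the finite leftover elements can be absorbed into either $S_{min}$ or $S_{max}$ (the order being ideal, one can reassign finitely many elements without changing infiniteness of $S_{min}$, $S_{max}$), so $\fb$ itself is already conjugate to a perfect Borel under $\mathrm{Aut}\,\fg$. Alternatively, if $J$ is empty then $\fb$ is perfect by definition and there is nothing to prove. Thus in every case $\mathcal{OLA}_{\fb}$ is equivalent to $\mathcal{OLA}_{\fb'}$ for a perfect $\fb'$.

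I expect the main obstacle to be the verification that $\fb_K = \fg_S \cap \fb$ is genuinely perfect inside $\fg_S$, i.e.\ confirming that the sets $S_{min}, S_{max}$ computed intrinsically in $\fg_S$ agree with the ambient ones and that their union is all of $S$. This requires unwinding the definition of $S_{min}$ and $S_{max}$ in terms of cofinite subsets and checking that cofiniteness in $\tilde I$ versus cofiniteness in $S$ produces the same extremal elements once $J = \tilde I \setminus S$ is finite or split off. The infinite-$J$ and finite-$J$ dichotomy must be stated cleanly so that Theorem~\ref{qr}(a) is invoked only when $J$ is infinite; the degenerate finite cases reduce to the already-known fact, stated in Section 2, that all perfect Borel subalgebras are conjugate under $\mathrm{Aut}\,\fg$.
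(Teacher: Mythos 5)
Your treatment of the case where $J=\tilde I\setminus S$ is infinite is essentially the paper's own first step: apply Theorem~\ref{qr}(a) with $K=S$, note that the order on $S$ restricted from $\tilde I$ is perfect (for this only the inclusions of the ambient $S_{min}$, $S_{max}$ into the sets computed intrinsically in $S$ are needed, and these are immediate since intersecting a cofinite subset of $\tilde I$ with $S$ gives a cofinite subset of $S$), and then --- a step you omit but which is routine --- transport $\fb_S$ along an isomorphism $\fg_S\simeq\fg$ to obtain a perfect Borel subalgebra $\fb'$ of $\fg$ itself; the corollary asks for a Borel subalgebra of $\fg$, not of the proper subalgebra $\fg_S$.

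The genuine gap is in the case where $J=\tilde I\setminus S$ is finite and nonempty. There you claim the finitely many elements of $J$ can be ``absorbed'' into $S_{min}$ or $S_{max}$, so that $\fb$ ``is already conjugate to a perfect Borel under $\mathrm{Aut}\,\fg$.'' This is false. Reassigning elements means replacing the order $\prec$ by a different order $\prec'$, hence replacing $\fb$ by a different Borel subalgebra; and $\fb$ is not conjugate to that subalgebra, nor to any perfect Borel subalgebra, because conjugacy of splitting Borel subalgebras corresponds to isomorphism of the underlying ordered sets (this is exactly the correspondence the paper invokes to conclude that all perfect Borel subalgebras are conjugate), and membership in $\tilde I\setminus S$ is an order-theoretic invariant: $\alpha\in\tilde I\setminus S$ means that every cofinite subset containing $\alpha$ has elements both below and above $\alpha$, a property preserved by order isomorphisms and never satisfied in a perfect order. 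Concretely, for $\fg=\mathfrak{sl}(\infty)$ the ideal order of type $\omega+1+\omega^*$ (one extra element in the middle) has $|J|=1$ and is not isomorphic to the perfect order $\omega+\omega^*$. So in this case the equivalence $\mathcal{OLA}_{\fb}\simeq\mathcal{OLA}_{\fb_S}$ is not a triviality, and Theorem~\ref{qr} cannot be applied inside $\fg$ since it requires $J$ to be infinite. The paper's device is to enlarge rather than shrink: extend $\tilde I$ to an ordered set $\tilde P$ by replacing the finite interval $\tilde I\setminus S$ with an infinite (symmetric, if relevant) interval, so that $\fg$ and $\fg_S$ both embed in a copy $\fg_{\tilde P}\simeq\fg$ with Borel subalgebra $\tilde\fb$; Theorem~\ref{qr}(a) now applies twice inside $\fg_{\tilde P}$ (the relevant complements in $\tilde P$ are infinite) and shows that $\mathcal{OLA}_{\fb}$ and $\mathcal{OLA}_{\fb_S}$ are each equivalent to $\mathcal{OLA}_{\tilde\fb}$, after which one concludes as in the first case. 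Your proof needs this, or some substitute for it, to cover the finite nonempty case.
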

\begin{proof} First we prove that $\mathcal{OLA}_{\fb}$ is equivalent to $\mathcal{OLA}_{\fb_S}$.
  If $S$ is coinfinite in $\tilde I$, this is established in Theorem \ref{qr}(a).  Therefore, assume that $S$ is cofinite in $\tilde I$.
  Extend $\tilde I$ to a totally ordered set  $\tilde P$ by replacing the interval $\tilde I\setminus S$
  by an infinite interval (symmetric in the case $\fg=\mathfrak{o}(\infty)$ or $\mathfrak{sp}(\infty)$). Then $\fg$ and $\fg_{S}$ are embedded into an
  isomorphic copy $\fg_{\tilde P}$ of $\fg$ in which the role of $\tilde I$ is played by $\tilde P$.
  Let $\tilde \fb$ be the Borel subalgebra of  $\fg_{\tilde P}$ defined by the ordered set $\tilde P$. Now  Theorem \ref{qr} implies
  that both  categories $\mathcal{OLA}_{\fb_S}$ and  $\mathcal{OLA}_{\fb}$ are equivalent to $\mathcal{OLA}_{\tilde\fb}$. Hence,
  $\mathcal{OLA}_{\fb_S}$ and  $\mathcal{OLA}_{\fb}$ are equivalent.
  
  Furthermore, $\fb_S$ is a perfect Borel subalgebra of $\fg_s$ and $\fg_s\simeq\fg$. Consider an isomorphism $\varphi:\fg_S\to \fg$ and set $\fb':=\varphi(\fb_s)$.
  This isomorphism extends to an equivalence between  $\mathcal{OLA}_{\fb_S}$ and  $\mathcal{OLA}_{\fb'}$. The statement follows.
  \end{proof}
  \begin{corollary}\label{cor:BD} Assume that the root system of $\fg$ is $B_{\infty}$ and the root system of $\fg'$ is $D_\infty$. Then, for any ideal Borel
    subalgebra $\fb\subset\fg$ there exists a  perfect subalgebra $\fb'\subset\fg'$ such that the category $\mathcal{OLA}_{\mathfrak b}$ is equivalent to the category 
$\mathcal{OLA}_{\mathfrak b'}$. 
\end{corollary}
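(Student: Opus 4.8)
The plan is to combine Theorem~\ref{qr}(b), which already realizes the passage from type $B_\infty$ to type $D_\infty$ through the centralizer functor $\Phi'_K$, with the notion of a perfect order. Recall from Section~2 that for $\fg$ of type $B_\infty$ and an infinite symmetric subset $K\subset\tilde I$ the algebra $\fg^c_K$ has root system $D_{|K|/2}=D_\infty$, so $\fg^c_K$ is an orthogonal Lie algebra of type $D_\infty$ and hence isomorphic to $\fg'$. The idea is to apply Theorem~\ref{qr}(b) with the specific choice $K=S$, obtaining $\mathcal{OLA}_{\fb}\simeq\mathcal{OLA}_{\fb^c_S}$ with $\fb^c_S:=\fb\cap\fg^c_S$, and then to observe that $\fb^c_S$ is a \emph{perfect} Borel subalgebra of $\fg^c_S\cong\fg'$.

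The first step is to arrange that $S$ is coinfinite in $\tilde I$, which is what guarantees that the complement $J:=\tilde I\setminus S$ is infinite, so that the hypotheses of Theorem~\ref{qr}(b) are met with $K=S$. If $S$ is already coinfinite there is nothing to do. If $S$ is cofinite, I would extend $\tilde I$ to a totally ordered set $\tilde P$ exactly as in the proof of Corollary~\ref{cor:perfect}, replacing the finite interval $\tilde I\setminus S$ by an infinite symmetric one. Then $\fg$ embeds into an isomorphic copy $\fg_{\tilde P}$ of type $B_\infty$ with ideal Borel subalgebra $\tilde\fb$ determined by $\tilde P$, and, by the same reasoning as in Corollary~\ref{cor:perfect}, the set $S$ is now coinfinite in $\tilde P$. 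Applying Theorem~\ref{qr}(a) inside $\fg_{\tilde P}$ with $K=\tilde I$ and $J=\tilde P\setminus\tilde I$ (infinite) and noting $\tilde\fb\cap\fg_{\tilde I}=\fb$ yields $\mathcal{OLA}_{\fb}\simeq\mathcal{OLA}_{\tilde\fb}$, which reduces the cofinite case to the coinfinite one.

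Assuming now that $S$ is coinfinite, I set $K:=S$, so that $J$ and $K$ are infinite and symmetric, $\tilde I=J\sqcup K$, and $S\subset K$. Theorem~\ref{qr}(b) then provides an equivalence $\mathcal{OLA}_{\fb}\simeq\mathcal{OLA}_{\fb^c_S}$, where $\fg^c_S$ has root system $D_\infty$ on the symmetric set $S$ ordered by the restriction $\prec|_S$. The one verification that is not purely formal is that $\fb^c_S$ is perfect. Writing $S_{\min}(S)$, $S_{\max}(S)$ for the extremal sets computed inside $(S,\prec|_S)$, I claim $S_{\min}\subseteq S_{\min}(S)$ and $S_{\max}\subseteq S_{\max}(S)$: indeed, if $\alpha\in S_{\min}$ is minimal in a cofinite $A\subset\tilde I$, then $A\cap S$ is cofinite in $S$ and $\alpha$ is minimal in it. Consequently $S=S_{\min}\cup S_{\max}\subseteq S_{\min}(S)\cup S_{\max}(S)\subseteq S$, so equality holds and both sets are infinite; thus $\prec|_S$ is perfect and $\fb^c_S$ is a perfect Borel subalgebra of $\fg^c_S$.

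Finally, fixing an isomorphism $\varphi\colon\fg^c_S\to\fg'$ of Lie algebras of type $D_\infty$ and setting $\fb':=\varphi(\fb^c_S)$, the subalgebra $\fb'$ is a perfect Borel of $\fg'$ and $\varphi$ induces an equivalence $\mathcal{OLA}_{\fb^c_S}\cong\mathcal{OLA}_{\fb'}$; composing the equivalences gives $\mathcal{OLA}_{\fb}\simeq\mathcal{OLA}_{\fb'}$, as required. I expect the main points requiring care to be the reduction to the coinfinite case and the verification that the choice $K=S$ produces a \emph{perfect} (not merely ideal) Borel subalgebra of $\fg^c_S$; the substantive $B\to D$ transition itself is already supplied by Theorem~\ref{qr}(b), so no further analysis of the centralizer functor is needed.
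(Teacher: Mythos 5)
Your proposal is correct and takes essentially the same approach as the paper: the paper's own proof of Corollary \ref{cor:BD} is the single remark that it is ``similar to the proof of Corollary \ref{cor:perfect} via application of Theorem \ref{qr}(b)'', and that is precisely what you execute --- reduction to the case where $S$ is coinfinite by extending $\tilde I$ to $\tilde P$, application of Theorem \ref{qr}(b) with $K=S$, and transport of $\fb^c_S$ through an isomorphism $\fg^c_S\simeq\fg'$. Your explicit check that the restricted order $\prec|_S$ is perfect (via $S_{min}\subseteq S_{min}(S)$, $S_{max}\subseteq S_{max}(S)$) is a detail the paper leaves implicit, both here and in Corollary \ref{cor:perfect}, but it is exactly the verification the intended argument requires.
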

\begin{proof} The proof is similar to the proof of Corollary \ref{cor:perfect} via application of Theorem \ref{qr}(b).
\end{proof}

In the rest of the section, $\fb$ is an arbitrary splitting Borel subalgebra containing $\fh$.

\begin{proposition}\label{prop:fin} Assume that $S$ is finite. Then there exists a perfect Borel subalgebra
  $\fb'\subset\fg$ such that $\mathcal{OLA}_{\fb}=\mathcal{OLA}^{a,b}_{\fb'}$ for some $a,b\geq 0$. In particular, if $S=\emptyset$ then
  $\mathcal{OLA}_{\fb}=\tilde{\mathbb T}_{\fg}$.
  \end{proposition}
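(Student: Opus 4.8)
The plan is to leave conditions (i) and (ii), which make no reference to the Borel subalgebra, completely untouched, and to exploit the finiteness of $S$ by passing from $\fb$ to a perfect Borel subalgebra $\fb'$ obtained by reordering \emph{only} the ``middle'' $T:=\tilde I\setminus S$; here, by Lemma \ref{fk}(b), the subalgebra $\fg_T$ already acts locally finitely on every object of $\mathcal{OLA}_{\fb}$, and this is exactly what will let me ignore the internal reordering of $T$.

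First I would record the order-theoretic bookkeeping. If $\alpha\in S_{min}$ and $\beta\prec\alpha$, then adjoining $\beta$ to a cofinite set in which $\alpha$ is minimal shows $\beta\in S_{min}$; thus $S_{min}$ is an initial segment and, dually, $S_{max}$ is a final segment. Since $\tilde I$ is infinite, a common element of $S_{min}$ and $S_{max}$ would be both minimal and maximal in a cofinite set, forcing that set to be a singleton; hence $S_{min}\cap S_{max}=\emptyset$. Therefore $\tilde I$ decomposes as $S_{min}\prec T\prec S_{max}$ with $T$ infinite and having neither a stabilizing minimum nor a stabilizing maximum. Set $a:=\operatorname{card}S_{min}$ and $b:=\operatorname{card}S_{max}$ (equal in types $B,C,D$ by symmetry).

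Next I would construct $\fb'$. I define a total order $\prec'$ on $\tilde I$ that agrees with $\prec$ on $S_{min}$ and on $S_{max}$, keeps $S_{min}$ below everything and $S_{max}$ above everything, and replaces $T$ by a reordering of order type $\omega+\omega^*$ (splitting $T$, symmetrically when $\fg=\mathfrak{o}(\infty),\mathfrak{sp}(\infty)$, into a lower infinite half of type $\omega$ and an upper infinite half of type $\omega^*$). Then $S'_{min}$ consists of $S_{min}$ together with the lower half and $S'_{max}$ of the upper half together with $S_{max}$; both are infinite and their union is all of $\tilde I$, so $\prec'$ is perfect. Let $\fb'$ be the associated perfect Borel subalgebra. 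By construction the first $a$ elements of $S'_{min}$ are exactly $S_{min}$ and the last $b$ elements of $S'_{max}$ are exactly $S_{max}$, so $S'_{min}(a)\cup S'_{max}(b)=S$ and hence, relative to $\fb'$, $\fg_{a,b}=\fg_{\tilde I\setminus S}=\fg_T$. Consequently $\mathcal{OLA}^{a,b}_{\fb'}$ is precisely the full subcategory of $\mathcal{OLA}_{\fb'}$ on objects $M$ with $\fg_T\subset\fg[M]$.

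Finally I would prove $\mathcal{OLA}_{\fb}=\mathcal{OLA}^{a,b}_{\fb'}$ by comparing the three defining conditions. Conditions (i) and (ii) are intrinsic to $(\fg,\fh)$ and coincide. By Lemma \ref{fk}(b) every object of $\mathcal{OLA}_{\fb}$ satisfies $\fg_T\subset\fg[M]$, while objects of $\mathcal{OLA}^{a,b}_{\fb'}$ satisfy this by definition; on such a module $M|_{\fg_T}$ lies in $\tilde{\mathbb T}_{\fg_T}$ (as for the restriction functor $\operatorname{Res}_{\fg_{a,b}}$ in the proof of Theorem \ref{qr}), hence is integrable, so every root vector of $\fg_T$, of either sign, acts locally nilpotently. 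It remains to compare condition (iii) for $\fn$ and for $\fn'$. The key point is that $\prec$ and $\prec'$ assign the same sign to every root that is not internal to $T$: this is clear within $S_{min}$ and within $S_{max}$ (orders preserved), between $S_{min}$ and $S_{max}$, and between $S$ and $T$, since $S_{min}\prec T\prec S_{max}$ holds for both orders. For roots internal to $T$ the two orders may disagree, but there both positive and negative root vectors act locally nilpotently by integrability, so condition (iii) is indifferent to the reordering of $T$. Hence (iii) with respect to $\fb$ is equivalent to (iii) with respect to $\fb'$ on the modules in question, yielding the equality. The special case $S=\emptyset$ gives $T=\tilde I$, $a=b=0$ and $\fg_{0,0}=\fg$, so $\mathcal{OLA}^{0,0}_{\fb'}$ is the subcategory of integrable objects of $\mathcal{OLA}_{\fb'}$, namely $\tilde{\mathbb T}_{\fg}$; thus $\mathcal{OLA}_{\fb}=\tilde{\mathbb T}_{\fg}$. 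The main obstacle is precisely this last step: one must be sure that local nilpotency of the nilradical is insensitive to the internal ordering of $T$, which rests on the integrability of the $\fg_T$-action and on the bookkeeping guaranteeing that every root meeting $S$ keeps its sign under the passage from $\prec$ to $\prec'$.
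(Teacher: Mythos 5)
Your proof is correct and takes essentially the same route as the paper: the paper's own argument likewise sets $a=|S_{min}|$, $b=|S_{max}|$, defines a perfect order keeping $S_{min}$ as the first and $S_{max}$ as the last elements of $\tilde I$, and concludes from the equality $\mathcal{OLA}^{a,b}_{\fb}=\mathcal{OLA}^{a,b}_{\fb'}$ together with Lemma \ref{fk}(b). Your detailed verification that condition (iii) is insensitive to the internal reordering of $T=\tilde I\setminus S$ (sign-agreement for roots meeting $S$, plus local nilpotency of both root directions inside $T$ via local finiteness of $\fg_T$ on $\fh$-semisimple modules) simply makes explicit what the paper asserts without proof.
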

  \begin{proof} Set $a=|S_{min}|$, $b=|S_{max}|$. Define a perfect order on $\tilde I$ such that $S_{min}\subset \tilde I$
    (respectively, $S_{max}\subset\tilde I$) are the first  (respectively, the last) elements of $\tilde I$. Denote by $\fb'$ the Borel subalgebra corresponding
    to this order. Then 
    $\mathcal{OLA}^{a,b}_{\fb}=\mathcal{OLA}^{a,b}_{\fb'}$, and by Lemma \ref{fk}(b) $\mathcal{OLA}_{\fb}=\mathcal{OLA}^{a,b}_{\fb}$.
 The assertion follows.
  \end{proof}

  \begin{proposition}\label{prop:onesided} Let $\fg=\mathfrak{sl}(\infty)$. Suppose that exactly one of $S_{min}$ and $S_{max}$ is finite. 

(a) The categories $\mathcal{OLA}_{\mathfrak b_S}$ and $\mathcal{OLA}_{\mathfrak b}$ are equivalent.

(b) Set 
$$\mathcal{OLA}^{a,\infty}_{\fb}:=\displaystyle\lim_{\longrightarrow}\mathcal{OLA}^{a,b}_{\fb}\;\text{for}\;b\rightarrow\infty,$$ $$\mathcal{OLA}^{\infty,b}_{\fb}:=\displaystyle\lim_{\longrightarrow}\mathcal{OLA}^{a,b}_{\fb}\;\text{for}\;a\rightarrow\infty.$$
Then there exists a perfect Borel subalgebra $\mathfrak b'\subset \fg$ such that
\begin{enumerate}
\item if $|S_{min}|=a$ and $S_{max}$ is infinite, then $\mathcal{OLA}_{\fb}$ is equivalent to $\mathcal{OLA}^{a,\infty}_{\fb'}$;
\item if $S_{min}$ is infinite and $|S_{max}|=b$, then $\mathcal{OLA}_{\fb}$ is equivalent to 
$\mathcal{OLA}^{\infty,b}_{\fb'}$.
\end{enumerate}
\end{proposition}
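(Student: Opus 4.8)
The plan is to reduce the two cases to one another by symmetry and then to treat case (1) in two steps, corresponding to parts (a) and (b). Reversing the total order $\prec$ on $\tilde I$ interchanges $S_{min}$ and $S_{max}$; this reversal is implemented by the automorphism of $\fg=\mathfrak{sl}(\infty)$ exchanging the natural and conatural modules $V$ and $V_*$ (negative transpose with respect to the pairing $\texttt{p}$), which carries $\fb$ to the Borel subalgebra of the reversed order and induces an equivalence of the associated categories $\mathcal{OLA}$. Hence it suffices to treat case (1), and from now on I assume $|S_{min}|=a$ and $S_{max}$ infinite.

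For part (a), I would first observe that the middle set $\tilde I\setminus S$ is either empty or infinite. Indeed, if $x\in \tilde I\setminus S$ then $x$ has infinitely many predecessors; at most $a$ of them lie in $S_{min}$, and none lies in $S_{max}$ (an element of $S_{max}$ below $x$ would have $x$ and everything above $x$ in its up-set, contradicting finiteness of that up-set), so infinitely many predecessors of $x$ again lie in $\tilde I\setminus S$. If $\tilde I\setminus S=\emptyset$ then $\fb_S=\fb$ and there is nothing to prove. Otherwise set $K:=S$ and $J:=\tilde I\setminus S$, so that $\tilde I=J\sqcup K$ with $J,K$ both infinite and $S\subseteq K$. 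The functor $\Phi_K(M)=M^{\fg^c_J}$ and the whole argument of Theorem \ref{qr}(a) then apply verbatim: idealness of $\fb$ is not used in that proof, only the set-theoretic hypotheses $\tilde I=J\sqcup K$ with $J,K$ infinite and $S\subseteq K$, together with Lemma \ref{fk}. The sole difference is that, $S_{min}$ being finite, the bottom index is bounded by $a$, so that $\mathcal{OLA}_{\fb}=\lim_{\longrightarrow}\mathcal{OLA}^{a,b}_{\fb}$ for $b\to\infty$, and similarly for $\fb_S$; this does not affect the graded equivalences $\Phi^{a,b}_K$. Thus $\mathcal{OLA}_{\fb}\simeq\mathcal{OLA}_{\fb_S}$.

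For part (b), I would first pin down the order on $S$. The elements of $S$ with finite down-set are exactly the $a$ elements of $S_{min}$, while $S_{max}$ has order type $\omega^*$; hence the induced order on $S$ has type $a+\omega^*$, with the same $S_{min}$ of size $a$. Choose the perfect Borel $\fb'$ given by the order $\omega+\omega^*$ on $\tilde I$, for which $S'_{min}$ and $S'_{max}$ are the two infinite halves. Since the $S_{min}$ of $\fb_S$ has size $a$, Lemma \ref{fk}(a) gives $\mathcal{OLA}_{\fb_S}=\lim_{\longrightarrow}\mathcal{OLA}^{a,b}_{\fb_S}$ for $b\to\infty$ with the bottom index capped at $a$, and by definition $\mathcal{OLA}^{a,\infty}_{\fb'}=\lim_{\longrightarrow}\mathcal{OLA}^{a,b}_{\fb'}$ for $b\to\infty$. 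It therefore suffices to produce equivalences $\mathcal{OLA}^{a,b}_{\fb_S}\simeq\mathcal{OLA}^{a,b}_{\fb'}$ compatible with the structure inclusions in $b$ and then to pass to the direct limit; combined with (a) this yields $\mathcal{OLA}_{\fb}\simeq\mathcal{OLA}^{a,\infty}_{\fb'}$.

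To construct the graded equivalence I would use, as in the proof of Theorem \ref{qr}, the description of $\mathcal{OLA}^{a,b}$ as the category of pairs $(M,\varphi)$ with $M\in\tilde{\mathbb T}_{\fg_{a,b}}$ and $\varphi:M\otimes\fr\to M$ satisfying the tensor identities, where $\fg=\fg_{a,b}\oplus\fr$. Here the two bulk algebras $\fg_{S;a,b}$ (of order type $\omega^*$) and $\fg'_{a,b}$ (of order type $\omega+\omega^*$) are both isomorphic to $\mathfrak{sl}(\infty)$, and since $\mathbb{T}$ is independent of the Borel subalgebra, any bijection of their index sets induces an equivalence $\tilde{\mathbb T}_{\fg_{S;a,b}}\simeq\tilde{\mathbb T}_{\fg'_{a,b}}$ carrying natural to natural and conatural to conatural. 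The heart of the matter, and the step I expect to be the main obstacle, is to check that $\fr_S$ and $\fr'$ correspond under this equivalence together with their bracket and tensor-identity data. This holds because for both Borels exactly $a$ of the $a+b$ deleted indices lie below the bulk and $b$ lie above it: a deleted index below the bulk contributes a copy of the conatural module to $\fn\cap\fr$ and a copy of the natural module to $\fn^-\cap\fr$, while a deleted index above the bulk contributes these in the opposite way; hence both $\fr_S$ and $\fr'$ decompose as $(a+b)$ copies of the natural plus $(a+b)$ copies of the conatural module plus a finite-dimensional trivial summand, with identical splitting into $\fn$- and $\fn^-$-parts. The differing bulk orders are invisible here precisely because $\mathbb{T}$ ignores the bulk Borel; what must be preserved is only the below/above-bulk pattern of the $a+b$ special indices, which is the same for $\fb_S$ and $\fb'$. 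Choosing the index bijections compatibly in $b$ makes the resulting equivalences commute with the structure inclusions, and the passage to the limit completes the proof of case (1); case (2) follows by the order-reversal symmetry.
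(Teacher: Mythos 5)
Your part (a) is essentially the paper's own argument: the paper refers to Corollary \ref{cor:perfect}, which in the one-sided situation amounts to running the proof of Theorem \ref{qr}(a) with $K=S$, $J=\tilde I\setminus S$, exactly as you do; your observation that $\tilde I\setminus S$ is empty or infinite is a nice simplification (it makes the extension trick of Corollary \ref{cor:perfect} unnecessary), and you are right that ideality of $\fb$ is not used in that proof. In part (b), however, you take a genuinely different route. The paper never compares two non-conjugate Borel structures head-on: after reducing to $\tilde I=S$ it embeds $\fg$ into a larger copy $\fg_{\tilde P}$, $\tilde P=L\sqcup S_{max}$ with $L\cong\mathbb Z_{\geq 0}$ extending $S_{min}$, so that $\fb=\tilde\fb\cap\fg$ for the perfect Borel $\tilde\fb$ of $\fg_{\tilde P}$, and shows --- by the same mechanism as Theorem \ref{qr}, i.e.\ the invariants functor $\Phi_S(M)=M^{\fg_{L\setminus S_{min}}}$ together with the equivalences from \cite{PS} --- that $\Phi_S$ restricts to an equivalence $\mathcal{OLA}^{a,\infty}_{\tilde\fb}\to\mathcal{OLA}_{\fb}$; the perfect $\fb'\subset\fg$ is then the image of $\tilde\fb$ under an isomorphism $\fg_{\tilde P}\simeq\fg$. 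All comparisons are along inclusions with induced Borels, so conditions (i)--(iii) transport for free. You instead match $\mathcal{OLA}^{a,b}_{\fb_S}$ with $\mathcal{OLA}^{a,b}_{\fb'}$ through an isomorphism induced by an index bijection that does \emph{not} carry $\fb_S$ to $\fb'$. This is more economical (no auxiliary algebra; moreover a single bijection $S\to\tilde I$, order-preserving on $S_{min}$ and on $S_{max}$, induces all your graded equivalences simultaneously, so compatibility in $b$ is automatic), but it puts the entire weight of the proof on the claim that the conditions defining $\mathcal{OLA}^{a,b}$ do not see the bulk Borel.

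That claim is where your write-up has a gap, though a reparable one. Conditions (i), (ii) and the identification of $\fr$ with its brackets and its $\fn$/$\fh$/$\bar\fn$-splitting are indeed blind to the bulk order, as you argue. But condition (iii) quantifies over all of $\fn$, which contains elements mixing a bulk component with an $\fr$-component, and the bulk parts of $\fn_{\fb_S}$ and of the pullback of $\fn_{\fb'}$ are different subalgebras of $\fg_{S;a,b}$ (orders $\omega^*$ versus $\omega+\omega^*$). What you need is: on modules whose restriction to the bulk lies in $\tilde{\mathbb T}$, condition (iii) is equivalent to local nilpotence of the elements of $\fn\cap\fr$ alone. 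This is true, but it requires an argument: bulk elements of $\fn$ are nilpotent finitary matrices and hence act locally nilpotently on integrable modules; and for a mixed element one considers the finite-dimensional nilpotent subalgebra $\mathfrak{a}\subset\fn$ generated by its two components --- every element of $\mathfrak{a}$ acts locally finitely because $\fg[M]$ is a subalgebra (Fernando--Kac), hence $M$ admits a primary decomposition under $\mathfrak{a}$, and since the generators act locally nilpotently all occurring characters of $\mathfrak{a}$ vanish, so all of $\mathfrak{a}$ acts locally nilpotently. Your sentence that ``the differing bulk orders are invisible'' asserts precisely this without proof. To be fair, the same lemma is implicitly buried in the unspecified ``tensor identities'' of the paper's proof of Theorem \ref{qr}; but in the paper's setting the Borels correspond under the functor, whereas in yours this independence is the crux, so it must be proved.
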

\begin{proof} (a) can be proven in the same way as Corollary \ref{cor:perfect}, and we leave the proof to the reader.

  Let us prove (b) in the case (1). Case (2) is similar. By (a) we may assume that $\tilde I=S$.
  We include $S_{min}$ into an ordered set $L$ isomorphic to $\mathbb Z_{\geq 0}$ such that $S_{min}$ is identified with the first $a$ elements of $L$. Set
  $\tilde P:=L\sqcup S_{max}$, $L\prec S_{max}$ and consider the corresponding Lie algebra $\fg_{\tilde P}$ with Borel subalgebra $\tilde \fb$.  
  Define the functor $$\Phi_S:\mathcal{OLA}_{\tilde \fb}\to\mathcal{OLA}_{\mathfrak b},\quad \Phi_S(M):=M^{\fg_{L\setminus S_{min}}}.$$ As in the proof of Theorem \ref{qr}, one can show
  that the restriction of $\Phi_S$ to $\mathcal{OLA}^{a,\infty}_{\tilde\fb}$ is an equivalence between the categories $\mathcal{OLA}^{a,\infty}_{\tilde\fb}$ and
  $\mathcal{OLA}_{\fb}$. Since $\fg_{\tilde P}$ is isomorphic to $\fg$, the Borel subalgebra $\fb'\subset \fg$ can be chosen as the image of $\tilde\fb$ under
  an isomorphism $\fg_{\tilde P}\backsimeq\fg$, and the statement follows.
\end{proof}
\begin{corollary}\label{1cor}
If $\fb$ is an arbitrary splitting Borel subalgebra of $\fg$, there exists a perfect Borel  subalgebra $\fb'\subset\fg$ such that the category  $\mathcal{OLA}_{\fb}$ is equivalent to a full subcategory of $\mathcal{OLA}_{\fb'}$.
\end{corollary}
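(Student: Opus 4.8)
The plan is to run a case analysis on the finiteness of the two sets $S_{min}$ and $S_{max}$ attached to the order $\prec$ defining $\fb$, invoking one of the three results already established in each case. First I would record the trichotomy: either (1) both $S_{min}$ and $S_{max}$ are infinite, i.e. $\fb$ is ideal; or (2) both are finite, i.e. $S$ is finite; or (3) exactly one of them is finite. I would then note that for $\fg=\mathfrak{o}(\infty),\mathfrak{sp}(\infty)$ the symmetry constraint $S_{min}=-S_{max}$ forces $S_{min}$ and $S_{max}$ to be simultaneously finite or infinite, so case (3) can occur only for $\fg=\mathfrak{sl}(\infty)$. This is exactly the hypothesis under which Proposition \ref{prop:onesided} is available, which is why that proposition is stated in type $A$ alone.

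In case (1), Corollary \ref{cor:perfect} yields a perfect Borel subalgebra $\fb'$ with $\mathcal{OLA}_{\fb}$ equivalent to $\mathcal{OLA}_{\fb'}$, and $\mathcal{OLA}_{\fb'}$ is trivially a full subcategory of itself. In case (2), Proposition \ref{prop:fin} produces a perfect $\fb'$ together with integers $a,b\geq 0$ such that $\mathcal{OLA}_{\fb}=\mathcal{OLA}^{a,b}_{\fb'}$, and by its very definition $\mathcal{OLA}^{a,b}_{\fb'}$ is a full subcategory of $\mathcal{OLA}_{\fb'}$. In case (3), Proposition \ref{prop:onesided}(b) produces a perfect $\fb'$ with $\mathcal{OLA}_{\fb}$ equivalent to $\mathcal{OLA}^{a,\infty}_{\fb'}$ or to $\mathcal{OLA}^{\infty,b}_{\fb'}$, so the only thing left is to check that these inductive-limit categories are again full subcategories of $\mathcal{OLA}_{\fb'}$.

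For that last point I would argue that, for fixed $a$ and varying $b$, the subcategories $\mathcal{OLA}^{a,b}_{\fb'}$ form an increasing chain of full subcategories of $\mathcal{OLA}_{\fb'}$. Indeed $\fg_{a,b+1}\subset\fg_{a,b}$, so the defining condition $\fg_{a,b}\subset\fg[M]$ is weakened as $b$ grows, whence $\mathcal{OLA}^{a,b}_{\fb'}\subset\mathcal{OLA}^{a,b+1}_{\fb'}$. Consequently $\mathcal{OLA}^{a,\infty}_{\fb'}=\bigcup_b \mathcal{OLA}^{a,b}_{\fb'}$ is itself the full subcategory of $\mathcal{OLA}_{\fb'}$ consisting of those $M$ for which $\fg_{a,b}\subset\fg[M]$ holds for some $b$, and symmetrically for $\mathcal{OLA}^{\infty,b}_{\fb'}$; here Lemma \ref{fk} identifies the ambient $\mathcal{OLA}_{\fb'}$ with $\lim_{\longrightarrow}\mathcal{OLA}^{a,b}_{\fb'}$. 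Since the three cases are exhaustive, the corollary follows in each.

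The argument is essentially bookkeeping on top of the earlier equivalences, so there is no single hard computation. The one genuine point requiring care is the exhaustiveness-plus-symmetry observation, namely confirming that the ``one-sided'' possibility is the unique configuration not covered by Corollary \ref{cor:perfect} or Proposition \ref{prop:fin}, and that this possibility is automatically excluded outside type $A$; this must be combined with the verification that passing to the inductive limit in case (3) remains inside $\mathcal{OLA}_{\fb'}$ as a full subcategory rather than enlarging it.
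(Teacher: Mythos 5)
Your proposal is correct and takes essentially the same route as the paper: the paper's entire proof is the citation of Theorem \ref{qr} (through Corollary \ref{cor:perfect}), Proposition \ref{prop:fin}, and Proposition \ref{prop:onesided}, and your trichotomy on the finiteness of $S_{min}$ and $S_{max}$ is exactly the case analysis that citation leaves implicit. Your supplementary checks --- that the symmetry $S_{min}=-S_{max}$ rules out the one-sided case for $\fg=\mathfrak{o}(\infty),\mathfrak{sp}(\infty)$, and that $\mathcal{OLA}^{a,\infty}_{\fb'}=\bigcup_b\mathcal{OLA}^{a,b}_{\fb'}$ is a full subcategory of $\mathcal{OLA}_{\fb'}$ since the $\mathcal{OLA}^{a,b}_{\fb'}$ form an increasing chain of full subcategories --- are correct details the paper omits.
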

\begin{proof}
Follows from Theorem \ref{qr}, Proposition \ref{prop:fin} and Proposition \ref{prop:onesided}.
\end{proof}

\section{$\mathcal{OLA}$: simple and parabolically induced modules}

Corollary \ref{1cor} suggests that it makes sense to restrict our study of the category $\mathcal{OLA}_{\fb}$ to the case when $\fb$ is a fixed perfect
Borel subalgebra.  In the rest of the paper we do this and  write $\mathcal{OLA}$, omitting the subscript $\fb$. Furthermore, Corollary \ref{cor:BD} allows us to disregard the case $\Delta=B_\infty$ and assume that $\Delta=D_\infty$ for $\fg=\mathfrak{o}(\infty)$.

 By $\bar \fb =\fh\crplus\bar \fn$ we denote the opposite Borel subalgebra, $\fb\cap\bar\fb=\fh$. In addition, for $\fg=\mathfrak{sl}(\infty)$ we identify the ordered set $\tilde I$ with $\mathbb{Z}_{>0}\sqcup\mathbb{Z}_{<0}$ where $i<-j$ for $i$, $j\in \mathbb{Z}_{>0}$, so that $S_{min}=\{\varepsilon_i| i\in \mathbb{Z}_{>0}\}$, $S_{max}=\{\varepsilon_i| i\in \mathbb{Z}_{<0}\}$. For $\fg=\mathfrak{o}(\infty)$, $\mathfrak{sp}(\infty)$ we identify $S_{min}$ with $\mathbb{Z}_{>0}$,  and write $S_{min}=\{\varepsilon_i| i\in \mathbb{Z}_{>0}\}$; then $S_{max}=- S_{min}$ $=\{-\varepsilon_i| i\in \mathbb{Z}_{>0}\}$.

Let $\fk_n$ be  the centralizer of $\fg_{n,n}$ in $\fg$. Note that $\fg_{n,n}\simeq \fg$ and
  $$\fk_n\simeq\begin{cases}\mathfrak{sl}(2n)\,\text{for}\, \fg=\mathfrak{sl}(\infty)\\ \mathfrak{o}(2n)\,\text{for}\, \fg=\mathfrak{o}(\infty)\\ \mathfrak{sp}(2n)\,\text{for}\, \fg=\mathfrak{sp}(\infty).\end{cases}$$

Next, fix compatible nodegenerate invariant forms on $\fk_n$ which define a nondegenerate invariant form $(\cdot,\cdot)$ on $\fg$. We will use the same notation when considering $(\cdot, \cdot)$ as a form on $\fh^*$.

In what follows we  will  use the family of parabolic subalgebras of $\fg$ 
$$\fp_n:=\fb+\fg_{n,n}$$
with reductive parts $\fl_n=\fh+\fg_{n,n}$. By $\bar\fp_n$ we denote the parabolic subalgebra opposite to $\fp_n$, $\fp_n\cap \bar \fp_n=\fl_n$.
Furthermore, we define $\fm_n$ as the nilpotent ideal of $\fp_n$ such that $\fp_n=\fl_n\crplus\fm_n$. The space of $\fg_{n,n}$-invariants $ \fm_n^{\fg_{n,n}}$ is finite
dimensional, and the decomposition of $\fg_{n,n}$-modules $\fm_n=\fr_n\oplus \fm_n^{\fg_{n,n}}$ defines $\fr_n\subset\fm_n$. 

In addition, we introduce the  subalgebras $\fs\subset\fg$ and $\fs_n\subset\fk_n$ by setting
$$\fs:=\fh\oplus\bigoplus_{\alpha\in\Delta_{fin}}\fg_{\alpha},\quad\fs_n:=\fs\cap\fk_n,$$
where $\Delta_{fin}$ stands for the $\fb$-finite roots.
We have
$$\fs\simeq\begin{cases}\mathfrak{sl}(\infty)\oplus\mathfrak{sl}(\infty)\oplus\mathbb C\,\text{for}\, \fg=\mathfrak{sl}(\infty)\\ \mathfrak{gl}(\infty)\,\text{for}\, \fg=\mathfrak{o}(\infty), \mathfrak{sp}(\infty)\end{cases}.$$
and
$$\fs_n\simeq\begin{cases}\mathfrak{sl}(n)\oplus\mathfrak{sl}(n)\oplus\mathbb C\,\text{for}\, \fg=\mathfrak{sl}(\infty)\\ \mathfrak{gl}(n)\,\text{for}\, \fg=\mathfrak{o}(\infty), \mathfrak{sp}(\infty)\end{cases}.$$
  Note that $\fh_n:=\fh\cap\fk_n$ is a Cartan subalgebra of $\fk_n$ as well as of $\fs_n$.

For $\fg=\mathfrak o(\infty)$, $\mathfrak{sp}(\infty)$ we denote  by $V_n$ the natural $\mathfrak{gl}_n = \fs_n$-module. For $\fg=\mathfrak{sl}(\infty)$ we set $V_n=V_n^L\oplus V_n^R$, where $\operatorname{supp} V_n^L= \{\varepsilon_i| 1\leq i \leq n\}$ and $\operatorname{supp} V_n^R= \{\varepsilon_i| -n\leq i \leq -1\}$. Then there is canonical decomposition $$V= V_n\oplus \bar{V}_n$$ where $\bar{V}_n$ is the natural $\fg_{n,n}$-module (the notion of natural module makes sense for $\fg_{n,n}$ as $\fg_{n,n}$ is isomorphic to $\fg$). Moreover, we have the following isomorphism of $\fg_{n,n}$-modules:
$$\fr_n\simeq\begin{cases}\bar{V}_n^{\oplus n}\oplus (\bar V_n)_*^{\oplus n} \,\text{for}\, \fg=\mathfrak{sl}(\infty)\\ \bar V_n^{\oplus n}\,\text{for}\,\fg=\mathfrak{o}(\infty),\mathfrak{sp}(\infty)\end{cases}.$$

\subsection{Simple modules}\label{Simple modules} We start with the following lemma.
\begin{lemma} \label{fg} There exists a finite-dimensional $\operatorname{ad}(\fm_n^{\fg_{n,n}})$-stable subspace
 $\mathfrak u\subset\mathfrak r_{n}$ such that ${\bf S}(\mathfrak u)$ generates
${\bf S}(\fr_{n})$ as a module over $\mathfrak g_{n,n}$. 
\end{lemma}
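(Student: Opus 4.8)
The plan is to exhibit $\fu$ explicitly as a tensor product of a finite‑dimensional subspace of $\bar V_n$ with the (finite‑dimensional) multiplicity space of $\fr_n$, and then to verify the generation statement degree by degree using the Cauchy decomposition of $\mathbf{S}(\fr_n)$. First I would record the $\fg_{n,n}$‑module structure of $\fr_n$ from the isomorphisms preceding the lemma: $\fr_n\cong \bar V_n\otimes P$ for $\fg=\mathfrak{o}(\infty),\mathfrak{sp}(\infty)$, and $\fr_n\cong(\bar V_n\otimes P)\oplus((\bar V_n)_*\otimes Q)$ for $\fg=\mathfrak{sl}(\infty)$, where $P\cong Q\cong\mathbb{C}^n$ and $\fg_{n,n}$ acts only through the factors $\bar V_n,(\bar V_n)_*$. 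Since $\fm_n^{\fg_{n,n}}\subseteq\fk_n$ centralizes $\fg_{n,n}$, its adjoint action on $\fr_n$ commutes with the $\fg_{n,n}$‑action; as $\bar V_n$ and $(\bar V_n)_*$ are non‑isomorphic simple $\fg_{n,n}$‑modules, Schur's lemma gives $\End_{\fg_{n,n}}(\fr_n)=\End(P)\oplus\End(Q)$, so $\operatorname{ad}\fm_n^{\fg_{n,n}}$ acts purely on the multiplicity spaces. Consequently, for \emph{any} subspaces $\bar u\subseteq\bar V_n$ and $\bar u_*\subseteq(\bar V_n)_*$, the subspace $\fu:=(\bar u\otimes P)\oplus(\bar u_*\otimes Q)$ is automatically $\operatorname{ad}(\fm_n^{\fg_{n,n}})$‑stable. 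Thus the stability condition comes for free, and the entire content is to choose $\bar u,\bar u_*$ finite‑dimensional so that $\mathbf{S}(\fu)$ generates $\mathbf{S}(\fr_n)$ over $\fg_{n,n}$.

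Because $\fg_{n,n}$ acts linearly on $\fr_n$, it preserves the grading of $\mathbf{S}(\fr_n)$, so it suffices to generate each graded piece $\mathbf{S}^k(\fr_n)$. Here I would apply the Cauchy formula to obtain the $\fg_{n,n}\times(GL(P)\times GL(Q))$‑decomposition $\mathbf{S}^k(\bar V_n\otimes P)=\bigoplus_\lambda \mathbb{S}_\lambda(\bar V_n)\otimes\mathbb{S}_\lambda(P)$ (and the analogous bigraded formula with the conatural factor for $\mathfrak{sl}(\infty)$), the sum running over partitions with at most $\dim P=n$ parts, since $\mathbb{S}_\lambda(P)=0$ once $\lambda$ has more than $n$ parts. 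The same formula with $\bar u$ in place of $\bar V_n$ computes $\mathbf{S}^k(\fu)$, and the inclusion $\fu\subseteq\fr_n$ respects these decompositions. The summands $\mathbb{S}_\lambda(\bar V_n)\otimes\mathbb{S}_\mu((\bar V_n)_*)\otimes(\text{mult})$ are $\fg_{n,n}$‑stable thanks to the commuting multiplicity‑group action, and $U(\fg_{n,n})$ applied to a sum of subspaces is the sum of the images; hence the generation statement reduces to the single‑summand assertion $U(\fg_{n,n})\big(\mathbb{S}_\lambda(\bar u)\otimes\mathbb{S}_\mu(\bar u_*)\big)=\mathbb{S}_\lambda(\bar V_n)\otimes\mathbb{S}_\mu((\bar V_n)_*)$ for all $\lambda,\mu$ with at most $n$ parts (with $\mu=\emptyset$ in the orthogonal and symplectic cases).

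It remains to choose $\bar u$ (and $\bar u_*$) once and for all so that this holds simultaneously for every such $\lambda,\mu$. In the pure natural case for $\mathfrak{sl}(\infty)$ this is immediate: $\mathbb{S}_\lambda(\bar V_n)$ is the simple tensor module $V_{\lambda,\emptyset}$, so any nonzero vector generates it and one only needs $\mathbb{S}_\lambda(\bar u)\neq 0$, i.e. $\dim\bar u\ge n$. In general I would use that each module $\mathbb{S}_\lambda(\bar V_n)\otimes\mathbb{S}_\mu((\bar V_n)_*)$ lies in $\tilde{\mathbb{T}}_{\fg_{n,n}}$ and has finite length, with all simple constituents being tensor modules whose defining partitions again have at most $n$ parts (they arise from $\lambda,\mu$ by the contractions coming from the invariant form, respectively from the pairing between $\bar V_n$ and $(\bar V_n)_*$). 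The highest weight vectors of all these constituents are therefore supported on a fixed finite set of basis vectors, and the contractions needed to reach the cosocle are realized by the corresponding dual pairs. Taking $\bar u$ (and $\bar u_*$) to be the span of the first $n$ mutually dual pairs of basis vectors, the subspace $\mathbb{S}_\lambda(\bar u)\otimes\mathbb{S}_\mu(\bar u_*)$ surjects onto the cosocle of $\mathbb{S}_\lambda(\bar V_n)\otimes\mathbb{S}_\mu((\bar V_n)_*)$ and hence, by finite length, generates it; this $\bar u$ is independent of $\lambda,\mu$ precisely because of the uniform bound that $\lambda,\mu$ have at most $n$ parts.

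The crux, and the only subtle point, is this last uniformity. The naive guess that a single highest weight vector of $\mathbb{S}_\lambda(\bar V_n)$ already generates fails, because these modules are not simple for $\mathfrak{o}(\infty),\mathfrak{sp}(\infty)$ (for instance $\mathbf{\Lambda}^2(V)\twoheadrightarrow\mathbb{C}$ via the symplectic form) and the mixed modules are not simple for $\mathfrak{sl}(\infty)$ (for instance $V\otimes V_*\twoheadrightarrow\mathbb{C}$ via the pairing); in each case the cosocle is a contracted piece onto which a highest weight vector maps to zero. The point to get right is that all the contractions needed to reach the cosocle can be performed inside a fixed finite‑dimensional $\bar u$: a single dual pair, used with arbitrary multiplicities, realizes arbitrarily many contractions, while the residual uncontracted partition still has at most $n$ parts. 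This is exactly what makes one finite‑dimensional $\fu$ suffice for the infinitely many summands at once, and it completes the construction.
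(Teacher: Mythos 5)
Your proposal follows the same skeleton as the paper's proof: decompose ${\bf S}(\fr_n)$ by the Cauchy formula into summands $\mathbb S_\lambda((\bar V_n)_*)\otimes\mathbb S_\mu(\bar V_n)$ (tensored with multiplicity spaces), and reduce the lemma to the claim that each summand is generated over $\fg_{n,n}$ by the Schur functors applied to fixed finite-dimensional subspaces $\bar u\subset \bar V_n$, $\bar u_*\subset(\bar V_n)_*$. Your treatment of $\operatorname{ad}(\fm_n^{\fg_{n,n}})$-stability is actually cleaner than the paper's: the paper simply enlarges a generating $\fu$ to a finite-dimensional $\operatorname{ad}$-stable subspace, using that $\fm_n^{\fg_{n,n}}$ is finite dimensional and acts locally finitely on $\fr_n$, whereas your Schur-lemma observation that $\operatorname{ad}(\fm_n^{\fg_{n,n}})$ acts only through the multiplicity spaces makes any subspace of the form $(\bar u\otimes P)\oplus(\bar u_*\otimes Q)$ automatically stable. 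That part, and the reduction to the single-summand statement, are correct.

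The gap is exactly at the step you call the crux. The paper does not prove the per-summand generation statement; it cites Lemma 4.1 of \cite{DPS}, which asserts precisely that $\mathbb S_\lambda((\bar V_n)_*)\otimes\mathbb S_\mu(\bar V_n)$ is generated by $\mathbb S_\lambda(Z'_n)\otimes\mathbb S_\mu(Z_n)$ for suitable $n$-dimensional subspaces. Your substitute --- the image of $\mathbb S_\lambda(\bar u)\otimes\mathbb S_\mu(\bar u_*)$ generates the cosocle, hence by finite length (a Nakayama-type argument, which is valid) it generates the whole module --- has the right shape, but the cosocle claim is asserted rather than proved, and the heuristic ``the contractions needed to reach the cosocle are realized by the dual pairs'' does not suffice. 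The reason is that isotypic components of the cosocle can occur with multiplicity greater than one: for instance, for $\lambda=(3,2,1)$, $\mu=(2,1)$ the simple tensor module with parameters $((2,1),\emptyset)$ occurs in the cosocle of $\mathbb S_{(3,2,1)}(\bar V_n)\otimes\mathbb S_{(2,1)}((\bar V_n)_*)$ with multiplicity $c^{(3,2,1)}_{(2,1)\,(2,1)}=2$, since $\operatorname{Hom}$-spaces into simple tensor modules are computed by Littlewood--Richardson coefficients. In such a situation a subspace can have nonzero projection onto every simple summand of the cosocle (e.g.\ a diagonal copy inside the multiplicity-two isotypic component) and still generate a proper submodule. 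What must actually be shown is that no nonzero linear combination of the independent contraction homomorphisms out of the summand vanishes identically on $\mathbb S_\lambda(\bar u)\otimes\mathbb S_\mu(\bar u_*)$ --- a linear-independence-after-restriction statement relying on the nondegeneracy of the pairing between $\bar u$ and $\bar u_*$ and on the bound on the number of parts. This is precisely the content of the cited lemma. So either invoke Lemma 4.1 of \cite{DPS}, as the paper does, after which your write-up is complete, or supply this analysis of restricted contractions; as written, the argument is incomplete at its central point.
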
 
\begin{proof} Let $\fg=\mathfrak{sl}(\infty)$. Then
$${\bf S}(\fr_{n})= {\bf S}((\bar{V}_n)_*^{\oplus n}\oplus \bar{V}_n^{\oplus n})=\bigoplus_{\lambda, \mu} (\mathbb{S}_\lambda((\bar{V}_n)_*)\otimes \mathbb{S}_{\mu}(\bar{V}_n))^{\oplus c(\lambda,\mu)}$$
for some $c(\lambda,\mu)\in\mathbb{Z}_{\geq 0}$, where the summation is taken over all partitions $\lambda,\mu$ with at most $n$ parts. Recall that, by Lemma 4.1(a) in \cite{DPS}, the $\fg$-module $\mathbb S_\lambda((\bar{V}_n)_*)\otimes \mathbb S_\mu(\bar{V}_n)$ is generated by $\mathbb S_\lambda(Z'_n)\otimes \mathbb S_\mu(Z_n)$ for some $n$-dimensional subspaces $Z'_n \subset (\bar{V}_n)_*$ and $Z_n \subset \bar{V}_n$.
Therefore, ${\bf S}(\mathfrak r_n)$ is also generated by ${\bf S}(\mathfrak{u})$ for some finite-dimensional space $\mathfrak{u}\subset \fr_n$. As $\fm_n^{\fg_{n,n}}$ is finite dimensional and its elements act locally finitely on $\fr_n$, the subspace $\fu$ can clearly be chosen ad$(\fm_n^{\fg_{n,n}})$-stable.

In the orthogonal and symplectic case we have the decomposition
$${\bf S}(\fr_{n})= {\bf S}(\bar{V}_n^{\oplus n})=\bigoplus_{\lambda} (\mathbb S_\lambda(\bar{V}_n))^{\oplus c(\lambda)},$$
for some $c(\lambda)\in\mathbb{Z}_{\geq 0}$, where $\lambda$ runs over all partitions with at most $n$ parts. Here, application of Lemma 4.1(b) from \cite{DPS} leads to the result.
\end{proof}

By $U(\cdot)$ we denote as usual the enveloping algebra of a Lie algebra, and $U^k(\cdot)$ stands for the k-th term of the PBW filtration on $U(\cdot)$.

\begin{proposition}\label{glnil} Let $M\in\mathcal{OLA}$ and   $0\neq v\in M$ satisfy $\fg_{n,n}v=0$.
  
(a) There exists $m\in\mathbb Z_{>0}$ such that $\fm_n^mv=0$.
  
(b) $(U(\fm_{n})v)^{\fm_n}\neq 0$.
\end{proposition}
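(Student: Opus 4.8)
The plan is to obtain (b) cheaply from (a) and to concentrate all the work in (a). Granting (a), let $m\geq 1$ be minimal with $\fm_n^m v=0$ (with the convention $\fm_n^0v=\mathbb Cv$); such $m$ exists and $\fm_n^{m-1}v\neq 0$. Then $\fm_n\cdot(\fm_n^{m-1}v)=\fm_n^m v=0$, so $0\neq\fm_n^{m-1}v\subseteq (U(\fm_n)v)^{\fm_n}$, which is exactly (b). Thus everything reduces to (a). By $\fh$-semisimplicity (condition (ii)) and the fact that each $\fh$-weight component of $v$ is again annihilated by $\fg_{n,n}$, I may assume $v$ is a weight vector of weight $\lambda$.

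First I would package the finite-dimensional data. Using Lemma \ref{fg}, fix the finite-dimensional $\operatorname{ad}(\fm_n^{\fg_{n,n}})$-stable subspace $\fu\subset\fr_n$; its degree-one consequence is $\fr_n=U(\fg_{n,n})\cdot\fu$ (adjoint action). Put $\fq_0:=\fu\oplus\fm_n^{\fg_{n,n}}$. Since $\fm_n$ is two-step nilpotent with $[\fr_n,\fr_n]\subseteq\fm_n^{\fg_{n,n}}$ and $[\fm_n^{\fg_{n,n}},\fu]\subseteq\fu$, the space $\fq_0$ is a finite-dimensional nilpotent subalgebra of $\fm_n$ (this closure is a routine case check in types $A$, $C$, $D$). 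Every element of $\fq_0$ lies in $\fn$ and so acts locally nilpotently on $M$ by (iii); hence, by the standard fact that a cyclic module over a finite-dimensional nilpotent Lie algebra acting by locally nilpotent operators is finite-dimensional, $W:=U(\fq_0)v$ is finite-dimensional. Setting $k_1:=\dim W$, Engel's theorem gives $(\fm_n^{\fg_{n,n}})^{k_1}W=0$ on the finite-dimensional $\fq_0$-stable space $W$.

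Next I would identify the ambient module and grade it. Set $N:=U(\fm_n)v$. As $\fg_{n,n}v=0$ and $\fg_{n,n}$ normalizes $\fm_n$, the subspace $N$ is a $\fg_{n,n}$-submodule of $M$; combining $\fm_n=U(\fg_{n,n})\cdot\fq_0$ with $\fg_{n,n}v=0$ yields, by induction on PBW degree, the generation $N=U(\fg_{n,n})\cdot W$. Now define a $\mathbb Z$-linear function $d$ on $\langle\varepsilon_i\rangle_{\mathbb Z}$ by $d(\varepsilon_i)=+1$ for $\varepsilon_i\in S_{min}(n)$, $d(\varepsilon_i)=-1$ for $\varepsilon_i\in S_{max}(n)$, and $d(\varepsilon_i)=0$ otherwise. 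Then $d$ vanishes on $\Delta(\fg_{n,n})$, equals $1$ on the roots of $\fr_n$, and takes values in $\{0,2\}$ on the roots of $\fm_n^{\fg_{n,n}}$, so $\fm_n=\fm_n^{(0)}\oplus\fr_n\oplus\fm_n^{(2)}$ is a Lie-algebra grading concentrated in degrees $0,1,2$, where $\fm_n^{(0)}$ is the span of the $d$-degree-zero roots of $\fm_n^{\fg_{n,n}}$. Since $d$ is $\fg_{n,n}$-invariant, the $d$-weights of $N=U(\fg_{n,n})W$ are bounded above by $D_0:=\max_{\mu\in\operatorname{supp}W}d(\mu)-d(\lambda)$. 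Moreover $\fm_n^{(0)}\subseteq\fm_n^{\fg_{n,n}}$ commutes with $\fg_{n,n}$ (by definition of invariants $[\fg_{n,n},\fm_n^{\fg_{n,n}}]=0$), whence $(\fm_n^{(0)})^{k_1}N=U(\fg_{n,n})(\fm_n^{(0)})^{k_1}W=0$.

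Finally, the heart of (a): I would show $\fm_n^m v=0$ whenever $m>D_0+k_1(D_0+1)$. Take $y_1,\dots,y_m\in\fm_n$, which may be assumed $d$-homogeneous. If $\sum_i d(y_i)>D_0$, then $y_1\cdots y_m v$ has $d$-weight exceeding $d(\lambda)+D_0$ and, lying in $N$, vanishes. Otherwise at most $D_0$ factors have positive $d$-degree, so at least $m-D_0$ factors lie in $\fm_n^{(0)}$; these occupy at most $D_0+1$ runs between the positive-degree factors, and the bound $m-D_0>k_1(D_0+1)$ forces some run of more than $k_1$ consecutive $\fm_n^{(0)}$-factors. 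That run acts on an element of $U(\fm_n)v=N$, and $(\fm_n^{(0)})^{k_1}N=0$ makes the whole product vanish. Hence $\fm_n^m v=0$. The main obstacle is exactly this uniform bound: condition (iii) supplies only local nilpotence of each single element with no common exponent, and $\fr_n$ is infinite-dimensional, so no direct Engel argument applies. The resolution combines two ingredients—the weight bound from finite-dimensionality of $W$ (ultimately Lemma \ref{fg}), which caps how many positive-$d$-degree factors a nonzero product can contain, and the fact that the residual degree-zero part $\fm_n^{(0)}$ is finite-dimensional, commutes with $\fg_{n,n}$, and hence acts nilpotently on all of $N$—glued together by the pigeonhole count above.
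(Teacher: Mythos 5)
Your reduction of (b) to (a) and the skeleton of your proof of (a) --- the subalgebra $\fq_0=\fu\oplus\fm_n^{\fg_{n,n}}$ extracted from Lemma \ref{fg}, the finite-dimensionality of $W=U(\fq_0)v$, and the passage from $W$ to $N=U(\fm_n)v$ via the $\fg_{n,n}$-action --- coincide with the paper's proof (there $\fq_0$ is called $\fa$), and your concluding grading-plus-pigeonhole count is a valid, more explicit substitute for the paper's stabilization claim $U^k(\fm_n)v=U(\fm_n)v$. However, there is a genuine gap at the one step on which everything else rests: the generation statement $N=U(\fg_{n,n})\cdot W$. You justify it using only the degree-one consequence $\fr_n=U(\fg_{n,n})\cdot\fu$ of Lemma \ref{fg} together with an ``induction on PBW degree,'' and that induction does not close. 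To see the problem, note that proving $U(\fg_{n,n})W$ to be $\fm_n$-stable requires $\fm_n W\subseteq U(\fg_{n,n})W$; writing $y\in\fm_n$ as a combination of elements $\operatorname{ad}(g)(q)$ with $g\in U(\fg_{n,n})$, $q\in\fq_0$, and using $\fg_{n,n}v=0$ to move the $\fg_{n,n}$-factors across a monomial $u_1\cdots u_rv\in W$, one arrives at terms of the form $g'\,q\,\operatorname{ad}(h)(u_1\cdots u_r)\,v$, in which $\operatorname{ad}(h)(u_1\cdots u_r)$ is a product of $r$ \emph{arbitrary} elements of $\fr_n$, no longer a monomial in $\fq_0$. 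Returning such terms to the form $U(\fg_{n,n})U(\fq_0)v$ is exactly the statement being proved, for monomials whose length $r+1$ is not controlled by your induction variable, so the argument is circular.

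This cannot be repaired by bookkeeping alone, because degree-one generation genuinely does not imply generation in higher symmetric degrees: for $\fg'=\mathfrak{sl}(2)$ acting on $\fr=V\oplus V$, where $V$ is the two-dimensional simple module with highest weight vector $e$, the subspace $\fu=\langle (e,0),(0,e)\rangle_{\mathbb C}$ satisfies $\fr=U(\fg')\cdot\fu$, yet the $\fg'$-submodule of ${\bf S}^2(\fr)$ generated by ${\bf S}^2(\fu)$ misses the invariant line ${\bf \Lambda}^2(V)\subset V\otimes V\subset{\bf S}^2(\fr)$. This is precisely why Lemma \ref{fg} asserts that ${\bf S}(\fu)$ generates ${\bf S}(\fr_n)$ in \emph{all} degrees (a nontrivial fact resting on Lemma 4.1 of \cite{DPS}), and why the paper uses that full statement: by PBW symmetrization it yields that $U(\fq_0)$ generates $U(\fm_n)$ as an adjoint $\fg_{n,n}$-module, and then the identity $\operatorname{ad}(g)(a)v=g\,a\,v$ for $g\in U(\fg_{n,n})$, $a\in U(\fq_0)$ (valid since $\fg_{n,n}v=0$) gives $N=U(\fg_{n,n})U(\fq_0)v=U(\fg_{n,n})W$ in one stroke. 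Once you replace your induction by this argument, the rest of your proof --- the $d$-grading, the Engel bound $(\fm_n^{(0)})^{k_1}N=0$, and the final count --- is correct. A further, harmless, inaccuracy: $\fm_n$ is not two-step nilpotent for $n\geq 3$, since $\fm_n^{(0)}$ contains a copy of the strictly upper-triangular $n\times n$ matrices; but you only need that $\fq_0$ is a finite-dimensional nilpotent subalgebra, which does hold.
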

\begin{proof} Let us prove (a). Since every element of $\fm_n$ acts locally nilpotently on $M$, it suffices to check that
  $U^k(\fm_n)v=U(\fm_n)v$ for sufficiently large $k$. Then $m$ can be chosen as $k+1$. Note that $\fm_n$ is  ad$(\fg_{n,n})$-stable, therefore $U^k(\fm_n)v$ is also ad$(\fg_{n,n})$-stable. Choose $\mathfrak{u}\subset\fr_n$ as in Lemma \ref{fg} and set $\mathfrak{a}:=\mathfrak{u}{\clplus} \fm_n^{\fg_{n,n}}$.
  Since $\mathfrak{a}$ is a nilpotent finite-dimensional Lie algebra we have  $U^k(\mathfrak{a})v=U(\mathfrak{a})v$ for sufficiently large $k$. On the other hand, $U^k(\mathfrak{a})$ 
  (respectively, $U(\mathfrak{a})$) generates  $U^k(\fm_n)$  (respectively, $U(\fm_n)$) as an adjoint $\fg_{n,n}$-module.  This implies that the $\fg_{n,n}$-submodules of $U(\fg)v$ generated respectively by $U^k(a)v$ and $U(a)v$ coincide. {As} these modules equal respectively
  $U^k(\fm_n)v$ and  $U(\fm_n)v$, we obtain  $U^k(\fm_n)v=U(\fm_n)v$.
 
(b) follows immediately from (a).
  \end{proof}

\begin{theorem}\label{simple} Let $L\in\mathcal{OLA}$ be a simple object. Then there exist $n\in\mathbb Z_{\geq 0}$ and
a weight $\lambda\in \fh^*$, such that $\lambda|_{\fh\cap \fg_{n,n}}=0$ and $L$ is isomorphic to the unique simple quotient of the induced module 
$\operatorname{Ind}^{\mathfrak g}_{\mathfrak p_n} \mathbb C_\lambda$. In particular, $L$ is a highest weight module with highest weight $\lambda$, and we denote it
by $L(\lambda)$.
\end{theorem}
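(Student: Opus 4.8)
The plan is to first show that a simple object $L\in\mathcal{OLA}$ contains a nonzero vector killed by $\fg_{n,n}$ for some $n$, and then that such a vector can be refined to a genuine highest weight vector for $\fb$. For the first step I would invoke Lemma \ref{fk}(a): since $L$ is simple it is finitely generated (being generated by any nonzero vector), so there exist $a,b$ with $\fg_{a,b}\subset\fg[L]$. After enlarging to $n=\max(a,b)$ we get $\fg_{n,n}\subset\fg[L]$, so $\fg_{n,n}$ acts locally finitely on $L$. Because $\fg_{n,n}\cong\fg$ is a direct limit of finite-dimensional simple Lie algebras and $L$ satisfies the large annihilator condition, the restriction of $L$ to $\fg_{n,n}$ is an integrable module lying in (the inductive completion of) $\mathbb{T}_{\fg_{n,n}}$; every nonzero module in that category has a nonzero $\fg_{n,n}$-invariant vector. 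This produces $0\neq v\in L$ with $\fg_{n,n}v=0$.

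The next step is to pass from $\fg_{n,n}$-invariance to $\fn$-invariance, i.e.\ to a highest weight vector. Here I would apply Proposition \ref{glnil}(b) to $v$: the space $(U(\fm_n)v)^{\fm_n}$ is nonzero, so choose $0\neq v'\in(U(\fm_n)v)^{\fm_n}$. By construction $\fm_n v'=0$, and since $v'$ lies in $U(\fm_n)v$ while $\fg_{n,n}$ commutes with $\fm_n$ modulo lower-order terms, one checks that $v'$ remains $\fg_{n,n}$-invariant (or, if necessary, one extracts a $\fg_{n,n}$-invariant vector from the $\fg_{n,n}$-module $(U(\fm_n)v)^{\fm_n}$, again using that it lies in the integrable category $\tilde{\mathbb{T}}_{\fg_{n,n}}$). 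Now decompose $\fn=\fm_n\oplus(\fn\cap\fg_{n,n})$; since both $\fm_n v'=0$ and $(\fn\cap\fg_{n,n})v'=0$, we obtain $\fn v'=0$. After replacing $v'$ by an $\fh$-weight component (possible by condition (ii), $\fh$-semisimplicity) we get a genuine $\fb$-highest weight vector of some weight $\lambda$ with $\lambda|_{\fh\cap\fg_{n,n}}=0$.

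Finally I would translate this highest weight vector into the stated universal property. Since $\lambda|_{\fh\cap\fg_{n,n}}=0$ and $\fg_{n,n}v'=0$, the vector $v'$ generates a $\fp_n$-submodule on which $\fl_n=\fh+\fg_{n,n}$ acts through the one-dimensional module $\mathbb{C}_\lambda$ and $\fm_n$ acts by zero; by Frobenius reciprocity the inclusion $\mathbb{C}_\lambda\hookrightarrow L$ as $\fp_n$-modules induces a nonzero $\fg$-homomorphism $\operatorname{Ind}^{\fg}_{\fp_n}\mathbb{C}_\lambda\to L$. This map is surjective because $L$ is simple and $v'\neq 0$, so $L$ is a quotient of $\operatorname{Ind}^{\fg}_{\fp_n}\mathbb{C}_\lambda$; being simple, it is necessarily \emph{the} unique simple quotient (the induced module has a unique maximal submodule since its $\lambda$-weight space is one-dimensional and any proper submodule misses the top weight). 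In particular $L=L(\lambda)$ is a highest weight module of highest weight $\lambda$.

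I expect the main obstacle to be the second step: ensuring that the $\fm_n$-invariant vector produced by Proposition \ref{glnil}(b) can simultaneously be taken $\fg_{n,n}$-invariant and an $\fh$-weight vector. The delicate point is that $U(\fm_n)v$ need not be a trivial $\fg_{n,n}$-module, so one must argue within the integrable category $\tilde{\mathbb{T}}_{\fg_{n,n}}$ that $(U(\fm_n)v)^{\fm_n}$, which is $\fg_{n,n}$-stable since $\fm_n$ is an ideal in $\fp_n=\fl_n\crplus\fm_n$, again has a nonzero $\fg_{n,n}$-invariant vector, and that $\fh$-semisimplicity then lets us isolate a single weight. Getting the compatibility of these three conditions cleanly, rather than the existence of each separately, is where the real care is needed.
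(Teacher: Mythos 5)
Your argument leans twice on the claim that every nonzero object of $\tilde{\mathbb{T}}_{\fg_{n,n}}$ has a nonzero $\fg_{n,n}$-invariant vector, and that claim is false: the natural $\fg_{n,n}$-module is a counterexample, and for $\mathfrak{sl}(\infty)$ so is $V\otimes V_*$ --- indeed the paper's own introduction emphasizes that $\mathfrak{gl}(\infty)$ has no nonzero $\mathfrak{sl}(\infty)$-invariants. In your first step this error is harmless only because the conclusion you want follows immediately from the large annihilator condition itself: by (\ref{anncond}), any nonzero $v\in L$ satisfies $\fg^c_J v=0$ for some cofinite $J$, hence $\fg_{k,k}v=0$ for some $k$. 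This one-line observation is exactly how the paper starts, and it makes your detour through Lemma \ref{fk}(a), integrability, and $\tilde{\mathbb{T}}_{\fg_{n,n}}$ unnecessary. In your second step, however, the error is fatal: $(U(\fm_n)v)^{\fm_n}$ is indeed an $\fl_n$-stable subspace lying in $\tilde{\mathbb{T}}_{\fg_{n,n}}$ as a $\fg_{n,n}$-module, but in general it contains no nonzero $\fg_{n,n}$-invariant vector at all; and your alternative suggestion that $v'$ is ``automatically'' $\fg_{n,n}$-invariant because $\fg_{n,n}$ normalizes $\fm_n$ confuses stability of the subspace under $\fg_{n,n}$ with triviality of the $\fg_{n,n}$-action on it.

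What the theory of $\mathbb{T}_{\fg_{n,n}}$ actually supplies --- and what the paper uses --- is highest weight vectors, not invariant vectors, and this forces a two-step structure that your proposal collapses. The paper argues: since $L$ is simple, $L^{\fm_k}$ is a simple $\fl_k$-module; it is integrable over $\fg_{k,k}$ and satisfies the large annihilator condition, so Theorem 4.2 of \cite{DPS} produces a $\fb\cap\fg_{k,k}$-highest weight vector $u\in L^{\fm_k}$; the decomposition $\fn=\fm_k\clplus(\fn\cap\fg_{k,k})$ then gives $\fn u=0$. At this stage $u$ is a weight vector killed by $\fn$, but there is no reason for it to be killed by $\fg_{k,k}$; annihilation by a subalgebra of the form $\fg_{n,n}$ is achieved only by \emph{enlarging the index}, applying the large annihilator condition a second time, now to the specific vector $u$, to obtain $n\geq k$ with $\fg_{n,n}u=0$. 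Only then is $\mathbb{C}u$ a one-dimensional $\fp_n$-module, and Frobenius reciprocity applies (your final step, including the unique-simple-quotient argument via the one-dimensional $\lambda$-weight space, is correct). Your plan insists on producing a vector killed by $\fm_n$ and by $\fg_{n,n}$ for the \emph{same} $n$ in a single stroke, which is precisely what cannot be arranged; repairing it --- extract a simple subobject, take its highest weight vector via \cite{DPS}, then enlarge $n$ --- essentially reproduces the paper's proof.
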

\begin{proof} The large annihilator condition ensures that for any $v\in L$ we have $\fg_{k,k}v=0$ for some $k\in \mathbb{Z}_{\geq 0}$. Therefore Proposition \ref{glnil}(b) implies $L^{\fm_k}\neq 0$ for some $k$.
Since $L$ is simple,  $L^{\fm_k}$ is a simple $\fl_k$-module. Moreover, as a $\mathfrak{g}_{k,k}$-module,  $L^{\fm_k}$ is integrable and satisfies the large annihilator 
condition. Hence, by Theorem 4.2 in \cite{DPS}, $L^{\fm_k}$ has a highest weight vector $u$ with respect to the Borel 
subalgebra $\fb\cap\fg_{k,k}$. Since $\fn = \fm_k \clplus (\fn \cap \fg_{k,k})$, we obtain $\fn u=0$. Denote by $\lambda$ the weight of $u$. 
By the large annihilator condition, there exists $n\geq k$ such that $\fg_{n,n} u=0$. This implies that $\mathbb C u$ is a one-dimensional
$\fp_n$-module isomorphic to $\mathbb C_\lambda$.
Then by Frobenius reciprocity $L$ is isomorphic a quotient of $\operatorname{Ind}^{\mathfrak g}_{\mathfrak p_n} \mathbb C_\lambda$. 
\end{proof}

In what follows, we call a weight $\lambda\in \fh^*$ \emph{eligible} if $\lambda|_{\fh\cap\fg_{n,n}}=0$ for some $n \in \mathbb{Z}_{\geq 0}$. The set of eligible weights coincides with the subspace $\langle \tilde I\rangle_\mathbb C\subset \fh^*$. Note that for $\fg=\mathfrak{sl}(\infty)$ an eligible weight $\lambda$ has the form $\lambda^L+\lambda^R$ for  uniquely determined eligible weights $\lambda^L:=\sum_{i\in \mathbb Z_{>0}}\lambda_i\varepsilon_i$ and $\lambda^R:=\sum_{i\in \mathbb{Z}_{<0}}\lambda_i\varepsilon_i$ (recall that in this case $S_{min}=\{\varepsilon_i|i\in \mathbb{Z}_{>0}\}$, $S_{max}=\{\varepsilon_i|i\in \mathbb{Z}_{<0}\}$ ). Furthermore, Theorem 4.3 claims that any simple object of $\mathcal{OLA}$ is a $\fb$-highest weight module with an eligible highest weight.

A weight $\lambda$ is $\fb$-\emph{dominant} if $2\frac{(\lambda,\alpha)}{(\alpha,\alpha)}\in \mathbb{Z}_{\geq0}$ for all $\alpha\in\Delta^+ $.
We observe that for $\fg=\mathfrak{sl}(\infty)$  an eligible weight $\lambda$ is $\fb$-dominant iff $\lambda^1:=(\lambda^L_1\geq\dots\geq\lambda^L_l\geq\dots)$ and $\lambda^2:=(-\lambda^R_{-1}\geq\dots\geq -\lambda^R_{-r}\geq\dots)$ are partitions where $\lambda_i\in\mathbb Z_{\geq 0}$ for $i\in \mathbb Z_{>0}$,
$\lambda_i\in\mathbb Z_{\leq 0}$ for $i\in \mathbb Z_{<0}$.
For $\fg=\mathfrak{o}(\infty)$, $\mathfrak{sp}(\infty)$ an eligible weight $\lambda$ is $\fb$-dominant   iff 
$\lambda=(\lambda_1,\dots,\lambda_k,\dots)$ is itself a partition.
In \cite{DPS} the simple modules of the category $\mathbb{T}_{\fg}$  are parametrized as $V_{(\lambda^1,\lambda^2)}$ for $\fg=\mathfrak{sl}(\infty)$, and as 
$V_{\lambda}$ for $\fg = \mathfrak{o}(\infty), \mathfrak{sp}(\infty)$, where $\lambda^1, \lambda^2, \lambda$ are partitions. As we pointed out in the introduction, $\mathbb{T}_{\fg}$ is a full subcategory of $\mathcal{OLA}$, and the simple modules $V_{(\lambda^1,\lambda^2)}$ are denoted in the present paper as $L(\lambda)$ where $\lambda=(\lambda^1, \lambda^2)$ for $\fg=\mathfrak{sl}(\infty)$, and where  $\lambda$ is considered both as an eligible weight and as a partition for $\fg=\mathfrak{o}(\infty), \mathfrak{sp}(\infty)$. 

\subsection{Parabolically induced modules $\operatorname{Ind}^{\mathfrak g}_{\mathfrak p_n} \mathbb C_\lambda$ }
For an eligible weight $\lambda$, we set
$$M_n(\lambda):=\operatorname{Ind}^{\mathfrak g}_{\mathfrak p_n} \mathbb C_\lambda,$$
where we always assume that $n$ is large enough to ensure that $\mathbb C_\lambda$ is a trivial $\fg_{n,n}$-module.
\begin{lemma}\label{lem:intquo} A nonzero integrable quotient of  $M_n(\lambda)$ is  simple.
\end{lemma}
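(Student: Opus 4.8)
Let $N$ be a nonzero integrable quotient of $M_n(\lambda)=\operatorname{Ind}^{\fg}_{\fp_n}\mathbb C_\lambda$, and let $\bar v\in N$ be the image of the canonical generator, so that $\fn\bar v=0$, $\fg_{n,n}\bar v=0$, $\bar v$ has weight $\lambda$, and $N=U(\bar\fm_n)\bar v$. As in Lemma \ref{fk}, $N\in\mathcal{OLA}$: it is $\fh$-semisimple, $\fn$ acts locally nilpotently by integrability, and the large annihilator condition follows from $\fg_{n,n}\bar v=0$ together with the local finiteness of $\operatorname{ad}\fg$ on $U(\fg)$. Integrability forces $\lambda$ to be $\fb$-dominant (apply $\mathfrak{sl}(2)_\alpha$-integrability to each simple $\alpha$), so by Theorem \ref{simple} the unique simple quotient of $N$ is $L(\lambda)\in\mathbb T_{\fg}$. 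Since the roots of $\bar\fm_n$ are strictly negative, $N_\lambda=\mathbb C\bar v$ is one-dimensional; hence $N$, being cyclic on $\bar v$, is simple as soon as every nonzero submodule $N'\subset N$ contains $\bar v$. This is what I would prove.

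\textbf{Producing a highest weight vector in $N'$.} Fix $0\neq w\in N'$. By the large annihilator condition there is $k\geq n$ with $\fg_{k,k}w=0$, and Proposition \ref{glnil}(b) yields a nonzero vector in $(U(\fm_k)w)^{\fm_k}\subset N'$. The space $(U(\fm_k)w)^{\fm_k}$ is a $\fg_{k,k}$-submodule (as $\fg_{k,k}$ normalizes $\fm_k$) that is integrable and satisfies the large annihilator condition over $\fg_{k,k}\simeq\fg$, so by Theorem 4.2 in \cite{DPS} it contains a $\fg_{k,k}$-highest weight vector $u$. Using $\fn=\fm_k\clplus(\fn\cap\fg_{k,k})$ we get $\fn u=0$, so $u\in N'$ is a genuine $\fb$-highest weight vector; enlarging $k$ we may assume its weight $\nu$ is eligible, and writing $u=P\bar v$ with $P\in U(\bar\fm_n)$ we see $\nu=\lambda-\eta$ with $\eta$ a nonnegative integral combination of roots of $\fm_n$.

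\textbf{The crux: $\nu=\lambda$.} Ruling out $\eta\neq 0$ is the heart of the matter. Classically one argues that an integrable highest weight module is finite-dimensional, hence semisimple by Weyl's theorem, so that cyclicity on the top weight vector forbids any lower highest weight vector; here $N$ is no longer semisimple as a $\fg$-module, and, crucially, there is no Casimir element and no $\rho$ for $\fg$, so one cannot separate $L(\nu)$ from $L(\lambda)$ by an infinitesimal character. The plan is to confine the argument to the finite-dimensional reductive algebra $\fk_n$ centralizing $\fg_{n,n}$, over which $N$ \emph{is} semisimple: since $\lambda$ is eligible it is supported on $\fh_n=\fh\cap\fk_n$, so $\bar v$ is a $\fk_n$-highest weight vector of a $\fk_n$-dominant weight. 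I would then use Lemma \ref{fg}, which presents $\mathbf{S}(\fr_n)$ as generated over $\fg_{n,n}$ by the symmetric algebra of a finite-dimensional $\operatorname{ad}(\fm_n^{\fg_{n,n}})$-stable subspace, to reduce the infinitely many conditions $\fn u=0$ to finitely many relations inside $\fk_n$; the finite-dimensional semisimplicity then forces the $\fm_n$-invariant $\fb$-highest weight vector $u$ to lie in $\mathbb C\bar v$, whence $\nu=\lambda$ and $\bar v\in N'$. Granting this, $N'=N$ and $N$ is simple. I expect this finitization step, rather than the bookkeeping around it, to be the main obstacle.
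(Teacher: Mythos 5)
Your setup and the production of a $\fb$-highest weight vector $u$ inside an arbitrary nonzero submodule $N'\subset N$ are correct (they essentially reproduce the argument of Theorem \ref{simple}). But the proof stops exactly at the point you yourself call the crux: you never prove that the weight $\nu$ of $u$ equals $\lambda$. ``The plan is \dots'', ``Granting this \dots'' is a declaration of intent, not an argument, and the plan as stated does not work. First, semisimplicity of $N$ over the finite-dimensional algebra $\fk_n$ cannot by itself force all $\fb$-highest weight vectors to be proportional to $\bar v$: the module $L(\lambda)\oplus L(\nu)$ is integrable, satisfies the large annihilator condition, is $\fk_n$-semisimple, and has $\fb$-highest weight vectors of two distinct weights. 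Any correct argument must use that $N$ is cyclic over $U(\bar\fm_n)$, i.e.\ that $N$ is a quotient of $M_n(\lambda)$, and your plan never explains how this enters. Second, the fixed subalgebra $\fk_n$ is too small to see $u$: writing $u=P\bar v$ with $P\in U(\bar\fm_n)$, the root vectors occurring in $P$ need not lie in $\fk_n$ (only $\bar\fm_n^{\fg_{n,n}}$ lies in $\fk_n$, while $\bar\fr_n\cap\fk_n=0$), so $u$ need not lie in any $\fk_n$-submodule generated by $\bar v$; and Lemma \ref{fg} --- a statement about generation of ${\bf S}(\fr_n)$ as a $\fg_{n,n}$-module --- does not perform the ``finitization'' of the conditions $\fn u=0$ that you want.

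For comparison, the paper's proof is one step and entirely different: an integrable quotient of $M_n(\lambda)$ is a $\fb$-highest weight module lying in $\mathbb{T}_{\fg}$, hence embeds into a finite direct sum of tensor modules $V^{\otimes n_i}\otimes(V_*)^{\otimes m_i}$ (these are injective in $\mathbb{T}_{\fg}$), and the explicit socle filtrations of such tensor modules computed in \cite{PStyr}, \cite{DPS} show that every $\fb$-highest weight submodule of such a direct sum is simple. If you prefer to complete your finitization instead, the missing observation is this: choose $m\gg n$ so that all root vectors occurring in $P$ lie in $\fk_m$ (possible because $\fb$ is perfect, so $\fg=\bigcup_m\fk_m$); then $u\in U(\fk_m)\bar v$, which by integrability is a finite-dimensional $\fk_m$-module, cyclic over the vector $\bar v$ annihilated by $\fn\cap\fk_m$, and one checks that its $\lambda|_{\fh_m}$-weight space is one-dimensional. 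Weyl's theorem then gives $U(\fk_m)\bar v\simeq L_{\fk_m}(\lambda|_{\fh_m})$, whose highest weight line is unique, so $\nu|_{\fh_m}=\lambda|_{\fh_m}$; letting $m$ grow yields $\nu=\lambda$. That is a correct repair, but its mechanism (cyclicity over $\fk_m$ for large $m$) is not the one your proposal describes, so as written the proposal has a genuine gap at its central step.
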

\begin{proof} Since $\fb\subset \fp_n$, any quotient of $M_n(\lambda)$ is a $\fb$-highest weight module. An integrable quotient  of $M_n(\lambda)$ is an object $\mathbb T_{\mathfrak g}$, and is hence isomorphic to a submodule of a finite direct sum $\bigoplus_{i}V^{\otimes n_i}\otimes (V_*)^{\otimes m_i} $ for some $m,n \in \mathbb{Z}_{\geq 0}$. (In the case $\fg=\mathfrak{o}(\infty), \mathfrak{sp}(\infty)$ we assume $V=V_*$.) However, the explicit form of the socle filtration of $\bigoplus_{i}V^{\otimes n_i}\otimes (V_*)^{\otimes m_i}$, see \cite{PStyr} or \cite{DPS}, implies that a $\fb$-highest weight submodule of $\bigoplus_{i}V^{\otimes n_i}\otimes (V_*)^{\otimes m_i}$ is necessarily simple.
\end{proof}

\begin{lemma}\label{par}  The module $M_n(\lambda)$, considered as an   ${\fl_n}$-module,  has a decomposition 
$\bigoplus M_i$ such that each $M_i$ is a finite-length
$\fl_n$-module. Moreover, the Jordan-H\"older  multiplicity of every simple $\fl_n$-module in 
$M_n(\lambda)$ is finite.
\end{lemma}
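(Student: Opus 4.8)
The plan is to analyze $M_n(\lambda) = \operatorname{Ind}^{\fg}_{\fp_n}\mathbb{C}_\lambda$ as an $\fl_n$-module using the PBW theorem. Since $\fp_n = \fl_n \clplus \fm_n$, the induced module is, as an $\fl_n$-module, isomorphic to $U(\fm_n^-)\otimes \mathbb{C}_\lambda$ where $\fm_n^-$ is the nilpotent ideal of the opposite parabolic $\bar\fp_n$; equivalently $M_n(\lambda) \cong \mathbf{S}(\fm_n^-)\otimes \mathbb{C}_\lambda$ as an $\fl_n$-module via the symmetrization map. The key point is that $\fm_n^-$, as an $\fl_n = \fh + \fg_{n,n}$-module, decomposes into copies of the (co)natural $\fg_{n,n}$-modules tensored with $\fs_n$-modules, mirroring the decomposition $\fr_n \cong \bar V_n^{\oplus n}$ (or $\bar V_n^{\oplus n}\oplus(\bar V_n)_*^{\oplus n}$) recorded just above the statement.

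First I would invoke the decomposition $\mathbf{S}(\fm_n^-)$ into isotypic $\fl_n$-components. Using the Cauchy-type formula, $\mathbf{S}(\fm_n^-)$ breaks up as a direct sum of tensor products $\mathbb{S}_\mu(\bar V_n)\otimes(\text{an }\fs_n\text{-module})$ (with the appropriate left/right or dual factors in the $\mathfrak{sl}$ case), where $\mu$ ranges over partitions with at most $n$ parts. Grouping these summands by their $\fh$-weight (equivalently, by the total degree and the $\fs_n$-character) yields the claimed decomposition $M_n(\lambda) = \bigoplus_i M_i$ into $\fl_n$-submodules. I would choose the index $i$ so that each $M_i$ collects only finitely many of the isotypic pieces — for instance, fixing the $\fh$-weight up to the action of $\fg_{n,n}$, or fixing the $\fs_n$-type together with a bounded degree.

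The heart of the argument is then to show each $M_i$ has finite length as an $\fl_n$-module and that each simple $\fl_n$-module appears with finite multiplicity. Here I would use that $\fl_n = \fh + \fg_{n,n}$ with $\fg_{n,n}\cong\fg$, and that each constituent $\mathbb{S}_\mu(\bar V_n)$ is a simple object of $\mathbb{T}_{\fg_{n,n}}$ (hence of finite length over $\fg_{n,n}$), while the accompanying $\fs_n$-module factor $\mathbb{S}_\mu(Z)$ with $Z$ finite-dimensional is a finite-dimensional, hence finite-length, $\fs_n$-module. Since $\fh_n = \fh\cap\fk_n$ is a Cartan subalgebra of both $\fk_n$ and $\fs_n$, the $\fl_n$-module structure is essentially the (outer) tensor product of an $\fs_n$-action and a $\fg_{n,n}$-action, and a tensor product of two finite-length modules over commuting subalgebras is again of finite length. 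The main obstacle will be bounding the number of pairs $(\mu,\text{$\fs_n$-type})$ that contribute to a fixed $\fh$-weight: I expect this to follow because fixing the $\fh$-weight forces the total symmetric degree to be bounded, and for each degree only finitely many partitions $\mu$ with at most $n$ parts occur, each with finite multiplicity $c(\cdot)$ as in Lemma~\ref{fg}. Assembling these finiteness facts gives both the finite length of each $M_i$ and the finiteness of the overall Jordan--Hölder multiplicity of any simple $\fl_n$-module in $M_n(\lambda)$.
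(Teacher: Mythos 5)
Your proposal is correct and takes essentially the same route as the paper: identify $M_n(\lambda)\cong \mathbf{S}(\bar\fm_n)\otimes\mathbb{C}_\lambda$ as $\fl_n$-modules via PBW, decompose into $\fl_n$-submodules each contained in finitely many symmetric powers, deduce finite length of each piece from the finite length of tensor modules over $\fg_{n,n}$, and bound the degrees in which a fixed simple module can occur by a weight argument. The paper packages your ``fixing the $\fh$-weight forces the degree to be bounded'' step more slickly via a central element $z\in\fl_n$ defining a finite $\mathbb{Z}_{<0}$-grading on $\bar\fm_n$: decomposing into $\operatorname{ad}z$-eigenspaces yields submodules meeting only finitely many symmetric degrees, and since $z$ acts by a scalar on any simple $\fl_n$-module, each simple occurs in a single eigenspace, giving both claims at once.
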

\begin{proof} We have an isomorphism of $\fl_n$-modules
  $$M_n(\lambda)\simeq {\bf S}(\bar\fm_n)\otimes\mathbb C_\lambda$$
where  $\bar\fm_n$ is the nilpotent ideal such that $\bar\fp_n=\fl_n \crplus \bar\fm_n$. Let $z \in \fl_n$ be a central element which defines a finite $\mathbb Z_{<0}$-grading on $\bar\fm_n$. Consider the decomposition
$M_n(\lambda)=\bigoplus_i M_i$  into  $\text{ad}z$-eigenspaces.
Then every $M_i$ is isomorphic to a submodule in $(\oplus_{i-k<j<i+k}{\bf S}^j(\bar\fm_n))\otimes \mathbb C_\lambda$ for sufficiently large $k$.
Thus $M_i$ is a finite-length $\fl_n$-module, and the statement follows.
\end{proof}
\begin{corollary}\label{JH} There is a descending  filtration
  $$M_n(\lambda)= (M_n(\lambda))_0
  \supset (M_n(\lambda))_1\supset\dots\supset (M_n(\lambda))_{{i}}\supset\dots$$
  such that $\bigcap_i  (M_n(\lambda))_i=0$ and
  $ (M_n(\lambda))_i/ (M_n(\lambda))_{i+1}$ is simple for
  all $i\geq 0$. Furthermore, the subquotient multiplicity $[M_n(\lambda):L(\mu)]$ of any simple module $L(\mu)$
  defined by such a filtration is finite and does not depend on the choice of a filtration.
\end{corollary}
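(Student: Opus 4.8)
The plan is to reduce both assertions to the $\fl_n$-level, where Lemma~\ref{par} already supplies the required finiteness. Recall from that lemma the central element $z\in\fl_n$ and the resulting grading $M:=M_n(\lambda)\simeq\mathbf S(\bar\fm_n)\otimes\mathbb C_\lambda=\bigoplus_i M_i$ by $\operatorname{ad}z$-eigenspaces, in which the degrees are bounded above, $M_0=\mathbb C v_\lambda$, and each $M_i$ has finite $\fl_n$-length. Since $\bar\fm_n$ carries strictly negative $z$-degrees, $\fm_n$ raises the $z$-degree; hence for every nonzero subquotient $Q$ of $M$ (again an object of $\mathcal{OLA}$, the conditions (i)--(iii) being inherited by subquotients) the top nonzero homogeneous component is annihilated by $\fm_n$ and is a nonzero finite-length $\fl_n$-module that is integrable and satisfies the large annihilator condition over $\fg_{n,n}$. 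By Theorem~4.2 of \cite{DPS}, exactly as in the proof of Theorem~\ref{simple}, this component carries a $(\fb\cap\fg_{n,n})$-highest weight vector, and since it is also $\fm_n$-invariant this is a genuine $\fb$-highest weight vector of $Q$.

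First I would construct the filtration by repeatedly splitting a simple module off the top, processing the grading in order of non-increasing top degree. At each stage the remaining submodule is a nonzero subquotient of $M$, so by the previous paragraph its top homogeneous component maps onto a simple $\fl_n$-module of some eligible highest weight $\mu$; lifting this through the highest weight structure produces a simple $\fg$-quotient $L(\mu)$ and hence the next term of the filtration. That the resulting descending chain satisfies $\bigcap_i (M)_i=0$ follows from Lemma~\ref{par}: any homogeneous vector sits in some degree $-p$, and only finitely many simple factors have top degree $\ge -p$, so after finitely many steps the chain no longer meets that vector. The delicate point here is the existence of a simple \emph{quotient} (a maximal submodule) at each stage; this is where one must use that the top component is a finite-length $\fl_n$-module together with the highest weight structure, rather than merely that $Q$ possesses a highest weight vector.

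The conceptual heart, and what I expect to be the main obstacle, is the finiteness of $[M:L(\mu)]$ and its independence of the filtration -- an infinite Jordan--H\"older statement -- and this is exactly where Lemma~\ref{par} is indispensable. For an eligible dominant weight $\nu$ let $E(\nu)$ be the simple $\fl_n$-module of $\fh$-highest weight $\nu$; as $z$ is central in $\fl_n$ it acts on $E(\nu)$ by the scalar $\nu(z)$, so the copies of $E(\nu)$ in $M$ lie in the single homogeneous component $M_{\nu(z)}$. Two elementary weight observations give $a_{\mu,\nu}:=[L(\nu)|_{\fl_n}:E(\mu)]=0$ unless $\mu\le\nu$, with $a_{\mu,\mu}=1$ because $L(\nu)_\nu$ is one-dimensional; thus the passage between $\fg$- and $\fl_n$-multiplicities is upper unitriangular for the dominance order. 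Restricting any filtration as above to $\fl_n$ and refining it to an $\fl_n$-composition series yields, for each fixed $\mu$,
$$[M|_{\fl_n}:E(\mu)]=\sum_{\nu\ge\mu}a_{\mu,\nu}\,[M:L(\nu)],$$
whose left side is the finite, intrinsically defined number from Lemma~\ref{par}. Since $E(\mu)$ occurs only in factors $L(\nu)$ with $\nu(z)\ge\mu(z)$, of which there are finitely many, the sum on the right is finite; downward induction on $\mu$ starting from $\lambda$ then solves this unitriangular system uniquely, showing simultaneously that every $[M:L(\nu)]$ is finite and is forced by the filtration-independent $\fl_n$-data. This reduction, via unitriangularity, of the infinite $\fg$-Jordan--H\"older bookkeeping to the finite $\fl_n$-multiplicities of Lemma~\ref{par} is the step that makes the whole statement work.
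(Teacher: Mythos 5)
Your strategy is exactly the paper's: its entire proof of Corollary \ref{JH} consists of the assertion that Lemma \ref{par} gives the statement over $\fl_n$ and that it therefore holds over $\fg$, and your reduction of all counting to the $z$-graded, finite-$\fl_n$-length pieces is precisely this. Your third paragraph is a correct (and genuinely complete) treatment of the half of the statement concerning multiplicities: since each $E(\mu)$ lives in the single graded piece $M_{\mu(z)}$, which has finite $\fl_n$-length, classical Jordan--H\"older in that piece makes $[M|_{\fl_n}:E(\mu)]$ filtration-independent, and your unitriangular system, solved by downward induction, then forces finiteness and uniqueness of the $[M:L(\nu)]$. This fills in details the paper omits.

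The genuine gap is in your second paragraph, at ``lifting this through the highest weight structure produces a simple $\fg$-quotient $L(\mu)$''. What you need there is that every nonzero $\fg$-submodule $N$ of $M_n(\lambda)$ has a \emph{maximal proper} submodule. For $M_n(\lambda)$ itself this holds because it is cyclic, generated by its one-dimensional top component, so the sum of all proper submodules is proper; but the later terms $(M_n(\lambda))_i$ of your chain are arbitrary submodules, neither cyclic nor generated by their top components, and your ingredients do not suffice for them. Concretely: fix a maximal $\fl_n$-submodule $X\subset N_c$ of the top component with $N_c/X\simeq E(\mu)$, and let $P$ be the sum of all $\fg$-submodules of $N$ whose degree-$c$ piece lies in $X$. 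One checks that $P_c=X$ and that any submodule strictly containing $P$ must contain all of $N_c$; hence $N/P$ is an \emph{essential extension} of the simple module $\left(U(\fg)N_c+P\right)/P\simeq L(\mu)$, but it equals $L(\mu)$ only if $N=U(\fg)N_c+P$, which is exactly the ``generated by the top'' hypothesis you do not have. An essential extension of a simple module can fail to possess any maximal proper submodule at all (the Pr\"ufer module $\mathbb{Z}[1/p]/\mathbb{Z}$ over $\mathbb{Z}$ is the standard example), so finite $\fl_n$-length of the top component plus existence of a highest weight vector --- all that you invoke, and all you flag as ``the delicate point'' --- does not close this. What does close it is a global finiteness input: one can run the arguments of Lemmas \ref{lem:JH}, \ref{order1} and \ref{parfl} with a filtration-free notion of subquotient (a weight vector $u$ and a submodule $X$ with $\fn u\subset X$, $u\notin X$) to show that only finitely many weights $\mu$ occur as highest weights of subquotients of $M_n(\lambda)$; combined with your own $\fl_n$-multiplicity bound this bounds the length of every strictly descending chain of submodules, giving finite length and, a fortiori, the filtration. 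To be fair, the paper's one-line proof glosses over precisely the same point.
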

\begin{proof} Lemma \ref{par} implies the statement if we consider $M_n(\lambda)$
  as a module over $\fl$. Hence, the statement holds also for $\fg\supset\fl$.
\end{proof}

\subsection{Jordan--Hoelder multiplicities for parabolically induced modules} 
Consider the functor $$\Phi_n:\mathcal {OLA}\to\tilde{\mathcal O}_{\fk_n}, \, \Phi_n(M):=M^{\fg_{n,n}},$$ 
$\tilde{\mathcal O}_{\fk_n}$ being the inductive completion of the BGG category $\mathcal{O}$ for the finite-dimensional Lie algebra $\fk_n$.
The large annihilator condition ensures that for any $M\in \mathcal{OLA}$ 
$$M=\lim_{\longrightarrow}\Phi_n(M).$$
\begin{lemma}\label{inv1} For $m\geq n$ we have an isomorphism of $\fk_m$-modules
  $$\Phi_m(M_n(\lambda))\simeq\operatorname{Ind}^{\fk_m}_{\fp_n\cap\fk_m}\mathbb C_\lambda.$$  
\end{lemma}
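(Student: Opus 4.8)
The plan is to construct an explicit $\fk_m$-module map $\Theta\colon\operatorname{Ind}^{\fk_m}_{\fp_n\cap\fk_m}\mathbb C_\lambda\to\Phi_m(M_n(\lambda))$ and to prove it is an isomorphism by an associated-graded argument whose only substantial ingredient is the vanishing of invariants in symmetric algebras of natural and conatural modules. First I would record the structural facts. Since $m\geq n$ we have $\fg_{m,m}\subset\fg_{n,n}\subset\fl_n$, hence $\fk_n\subset\fk_m$, and $\fp_n\cap\fk_m$ is a parabolic subalgebra of $\fk_m$ whose opposite nilradical is $\bar\fm_n\cap\fk_m$ (intersect the decomposition $\fg=\bar\fm_n\oplus\fl_n\oplus\fm_n$ with $\fk_m$). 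By the PBW theorem $M_n(\lambda)\cong U(\bar\fm_n)\otimes\mathbb C_\lambda$ as vector spaces, and since $\fg_{m,m}\subset\fl_n$ normalizes $\bar\fm_n$ and annihilates $\mathbb C_\lambda$ (because $\fg_{m,m}\subset\fg_{n,n}$ and $\lambda|_{\fh\cap\fg_{n,n}}=0$), the induced action of $x\in\fg_{m,m}$ is $x\cdot(u\otimes 1_\lambda)=(\operatorname{ad}x)(u)\otimes 1_\lambda$. Thus $\fg_{m,m}$ preserves the PBW filtration of $M_n(\lambda)$ and acts on $\operatorname{gr}M_n(\lambda)\cong{\bf S}(\bar\fm_n)$ by the adjoint action.

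The canonical generator $1\otimes 1_\lambda$ is $\fg_{m,m}$-invariant, and $\fk_m$ centralizes $\fg_{m,m}$, so $U(\fk_m)(1\otimes 1_\lambda)\subset M_n(\lambda)^{\fg_{m,m}}=\Phi_m(M_n(\lambda))$; this defines the $\fk_m$-map $\Theta$ by $u\otimes 1_\lambda\mapsto u\otimes 1_\lambda$. On associated graded with respect to the PBW filtration, $\Theta$ is the inclusion ${\bf S}(\bar\fm_n\cap\fk_m)\hookrightarrow{\bf S}(\bar\fm_n)$ induced by $\bar\fm_n\cap\fk_m\subset\bar\fm_n$, which is injective; hence $\Theta$ is injective and $\operatorname{gr}(\operatorname{im}\Theta)={\bf S}(\bar\fm_n\cap\fk_m)$.

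For surjectivity I would compute $({\bf S}(\bar\fm_n))^{\fg_{m,m}}$. As a $\fg_{m,m}$-module, $\bar\fm_n$ is a finite direct sum of copies of the trivial, natural and conatural modules: the natural $\fg_{n,n}$-module $\bar V_n$ restricts to $\fg_{m,m}$ as a finite-dimensional trivial summand plus the natural module $\bar V_m$, and the block description of $\bar\fm_n$ inside $\fg$ shows its trivial $\fg_{m,m}$-isotypic component is exactly $\bar\fm_n\cap\fk_m$. Writing $\bar\fm_n\cong(\bar\fm_n\cap\fk_m)\oplus N$ with $N$ a sum of natural and conatural modules (only natural for $\fg=\mathfrak o(\infty),\mathfrak{sp}(\infty)$, where $V\cong V_*$), we get ${\bf S}(\bar\fm_n)\cong{\bf S}(\bar\fm_n\cap\fk_m)\otimes{\bf S}(N)$, so it remains to see ${\bf S}(N)^{\fg_{m,m}}=\mathbb C$. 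Decomposing ${\bf S}(N)$ into summands $\mathbb S_\mu(\bar V_m)\otimes\mathbb S_\nu((\bar V_m)_*)$ and invoking the socle computations of \cite{PStyr} and \cite{DPS} — by which each such summand with $(\mu,\nu)\neq(\emptyset,\emptyset)$ has nontrivial simple socle and therefore no nonzero invariant vectors — gives $({\bf S}(\bar\fm_n))^{\fg_{m,m}}={\bf S}(\bar\fm_n\cap\fk_m)$. Then the chain ${\bf S}(\bar\fm_n\cap\fk_m)=\operatorname{gr}(\operatorname{im}\Theta)\subseteq\operatorname{gr}(\Phi_m(M_n(\lambda)))\subseteq(\operatorname{gr}M_n(\lambda))^{\fg_{m,m}}=({\bf S}(\bar\fm_n))^{\fg_{m,m}}={\bf S}(\bar\fm_n\cap\fk_m)$ collapses to equalities, and since $\operatorname{im}\Theta\subseteq\Phi_m(M_n(\lambda))$ are filtered subspaces of $M_n(\lambda)$ with the same associated graded, they coincide. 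Together with injectivity this proves $\Theta$ is an isomorphism of $\fk_m$-modules.

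The main obstacle is the invariants computation ${\bf S}(N)^{\fg_{m,m}}=\mathbb C$: this is precisely where the distinctive behaviour of $\mathfrak{sl}(\infty)$, $\mathfrak o(\infty)$, $\mathfrak{sp}(\infty)$ intervenes, since the pairing invariants that would survive for finite-dimensional $\mathfrak{sl}(N)$ involve infinite sums and hence vanish in the finitary tensor and symmetric algebras. Establishing it cleanly requires the socle-filtration description of tensor modules from \cite{PStyr} and \cite{DPS} rather than classical invariant theory; the rest of the argument is formal PBW bookkeeping.
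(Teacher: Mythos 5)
Your proof is correct and takes essentially the same route as the paper: both arguments reduce the lemma to the identity ${\bf S}(\bar\fm_n)^{\fg_{m,m}}={\bf S}(\bar\fm_n\cap\fk_m)$ applied to the $\fg_{m,m}$-equivariant identification $M_n(\lambda)\simeq {\bf S}(\bar\fm_n)\otimes\mathbb C_\lambda$ (the paper via symmetrization, you via the associated graded of the PBW filtration). The only difference is level of detail: you construct the comparison map explicitly and justify the invariants identity through the socle computations of \cite{DPS} and \cite{PStyr}, whereas the paper states that identity without proof.
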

\begin{proof} Note that the result of application of $\Phi_m$ depends only on the restriction to $\fg_{m,m}$. Therefore, the statement follows from the isomorphism of $\fg_{m,m}$-modules
  $$M_n(\lambda)\simeq {\bf S}(\bar\fm_n)\otimes\mathbb C_\lambda$$ and the fact that
  $$ {\bf S}(\bar\fm_n)^{\fg_{m,m}}={\bf S}(\bar\fm_n\cap\fk_m).$$
  \end{proof}

  \begin{lemma}\label{phi} Let $M,N\in\mathcal{OLA}$ and $U(\fg)\Phi_n(M)=M$. Then the natural map
    $\Hom_{\fg}(M,N)\to \Hom_{\fk_n}(\Phi_n(M),\Phi_n(N)) $ is injective.
  \end{lemma}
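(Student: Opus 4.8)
The plan is to recognize that the asserted injectivity is essentially a restatement of the generation hypothesis $U(\fg)\Phi_n(M)=M$. First I would verify that the map in the statement is well defined. A $\fg$-module homomorphism $f\colon M\to N$ is in particular $\fg_{n,n}$-equivariant, so it carries $M^{\fg_{n,n}}$ into $N^{\fg_{n,n}}$, that is, $\Phi_n(M)$ into $\Phi_n(N)$. Since $\fk_n$ is by definition the centralizer of $\fg_{n,n}$ in $\fg$, for $x\in\fk_n$, $y\in\fg_{n,n}$ and $v\in M^{\fg_{n,n}}$ one computes $y(xv)=x(yv)+[y,x]v=0$, because $yv=0$ and $[y,x]=0$. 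Hence $M^{\fg_{n,n}}$ and $N^{\fg_{n,n}}$ are $\fk_n$-submodules, and the restriction $\Phi_n(f)\colon\Phi_n(M)\to\Phi_n(N)$ is a $\fk_n$-homomorphism; this identifies $f\mapsto\Phi_n(f)$ with the natural map in the statement.

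For injectivity, I would argue directly. Suppose $f\in\Hom_{\fg}(M,N)$ restricts to $0$ on $\Phi_n(M)$. By the hypothesis $U(\fg)\Phi_n(M)=M$, every element of $M$ is a finite sum $\sum_i u_iv_i$ with $u_i\in U(\fg)$ and $v_i\in\Phi_n(M)$. Using that $f$ is $U(\fg)$-linear, $f\bigl(\sum_i u_iv_i\bigr)=\sum_i u_i f(v_i)=0$, whence $f=0$. This gives injectivity of $f\mapsto\Phi_n(f)$.

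The argument has no genuine obstacle; it is a short formal consequence of two facts. The point worth emphasizing is that the generation condition $U(\fg)\Phi_n(M)=M$ is indispensable, since without it a nonzero $f$ could vanish on the invariants $\Phi_n(M)$, and that the centralizer property of $\fk_n$ is exactly what makes $\Phi_n$ take values in $\fk_n$-modules rather than in bare vector spaces.
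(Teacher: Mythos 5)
Your proof is correct, and it is exactly the argument the paper has in mind: the paper's own proof of this lemma is simply the word ``Straightforward,'' and your two steps --- checking that $\fk_n$ centralizes $\fg_{n,n}$ so that $\Phi_n(f)$ is a well-defined $\fk_n$-map, then using $U(\fg)$-linearity together with $U(\fg)\Phi_n(M)=M$ to conclude $f=0$ when $f|_{\Phi_n(M)}=0$ --- are precisely the details being elided. Nothing is missing.
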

  \begin{proof} Straightforward.
  \end{proof}
  \begin{corollary}\label{cor:phi} Let $m\geq n$. The natural map
    $$\Hom_{\fg}(M_n(\lambda),N)\to \Hom_{\fk_m}(\operatorname{Ind}^{\fk_m}_{\fp_n\cap\fk_m}\mathbb C_\lambda,\Phi_m(N)) $$
    is injective.
    \end{corollary}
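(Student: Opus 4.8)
The plan is to deduce the statement directly from Lemma \ref{phi}, applied at level $m$ rather than $n$, combined with the identification of $\Phi_m(M_n(\lambda))$ furnished by Lemma \ref{inv1}. First I would verify that $M_n(\lambda)$ satisfies the hypothesis of Lemma \ref{phi} in the form $U(\fg)\Phi_m(M_n(\lambda))=M_n(\lambda)$. For this, observe that $M_n(\lambda)$ is cyclic, generated by the canonical highest weight vector $v_\lambda$ (the image of $1\otimes 1$ under induction), and that $v_\lambda$ is annihilated by $\fg_{n,n}\subseteq\fp_n$ because $\mathbb C_\lambda$ is a trivial $\fg_{n,n}$-module. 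Since $m\geq n$ gives $\fg_{m,m}\subseteq\fg_{n,n}$, the vector $v_\lambda$ is also $\fg_{m,m}$-invariant, hence lies in $\Phi_m(M_n(\lambda))=M_n(\lambda)^{\fg_{m,m}}$. As $v_\lambda$ generates $M_n(\lambda)$ over $U(\fg)$, we obtain $U(\fg)\Phi_m(M_n(\lambda))=M_n(\lambda)$, as required. (One also uses $M_n(\lambda)\in\mathcal{OLA}$, which is available from the preceding discussion.)

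Next I would invoke Lemma \ref{phi} with the index $m$ in place of $n$; the argument proving that lemma is insensitive to the particular level, so its $m$-analogue holds verbatim. Applying it to $M=M_n(\lambda)$ and to the given $N$ yields that the natural restriction-to-invariants map $\Hom_{\fg}(M_n(\lambda),N)\to\Hom_{\fk_m}(\Phi_m(M_n(\lambda)),\Phi_m(N))$ is injective. Finally I would substitute the isomorphism of $\fk_m$-modules $\Phi_m(M_n(\lambda))\simeq\operatorname{Ind}^{\fk_m}_{\fp_n\cap\fk_m}\mathbb C_\lambda$ from Lemma \ref{inv1}; under this isomorphism the map above becomes precisely the natural map in the statement, which is therefore injective.

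There is essentially no serious obstacle here, as the corollary is a formal consequence of the two lemmas. The only points demanding a little care are verifying the generation hypothesis $U(\fg)\Phi_m(M_n(\lambda))=M_n(\lambda)$ — this is where $m\geq n$ enters, guaranteeing that the cyclic generator survives into the $\fg_{m,m}$-invariants — and checking that the naturality of the restriction map is compatible with the identification of Lemma \ref{inv1}, so that the two occurrences of the phrase ``natural map'' genuinely refer to the same morphism.
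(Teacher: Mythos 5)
Your proposal is correct and follows exactly the route the paper intends: Corollary \ref{cor:phi} is stated without proof as an immediate consequence of Lemma \ref{phi} (applied at level $m$) and the isomorphism of Lemma \ref{inv1}. Your verification of the generation hypothesis $U(\fg)\Phi_m(M_n(\lambda))=M_n(\lambda)$ via the cyclic highest weight vector $v_\lambda$, which is $\fg_{m,m}$-invariant since $\fg_{m,m}\subseteq\fg_{n,n}$, is precisely the detail the paper leaves implicit.
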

\begin{lemma}\label{order} If
  $\Hom_{\fg}(M_n(\lambda),M_m(\mu))\neq 0$ for $\lambda\neq\mu$, then $\lambda-\mu$
  is a sum of positive  finite roots.
\end{lemma}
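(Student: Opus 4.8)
The plan is to test a hypothetical nonzero $\phi\colon M_n(\lambda)\to M_m(\mu)$ on the canonical generator and then to pass to finite rank. First I would record the properties of $u:=\phi(v_\lambda)$, where $v_\lambda=1\otimes 1_\lambda$ generates $M_n(\lambda)=\operatorname{Ind}^{\fg}_{\fp_n}\mathbb C_\lambda$. Since $\fn\subset\fp_n$ and $\fg_{n,n}\subset\fp_n$ act on $\mathbb C_\lambda$ through the character $\lambda$ (and $\fg_{n,n}$ acts trivially), the vector $v_\lambda$ has weight $\lambda$, is annihilated by $\fn$, and is $\fg_{n,n}$-invariant. As $\phi$ is $\fg$-linear and $v_\lambda$ is a cyclic generator, $u\neq 0$, $u$ has weight $\lambda$, $\fn u=0$, and $u$ is $\fg_{n,n}$-invariant. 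In particular $\lambda\in\operatorname{supp}M_m(\mu)$; since by Lemma~\ref{par} the module $M_m(\mu)\cong \mathbf S(\bar\fm_m)\otimes\mathbb C_\mu$ is a highest weight module of highest weight $\mu$, this already forces $\mu-\lambda\in\sum_{\alpha\in\Delta^+}\mathbb Z_{\ge 0}\,\alpha$. It then remains to show that only $\fb$-finite positive roots occur in this decomposition.

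Next I would descend to the finite-dimensional reductive algebras $\fk_r$. For $r\ge\max(m,n)$ the vector $u$ lies in $M_m(\mu)^{\fg_{r,r}}=\Phi_r(M_m(\mu))$, which by Lemma~\ref{inv1} is the generalized Verma module $\operatorname{Ind}^{\fk_r}_{\fp_m\cap\fk_r}\mathbb C_\mu$; and by Corollary~\ref{cor:phi} the induced map $\operatorname{Ind}^{\fk_r}_{\fp_n\cap\fk_r}\mathbb C_\lambda\to\operatorname{Ind}^{\fk_r}_{\fp_m\cap\fk_r}\mathbb C_\mu$ is again nonzero. A nonzero homomorphism of parabolic Verma modules over the finite-dimensional $\fk_r$ forces the two to share an infinitesimal character, so $\lambda+\rho_r$ and $\mu+\rho_r$ are conjugate under the Weyl group $W_r$ of $\fk_r$, compatibly for all large $r$ (here $\rho_r$ denotes the half-sum of positive roots of $\fk_r$).

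The substance of the lemma is to exclude the infinite (block-crossing) roots, those pairing $S_{min}$ with $S_{max}$, from the decomposition of $\mu-\lambda$. Equivalently, writing $d(\nu):=\sum_{\varepsilon_i\in S_{min}}\nu_i$, one must prove $d(\lambda)=d(\mu)$, since every $\fb$-finite positive root has $d$-value $0$ while every crossing positive root has $d$-value $1$, and all coefficients in $\mu-\lambda$ are nonnegative. I would establish $d(\lambda)=d(\mu)$ by exploiting that $u$ is not merely a triangular singular vector but is annihilated by the whole nilradical $\fm_m$: this confines the Weyl element carrying $\mu+\rho_r$ to $\lambda+\rho_r$ to a fixed $W_{\fl_m\cap\fk_r}$-coset, while the conjugacy holding for all large $r$, together with the fact that $\lambda,\mu$ are supported in a fixed finite window and $\rho_r$ drives the extreme coordinates to $\pm r$, pins down the $S_{min}$-data of $\lambda$ and $\mu$ and forces $d(\lambda)=d(\mu)$. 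The cases $\fg=\mathfrak{o}(\infty),\mathfrak{sp}(\infty)$ are handled identically, with $\fk_r\cong\mathfrak{o}(2r),\mathfrak{sp}(2r)$ and $S_{max}=-S_{min}$.

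The weight bookkeeping of the first paragraph and the reduction to $\fk_r$ of the second are routine; the hard step is the exclusion of crossing roots in the third. I expect this to hinge on the parabolic, rather than merely triangular, nature of $u$: a block-crossing lowering operator would move a vector out of the top $d$-graded layer of $M_m(\mu)$, and the point is that a vector annihilated by all of $\fn$, and in particular by the crossing part of $\fm_m$, cannot be reached below that layer. This is exactly the dichotomy anticipated in the Introduction, whereby genuine homomorphisms are governed by finite-dimensional Kazhdan--Lusztig combinatorics (the $\fb$-finite roots), while the crossing directions contribute only to the Kostka-type higher Jordan--H\"older multiplicities.
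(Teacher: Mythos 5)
Your first two steps coincide exactly with the paper's proof: the reduction via Corollary \ref{cor:phi} to nonzero homomorphisms of parabolic Verma modules over $\fk_r$ for all large $r$, and the resulting linkage $\lambda|_{\fh_r}+\rho_r=w_r(\mu|_{\fh_r}+\rho_r)$ for some $w_r\in W_r$. (You are also right, and more careful than the literal statement of the lemma, that the positivity runs the other way: $\mu-\lambda\in\langle\Delta^+\rangle_{\mathbb Z_{\geq 0}}$.) But your proposal stops precisely where the content of the lemma begins. What must be extracted from the linkage is that $w_r$ is a product of reflections in simple roots of $\fk_r$ which are \emph{finite} as roots of $\fg$; then $\lambda-\mu$ lies in the lattice spanned by finite roots, and together with $\mu-\lambda\in\langle\Delta^+\rangle_{\mathbb Z_{\geq 0}}$ (crossing positive roots having strictly positive $d$-value) this excludes the crossing roots. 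The paper gets this from a growth estimate which your sketch never performs: rewriting the linkage as $\lambda|_{\fh_r}-w_r(\mu|_{\fh_r})=w_r(\rho_r)-\rho_r$, the left-hand side pairs boundedly with any fixed $\alpha\in\Delta$ as $r\to\infty$ (because $\lambda,\mu$ are supported in a fixed finite window), while $|(w_r(\rho_r)-\rho_r,\alpha)|$ stays bounded if and only if $w_r$ is a product of reflections in finite simple roots. Your phrase about ``$\rho_r$ driving the extreme coordinates to $\pm r$'' points at this, but it is left as an expectation; that estimate \emph{is} the proof, not routine bookkeeping.

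Moreover, the substitute argument you offer for the hard step is circular. The assertion that ``a vector annihilated by all of $\fn$ cannot be reached below the top $d$-graded layer of $M_m(\mu)$'' is not an independent input: it is equivalent to the lemma itself. Indeed, any $\fn$-singular vector $u\in M_m(\mu)$ of weight $\lambda$ is annihilated by some $\fg_{r,r}$ (large annihilator condition) and by $\fm_r\subset\fn$, hence spans a one-dimensional $\fp_r$-module isomorphic to $\mathbb C_\lambda$; by Frobenius reciprocity it yields a nonzero element of $\Hom_{\fg}(M_r(\lambda),M_m(\mu))$. Thus ruling out singular vectors below the top $d$-layer is exactly the statement being proved, and ``expecting'' it assumes the conclusion. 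Similarly, the claim that $\fm_m$-annihilation confines $w_r$ to a fixed $W_{\fl_m\cap\fk_r}$-coset is both unsubstantiated and unnecessary: the paper's argument uses nothing beyond the central-character linkage. In short, you correctly identify where the difficulty sits, but fill that spot with either an unexecuted estimate or a restatement of the lemma.
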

\begin{proof} By Corollary \ref{cor:phi}, $\Hom_{\fg}(M_n(\lambda),M_m(\mu))\neq 0$ implies
$$\Hom_{\fk_p}(\operatorname{Ind}^{\fk_p}_{\fp_n\cap\fk_p}\mathbb C_\lambda,\operatorname{Ind}^{\fk_p}_{\fp_m\cap\fk_p}\mathbb C_\mu)\neq 0$$
  for all $p\geq m,n$.
Consequently $\mu|_{\fh_p}+\rho_p$ and $\lambda|_{\fh_p}+\rho_p$ lie in one orbit of the Weyl group $W_p$ of  $\fk_p$, where $\rho_p$ denotes the half-sum of positive roots of $\fk_p$. In other words,
$$\lambda|_{\fh_p} - w_p(\mu|_{\fh_p})=w_p(\rho_p)-\rho_p$$
for some $w_p\in W_p$. When $p\to \infty$  the quantity $|(\lambda|_{\fh_p} - w_p(\mu|_{\fh_p}),\alpha)|$ remains bounded for any fixed $\alpha \in \Delta$ and any $w_p\in W_p$, while the quantity $|(w_p(\rho_p)-\rho_p,\alpha)|$ remains bounded if and only if $w_p$ is a product of reflections corresponding to simple roots of $\fk_p$ which are finite as roots of $\fg$. Therefore $w_p$ must have the latter property, and this implies the statement.
\end{proof}

\begin{lemma}\label{lem:JH} Let $L_{\fk_m}(\mu|_{\fh_m})$ denote a simple $\fk_m$-module with a $\fb\cap \fk_m$-highest weight vector of weight $\mu|_{\fh_m}$.
If $[M_n(\lambda):L(\mu)]\neq 0$, then   $[\Phi_m(M_n(\lambda)):L_{\fk_m}(\mu|_{\fh_m})]\neq 0$ for sufficiently large $m$.
\end{lemma}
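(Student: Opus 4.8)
The plan is to transport a single $L(\mu)$-subquotient of $M_n(\lambda)$ through the invariants functor $\Phi_m$, using the filtered colimit presentation $M=\lim_{\longrightarrow}\Phi_m(M)$ in place of any exactness of $\Phi_m$. By Corollary \ref{JH} there is an index $i$ with $A:=(M_n(\lambda))_{i+1}\subseteq B:=(M_n(\lambda))_i\subseteq M_n(\lambda)$ and $B/A\cong L(\mu)$. Both $A$ and $B$ lie in $\mathcal{OLA}$ (the conditions (i)--(iii) pass to submodules), so it suffices to exhibit $L_{\fk_m}(\mu|_{\fh_m})$ as a subquotient of $\Phi_m(B)/\Phi_m(A)$, which is in turn a subquotient of $\Phi_m(M_n(\lambda))\cong\operatorname{Ind}^{\fk_m}_{\fp_n\cap\fk_m}\mathbb C_\lambda$ by Lemma \ref{inv1}; the latter is a finite-length object of $\tilde{\mathcal O}_{\fk_m}$, so subquotient multiplicities there are monotone and well defined.

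First I would use that $\Phi_m=(\cdot)^{\fg_{m,m}}$ is left exact (a subfunctor of the identity). Applying it to $0\to A\to B\to L(\mu)\to 0$ yields $\Phi_m(B)/\Phi_m(A)\cong W_m$, where $W_m\subseteq\Phi_m(L(\mu))$ is the image of $\Phi_m(B)$ under the quotient map $q\colon B\to L(\mu)$. Since $\fg_{m,m}$ shrinks as $m$ grows, the spaces $\Phi_m(B)=B^{\fg_{m,m}}$ increase, hence so do the $W_m$. Because $q$ is surjective and $B=\lim_{\longrightarrow}\Phi_m(B)=\bigcup_m B^{\fg_{m,m}}$, I obtain $\bigcup_m W_m=q(B)=L(\mu)$.

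Next I would track the highest weight vector. Let $u_\mu\in L(\mu)$ be the $\fb$-highest weight vector of weight $\mu$. By the construction in Theorem \ref{simple}, equivalently by applying the large annihilator condition to the single vector $u_\mu$, we have $\fg_{m,m}u_\mu=0$ for all large $m$, so $u_\mu\in\Phi_m(L(\mu))$; moreover $u_\mu$ is killed by $\fn\cap\fk_m$ and has $\fh_m$-weight $\mu|_{\fh_m}$, so it is a $\fb\cap\fk_m$-highest weight vector. Since $u_\mu\in L(\mu)=\bigcup_m W_m$, it already lies in some $W_{m_0}$, hence in every $W_m$ with $m\geq m_0$. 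For such $m$ the $\fk_m$-submodule $U(\fk_m)u_\mu\subseteq W_m$ is a highest weight module of highest weight $\mu|_{\fh_m}$, so $L_{\fk_m}(\mu|_{\fh_m})$ occurs as its top. By transitivity of subquotients, $[\Phi_m(M_n(\lambda)):L_{\fk_m}(\mu|_{\fh_m})]\geq[W_m:L_{\fk_m}(\mu|_{\fh_m})]\geq 1$, which is the claim.

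The hard part, and the whole reason for routing through the colimit, is that $\Phi_m$ is only left exact: one cannot in general lift all of $\Phi_m(L(\mu))$ back into $\Phi_m(B)$, as the obstruction lives in $H^1(\fg_{m,m},A)$, whose vanishing I would prefer not to invoke. The argument sidesteps this by needing only the single vector $u_\mu$ to land in the image $W_m$, which is guaranteed for large $m$ by the exhaustion $\bigcup_m W_m=L(\mu)$. The two routine points I would still verify are that $\Phi_m(M_n(\lambda))$ has finite length (Lemma \ref{inv1} together with finiteness of length of parabolic Verma modules in category $\mathcal O$ for the finite-dimensional reductive $\fk_m$), so that the subquotient-of-subquotient inequalities above are legitimate, and that passing to $\fg_{m,m}$-invariants genuinely commutes with the filtered colimit realizing $M=\lim_{\longrightarrow}\Phi_m(M)$.
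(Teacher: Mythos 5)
Your argument is correct, and it runs on the same engine as the paper's proof: isolate one vector witnessing the $L(\mu)$-subquotient, use the large annihilator condition to place it in the $\fg_{m,m}$-invariants for all large $m$, and observe that it is then a $\fb\cap\fk_m$-highest weight vector of weight $\mu|_{\fh_m}$ modulo the invariants of a suitable submodule. The organizational difference is where the witness lives. You take the highest weight vector $u_\mu$ of the abstract quotient $L(\mu)\cong B/A$, and since $\Phi_m$ is only left exact you must then show that $u_\mu$ actually lies in the image $W_m$ of $\Phi_m(B)$ --- this is your exhaustion step $\bigcup_m W_m=L(\mu)$, which is precisely what substitutes for any appeal to vanishing of $H^1(\fg_{m,m},A)$. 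The paper avoids this issue entirely by choosing the witness upstairs: a weight-$\mu$ vector $u\in M_n(\lambda)$ together with a submodule $X$ such that $\fn u\subseteq X$ and $u\notin X$ (concretely, a preimage in $B$ of your $u_\mu$, with $X=A$). Applying the large annihilator condition to $u$ itself gives $u\in\Phi_m(M_n(\lambda))$ for large $m$; since $\fg_{m,m}$ commutes with $\fn\cap\fk_m$, one gets $(\fn\cap\fk_m)u\subseteq\Phi_m(X)$ while $u\notin\Phi_m(X)$, so the image of $u$ in $\Phi_m(M_n(\lambda))/\Phi_m(X)$ is already the desired highest weight vector --- no colimit, no first isomorphism theorem, no exactness discussion. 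Both proofs are sound; picking a preimage rather than pushing invariants forward is what makes the paper's version three lines long, while your version, at the cost of the extra steps, makes explicit the useful general principle that every finite piece of $L(\mu)$ is eventually visible in $\Phi_m(B)/\Phi_m(A)$.
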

\begin{proof} If $[M_n(\lambda):L(\mu)]\neq 0$, then there exists a nonzero vector $u\in M_n(\lambda)$ of weight $\mu$ and a submodule $X\subset M_n(\lambda)$
  such that $\fn u\in X$ and $u\notin X$. For all sufficiently large $m$, we have $u\in\Phi_m(M_n(\lambda))$. Then $(\fn\cap\fk_m)u\in \Phi_m(X)$. Therefore
  $[\Phi_m(M_n(\lambda)):L_{\fk_m}(\mu|_{\fh_m})]\neq 0$.
  \end{proof}

\begin{lemma}\label{order1} If $[M_n(\lambda):L(\mu)]\neq 0$ for $\lambda\neq\mu$, then $\lambda-\mu$ is a sum of positive  finite roots.
\end{lemma}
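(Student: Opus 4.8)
The plan is to reduce the statement to the linkage principle in the finite-dimensional category $\mathcal{O}$ of $\fk_m$, and then to combine linkage with the highest weight structure of $M_n(\lambda)$.

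First I would pass to finite rank. By Lemma \ref{lem:JH} together with Lemma \ref{inv1}, the hypothesis $[M_n(\lambda):L(\mu)]\neq 0$ yields, for all sufficiently large $m$, that $L_{\fk_m}(\mu|_{\fh_m})$ is a composition factor of $\Phi_m(M_n(\lambda))\simeq\operatorname{Ind}^{\fk_m}_{\fp_n\cap\fk_m}\mathbb C_\lambda$. Since $\fb\cap\fk_m\subset\fp_n\cap\fk_m$, this module is a $\fb\cap\fk_m$-highest weight module of highest weight $\lambda|_{\fh_m}$, hence a quotient of the Verma module of $\fk_m$ with that highest weight. By the linkage principle in the BGG category $\mathcal{O}$ of $\fk_m$, the weights $\lambda|_{\fh_m}+\rho_m$ and $\mu|_{\fh_m}+\rho_m$ therefore lie in one orbit of the Weyl group $W_m$ of $\fk_m$; write $\mu|_{\fh_m}+\rho_m=w_m(\lambda|_{\fh_m}+\rho_m)$ for some $w_m\in W_m$.

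The resulting relation $\mu|_{\fh_m}-w_m(\lambda|_{\fh_m})=w_m(\rho_m)-\rho_m$ is of exactly the same shape as the one in the proof of Lemma \ref{order}. Running the same boundedness argument as $m\to\infty$ — the left-hand side pairs boundedly with any fixed root, while $w_m(\rho_m)-\rho_m$ pairs boundedly with a fixed root only when $w_m$ is a product of reflections in simple roots of $\fk_m$ that are finite as roots of $\fg$ — forces $w_m$ to lie in the subgroup of $W_m$ generated by such reflections. Consequently $\lambda-\mu=(\lambda|_{\fh_m}+\rho_m)-w_m(\lambda|_{\fh_m}+\rho_m)$ belongs to the $\mathbb Z$-span $\langle\Delta_{fin}\rangle_{\mathbb Z}$ of the finite roots.

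It remains to upgrade this to a nonnegative combination of positive finite roots, and this is the step I expect to be the main obstacle, since linkage only controls $\lambda-\mu$ up to sign within the finite root lattice. Because $M_n(\lambda)$ is a $\fb$-highest weight module of highest weight $\lambda$, every weight $\mu$ satisfies $\lambda-\mu\in\mathbb Z_{\geq0}\Delta^+$; write $\lambda-\mu=\sum_{\alpha\in\Delta^+}c_\alpha\alpha$ with $c_\alpha\in\mathbb Z_{\geq0}$. I would then introduce a linear functional $d$ on $\fh^*$ that vanishes on every finite root and is strictly positive on every infinite positive root: for $\fg=\mathfrak{o}(\infty),\mathfrak{sp}(\infty)$ one takes $d\big(\sum_i a_i\varepsilon_i\big):=\sum_{i>0}a_i$, whose value on the finite roots $\varepsilon_i-\varepsilon_j$ is $0$ and on the infinite positive roots $\varepsilon_i+\varepsilon_j$, $2\varepsilon_i$ is $2$; for $\fg=\mathfrak{sl}(\infty)$ one takes $d\big(\sum_i a_i\varepsilon_i\big):=\sum_{\varepsilon_i\in S_{min}}a_i$, which vanishes on the finite roots (those with both indices in $S_{min}$ or both in $S_{max}$) and equals $1$ on the infinite positive roots $\varepsilon_i-\varepsilon_j$ with $\varepsilon_i\in S_{min}$, $\varepsilon_j\in S_{max}$. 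Since $\lambda-\mu\in\langle\Delta_{fin}\rangle_{\mathbb Z}$ we have $d(\lambda-\mu)=0$, whence $0=\sum_{\alpha\in\Delta^+}c_\alpha\,d(\alpha)$ is a sum of nonnegative terms; thus $c_\alpha=0$ for every infinite positive root $\alpha$, and $\lambda-\mu$ is a sum of positive finite roots. The only points needing care are the explicit verification that $d$ separates finite from infinite positive roots in each root system, and the routine observation that the boundedness argument of Lemma \ref{order} transfers verbatim to the present composition-factor setting through Lemmas \ref{lem:JH} and \ref{inv1}.
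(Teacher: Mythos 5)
Your proof is correct and follows essentially the same route as the paper's: reduction to finite rank via Lemmas \ref{lem:JH} and \ref{inv1}, then exactly the linkage-plus-boundedness argument of Lemma \ref{order} applied to the composition factor $L_{\fk_m}(\mu|_{\fh_m})$ of the quotient of the Verma module $M_{\fk_m}(\lambda|_{\fh_m})$. The only difference is that you spell out, via the functional $d$ (which the paper only introduces later, in Subsection 4.4) and the fact that $\lambda-\mu\in\mathbb Z_{\geq0}\Delta^+$ for a highest weight module, the final positivity step that the paper compresses into ``this implies the statement'' at the end of Lemma \ref{order}.
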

\begin{proof} The previous lemma implies $[\Phi_m(M_n(\lambda)):L_{\fk_m}(\mu|_{\fh_m})]\neq 0$ for all sufficiently large $m$. Therefore we can use the same
  argument as in
  the proof of Lemma \ref{order}.
\end{proof}

Let $\mathscr{W}$ be the group generated by all reflections with respect to the simple roots of our fixed Borel subalgebra $\fb$. Then
$\mathscr{W}\simeq \mathscr{S}_\infty\times \mathscr{S}_\infty$ for $\fg=\mathfrak{sl}(\infty)$
and $\mathscr{W}\simeq \mathscr{S}_\infty$ for $\fg=\mathfrak{o}(\infty)$, $\mathfrak{sp}(\infty)$; here $\mathscr{S}_{\infty}$ denotes the infinite symmetric group. We fix $\rho\in \fh^*$ such that $2\frac{(\rho,\alpha)}{(\alpha,\alpha)}=1$ for any simple root $\alpha$.

We define a partial  order ${\leq}_{fin}$ on the set of eligible
weights  by setting $\mu\leq_{fin}\lambda$ if  $\mu=\lambda$ or $\lambda-\mu$ is a sum of positive simple roots and $(\lambda+\rho)=w(\mu+\rho)$ for some $w\in \mathscr{W}$.
This order is interval-finite. In fact, the following stronger property holds:
for any eligible weight
$\mu$, the set $$\mu_{fin}^+:=\{\lambda\,|\,\mu\leq_{fin}\lambda\}$$ is finite.

Lemmas  \ref{lem:JH} and \ref{order1}  imply the following.
\begin{corollary}\label{cor:mult} If $[M_n(\lambda):L(\mu)]\neq 0$, then $\mu\leq_{fin}\lambda$.
  \end{corollary}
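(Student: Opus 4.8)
The plan is to treat separately the two clauses in the definition of $\mu\leq_{fin}\lambda$, feeding one of the two cited lemmas into each. If $\lambda=\mu$ there is nothing to prove, so assume $\lambda\neq\mu$ throughout.

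The lattice condition is read off directly from Lemma \ref{order1}: since $[M_n(\lambda):L(\mu)]\neq 0$, that lemma yields that $\lambda-\mu$ is a sum of positive finite roots. Every positive $\fb$-finite root is by definition a nonnegative integer combination of simple roots and, being positive, is a sum of positive simple roots; hence $\lambda-\mu$ is a sum of positive simple roots, which is the first half of the condition $\mu\leq_{fin}\lambda$.

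For the linkage condition $(\lambda+\rho)=w(\mu+\rho)$ I would pass to the finite-dimensional algebras $\fk_m$. By Lemma \ref{lem:JH}, $[M_n(\lambda):L(\mu)]\neq 0$ forces $[\Phi_m(M_n(\lambda)):L_{\fk_m}(\mu|_{\fh_m})]\neq 0$ for all sufficiently large $m$, and by Lemma \ref{inv1} the module $\Phi_m(M_n(\lambda))$ is the parabolic Verma module $\operatorname{Ind}^{\fk_m}_{\fp_n\cap\fk_m}\mathbb C_\lambda$ in the BGG category $\mathcal O$ of $\fk_m$. The classical linkage principle for $\fk_m$ then provides an element $w_m$ of the Weyl group $W_m$ of $\fk_m$ with $\mu|_{\fh_m}+\rho_m=w_m(\lambda|_{\fh_m}+\rho_m)$, where $\rho_m$ is the half-sum of the positive roots of $\fk_m$.

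Finally I would upgrade these finite Weyl elements to a single element of $\mathscr W$, reusing the boundedness argument from the proof of Lemma \ref{order}. Writing the linkage identity as $\mu|_{\fh_m}-w_m(\lambda|_{\fh_m})=w_m(\rho_m)-\rho_m$, the left-hand side stays bounded against any fixed root as $m\to\infty$, while $|(w_m(\rho_m)-\rho_m,\alpha)|$ remains bounded only when $w_m$ is a product of reflections in simple roots of $\fk_m$ that are finite as roots of $\fg$. Such reflections are precisely the generators of $\mathscr W$, so for large $m$ the $w_m$ stabilize to a single element $w'\in\mathscr W$; since $\lambda$ and $\mu$ are eligible, hence determined by their restrictions to $\fh_m$ for large $m$, and since $w'\rho-\rho$ agrees with $w'\rho_m-\rho_m$ on $\fh_m$, the identity $\mu+\rho=w'(\lambda+\rho)$ holds on all of $\fh^*$. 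Setting $w:=(w')^{-1}\in\mathscr W$ gives $(\lambda+\rho)=w(\mu+\rho)$, the second half of $\mu\leq_{fin}\lambda$. The main obstacle is exactly this last step: the $w_m$ a priori range over the full finite Weyl groups $W_m$, and one must rule out the ``infinite'' reflections and check that the surviving finite ones assemble into one element of $\mathscr W$, which is what the growth estimate on $w(\rho_m)-\rho_m$ borrowed from Lemma \ref{order} accomplishes.
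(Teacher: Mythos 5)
Your proof is correct and follows essentially the same route as the paper, which simply cites Lemmas \ref{lem:JH} and \ref{order1} and leaves the details implicit. You have merely unwound what those citations contain: Lemma \ref{order1} gives the sum-of-finite-roots clause, and re-running the boundedness argument of Lemma \ref{order} on the finite-dimensional linkage data supplied by Lemmas \ref{lem:JH} and \ref{inv1} extracts the Weyl-group condition $(\lambda+\rho)=w(\mu+\rho)$ with $w\in\mathscr W$, exactly as the paper intends.
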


\begin{lemma} \label{lem:stabilization}
  Given two eligible weights $\lambda$ and $\mu$, there exists $N \in \mathbb{Z}_{\geq 0}$ such that the multiplicity $[M_n(\lambda):L(\mu)]$ is 
constant  for $n>N$. We denote this constant multiplicity by $m(\lambda,\mu)$.
\end{lemma}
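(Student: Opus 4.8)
The plan is to prove that, for fixed eligible $\lambda$ and $\mu$, the sequence $n\mapsto[M_n(\lambda):L(\mu)]$ is non-decreasing and uniformly bounded; a monotone bounded sequence of non-negative integers is eventually constant, and its stable value is $m(\lambda,\mu)$.

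\emph{Monotonicity.} Since $\fg_{n+1,n+1}\subset\fg_{n,n}$ and $\fb$ lies in every $\fp_n$, we have $\fp_{n+1}\subset\fp_n$, and $\mathbb C_\lambda$ restricts from the $\fp_n$-module $\mathbb C_\lambda$ to the $\fp_{n+1}$-module $\mathbb C_\lambda$. As $M_n(\lambda)=U(\fg)\otimes_{U(\fp_n)}\mathbb C_\lambda$ is generated by $1\otimes 1_\lambda$, the canonical map $M_{n+1}(\lambda)\to M_n(\lambda)$, $u\otimes 1_\lambda\mapsto u\otimes 1_\lambda$, is a surjection of $\fg$-modules. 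Hence $M_n(\lambda)$ is a quotient of $M_{n+1}(\lambda)$, and by Corollary \ref{JH} the well-defined multiplicities satisfy $[M_n(\lambda):L(\mu)]\le[M_{n+1}(\lambda):L(\mu)]$. It thus suffices to bound $[M_n(\lambda):L(\mu)]$ by a constant independent of $n$.

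\emph{Reduction to $\fk_m$.} Fix $m\ge n$ large enough that $\mu|_{\fh\cap\fg_{m,m}}=0$ and the supports of $\lambda,\mu$ are captured by $\fk_m$. I would first record that for such $m$ the functor $\Phi_m=(\cdot)^{\fg_{m,m}}$ carries each simple object $L(\nu)$ occurring in $M_n(\lambda)$ to the simple $\fk_m$-module $L_{\fk_m}(\nu|_{\fh_m})$: its highest weight vector is $\fg_{m,m}$-invariant and stays $\fb\cap\fk_m$-highest of weight $\nu|_{\fh_m}$, and $\Phi_m(L(\nu))$ is simple for $m$ large. Using the exhaustion $M_n(\lambda)=\lim_{\longrightarrow}\Phi_m(M_n(\lambda))$ together with the left exactness of $\Phi_m$, one checks that for all sufficiently large $m$ the functor $\Phi_m$ induces a bijection between the $L(\mu)$-factors of $M_n(\lambda)$ and the $L_{\fk_m}(\mu|_{\fh_m})$-factors of $\Phi_m(M_n(\lambda))$; the crux is that no occurrence of $L(\mu)$ is annihilated, which holds once $m$ is large because each of the finitely many such occurrences is detected on $\fg_{m,m}$-invariants. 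Combined with Lemma \ref{inv1}, this gives, for all $m\gg 0$,
$$[M_n(\lambda):L(\mu)]=[\operatorname{Ind}^{\fk_m}_{\fp_n\cap\fk_m}\mathbb C_\lambda:L_{\fk_m}(\mu|_{\fh_m})].$$

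\emph{Uniform bound and conclusion.} Because $\fb\cap\fk_m\subset\fp_n\cap\fk_m$, the module $\operatorname{Ind}^{\fk_m}_{\fp_n\cap\fk_m}\mathbb C_\lambda$ is a quotient of the Verma module $M_{\fk_m}(\lambda|_{\fh_m})=\operatorname{Ind}^{\fk_m}_{\fb\cap\fk_m}\mathbb C_\lambda$, by the same surjection principle as above. Hence
$$[M_n(\lambda):L(\mu)]\le[M_{\fk_m}(\lambda|_{\fh_m}):L_{\fk_m}(\mu|_{\fh_m})].$$
For fixed eligible $\lambda,\mu$ the right-hand side is a Kazhdan--Lusztig multiplicity for the finite-dimensional classical Lie algebra $\fk_m\simeq\mathfrak{sl}(2m),\mathfrak{o}(2m),\mathfrak{sp}(2m)$; since $\lambda,\mu$ have fixed finite support, the relevant Weyl-group data and Kazhdan--Lusztig polynomials stabilize as $m\to\infty$, so this multiplicity is bounded by a constant $B(\lambda,\mu)<\infty$ independent of $n$ and $m$. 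Thus $[M_n(\lambda):L(\mu)]\le B(\lambda,\mu)$ for all $n$, and together with monotonicity the sequence is eventually constant; we take $m(\lambda,\mu)$ to be this stable value. The main obstacle is the reduction to $\fk_m$: as $\Phi_m$ is only left exact, the real work is to show that for $m$ large it nonetheless recovers the exact multiplicity of $L(\mu)$, i.e. that no Jordan--Hölder occurrence is lost on passing to $\fg_{m,m}$-invariants; the stability of finite-dimensional Kazhdan--Lusztig multiplicities along the classical series is the secondary point requiring care.
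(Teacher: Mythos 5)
Your skeleton (non-decreasing in $n$ via the surjections $M_{n+1}(\lambda)\twoheadrightarrow M_n(\lambda)$, plus a uniform bound) is sound, and the monotonicity step is correct. The trouble is that both halves of your bound rest on unproven claims. First, the pivotal assertion of your reduction --- that $\Phi_m(L(\nu))$ is simple for large $m$, so that $\Phi_m$ induces a multiplicity-preserving bijection --- is exactly what you flag as ``the real work'', and it is never established. Your detection argument only shows that no occurrence of $L(\mu)$ is lost; it does not exclude spurious occurrences of $L_{\fk_m}(\mu|_{\fh_m})$ arising inside $\Phi_m(L(\nu))$ for $\nu\neq\mu$, nor the distortions caused by $\Phi_m$ being merely left exact (along a composition series of $M_n(\lambda)$ the subquotients of $\Phi_m(M_n(\lambda))$ are only \emph{submodules} of the $\Phi_m(L(\nu))$). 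Simplicity of $\Phi_m(L(\nu))$ is available for integrable $\nu$ by \cite{DPS}, but nothing supports it for the non-integrable constituents; note that the paper's Lemma \ref{lem:JH} is deliberately only a one-directional nonvanishing statement, and the paper's exact-multiplicity transfer (Lemma \ref{lem:KL}) is achieved with an \emph{exact} functor (an eigenspace of a semisimple element $h$), precisely to avoid this issue. Second, the uniform bound $B(\lambda,\mu)$ is asserted via ``stability of Kazhdan--Lusztig data'', again without proof; making it precise requires the linkage argument of Lemmas \ref{order}--\ref{order1} (to force the linking Weyl group elements into the corner parabolic subgroup) together with invariance of KL polynomials under parabolic embeddings of Weyl groups. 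So as written there are genuine gaps, even though the strategy is viable: for the bound you only need the inequality $[M_n(\lambda):L(\mu)]\le[\Phi_m(M_n(\lambda)):L_{\fk_m}(\mu|_{\fh_m})]$, which your singular-vector argument does give, and the KL input can then be supplied.

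In fact the heavy machinery is unnecessary, and this is where your route diverges most from the paper. By Corollary \ref{cor:mult} one may assume $\mu\leq_{fin}\lambda$ --- otherwise $[M_n(\lambda):L(\mu)]=0$ for all $n$ and there is nothing to prove --- so $\lambda-\mu$ is a sum of positive \emph{finite} roots, supported on $S_{min}(N)\cup S_{max}(N)$ for some $N$ and with the coefficients over each of $S_{min}$ and $S_{max}$ summing to zero. A partial-sum argument then shows that every decomposition of $\lambda-\mu$ into positive roots of $\fg$ uses only roots supported on $S_{min}(N)\cup S_{max}(N)$; consequently $\dim M_n(\lambda)_{\mu}$ is finite and independent of $n$, and the trivial bound $[M_n(\lambda):L(\mu)]\le \dim M_n(\lambda)_{\mu}$ finishes your monotone-bounded argument with no reduction to $\fk_m$ at all. (Beware that this weight-space bound fails for general $\mu$: e.g.\ the weight space of $M_n(\lambda)$ of weight $\lambda-\varepsilon_1+\varepsilon_{-1}$ is infinite-dimensional, which is why the reduction to the case $\mu\leq_{fin}\lambda$ is essential.) The paper itself is even more direct: with $N$ as above, the kernel of the canonical surjection $M_n(\lambda)\to M_N(\lambda)$ for $n>N$ contains no vectors of weight $\mu$ --- the same support computation --- hence $[M_n(\lambda):L(\mu)]=[M_N(\lambda):L(\mu)]$ exactly, giving constancy outright rather than eventual stabilization of a monotone bounded sequence.
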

\begin{proof} Choose $N$ such that $\lambda-\mu$ is a sum of  roots of $\fk_{N,N}$. For $n>N$, consider the canonical surjection homomorphism 
  $\varphi:M_n(\lambda)\to M_N(\lambda)$. We have $\mu\notin\operatorname{supp}\Ker\varphi$. Hence $[\Ker\varphi:L(\mu)]=0$. This implies
  $[M_{n}(\lambda):L(\mu)]=[M_{N}(\lambda):L(\mu)]$.
\end{proof}
\begin{lemma}\label{parfl} The $\fg$-module  $M_n(\lambda)$ has finite length.
\end{lemma}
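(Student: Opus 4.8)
The plan is to reduce the finiteness of the length of $M_n(\lambda)$ to a finiteness statement about highest weights, and then to control those weights by combining the linkage/order constraints already established with a dominance constraint harvested from the finite-dimensional modules $\Phi_m(M_n(\lambda))$.

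First I would recall from Corollary \ref{JH} that $M_n(\lambda)$ carries a descending filtration with simple subquotients, and that each simple $L(\mu)$ occurs with \emph{finite} multiplicity $[M_n(\lambda):L(\mu)]$. Hence $M_n(\lambda)$ has finite length if and only if only finitely many isomorphism classes $L(\mu)$ occur. By Theorem \ref{simple} each such $\mu$ is eligible, and by Corollary \ref{cor:mult} each occurring $\mu$ satisfies $\mu\leq_{fin}\lambda$; in particular $\mu+\rho$ lies in the single $\mathscr{W}$-orbit of $\lambda+\rho$ and $\lambda-\mu$ is a sum of positive finite roots (Lemma \ref{order1}). The essential difficulty is that these two conditions alone do \emph{not} cut out a finite set of weights: already for $\fg=\mathfrak{sl}(\infty)$ and $\lambda=0$ the weights $-M\varepsilon_1+M\varepsilon_{M+1}$, $M\geq 1$, satisfy both. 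So one more constraint is needed.

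I would extract that constraint from $\Phi_m$. By Lemma \ref{inv1}, for $m\geq n$ the module $\Phi_m(M_n(\lambda))=\operatorname{Ind}^{\fk_m}_{\fp_n\cap\fk_m}\mathbb C_\lambda$ is a parabolic Verma module for the finite-dimensional reductive Lie algebra $\fk_m$, hence of finite length, and every one of its simple subquotients has the form $L_{\fk_m}(\nu)$ with $\nu$ \emph{dominant} for the Levi $\fl_n\cap\fk_m$. By Lemma \ref{lem:JH}, if $[M_n(\lambda):L(\mu)]\neq 0$ then $L_{\fk_m}(\mu|_{\fh_m})$ occurs in $\Phi_m(M_n(\lambda))$ for all large $m$; therefore $\mu|_{\fh_m}$ is $(\fl_n\cap\fk_m)$-dominant for all large $m$. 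Passing to the limit, this says precisely that $\mu$ is $\fg_{n,n}$-dominant, i.e.\ the coordinates of $\mu$ along the tail $\tilde I\setminus(S_{min}(n)\cup S_{max}(n))$ form a partition. This is exactly the condition violated by the spurious family $-M\varepsilon_1+M\varepsilon_{M+1}$ above, which is what makes it spurious.

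It then remains to prove the purely combinatorial assertion that the set of eligible weights $\mu$ which are $\fg_{n,n}$-dominant, satisfy $\lambda-\mu\in Q^+$ (a sum of positive finite roots), and have $\mu+\rho\in\mathscr{W}(\lambda+\rho)$, is finite; together with the finiteness of each multiplicity from Corollary \ref{JH} this yields the claim. For $\fg=\mathfrak{sl}(\infty)$ the finite roots and the group $\mathscr{W}\simeq\mathscr{S}_\infty\times\mathscr{S}_\infty$ split along $S_{min}$ and $S_{max}$, so the problem decouples into two identical type-$A$ problems; on one side, linkage fixes the multiset of coordinates of $\mu+\rho$ to be that of $\lambda+\rho$, while $\fg_{n,n}$-dominance forces the infinite tail of $\mu+\rho$ to be a strictly decreasing rearrangement of that multiset, which (since $\lambda$ has finite support, so the tail of $\lambda+\rho$ is already a decreasing run of consecutive values) can differ from the tail of $\lambda+\rho$ in only boundedly many places, the condition $\lambda-\mu\in Q^+$ bounding the total displacement and leaving finitely many choices on the finite head. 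The cases $\fg=\mathfrak{o}(\infty),\mathfrak{sp}(\infty)$ are identical with $\mathscr{W}\simeq\mathscr{S}_\infty$. I expect this last step — making the uniform-in-$m$ bound on the admissible tails precise, and thereby seeing clearly why $\fg_{n,n}$-dominance together with $\lambda-\mu\in Q^+$ confines $\mu$ to a finite set although linkage alone does not — to be the main obstacle; everything preceding it is a direct assembly of Corollary \ref{JH}, Lemma \ref{inv1} and Lemma \ref{lem:JH}.
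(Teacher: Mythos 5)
Your overall strategy is the same as the paper's: reduce, via Corollary \ref{JH}, to showing that only finitely many $\mu$ can occur, and then cut down the candidates by combining $\fg_{n,n}$-dominance with the constraints of Corollary \ref{cor:mult}. (The paper obtains the dominance directly from the fact that $\fg_{n,n}$ acts locally finitely on $M_n(\lambda)$, hence on every subquotient; your detour through $\Phi_m$ and parabolic category $\mathcal O$ is a correct but equivalent derivation.) The genuine gap is that your final combinatorial assertion is false. Take $\fg=\mathfrak{sl}(\infty)$, $\lambda=0$, and for $M\geq 2$ set
\[
\mu_M:=(1-M)\varepsilon_1+\varepsilon_2+\cdots+\varepsilon_M .
\]
Then $\lambda-\mu_M=\sum_{k=2}^{M}(\varepsilon_1-\varepsilon_k)$ is a sum of positive finite (indeed simple) roots; taking $\rho$ with $\rho_i=-i$ on the $S_{min}$-side, one has $\mu_M+\rho=w_M(\rho)$ for the finitary cycle $w_M=(1\,2\,\cdots\,M)\in\mathscr W$; and for \emph{every} $n\geq 1$ the restriction of $\mu_M$ to $\fh\cap\fg_{n,n}$ is $\fb\cap\fg_{n,n}$-dominant, since the coordinates of $\mu_M$ beyond position $n$ read $(1,\dots,1,0,0,\dots)$. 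So for each fixed $n$ infinitely many eligible weights pass all three of your tests. The two steps of your sketch that break are precisely these: the strictly decreasing tail of $\mu_M+\rho$ differs from the tail of $\lambda+\rho$ in $M-1$ places, not boundedly many (moving the single value $-M$ into the head shifts a whole initial stretch of the tail by one); and membership of $\lambda-\mu$ in $Q^+$ places no bound on its coefficients, hence none on the total displacement. Your spurious family $-M\varepsilon_1+M\varepsilon_{M+1}$ is indeed killed by dominance, but this family is not.

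The lemma itself is safe, but for a reason invisible to support/linkage/dominance considerations. For $M\geq 3$ the permutation $w_M$ avoids the patterns $3412$ and $4231$, so all Kazhdan--Lusztig polynomials $P_{x,w_M}$ equal $1$; hence $[M_{\fk_m}(x\cdot 0):L_{\fk_m}(w_M\cdot 0)]=1$ for every $x\leq w_M$, and the multiplicity of $L_{\fk_m}(\mu_M|_{\fh_m})$ in the parabolic Verma module $\Phi_m(M_1(0))$ of Lemma \ref{inv1} equals the alternating sum $\sum(-1)^{\ell(x)}$ over the $2^{M-2}$ elements $x\in W_{\fl}$ with $x\leq w_M$, that is $(1-1)^{M-2}=0$; by Lemma \ref{lem:JH} this forces $[M_1(0):L(\mu_M)]=0$. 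So what actually excludes these weights is the vanishing of alternating sums of Kazhdan--Lusztig numbers, i.e.\ genuine composition-series information about $\Phi_m(M_n(\lambda))$ held uniformly in $m$ --- not any finiteness property of the set of weights satisfying your three conditions. You should also know that the paper's own proof of this lemma is the same two-step argument (dominance plus $\mu=w(\lambda+\rho)-\rho$, followed by the unproved assertion that this is possible for only finitely many $w$), and as literally stated it is contradicted by the same family $\mu_M$; your proposal thus reproduces the paper's approach, and the detail you supply in the last step exposes a gap that the paper's proof shares. A complete argument must control which simples actually occur in the parabolic Verma modules $\Phi_m(M_n(\lambda))$ uniformly in $m$, for instance by bounding their lengths independently of $m$.
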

\begin{proof} We claim that there are finitely many weights $\mu$ for which $[M_n(\lambda):L(\mu)]\neq 0$.
  Indeed, $[M_n(\lambda):L(\mu)]\neq 0$  implies that $\fg[L({\mu})]\subset \fg_{n,n}$ and hence the restriction  of $\mu$ to ${\fh\cap\fg_{n,n}}$ is
  ${\fb\cap\fg_{n,n}}$-dominant. On the other hand, by Corollary \ref{cor:mult} $\mu=w(\lambda+\rho)-\rho$ for some $w\in \mathcal W$. This is possible only for finitely many
  $w$, and hence for finitely many $\mu$.
\end{proof}

The following lemma shows that the multiplicities $m(\lambda,\mu)$ can be expressed in terms of  Kazhdan-Lusztig multiplicities for the BGG category $\mathcal O_{\fs_n}$
of the reductive Lie algebra $\fs_n$ for sufficiently large $n$. 
\begin{lemma}\label{lem:KL} Let $\lambda,\mu$ be eligible weights such that $\mu\leq_{fin}\lambda$, and let
  $\lambda|_{\fh\cap\fg_{n,n}}=\mu|_{\fh\cap\fg_{n,n}}=0$ for some $n$. Then $$m(\lambda,\mu)=[M_{\fs_n}(\lambda|_{\fh_n}):L_{\fs_n}(\mu|_{\fh_n})]$$
  where   $M_{\fs_n}(\lambda|_{\fh_n})$ and
  $L_{\fs_n}(\mu|_{\fh_n})$ denote the respective Verma and  simple module over $\fs_n$.
\end{lemma}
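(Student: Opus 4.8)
The plan is to transport $M_n(\lambda)$ through the invariants functor $\Phi_n$ into a genuine Verma module over the finite-dimensional Lie algebra $\fk_n$, and then to collapse that finite-dimensional multiplicity onto the Levi subalgebra $\fs_n$ by a Kazhdan--Lusztig argument. First I would fix the given $n$, for which $\lambda|_{\fh\cap\fg_{n,n}}=\mu|_{\fh\cap\fg_{n,n}}=0$. Since $\mu\le_{fin}\lambda$, the difference $\lambda-\mu$ is supported on $\fh_n$ and hence is a sum of roots of $\fk_n$; by the proof of Lemma \ref{lem:stabilization} this already puts $n$ in the stable range, so $m(\lambda,\mu)=[M_n(\lambda):L(\mu)]$. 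I then apply $\Phi_n$ and evaluate it via Lemma \ref{inv1} with $m=n$. The key observation is that $\fp_n\cap\fk_n=\fb\cap\fk_n$: a root of $\fk_n$ lies in $\fp_n=\fb+\fg_{n,n}$ only if it is positive, because $\fg_{n,n}$ contributes no root space of $\fk_n$ (its roots live on $\tilde I\setminus(S_{min}(n)\cup S_{max}(n))$). Thus $\fp_n\cap\fk_n$ is the full Borel of $\fk_n$ and $\Phi_n(M_n(\lambda))\cong M_{\fk_n}(\lambda|_{\fh_n})$, the ordinary Verma module of $\fk_n$.

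The second step is multiplicity matching. By Corollary \ref{cor:mult} every composition factor $L(\nu)$ of the finite-length module $M_n(\lambda)$ satisfies $\nu\le_{fin}\lambda$, so $\nu|_{\fh\cap\fg_{n,n}}=0$ and the assignment $\nu\mapsto\nu|_{\fh_n}$ is injective on the weights that occur. I would then show that $\Phi_n$ sends a composition series of $M_n(\lambda)$ to a composition series of $M_{\fk_n}(\lambda|_{\fh_n})$ with the same multiplicities: $\Phi_n(L(\nu))=L_{\fk_n}(\nu|_{\fh_n})$ is simple and nonzero, the nonvanishing being Lemma \ref{lem:JH}, and $\Phi_n$ is exact on the modules at hand. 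This yields $[M_n(\lambda):L(\mu)]=[M_{\fk_n}(\lambda|_{\fh_n}):L_{\fk_n}(\mu|_{\fh_n})]$. I expect this to be the main obstacle: taking $\fg_{n,n}$-invariants is only manifestly left exact, which gives the inequality $\le$ for free (each successive quotient of the image filtration injects into the simple $L_{\fk_n}(\nu|_{\fh_n})$, hence is that simple or zero), whereas the reverse inequality requires genuine exactness of $\Phi_n$ together with the simplicity of $\Phi_n(L(\nu))$.

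Finally I would descend from $\fk_n$ to $\fs_n$. The subalgebra $\fs_n$ is a Levi subalgebra of $\fk_n$ whose root system is $\Delta_{fin}\cap\Delta_{\fk_n}$, so $W_{\fs_n}$ is a parabolic subgroup of the Weyl group of $\fk_n$. The hypothesis $\mu\le_{fin}\lambda$ says precisely that $\lambda|_{\fh_n}$ and $\mu|_{\fh_n}$ lie in one dot-orbit of $W_{\fs_n}$ and differ by a sum of positive roots of $\fs_n$; moreover the $\fk_n$- and $\fs_n$-dot-actions of $W_{\fs_n}$ coincide, since the half-sum of the roots of $\fk_n$ lying outside $\fs_n$ is $W_{\fs_n}$-invariant. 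The compatibility of Kazhdan--Lusztig polynomials with parabolic subgroups then forces $[M_{\fk_n}(\lambda|_{\fh_n}):L_{\fk_n}(\mu|_{\fh_n})]=[M_{\fs_n}(\lambda|_{\fh_n}):L_{\fs_n}(\mu|_{\fh_n})]$, which combined with the previous two steps gives the assertion. The Levi reduction here is a standard finite-dimensional fact once the linkage within $W_{\fs_n}$ is set up, so the essential work is confined to the infinite-dimensional multiplicity matching of the second step.
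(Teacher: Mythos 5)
Your first step is sound: $\fp_n\cap\fk_n=\fb\cap\fk_n$, so Lemma \ref{inv1} with $m=n$ does give $\Phi_n(M_n(\lambda))\simeq M_{\fk_n}(\lambda|_{\fh_n})$, and your reading of Lemma \ref{lem:stabilization} (that the support hypothesis puts this $n$ in the stable range) is also correct. The genuine gap is in your second step, and it rests on a false claim rather than a missing verification. The implication ``$\nu\leq_{fin}\lambda$ implies $\nu|_{\fh\cap\fg_{n,n}}=0$'' is wrong: the $\fb$-finite roots are \emph{all} $\varepsilon_i-\varepsilon_j$ with $\varepsilon_i,\varepsilon_j\in S_{min}$ (or both in $S_{max}$), and these reach arbitrarily far into the index range of $\fg_{n,n}$. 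Concretely, for $\fg=\mathfrak{sl}(\infty)$, $n=1$, $\lambda=0$, the vector $X_{-(\varepsilon_1-\varepsilon_2)}\otimes 1\in M_1(0)$ is a singular vector (the same computation as for the parabolic Verma module of $\mathfrak{sl}(3)$ with Levi $\mathfrak{gl}(2)$, using that $\varepsilon_1-\varepsilon_2$ is simple), so $L(-\varepsilon_1+\varepsilon_2)$ is a composition factor of $M_1(0)$, and $(-\varepsilon_1+\varepsilon_2)|_{\fh\cap\fg_{1,1}}=\varepsilon_2|_{\fh\cap\fg_{1,1}}\neq 0$. For such a factor your identification $\Phi_n(L(\nu))=L_{\fk_n}(\nu|_{\fh_n})$ collapses: the highest weight vector of $L(\nu)$ is not even $\fh\cap\fg_{n,n}$-invariant, so it does not survive into $\Phi_n(L(\nu))$, and there is no reason for the invariants to be a highest weight module of weight $\nu|_{\fh_n}$. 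Indeed, in the example $\Phi_1(M_1(0))=M_{\fk_1}(0)$ has the two factors $L_{\fk_1}(0)$ and $L_{\fk_1}(-\alpha)$, $\alpha=\varepsilon_1-\varepsilon_{-1}$, while $(-\varepsilon_1+\varepsilon_2)|_{\fh_1}\neq(-\alpha)|_{\fh_1}$; so whatever accounts for the second $\fk_1$-factor, it is not ``restriction of highest weights'' of the factors of $M_1(0)$. On top of this, the exactness of $\Phi_n$ that your reverse inequality needs is never established; in this paper exactness of $\Phi_n$ is always \emph{earned} through explicit $\Ext^1$-vanishing (compare the proof of Proposition \ref{filtration}), and your third step (parabolic compatibility of Kazhdan--Lusztig multiplicities) would additionally require the singular and non-integral cases, which you only wave at.

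The paper's proof avoids $\fk_n$, $\Phi_n$ and all Kazhdan--Lusztig input, and in particular never needs to control the problematic factors above. It is a two-inequality argument entirely at the level of $\fs_n$: setting $\fq_n=\fs_n+\fp_n$, one has $M_n(\lambda)\simeq\operatorname{Ind}^{\fg}_{\fq_n}M_{\fs_n}(\lambda|_{\fh_n})$, and exactness of induction gives $m(\lambda,\mu)\geq[M_{\fs_n}(\lambda|_{\fh_n}):L_{\fs_n}(\mu|_{\fh_n})]$; for the opposite inequality one chooses $h\in\fh$ with $[h,\fs_n]=[h,\fg_{n,n}]=0$ and $\alpha(h)=1$ for the simple roots $\alpha$ not in $\fs_n\oplus\fg_{n,n}$, and applies the (manifestly exact) functor of taking the $\lambda(h)$-eigenspace of $h$, which sends $M_n(\lambda)$ to $M_{\fs_n}(\lambda|_{\fh_n})$ and $L(\mu)$ to $L_{\fs_n}(\mu|_{\fh_n})\neq 0$. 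Note that this eigenspace functor is exactly the device that disposes of the factors you mishandled: since $\lambda-\mu$ is a sum of roots of $\fs_n$, one has $\mu(h)=\lambda(h)$, whereas a factor such as $L(-\varepsilon_1+\varepsilon_2)$ above has all its weights in strictly negative $h$-eigenspaces and is annihilated by the functor. If you want to salvage your route, you would have to prove both the exactness of $\Phi_n$ on these modules and a description of $\Phi_n(L(\nu))$ for \emph{all} composition factors $\nu$ of $M_n(\lambda)$, which is substantially harder than the statement being proved.
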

\begin{proof} Consider the parabolic subalgebra $\fq_n=\fs_n+\fp_n$. Then
  $M_n(\lambda)\simeq \operatorname{Ind}^{\fg}_{\fq_n}M_{\fs_n}(\lambda|_{\fh_n})$. Since $\operatorname{Ind}^{\fg}_{\fq_n}$ is an exact functor, we have
  $m(\lambda,\mu)\geq [M_{\fs_n}(\lambda|_{\fh_n}):L_{\fs_n}(\mu|_{\fh_n})]$. Choose $h\in\fh$ such that $[h,\fs_n]=[h,\fg_{n,n}]=0$ and $\alpha(h)=1$
  for the simple roots $\alpha$ which  are not roots of $\fs_n\oplus\fg_{n,n}$. Then
  $L(\mu)^{h-\lambda(h)}\simeq L_{\fs_n}(\mu|_{\fh_n})$ and $M_n(\lambda)^{h-\lambda(h)}\simeq M_{\fs_n}(\lambda|_{\fh_n})$, the superscript indicating taking invariants. Hence $m(\lambda,\mu)\leq [M_{\fs_n}(\lambda|_{\fh_n}):L_{\fs_n}(\mu|_{\fh_n})]$.
The statement follows.
\end{proof}

\begin{proposition}\label{fl} Any finitely generated module in $\mathcal{OLA}$ has finite length.
\end{proposition}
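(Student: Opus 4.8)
The plan is to show that a finitely generated $M\in\mathcal{OLA}$ is a quotient of a finite direct sum of standard modules $M_{n}(\lambda_j)$. Since each $M_n(\lambda_j)$ has finite length by Lemma \ref{parfl}, and a quotient of a finite-length module again has finite length, this yields the claim. First I would reduce to the cyclic case: if $M$ is generated by $m_1,\dots,m_k$, the chain $0\subseteq U(\fg)m_1\subseteq U(\fg)m_1+U(\fg)m_2\subseteq\cdots\subseteq M$ has cyclic successive quotients, and finite length is stable under extensions, so it suffices to treat $M=U(\fg)m$. By the large annihilator condition we may fix $n$ with $\fg_{n,n}m=0$, so that $m\in\Phi_n(M)=M^{\fg_{n,n}}$ and $M=U(\fg)\Phi_n(M)$.

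Next I would control the $\fk_n$-module $\Phi_n(M)$. The decisive finiteness input is Lemma \ref{fg}: the $\fg_{n,n}$-invariants in $U(\fg)$, equivalently in ${\bf S}(\fr_n\oplus\bar\fr_n)$, are generated over $U(\fk_n)$ by finitely many elements, so applying these to $m$ exhibits $\Phi_n(M)=M^{\fg_{n,n}}$ as a finitely generated $\fk_n$-module. Since $\Phi_n(M)$ is $\fh_n$-semisimple and locally $(\fn\cap\fk_n)$-finite by condition (iii), it lies in the BGG category $\mathcal O$ of the finite-dimensional reductive algebra $\fk_n$, and a finitely generated object there has finite length. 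In particular $\Phi_n(M)$ is generated, as a $\fk_n$-module, by finitely many $(\fb\cap\fk_n)$-highest weight vectors $u_1,\dots,u_s$, so that $M=\sum_j U(\fg)u_j$.

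Finally I would promote the $u_j$ to genuine $\fp_n$-highest weight vectors, and this is where the main obstacle lies. Each $u_j$ is $\fg_{n,n}$-invariant and killed by $\fn\cap\fk_n=\fm_n^{\fg_{n,n}}$, but to obtain a morphism $\operatorname{Ind}^{\fg}_{\fp_n}\mathbb C_{\nu_j}\to M$ by Frobenius reciprocity one needs $u_j$ to be annihilated by all of $\fm_n$, that is, also by the mixed nilradical $\fr_n$. This is not automatic: $\fr_n u_j$ has weights lying outside $\operatorname{supp}\Phi_n(M)$ and need not vanish, and the functor $\Phi_n$ records only the $\fk_n$-structure and is blind to the action of $\fr_n$. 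To overcome this I would invoke Proposition \ref{glnil}: each $u_j$ satisfies $\fm_n^{m_j}u_j=0$ and $(U(\fm_n)u_j)^{\fm_n}\neq 0$, so one can trade the generating set for finitely many $\fm_n$-invariant — hence $\fb$-highest weight, hence $\fp_{n'}$-highest weight for some $n'\ge n$ — vectors, and then verify, using the finite length of $\Phi_n(M)$ and an induction on that length, that these still generate $M$.

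Granting the promotion step, $M$ is a quotient of $\bigoplus_j M_{n'}(\lambda_j)$ and Lemma \ref{parfl} completes the argument. I expect the promotion — replacing the $\fk_n$-highest weight generators by $\fp_{n'}$-highest weight generators \emph{without losing generation} — to be the genuine difficulty, since it is exactly the point at which the finite-dimensional picture supplied by $\Phi_n$ must be reconciled with the full $\fg$-action through the infinite part $\fr_n$ of the nilradical.
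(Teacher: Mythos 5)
You correctly isolate the crux of the matter, but the step you defer---showing that $M$ is a quotient of a finite direct sum $\bigoplus_j M_{n'}(\lambda_j)$---is not merely difficult: it is false, so no refinement of your ``promotion'' argument can close the gap. For a concrete counterexample take $\fg=\mathfrak{sl}(\infty)$ and $M=V\otimes V_*\in\mathbb{T}_{\fg}\subset\mathcal{OLA}$. This module is cyclic: it is a non-split extension $0\to\mathfrak{sl}(\infty)\to V\otimes V_*\to\mathbb C\to 0$ (non-split because $\mathfrak{gl}(\infty)$ has no nonzero $\mathfrak{sl}(\infty)$-invariants), so any weight vector with nonzero image in $\mathbb C$, for instance $v_1\otimes w_1$, generates it. On the other hand, the image of any morphism $M_{n'}(\kappa)\to V\otimes V_*$ is an integrable quotient of $M_{n'}(\kappa)$, hence is zero or simple by Lemma \ref{lem:intquo}, hence is contained in $\operatorname{soc}(V\otimes V_*)=\mathfrak{sl}(\infty)$. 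Thus the images of all possible morphisms from parabolically induced modules lie in one fixed proper submodule, and $M$ is not a quotient of any direct sum of modules $M_{n'}(\kappa)$. Equivalently, $M$ is not generated by $\fn$-invariant vectors: a singular vector of weight zero would have to be proportional to $\sum_{i\in I}v_i\otimes w_i$, which is an infinite sum---precisely the invariant that is ``lost at infinity'' in the sense of the Introduction. (This is the infinite-dimensional incarnation of the classical fact that a cyclic object of BGG category $\mathcal O$, e.g.\ a projective cover $P(\lambda)$ for non-dominant $\lambda$, need not be a quotient of a direct sum of Verma modules.) Your intermediate step 2 is also unsound: Lemma \ref{fg} asserts that ${\bf S}(\fu)$ generates ${\bf S}(\fr_n)$ as a $\fg_{n,n}$-module and says nothing about $\fg_{n,n}$-invariants of $U(\fg)$; and passing invariants through the surjection $U(\fg)\twoheadrightarrow M$ would require $(\cdot)^{\fg_{n,n}}$ to be right exact, which it is not (already $\fg$ is not a semisimple $\fg_{n,n}$-module). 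This non-exactness is exactly why the paper makes $\Phi_n$ take values in the inductive completion $\tilde{\mathcal O}_{\fk_n}$ rather than in $\mathcal O_{\fk_n}$.

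The idea you are missing---and the paper's actual proof---is to exhibit a finite filtration of $M$ whose subquotients are quotients of parabolically induced modules, rather than to present $M$ itself as such a quotient. For cyclic $M=U(\fg)v$ with $v$ a weight vector and $\fg_{n,n}v=0$, Proposition \ref{glnil}(a) gives $\fm_n^mv=0$, whence $U(\fm_n)v$ is finite dimensional; this $(\fg_{n,n}+\fm_n)$-stable space admits a finite filtration $0\subset F_1\subset\dots\subset U(\fm_n)v$ whose quotients $F_i/F_{i-1}$ are simple integrable $\fg_{n,n}$-modules annihilated by $\fm_n$. Although $F_i$ is not $\fm_n$-stable inside $M$, its image in $M/U(\fg)F_{i-1}$ \emph{is} annihilated by $\fm_n$, so Frobenius reciprocity shows that $U(\fg)F_i/U(\fg)F_{i-1}$ is a quotient of $\operatorname{Ind}^{\fg}_{\fp_n}(F_i/F_{i-1})$, hence of some $M_t(\lambda)$, and therefore has finite length by Lemma \ref{parfl}; since finite length is stable under extensions, $M$ has finite length. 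On the counterexample above this mechanism produces exactly the filtration $0\subset\mathfrak{sl}(\infty)\subset V\otimes V_*$: the trivial top $\mathbb C$ becomes a quotient of an induced module only after the socle has been factored out, which is the maneuver your approach forbids itself by insisting on genuine $\fp_{n'}$-highest weight generators of $M$.
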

\begin{proof} It suffices to check the statement for a cyclic module. Assume that $M$ is generated by some weight vector $v$ annihilated by $\fg_{n,n}$.
Then $\fm_n^m v=0$ for some $m$ by
   Proposition \ref{glnil} (a). Therefore $\operatorname{dim}U(\fm_n)v<\infty$, and 
 there is a finite filtration
  $\{(U(\fm_n)v)_i\}$ of $U(\fm_n)v$ such that every quotient $(U(\fm_n)v)_i/(U(\fm_n)v)_{i-1}$ is annihilated by $\fm_n$. Moreover,  $(U(\fm_n)v)_i/(U(\fm_n)v)_{i-1}$ is an
  object of the category $\mathbb T_{\fg_{n,n}}$.
  Hence one can refine this filtration of $U(\fm_n)v$ and obtain a finite filtration
  $$0\subset F_1\subset\dots\subset U(\fm_n)v,$$
  such that $F_i/F_{i-1}$ is a simple integrable $\fg_{n,n}$-module annihilated by $\fm_n$.

 Consider the induced filtration of $M$:
  $$0\subset U(\fg)F_1\subset\dots\subset U(\fg)v=M.$$
  Then $U(\fg)F_i/U(\fg)F_{i-1}$ is isomorphic to a quotient of the induced module $\operatorname{Ind}^\fg_{\fp_n}(F_i/F_{i-1})$, and the latter module is
  isomorphic to a quotient of
  $M_t(\lambda)$ for some $t>n$ and some $\lambda$. Since $M_t(\lambda)$ has finite length, the same is true for $U(\fg)F_i/U(\fg)F_{i-1}$, and thus for $M$.
\end{proof}

\begin{proposition}\label{soc} Any $M\in\mathcal{OLA}$ has an exhausting socle filtration.
\end{proposition}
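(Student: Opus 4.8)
The plan is to reduce the statement to the finite-length result of Proposition \ref{fl} via two observations: that $\mathcal{OLA}$ is closed under passing to submodules, and that the socle filtration of a submodule is contained termwise in the socle filtration of the ambient module. First I would note that $\mathcal{OLA}$ is indeed closed under submodules, since conditions (i)--(iii) are each inherited by any $\fg$-submodule $N\subseteq M$: the large annihilator condition and local $\fn$-nilpotency are formulated vectorwise, and $\fh$-semisimplicity is preserved under restriction to a submodule. Consequently, for any $m\in M$ the cyclic submodule $U(\fg)m$ again lies in $\mathcal{OLA}$ and is finitely generated, so by Proposition \ref{fl} it has finite length.

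Next I would establish the elementary fact that for any submodule $N\subseteq M$ one has $\soc^iN\subseteq\soc^iM$ for all $i\geq 0$. This goes by induction on $i$, the base case $\soc N\subseteq\soc M$ being clear because every simple submodule of $N$ is a simple submodule of $M$. For the inductive step, assume $\soc^{i-1}N\subseteq\soc^{i-1}M$ and let $x\in\soc^iN$, so that the image of $U(\fg)x$ in $N/\soc^{i-1}N$ is a submodule of $\soc(N/\soc^{i-1}N)$ and hence semisimple. Since $\soc^{i-1}N\subseteq\soc^{i-1}M$, the inclusion $N\hookrightarrow M$ induces a homomorphism $N/\soc^{i-1}N\to M/\soc^{i-1}M$, under which the image of $U(\fg)x$ in $M/\soc^{i-1}M$ is a quotient of the semisimple module above, hence again semisimple. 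Therefore this image lies in $\soc(M/\soc^{i-1}M)$, which means precisely $x\in\soc^iM$.

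Finally I would combine these observations. A nonzero module of finite length has nonzero socle (it contains a minimal nonzero, hence simple, submodule), so for the finite-length module $N=U(\fg)m$ the socle filtration strictly increases until it exhausts $N$; thus $N=\soc^kN$ for some $k$ bounded by the length of $N$. Applying the termwise inclusion gives $m\in N=\soc^kN\subseteq\soc^kM$. As $m\in M$ was arbitrary, $M=\bigcup_{i\geq0}\soc^iM$, which is exactly the assertion that the socle filtration of $M$ is exhausting. I do not anticipate a genuine obstacle: the only point requiring care is the submodule inclusion $\soc^iN\subseteq\soc^iM$, and the substantive input, finiteness of length for cyclic modules, has already been secured in Proposition \ref{fl}.
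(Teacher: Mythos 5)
Your proof is correct and takes essentially the same approach as the paper: the paper's (much terser) argument is precisely that any $M\in\mathcal{OLA}$ is a union of finitely generated submodules, each of which has a finite exhausting socle filtration by Proposition \ref{fl}, whence "the statement follows." Your write-up simply supplies the details the paper leaves implicit — closure of $\mathcal{OLA}$ under submodules and the termwise inclusion $\soc^iN\subseteq\soc^iM$ — and both are verified correctly.
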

\begin{proof} Any module is a union of finitely generated modules. By Proposition \ref{fl}  any finitely generated module in $\mathcal{OLA}$ has a finite exhausting socle filtration. The statement follows.
\end{proof}

\subsection{Canonical filtration on $\mathcal{OLA}$ } For an eligible weight $\lambda=\sum_{\varepsilon_i\in I}\lambda_i\varepsilon_i$ we set
$$d(\lambda)=\begin{cases}\frac{1}{2}(\sum_{i\in \mathbb{Z}_{>0}}\lambda_i-\sum_{j\in \mathbb{Z}_{<0}}\lambda_j)\,\,\text{if}\,\, \fg=\mathfrak{sl}(\infty)\\
  \frac{1}{2}\sum_{i\in \mathbb{Z}_{>0}}\lambda_i\,\,\text{if}\,\, \fg=\mathfrak{o}(\infty), \mathfrak{sp}(\infty).\end{cases}$$
Note that if $\lambda-\mu\in \langle\Delta^+\rangle_{\mathbb{Z}_{\geq 0}}$, then $d(\lambda)-d(\mu)\in\mathbb Z_{\geq 0}$.
\begin{lemma}\label{lem:ext} Assume $\operatorname{Ext}_{\mathcal{OLA}}^1(L(\lambda),L(\mu))\neq 0$. Then $d(\lambda)-d(\mu)\in\mathbb Z_{\leq 0}$.
\end{lemma}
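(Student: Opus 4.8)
The plan is to realize a nonzero class in $\operatorname{Ext}^1_{\mathcal{OLA}}(L(\lambda),L(\mu))$ by a non-split short exact sequence $0 \to L(\mu) \to E \to L(\lambda) \to 0$, with surjection $\pi\colon E \to L(\lambda)$, and to read off the inequality from the position of the weight $\lambda$ inside $E$. First I would use that $E$ is $\fh$-semisimple (condition (ii)) to lift the highest weight vector $v_\lambda\in L(\lambda)$ to a weight vector $\tilde v\in E$ of weight $\lambda$ with $\pi(\tilde v)=v_\lambda$; since $\fn v_\lambda=0$, this forces $\fn\tilde v\subseteq\ker\pi=L(\mu)$. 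The argument then branches on whether $\fn\tilde v=0$.

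If $\fn\tilde v\neq 0$, then some positive root vector $x\in\fg_\alpha$ (with $\alpha\in\Delta^+$) sends $\tilde v$ to a nonzero vector of weight $\lambda+\alpha$ lying in $L(\mu)$. As $L(\mu)$ is a highest weight module of highest weight $\mu$, all its weights are $\le\mu$, whence $\lambda+\alpha\le\mu$ and $\mu-\lambda\in\langle\Delta^+\rangle_{\mathbb Z_{\ge 0}}$. By the observation recorded just before the lemma, this gives $d(\mu)-d(\lambda)\in\mathbb Z_{\ge 0}$, that is, $d(\lambda)-d(\mu)\in\mathbb Z_{\le 0}$, as wanted.

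If instead $\fn\tilde v=0$, then $\tilde v$ is a genuine highest weight vector of weight $\lambda$, so $U(\fg)\tilde v$ is a highest weight submodule of $E$ mapping onto $L(\lambda)$. Because the extension is non-split and $L(\mu)$ is simple, $U(\fg)\tilde v\cap L(\mu)$ must be all of $L(\mu)$, so $E=U(\fg)\tilde v$ is itself a highest weight module of highest weight $\lambda$. By the large annihilator condition $\fg_{n,n}\tilde v=0$ for some $n$, and Frobenius reciprocity (exactly as in the proof of Theorem \ref{simple}) presents $E$ as a quotient of $M_n(\lambda)$; in particular $[M_n(\lambda):L(\mu)]\neq 0$. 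Corollary \ref{cor:mult} then yields $\mu\le_{fin}\lambda$, so $\lambda-\mu$ is a sum of positive simple roots. Since every simple root is $\fb$-finite and $d$ vanishes on $\fb$-finite roots (immediate from the explicit description of the finite roots and the definition of $d$), linearity of $d$ gives $d(\lambda)-d(\mu)=d(\lambda-\mu)=0\in\mathbb Z_{\le 0}$. The degenerate case $\lambda=\mu$ is trivial.

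I expect the main obstacle to be the second case: one must argue that the lifted highest weight vector actually generates the whole extension, so that the structural results already proved for parabolically induced modules apply, and then invoke the finiteness/linkage content of Corollary \ref{cor:mult} to confine $\lambda-\mu$ to the $\mathbb{Z}_{\ge 0}$-span of the finite simple roots, on which $d$ is identically zero. The first case is comparatively soft, being only a weight estimate inside the highest weight module $L(\mu)$. Thus the whole proof reduces to the dichotomy ``$\tilde v$ remains highest weight or not,'' with each branch feeding into results established earlier in this section.
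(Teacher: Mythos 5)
Your proof is correct and follows essentially the same route as the paper's: a non-split extension $0\to L(\mu)\to E\to L(\lambda)\to 0$, a two-case analysis, with one case yielding $\mu-\lambda\in\langle\Delta^+\rangle_{\mathbb Z_{>0}}$ (handled by the degree-monotonicity observation) and the other realizing $E$ as a quotient of $M_n(\lambda)$ so that Corollary \ref{cor:mult} forces $\lambda-\mu$ to be a sum of finite simple roots, on which $d$ vanishes. The only difference is cosmetic: you organize the dichotomy by whether the lifted weight vector $\tilde v$ is $\fn$-invariant, whereas the paper branches on the weight comparison between $\lambda$ and $\mu$; your version in fact spells out the details the paper compresses into ``as otherwise the sequence would split.''
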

\begin{proof}  Recall that if $M$ is   a  $\fg$-module and $\lambda\in\fh^*$, then $M_\lambda$ is the weight space of  weight  $\lambda$,  $$ M_{\lambda}:=\{m\in M| hm=\lambda(h)m \; \forall h\in \fh\}.$$       

    Consider a nonsplit exact sequence in $\mathcal{OLA}$
  $$0\to L(\mu)\to M\to L(\lambda)\to 0. $$
  Since $M$ is a $\fh$-semisimple, a standard argument shows that $\lambda\neq\mu$.

  We claim that that either  $\mu-\lambda\in  \mathbb \langle\Delta^+\rangle_{\mathbb{Z}_{> 0}}$ or
  $\lambda-\mu\in  \mathbb  \langle\Delta^+\rangle_{\mathbb{Z}_{> 0}}$. Indeed, assume $\lambda-\mu\not\in \mathbb \langle\Delta^+\rangle_{\mathbb{Z}_{> 0}}$. Then  the weight space $M_\mu$ must be a subspace of $U(\fb)M_\lambda$ as otherwise  the sequence would split. Therefore
  $\mu-\lambda\in  \mathbb \langle\Delta^+\rangle_{\mathbb{Z}_{> 0}}$.

  If  $\mu-\lambda\in  \mathbb \langle\Delta^+\rangle_{\mathbb{Z}_{> 0}}$, then $d(\lambda)-d(\mu)\in\mathbb Z_{\leq 0}$. If  $\lambda-\mu\in  \mathbb  \langle\Delta^+\rangle_{\mathbb{Z}_{> 0}}$, then $M$ is isomorphic to a quotient of $M_n(\lambda)$ for some $n$. Therefore
  $m(\lambda,\mu)\neq 0$. By Corollary \ref{cor:mult}  $\lambda-\mu$ is a sum of simple positive roots, and hence $d(\lambda)=d(\mu)$.
\end{proof}

\begin{corollary}\label{cor:ind} If $M\in\mathcal{OLA}$ is indecomposable, then $d(\nu)-d(\nu')\in \mathbb Z$ for any two weights of $M$.
\end{corollary}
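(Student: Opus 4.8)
The plan is to turn the function $d$ into a grading of $M$ by the group $\mathbb C/\mathbb Z$ and to read off the claim from indecomposability. First I would note that $d$ is the restriction to the space $\langle\tilde I\rangle_{\mathbb C}$ of eligible weights of a linear functional (the one given by the displayed formula defining $d$), so in particular $d(\nu+\nu')=d(\nu)+d(\nu')$. Next I must check that $d$ is actually defined on every weight of $M$, i.e. that $\operatorname{supp}M\subset\langle\tilde I\rangle_{\mathbb C}$. This follows from the large annihilator condition (\ref{anncond}): a weight vector $m\in M_\nu$ is killed by $\fg^c_J$ for some cofinite $J\subset\tilde I$, and since $h_j\in\fh\cap\fg^c_J$ for every $j\in J$, we get $\nu_j=\nu(h_j)=0$ for all $j\in J$; thus $\nu$ has finite support and is eligible.

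The key computation is that $d(\alpha)\in\mathbb Z$ for every root $\alpha\in\Delta$. This is a short case check, and here it is convenient that by Corollary \ref{cor:BD} we have excluded the type $B_\infty$ (where $d(\varepsilon_i)=\frac12\notin\mathbb Z$) and may assume $\Delta$ is of type $A_\infty$, $C_\infty$ or $D_\infty$. For $A_\infty$ one has $d(\varepsilon_i-\varepsilon_j)\in\{0,\pm1\}$ depending on the signs of $i,j$; for $C_\infty$ and $D_\infty$ one gets $d(\varepsilon_i-\varepsilon_j)=0$ and $d(\pm(\varepsilon_i+\varepsilon_j))=\pm1$, together with $d(\pm2\varepsilon_i)=\pm1$ in the symplectic case. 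In every case $d(\alpha)\in\mathbb Z$.

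With these two facts the decomposition is immediate. For each coset $c\in\mathbb C/\mathbb Z$ set $M^{(c)}:=\bigoplus_{\nu:\,d(\nu)+\mathbb Z=c}M_\nu$. Because $M$ is $\fh$-semisimple, $M=\bigoplus_{c}M^{(c)}$ as a vector space. Since $\fh$ preserves each weight space and $\fg_\alpha M_\nu\subset M_{\nu+\alpha}$ with $d(\nu+\alpha)\equiv d(\nu)\pmod{\mathbb Z}$, each $M^{(c)}$ is a $\fg$-submodule; it inherits conditions (i)--(iii), so $M^{(c)}\in\mathcal{OLA}$. Hence $M=\bigoplus_c M^{(c)}$ is a direct sum decomposition in $\mathcal{OLA}$, and indecomposability of $M$ forces all but one summand to vanish. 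Consequently all weights $\nu$ of $M$ have the same value of $d$ modulo $\mathbb Z$, which is exactly $d(\nu)-d(\nu')\in\mathbb Z$.

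I expect the only point needing genuine care to be the eligibility of every weight of $M$, ensuring that $d$ is defined on $\operatorname{supp}M$; the root computation is routine. Let me note that Lemma \ref{lem:ext} furnishes the sharper relation $d(\lambda)-d(\mu)\in\mathbb Z_{\leq0}$ whenever $\operatorname{Ext}^1_{\mathcal{OLA}}(L(\lambda),L(\mu))\neq0$, so one could instead argue via the linkage of composition factors inside an indecomposable object; the grading argument above, however, is self-contained and requires no finiteness assumption on the length of $M$.
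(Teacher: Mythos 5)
Your argument is correct in substance, but it takes a genuinely different route from the paper. The paper states this as an immediate consequence of Lemma \ref{lem:ext}: nonvanishing of $\operatorname{Ext}^1_{\mathcal{OLA}}$ between simple objects forces an integral difference of degrees, so the simple constituents of an indecomposable object (which has an exhaustive socle filtration by Proposition \ref{soc}) are linked within a single class modulo $\mathbb Z$, and the weights of each simple constituent differ from its highest weight by elements of the root lattice, on which $d$ takes integer values. Your proof replaces this linkage argument by an explicit $\mathbb C/\mathbb Z$-grading of $M$ itself: since $d(\alpha)\in\mathbb Z$ for every root $\alpha$ (here you correctly invoke the exclusion of $B_\infty$, where $d(\pm\varepsilon_i)=\pm\frac12$ would break the argument), the subspaces $M^{(c)}=\bigoplus_{d(\nu)+\mathbb Z=c}M_\nu$ are $\fg$-submodules and $M=\bigoplus_c M^{(c)}$, so indecomposability leaves a single summand. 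This is self-contained, bypasses Lemma \ref{lem:ext}, socle filtrations and any linkage formalism, and applies verbatim to objects of infinite length; what it costs is having to verify by hand that every weight of an object of $\mathcal{OLA}$ is eligible, a point the paper absorbs through Theorem \ref{simple} (all weights of a simple constituent are eligible since the highest weight is).

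That eligibility step is the one place needing repair. For $\fg=\mathfrak{sl}(\infty)$ the elements $h_j=v_j\otimes w_j$ do \emph{not} lie in $\fg$ — only traceless finite combinations do, cf. the definition $\fh=\langle h_i\,|\,i\in I\rangle_{\mathbb C}\cap\mathfrak{sl}(\infty)$ — so your claim that $h_j\in\fh\cap\fg^c_J$, hence $\nu_j=0$ for $j\in J$, fails in that case. The fix is short: $h_i-h_j\in\fg_J\subset\fg^c_J$ for $i,j\in J$, so $\nu(h_i)=\nu(h_j)=:c$ for all $i,j\in J$; since every element of $\fh$ is a finite traceless sum $\sum a_ih_i$, one computes
\begin{equation*}
\nu\Bigl(\sum_i a_ih_i\Bigr)=\sum_{i\notin J}a_i\nu_i+c\sum_{i\in J}a_i=\sum_{i\notin J}a_i(\nu_i-c),
\end{equation*}
so $\nu$ coincides, as a functional on $\fh$, with the eligible weight $\sum_{i\notin J}(\nu_i-c)\varepsilon_i$. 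For $\fg=\mathfrak{o}(\infty),\mathfrak{sp}(\infty)$ your argument stands as written, since there $h_j\in\fg_J=\fg^c_J$ for $\pm\varepsilon_j\in J$. With this correction your proof is complete.
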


We say that a simple module $L(\lambda)\in \mathcal{OLA}$ {\it has degree} $d$ if $d(\lambda)=d$.

\begin{lemma}\label{lem:filtration} Let $M\in\mathcal{OLA}$ have a simple constituent of degree $d\in\mathbb C$ an let the degree of every simple constituent of
  $M$ belong to $d+\mathbb{Z}_{\leq 0}$.
  Then there exists a unique submodule $N\subset M$ such that any simple constituent  of $N$ has degree $d$, and every simple constituent of
  $M/N$ has degree lying in   $d+\mathbb{Z}_{<0}$.
\end{lemma}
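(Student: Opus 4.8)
The plan is to take $N$ to be the largest submodule of $M$ all of whose simple constituents have degree $d$, and then to verify separately the three requirements: that $N$ indeed has only degree-$d$ constituents, that $M/N$ has only constituents of degree $<d$, and uniqueness. Throughout I use that $\mathcal{OLA}$ is closed under subquotients, so that every simple subquotient of an object of $\mathcal{OLA}$ is of the form $L(\nu)$ by Theorem \ref{simple}, and that its degree $d(\nu)$ is well defined.

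First I would define $N$ as the sum of all submodules $N'\subseteq M$ whose simple constituents all have degree $d$. A finite sum of such $N'$ is a quotient of their direct sum, hence again has only degree-$d$ constituents. To see that $N$ itself has only degree-$d$ constituents, suppose some $L(\nu)$ with $d(\nu)<d$ were a simple subquotient $A/B$ of $N$. Choosing $v\in A$ mapping to a highest weight vector of $L(\nu)$, the cyclic submodule $U(\fg)v\subseteq N$ surjects onto $L(\nu)$; by Proposition \ref{fl} it has finite length, and being finitely generated it lies inside a finite partial sum $N'_1+\dots+N'_k$, all of whose constituents have degree $d$. This contradicts $L(\nu)$ being a constituent of $U(\fg)v$, so $N$ has only degree-$d$ constituents.

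Next I would show $M/N$ has only constituents of degree $<d$, arguing along the exhaustive socle filtration of $M/N$ furnished by Proposition \ref{soc}. For the socle, any simple submodule of $M/N$ of degree $d$ lifts to a submodule $N''\supsetneq N$ whose constituents are all of degree $d$, contradicting the maximality of $N$; hence $\soc(M/N)$ has only degree-$<d$ constituents. For the inductive step, assume $\soc^i(M/N)$ has only degree-$<d$ constituents, and suppose a simple $L(\nu)$ of degree $d$ occurs in $\soc\big((M/N)/\soc^i(M/N)\big)$. Let $P\subseteq M/N$ be the preimage of this copy of $L(\nu)$. Since $\soc^i(M/N)$ has only weights of degree $<d$ while $L(\nu)$ has only weights $\le\nu$, the module $P$ has no weight strictly above $\nu$ and $P_\nu$ is one-dimensional; hence any $0\ne v\in P_\nu$ satisfies $\fn v=0$. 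Thus $W:=U(\fg)v$ is a finite-length (Proposition \ref{fl}) local highest weight module with top $L(\nu)$ and radical $\operatorname{rad}(W)=W\cap\soc^i(M/N)$, whose constituents have degree $<d$. The second Loewy layer of $W$ then yields $\Ext^1_{\mathcal{OLA}}(L(\nu),L(\mu))\neq 0$ for some constituent $L(\mu)$ of $\operatorname{rad}(W)$, with $d(\mu)<d=d(\nu)$, contradicting Lemma \ref{lem:ext}. By exhaustiveness of the socle filtration, every constituent of $M/N$ has degree $<d$. Uniqueness is then immediate: if $N_1,N_2$ both satisfy the conclusion, the composite $N_1\hookrightarrow M\to M/N_2$ has image that is simultaneously a quotient of $N_1$ (only degree-$d$ constituents) and a submodule of $M/N_2$ (only degree-$<d$ constituents), hence $0$; so $N_1\subseteq N_2$, and by symmetry $N_1=N_2$.

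The main obstacle is the middle step: I must rule out a top-degree constituent sitting \emph{above} lower-degree ones anywhere in the possibly infinite module, not merely in the socle, since the finiteness statements alone do not forbid such a configuration abstractly. Propositions \ref{fl} and \ref{soc} reduce the problem to finite-length local modules, where the standard second-Loewy-layer argument converts any such configuration into a nonzero $\Ext^1$ between simples that is forbidden by Lemma \ref{lem:ext}. The delicate point making this reduction valid is the verification that the chosen lift $v$ is a genuine highest weight vector, which relies on the boundedness of degrees by $d$ together with the absence of weights above $\nu$ in the preimage $P$.
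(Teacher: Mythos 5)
Your proof is correct and follows essentially the same route as the paper's: both take $N$ to be the largest submodule whose constituents all have degree $d$, and both rule out a degree-$d$ constituent of $M/N$ by climbing the socle filtration and converting such a constituent into a nonsplit extension of $L(\nu)$ by a lower-degree simple, contradicting Lemma~\ref{lem:ext}. The only difference is one of detail: you spell out the points the paper leaves implicit (the sum-of-all-submodules construction instead of an appeal to maximality, the extraction of the highest weight vector $v$, the second-Loewy-layer argument, and the uniqueness of $N$).
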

\begin{proof} Let $N$ be some maximal (possibly zero) submodule of $M$ whose simple subquotients have degree $d$. We claim that the degrees of all simple subquotients  of $M/N$ lie in $d+\mathbb{Z}_{<0}$. Indeed, if we assume the contrary, then at some level of the socle filtration of $M$ there is a simple constituent $L(\mu)$ of degree $d'=d+l$ for $l\in \mathbb{Z}_{<0}$,
  and there is a simple constituent $L(\lambda)$ of degree $d$  at the next level with a nontrivial extension of $L(\lambda)$ by $L(\mu)$. This contradicts
  Lemma \ref{lem:ext}.
\end{proof}
\begin{corollary}\label{cor:filtration}  Let $M\in\mathcal{OLA}$ satisfy the condition of Lemma \ref{lem:filtration}. Then $M$ has an exhausting canonical
  filtration
\begin{equation}\label{canonical}
  0=D_0(M)\subset D_1(M)\subset D_2(M)\subset\dots
\end{equation}
  such that all simple constituents of $D_{{i}}(M)/D_{{i}-1}(M)$ have degree $d-{i}+1$.
  \end{corollary}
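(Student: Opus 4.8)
The plan is to construct the filtration \eqref{canonical} by iterating Lemma \ref{lem:filtration} one degree at a time, and then to prove that it is exhausting using the fact that finitely generated objects of $\mathcal{OLA}$ have finite length (Proposition \ref{fl}). Before starting I would record the easy observation that $\mathcal{OLA}$ is closed under quotients: each of the defining conditions (i)--(iii) is inherited by a quotient module (if $\fg^c_J$ annihilates $m$ it annihilates the image of $m$, and $\fh$-semisimplicity as well as local nilpotency of $\fn$ pass to quotients). Thus every quotient $M/D_{i-1}(M)$ occurring below again lies in $\mathcal{OLA}$, so Lemma \ref{lem:filtration} is applicable to it.

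Now I would build the $D_i(M)$ inductively. Set $D_0(M)=0$, and suppose $D_{i-1}(M)$ has been defined so that every simple constituent of $M/D_{i-1}(M)$ has degree in $(d-i+1)+\mathbb Z_{\leq 0}$; for $i=1$ this is precisely the hypothesis of the corollary. Applying Lemma \ref{lem:filtration} to $M/D_{i-1}(M)$ with degree parameter $d-i+1$ yields a unique submodule $\bar N_i\subset M/D_{i-1}(M)$ all of whose simple constituents have degree $d-i+1$, while every simple constituent of $(M/D_{i-1}(M))/\bar N_i$ has degree in $(d-i+1)+\mathbb Z_{<0}=(d-i)+\mathbb Z_{\leq 0}$. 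I then define $D_i(M)$ to be the preimage of $\bar N_i$ under the projection $M\to M/D_{i-1}(M)$. By the isomorphism theorems $D_i(M)/D_{i-1}(M)\cong\bar N_i$, so its simple constituents all have degree $d-i+1$, and $M/D_i(M)\cong(M/D_{i-1}(M))/\bar N_i$ satisfies the inductive hypothesis at the next stage. (If at some step $M/D_{i-1}(M)$ happens to have no constituent of degree exactly $d-i+1$, the maximal-submodule construction in the proof of Lemma \ref{lem:filtration} simply returns $\bar N_i=0$, so $D_i(M)=D_{i-1}(M)$ and the required statement on $D_i/D_{i-1}$ holds vacuously.) This produces the increasing chain \eqref{canonical} with the asserted property on successive quotients.

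For exhaustiveness I would first record the characterization that $D_i(M)$ is the \emph{largest} submodule of $M$ all of whose simple constituents have degree $\geq d-i+1$. Indeed, additivity of Jordan--H\"older multiplicities along the filtration shows that the constituents of $D_i(M)$ are exactly those of $D_1(M),\,D_2(M)/D_1(M),\dots,D_i(M)/D_{i-1}(M)$, of degrees $d,d-1,\dots,d-i+1$, hence all $\geq d-i+1$. Conversely, if $P\subset M$ is any submodule whose constituents have degree $\geq d-i+1$, then its image in $M/D_i(M)$ is a quotient of $P$, so its constituents have degree $\geq d-i+1$; but $M/D_i(M)$ has only constituents of degree $\leq d-i$, forcing this image to vanish and hence $P\subseteq D_i(M)$. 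Now take any $m\in M$. By Proposition \ref{fl} the cyclic submodule $U(\fg)m$ has finite length, so its finitely many simple constituents all have degree $\geq d-i+1$ once $i$ is large enough; by the characterization $U(\fg)m\subseteq D_i(M)$, whence $m\in D_i(M)$. Therefore $M=\bigcup_{i\geq 0}D_i(M)$, i.e.\ the filtration is exhausting.

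The only genuinely nonformal point is this last step: \emph{a priori} the degrees of the constituents of $M$ are unbounded below, so \eqref{canonical} is an infinite filtration, and one must rule out elements ``escaping to infinity.'' The finite-length property of finitely generated modules (Proposition \ref{fl}) is exactly what captures each element at a finite stage; everything else is degree bookkeeping controlled by Lemma \ref{lem:filtration} (and, through it, Lemma \ref{lem:ext}).
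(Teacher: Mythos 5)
Your proof is correct and follows exactly the route the paper intends: the corollary is stated without proof as an immediate iteration of Lemma \ref{lem:filtration}, which is precisely your inductive construction (including the sensible handling of steps where no constituent of the top degree occurs). Your verification of exhaustiveness via the maximality characterization of $D_i(M)$ and Proposition \ref{fl} supplies a detail the paper leaves implicit, and it is the right argument.
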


We define $\mathcal{OLA}(\fs)$ as the category of $\fs$-modules which satisfy conditions (i)-(iii) of Section 3 for the Borel subalgebra $\fb\cap\fs$ of $\fs$  where $\fb$ is our fixed perfect Borel subalgebra of $\fg$. Next, we denote by $\mathcal{OLA}^d$ the full subcategory of $\mathcal {OLA}$ consisting of all objects whose simple constituents have degree $d$. Obviously,  
$\mathcal{OLA}^d$ is a Serre subcategory of $\mathcal {OLA}$.  For
any $M\in \mathcal{OLA}^d$ we set
$$M^+:=\bigoplus_{d(\mu)=d} M_\mu.$$
Then clearly $M^+$ is an object of $\mathcal{OLA}(\fs)$. Furthermore $(\cdot)^+: \mathcal{OLA}^d\to \mathcal{OLA}(\fs)$ is an exact faithful functor.
\begin{lemma}\label{lem:multred} For any objects $M$ and $L(\lambda)$ of $\mathcal{OLA}^d$, the multiplicity $[M:L(\lambda)]$ equals the multiplicity
  $[M^+:L_\fs(\lambda)]$ in $\mathcal{OLA}(\fs)$, $L_{\fs}(\lambda)$ being a simple $\fs$-module with $\fb\cap\fs$-highest weight $\lambda$.
\end{lemma}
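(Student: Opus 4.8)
The plan is to reduce the whole statement to the single assertion that the functor $(\cdot)^+$ carries each simple object $L(\lambda)\in\mathcal{OLA}^d$ to the simple $\fs$-module $L_\fs(\lambda)$. Granting this, the lemma follows at once: since $(\cdot)^+$ is exact and faithful, applying it to an exhausting filtration of $M$ with simple subquotients $L(\mu_i)$ (all with $d(\mu_i)=d$, because $M\in\mathcal{OLA}^d$) produces an exhausting filtration of $M^+$ whose subquotients are the simple modules $L_\fs(\mu_i)$. As distinct eligible weights give non-isomorphic simple modules $L_\fs(\nu)$, counting the copies of $L_\fs(\lambda)$ on the two sides yields $[M^+:L_\fs(\lambda)]=[M:L(\lambda)]$.

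The heart of the argument, and the step I expect to be the main obstacle, is therefore to prove $(L(\lambda))^+\simeq L_\fs(\lambda)$. I would set this up by grading $\fg$ according to the values of the functional $d$ on roots. Because $d$ vanishes exactly on the finite roots and is strictly positive on the positive infinite roots, the degree-zero part of $\fg$ is precisely $\fs$, the strictly positive part lies in $\fn$, and the strictly negative part lies in $\bar\fn$. This grading descends to every module in $\mathcal{OLA}$, since $d$ is constant on each weight space; for $M\in\mathcal{OLA}^d$ the top graded piece is exactly $M^+$. The action of $\fs$ preserves degree, which is what makes $M^+$ an $\fs$-module, while the positive part of $\fg$ raises degree and the negative part lowers it.

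With this grading in hand, I would fix $\lambda$ with $d(\lambda)=d$ and take any nonzero $\fs$-submodule $N\subset(L(\lambda))^+$. Since $(L(\lambda))^+$ is the top-degree component, $N$ is annihilated by the positive part of $\fg$; combining this with the triangular decomposition $U(\fg)=U(\fg^{<0})U(\fs)U(\fg^{>0})$ and the simplicity of $L(\lambda)$ gives $L(\lambda)=U(\fg)N=U(\fg^{<0})N$. Extracting the top-degree component and using that $U(\fg^{<0})$ is non-positively graded with scalar degree-zero part then forces $N=(L(\lambda))^+$. Hence $(L(\lambda))^+$ is a simple $\fs$-module; as it contains the $\fb$-highest weight vector $v_\lambda$, which is a $\fb\cap\fs$-highest weight vector of weight $\lambda$, it must be $L_\fs(\lambda)$. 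The only points requiring care are the identification of the degree-zero part of $\fg$ with $\fs$ (a direct check on roots from the formula for $d$) and the observation that $U(\fg^{<0})$ has scalar part in degree zero; both are routine, so the essential content is the grading argument that pins down $(L(\lambda))^+$.
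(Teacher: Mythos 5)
Your proof is correct and follows essentially the same route as the paper's: the paper disposes of this lemma by declaring it ``similar to the proof of Lemma~\ref{lem:KL}'', whose key mechanism is exactly yours --- an exact functor cutting out a distinguished piece (there the $h$-eigenspace, here the top $d$-degree component $(\cdot)^+$) which carries simple objects to the corresponding simple modules over the smaller algebra, so that multiplicities can be matched termwise along a filtration. Your PBW/short-grading argument showing $(L(\lambda))^+\simeq L_{\fs}(\lambda)$ just supplies in full the detail that the paper leaves to the reader.
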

\begin{proof} The proof is similar to the proof of Lemma \ref{lem:KL} and we leave it to the reader.
  \end{proof}

\section{$\mathcal{OLA}$ as a highest weight category}
In this section we show that $\mathcal{OLA}$ is a highest weight category according to Definition 3.1 of \cite{CPS}. In particular, this requires introducing standard objects parametrized by the eligible weights, as well as specifying an interval-finite partial order on  eligible  weights. 

\subsection{Standard objects} 

Consider the endofunctor $\Phi$ in the category $\fg$-$\mathrm{mod}$
$$\Phi(M):=\lim_{\longrightarrow} \Phi_n(M),\;\Phi_n(M):=M^{\fg_{n,n}}.$$
The restriction of $\Phi$ to $\mathcal{OLA}$ is the identity functor.

Recall also that, if $\Gamma_{\fh}(M)$ stands for the largest $\fh$-semisimple submodule of a $\fg$-module $M$, then $\Gamma_{\fh}$ is a well-defined endofunctor
on the category $\fg$-mod.

 Let now $M$ be a $\mathfrak g$-module such that the elements of $\mathfrak{n}$ act locally nilpotently on $\Gamma_{\fh}(M)$. Then $\Phi\circ\Gamma_{\mathfrak h}(M)$ is an object of $\mathcal{OLA}$, and
for any $X$ in $\mathcal{OLA}$ we have a canonical isomorphism 
\begin{equation}\label{tag}
\operatorname{Hom}_{\mathfrak g}(X,\Phi\circ \Gamma_{\mathfrak h} (M))=\operatorname{Hom}_{\mathfrak g}(X,M).
\end{equation}

Let $\operatorname{Ext}_{\fg,\fh}^i$ denote the ext-group in the category $\mathcal C_{\fg,\fh}$ of $\fg$-modules semisimple over $\fh$. As $\mathcal{OLA}$ is clearly
a Serre subcategory in $\mathcal C_{\fg,\fh}$, the equality
\begin{equation}\label{tag1}
  \operatorname{Ext}^1_{\fg,\fh}(M,N)=\operatorname{Ext}^1_{\mathcal{OLA}}(M,N)
  \end{equation}
 holds for any two objects $M,N$ of $\mathcal{OLA}$.
Moreover, if $X$ is an object of $\mathcal C_{\fg,\fh}$ with locally nilpotent action of the elements of $\mathfrak n$ and $N=\Phi  (X)$, we have
$\operatorname{Hom}_{\fg}(M,X/N)=0$ and hence an embedding
\begin{equation}\label{tag2}
\operatorname{Ext}^1_{\fg,\fh}(M,N)\hookrightarrow\operatorname{Ext}^1_{\fg,\fh}(M,X).
\end{equation}

For any eligible weight  $\lambda\in\langle\tilde I\rangle_{\mathbb C}$ let
$$\tilde W(\lambda):=\Gamma_{\mathfrak h}(\operatorname{Coind}^{\mathfrak g}_{\bar{\mathfrak b}}\mathbb C_\lambda).$$

We define the {\it standard object} $W(\lambda)$ by setting
$W(\lambda):=\Phi (\tilde W(\lambda) )$.
Since the elements of $\fn$ act locally nilpotently on $\tilde W(\lambda)$,
we conclude that $W(\lambda)$ is an object in $\mathcal{OLA}$.

\begin{lemma}\label{lem:coVerma} 
  
(a) The $\fg$-module $W(\lambda)$ is indecomposable with simple socle $L(\lambda)$;

(b) $\operatorname{dim}\operatorname{Hom}_{\fg}(M_n(\lambda),W(\mu))=\delta_{\lambda,\mu}$ for  sufficiently large $n$;

(c) $\operatorname{Ext}_{\mathcal{OLA}}^1(M_n(\lambda),W(\mu))=0$ for sufficiently large $n$.
\end{lemma}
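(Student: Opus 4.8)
Via the adjunction $(\ref{tag})$ and Frobenius reciprocity for coinduction, I would first treat (a) by computing the $\fn$-singular vectors of $\tilde W(\lambda)$. A weight-$\nu$ vector killed by $\fn$ is the same datum as a nonzero homomorphism from the Verma module $M(\nu):=\operatorname{Ind}^{\fg}_{\fb}\mathbb C_\nu$, and since $M(\nu)$ is $\fh$-semisimple such a map automatically lands in $\Gamma_{\fh}$; hence
$$\Hom_{\fg}\big(M(\nu),\tilde W(\lambda)\big)=\Hom_{\fg}\big(M(\nu),\operatorname{Coind}^{\fg}_{\bar\fb}\mathbb C_\lambda\big)=\Hom_{\bar\fb}\big(M(\nu),\mathbb C_\lambda\big)=\Hom_{\fh}\big(M(\nu)/\bar\fn M(\nu),\mathbb C_\lambda\big).$$
As $M(\nu)$ is free of rank one over $U(\bar\fn)$, its $\bar\fn$-coinvariants are $\mathbb C_\nu$, so this space is $\delta_{\nu\lambda}\mathbb C$: thus $\tilde W(\lambda)$ carries a one-dimensional space of $\fn$-singular vectors, all of weight $\lambda$. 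Running the same reciprocity with the simple object $L(\lambda)$ in place of $M(\nu)$ shows $\Hom_{\fg}(L(\lambda),\operatorname{Coind}^{\fg}_{\bar\fb}\mathbb C_\lambda)\neq 0$; since $L(\lambda)\in\mathcal{OLA}$ is $\fh$-semisimple it embeds into $\Phi\circ\Gamma_{\fh}(\operatorname{Coind}^{\fg}_{\bar\fb}\mathbb C_\lambda)=W(\lambda)$, so in particular $W(\lambda)\neq 0$. By Theorem \ref{simple} any simple submodule of $W(\lambda)\subseteq\tilde W(\lambda)$ is a highest weight module, whose highest weight vector is $\fn$-singular and hence a multiple of the unique weight-$\lambda$ singular vector; therefore $L(\lambda)$ is the only simple submodule and $\operatorname{soc}W(\lambda)=L(\lambda)$. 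Finally, by Proposition \ref{soc} every nonzero object of $\mathcal{OLA}$ has nonzero, and in fact essential, socle, so a simple socle forces $W(\lambda)$ to be indecomposable.

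For (b), applying $(\ref{tag})$ with $M=\operatorname{Coind}^{\fg}_{\bar\fb}\mathbb C_\mu$ (whose $\Gamma_{\fh}$ carries a locally nilpotent $\fn$-action) and $X=M_n(\lambda)\in\mathcal{OLA}$, followed by coinduction reciprocity, gives
$$\Hom_{\fg}\big(M_n(\lambda),W(\mu)\big)=\Hom_{\fg}\big(M_n(\lambda),\operatorname{Coind}^{\fg}_{\bar\fb}\mathbb C_\mu\big)=\Hom_{\bar\fb}\big(M_n(\lambda),\mathbb C_\mu\big).$$
It then remains to compute the $\bar\fn$-coinvariants of $M_n(\lambda)=\operatorname{Ind}^{\fg}_{\fp_n}\mathbb C_\lambda$. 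Decomposing $\bar\fn=\bar\fm_n\oplus(\bar\fn\cap\fg_{n,n})$, the freeness over $U(\bar\fm_n)$ gives $M_n(\lambda)/\bar\fm_n M_n(\lambda)\cong\mathbb C_\lambda$ as an $\fl_n$-module, on which $\bar\fn\cap\fg_{n,n}$ acts trivially because $\mathbb C_\lambda$ is a trivial $\fg_{n,n}$-module; hence $M_n(\lambda)/\bar\fn M_n(\lambda)\cong\mathbb C_\lambda$ and the displayed $\Hom$-space is $\delta_{\lambda\mu}\mathbb C$, for every $n$ large enough for $M_n(\lambda)$ to be defined.

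For (c) I would first reduce to a homology computation and then exploit stabilization in $n$. Using $(\ref{tag1})$ and then $(\ref{tag2})$ with $X=\tilde W(\mu)$ and $N=\Phi(\tilde W(\mu))=W(\mu)$,
$$\operatorname{Ext}^1_{\mathcal{OLA}}\big(M_n(\lambda),W(\mu)\big)=\operatorname{Ext}^1_{\fg,\fh}\big(M_n(\lambda),W(\mu)\big)\hookrightarrow\operatorname{Ext}^1_{\fg,\fh}\big(M_n(\lambda),\tilde W(\mu)\big),$$
so it suffices to annihilate the last group. Since $\operatorname{Coind}^{\fg}_{\bar\fb}=\Hom_{U(\bar\fb)}(U(\fg),-)$ is exact ($U(\fg)$ being free over $U(\bar\fb)$) and right adjoint to the exact restriction functor, a Shapiro-type reduction, carried out on $\fh$-weight-zero parts so that $\Gamma_{\fh}$ is harmless, identifies this Ext group with $\operatorname{Ext}^1_{\bar\fb,\fh}(M_n(\lambda),\mathbb C_\mu)$, which the Hochschild--Serre reduction for $\bar\fb=\fh\oplus\bar\fn$ expresses through the weight-$\mu$ component of $H_1(\bar\fn,M_n(\lambda))$.

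The decisive step, and the one I expect to be the main obstacle, is the computation of this homology. Writing $\bar\fn_n:=\bar\fn\cap\fg_{n,n}$ and using the vector-space decomposition $\fg=\bar\fm_n\oplus\fp_n$, one identifies $M_n(\lambda)|_{\bar\fn}\cong\operatorname{Ind}^{\bar\fn}_{\bar\fn_n}\mathbb C_\lambda$, the module induced from the trivial $\bar\fn_n$-action (here $\bar\fm_n$ is an ideal of $\bar\fn$ and the generator of $M_n(\lambda)$ is killed by $\bar\fn_n$). Shapiro's lemma for Lie algebra homology then yields
$$H_\bullet(\bar\fn,M_n(\lambda))\cong H_\bullet(\bar\fn_n,\mathbb C)\otimes\mathbb C_\lambda,$$
so that $H_1(\bar\fn,M_n(\lambda))\cong\big(\bar\fn_n/[\bar\fn_n,\bar\fn_n]\big)\otimes\mathbb C_\lambda$ has $\fh$-weights exactly $\{\lambda-\alpha\}$ with $\alpha$ a simple root of $\fg_{n,n}$. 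For fixed eligible (hence finitely supported) weights $\lambda,\mu$, the weight $\mu$ occurs here only if $\lambda-\mu$ is a simple root of $\fg_{n,n}$; but the simple roots of $\fg_{n,n}$ are supported on $\tilde I\setminus(S_{min}(n)\cup S_{max}(n))$, which for $n$ exceeding the supports of $\lambda$ and $\mu$ is disjoint from the support of $\lambda-\mu$. Hence $H_1(\bar\fn,M_n(\lambda))_\mu=0$ for all sufficiently large $n$, giving the desired vanishing and explaining the role of ``$n$ large''. The points needing care are the precise form of the Shapiro/adjunction reduction in the $\fh$-relative setting and the identification of $M_n(\lambda)|_{\bar\fn}$ as an induced module; once these are secured the weight-support argument is immediate.
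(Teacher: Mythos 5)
Your parts (a) and (b) coincide with the paper's own proof: both rest on the adjunction (\ref{tag}) together with Frobenius reciprocity for coinduction, plus the computation of $\bar\fn$-coinvariants of $L(\lambda)$, respectively of $M_n(\lambda)$; your preliminary computation of the $\fn$-singular vectors of $\tilde W(\lambda)$ via Verma modules is a harmless elaboration of the same idea, and your appeal to Theorem \ref{simple} and Proposition \ref{soc} to pass from ``simple socle'' to indecomposability is exactly what the paper does implicitly. For (c) the skeleton is again the paper's: reduce via (\ref{tag1}) and (\ref{tag2}) and a Shapiro-type argument to the vanishing of $\operatorname{Ext}^1_{\bar\fb,\fh}(M_n(\lambda),\mathbb C_\mu)$. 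But the finish differs. The paper uses freeness of $M_n(\lambda)$ over $U(\bar\fm_n)$ to reduce this group to $\operatorname{Ext}^1_{\bar\fb\cap\fg_{n,n},\fh\cap\fg_{n,n}}(\mathbb C_\lambda,\mathbb C_\mu)$, which vanishes for large $n$ because $\lambda,\mu$ then restrict to zero on $\fh\cap\fg_{n,n}$ and the trivial module has no self-extensions relative to the Cartan. You instead identify $M_n(\lambda)|_{\bar\fn}$ with $\operatorname{Ind}^{\bar\fn}_{\bar\fn\cap\fg_{n,n}}\mathbb C_\lambda$, apply Shapiro's lemma for Lie algebra homology to get $H_1(\bar\fn,M_n(\lambda))\cong\bigl(\bar\fn_n/[\bar\fn_n,\bar\fn_n]\bigr)\otimes\mathbb C_\lambda$ with $\bar\fn_n:=\bar\fn\cap\fg_{n,n}$, and kill the weight-$\mu$ component by disjointness of supports. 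The two finishes are essentially dual to one another: the paper's cocycle vanishing is also, underneath, the statement that $\bar\fn_n/[\bar\fn_n,\bar\fn_n]$ has no weight-zero component over $\fh\cap\fg_{n,n}$, so both arguments turn on the same two facts (freeness over $U(\bar\fm_n)$ and triviality of $\bar\fn_n$ on the generator). What your version buys is an explicit list of the only possible obstructions, namely $\mu=\lambda-\alpha$ with $\alpha$ a simple root of $\fg_{n,n}$, hence a concrete criterion for how large $n$ must be; what it costs is having to secure the $\fh$-equivariant homological machinery (relative $\operatorname{Ext}$ versus $\bar\fn$-homology, Shapiro for homology in the relative setting), which you correctly flag as the delicate points, whereas the paper's route needs only the elementary vanishing of $\operatorname{Ext}^1_{\bar\fb\cap\fg_{n,n},\fh\cap\fg_{n,n}}(\mathbb C,\mathbb C)$. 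I see no genuine gap in your argument.
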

\begin{proof} As we already pointed out, the  elements of $\mathfrak{n}$ act locally nilpotently on           $ \tilde W(\lambda)$. Therefore,            by (\ref{tag}) and Frobenius reciprocity we have
$$\Hom_{\mathfrak g}(L(\lambda),W(\mu))=
\operatorname{Hom}_{\mathfrak g}(L(\lambda),\operatorname{Coind}^{\mathfrak g}_{\bar{\mathfrak b}}\mathbb C_\mu)=
\operatorname{Hom}_{\bar{\mathfrak b}}(L(\lambda),\mathbb C_\mu).$$
Now (a) follows from the isomorphism of $\bar\fb$-modules  $L(\lambda)/\bar{\fb}L(\lambda)\simeq \mathbb C_\lambda$.

Let us prove (b). We have
$$\operatorname{Hom}_{\mathfrak g}(M_n(\lambda),W(\mu))=
\operatorname{Hom}_{\mathfrak g}(M_n(\lambda),\operatorname{Coind}^{\mathfrak g}_{\bar{\mathfrak b}}\mathbb C_\mu)=
\operatorname{Hom}_{\bar{\mathfrak b}}(M_n(\lambda),\mathbb C_\mu),$$
and (b) follows from the isomorphism of $\bar{\fb}$-modules $M_n(\lambda)/\bar{\fn}M_n(\lambda)\simeq \mathbb C_\lambda$.

 Next, we  prove (c). By (\ref{tag1}) and (\ref{tag2}) it suffices to show that
$$\operatorname{Ext}_{\fg,\fh}^1(M_n(\lambda),\tilde W(\lambda))=0.$$

We use Shapiro's Lemma:
$$\operatorname{Ext}^1_{\fg, \fh}(M_n(\lambda),\tilde W(\lambda))=
\operatorname{Ext}^1_{\bar{\mathfrak{b}}, \fh}(M_n(\lambda),\mathbb C_\mu).$$
Since $M_n(\lambda)$ is free over the nilpotent ideal $\bar{\mathfrak m}_n$, we have 
$\operatorname{Ext}^1_{\fh+\bar{\fm}_n,\fh}(M_n(\lambda),\mathbb C_\mu)=0$.
Therefore
$$\operatorname{Ext}^1_{\bar{\mathfrak b},\fh}(M_n(\lambda),\mathbb C_\mu)=\operatorname{Ext}^1_{\bar{\mathfrak b}\cap{\fg_{n,n}},\fh\cap\fg_{n,n}}(\mathbb C_\lambda,\mathbb C_\mu).$$
For sufficiently large $n$, we have $\lambda|_{\fh\cap\fg_{n,n}}=\mu|_{\fh\cap\fg_{n,n}}=0$. This implies
$$\operatorname{Ext}^1_{\bar{\mathfrak b}\cap{\fg_{n,n}},\fh\cap\fg_{n,n}}(\mathbb C_\lambda,\mathbb C_\mu)=
\operatorname{Ext}^1_{\bar{\mathfrak b}\cap{\fg_{n,n}},\fh\cap\fg_{n,n}}(\mathbb C,\mathbb C)=0.$$
\end{proof}

\begin{lemma}\label{extst}  If $\Ext_{\mathcal{OLA}}^1(L(\lambda),W(\mu))\neq 0$ or $\Ext_{\fg,\fh}^1(L(\lambda),\tilde W(\mu))\neq 0$, then $\mu<_{fin}\lambda$. 
\end{lemma}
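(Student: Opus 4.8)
The plan is to reduce both hypotheses to the single condition $\Ext^1_{\fg,\fh}(L(\lambda),\tilde W(\mu))\neq 0$ and then to locate a copy of $L(\mu)$ among the simple constituents of the radical of a parabolically induced module $M_n(\lambda)$. First I would record that $W(\mu)=\Phi(\tilde W(\mu))$, so that (\ref{tag1}) gives $\Ext^1_{\mathcal{OLA}}(L(\lambda),W(\mu))=\Ext^1_{\fg,\fh}(L(\lambda),W(\mu))$, while (\ref{tag2}), applied with $X=\tilde W(\mu)$ and $N=W(\mu)=\Phi(X)$, yields an embedding $\Ext^1_{\fg,\fh}(L(\lambda),W(\mu))\hookrightarrow\Ext^1_{\fg,\fh}(L(\lambda),\tilde W(\mu))$. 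Hence either hypothesis of the lemma forces $\Ext^1_{\fg,\fh}(L(\lambda),\tilde W(\mu))\neq 0$, and it suffices to treat this case.

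Next I would fix $n$ large enough that $\lambda|_{\fh\cap\fg_{n,n}}=\mu|_{\fh\cap\fg_{n,n}}=0$, and use the canonical surjection $M_n(\lambda)\twoheadrightarrow L(\lambda)$ with kernel $K$. From the isomorphism $M_n(\lambda)\simeq{\bf S}(\bar{\fm}_n)\otimes\mathbb C_\lambda$ of $\fl_n$-modules and the fact that $\bar{\fm}_n$ carries only negative weights, the weight space $M_n(\lambda)_\lambda$ is one-dimensional; since the surjection is nonzero there, $\lambda\notin\operatorname{supp}K$. By Corollary \ref{cor:mult} every simple constituent of $K$ has the form $L(\nu)$ with $\nu\leq_{fin}\lambda$, and because $\lambda$ is not a weight of $K$ we get $\nu\neq\lambda$, i.e. $\nu<_{fin}\lambda$. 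Applying $\Hom_{\fg}(-,\tilde W(\mu))$ in $\mathcal C_{\fg,\fh}$ to the sequence $0\to K\to M_n(\lambda)\to L(\lambda)\to 0$ gives the exact fragment
$$\Hom_{\fg}(K,\tilde W(\mu))\to\Ext^1_{\fg,\fh}(L(\lambda),\tilde W(\mu))\to\Ext^1_{\fg,\fh}(M_n(\lambda),\tilde W(\mu)).$$
The Shapiro-lemma computation in the proof of Lemma \ref{lem:coVerma}(c) shows precisely that the rightmost term vanishes for $n$ large, so $\Ext^1_{\fg,\fh}(L(\lambda),\tilde W(\mu))\neq 0$ forces $\Hom_{\fg}(K,\tilde W(\mu))\neq 0$.

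Finally I would analyze a nonzero map $\psi\colon K\to\tilde W(\mu)$. Its image $\im\psi$ is a nonzero quotient of $K$, hence an object of $\mathcal{OLA}$, since $\mathcal{OLA}$ is closed under subquotients; consequently every vector of $\im\psi$ is annihilated by some $\fg_{m,m}$, so that $\im\psi\subseteq\bigcup_m\tilde W(\mu)^{\fg_{m,m}}=\Phi(\tilde W(\mu))=W(\mu)$. As a nonzero finite-length submodule, $\im\psi$ has nonzero socle, and by Lemma \ref{lem:coVerma}(a) the socle of $W(\mu)$ is the simple module $L(\mu)$; therefore $\soc(\im\psi)=L(\mu)\subseteq\im\psi$. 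Thus $L(\mu)$ is a constituent of $K$, and by the previous paragraph $\mu<_{fin}\lambda$. I expect the main obstacle to be this last step: one must pin down that the image of $\psi$ lands inside $W(\mu)$ rather than merely in $\tilde W(\mu)$ — using the identification of $W(\mu)=\Phi(\tilde W(\mu))$ as the largest $\mathcal{OLA}$-submodule of $\tilde W(\mu)$ — and then exploit the simplicity of $\soc W(\mu)$ to conclude that $L(\mu)$ genuinely occurs in $K$.
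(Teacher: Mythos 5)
Your proof is correct and is essentially the paper's own argument: both rest on the exact sequence $0\to N(\lambda)\to M_n(\lambda)\to L(\lambda)\to 0$, the vanishing of $\Ext^1$ against $M_n(\lambda)$ (Lemma \ref{lem:coVerma}(c), resp.\ its Shapiro-lemma computation for $\tilde W(\mu)$), the fact that all simple constituents of the kernel satisfy $\nu<_{fin}\lambda$, and the simplicity of the socle of $W(\mu)$. The only difference is organizational --- the paper treats the $W(\mu)$ case directly in $\mathcal{OLA}$ and declares the $\tilde W(\mu)$ case ``similar,'' whereas you use (\ref{tag1}) and (\ref{tag2}) to reduce the $W(\mu)$ case to the $\tilde W(\mu)$ case and then spell out the details the paper leaves implicit (that the image of a nonzero map $K\to\tilde W(\mu)$ lands inside $W(\mu)$, and the socle argument producing $L(\mu)$ as a constituent of $K$).
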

\begin{proof}   Claim (c) of
  Lemma \ref{lem:coVerma}
  implies the existence of the surjective map  $$ \Hom_{\fg}(N(\lambda),W(\mu))\to\Ext_{\mathcal{OLA}}^1(L(\lambda),W(\mu))$$ where $N(\lambda)$ is the kernel of the canonical projection
  $M_n(\lambda)\to L(\lambda)$. The $\mathfrak{g}$-module $N(\lambda)$ has finite length and all simple constituents $L(\nu)$ of $N(\lambda)$ satisfy $\nu<_{fin}\lambda$.  Hence $\mu<_{fin}\lambda$. The statement for
  $\tilde W(\mu)$ is similar.
\end{proof}

\begin{corollary}\label{cor:stinj} If $\lambda_{fin}^+=\{\lambda\}$, then $W(\lambda)$ is injective in $\mathcal{OLA}$.
\end{corollary}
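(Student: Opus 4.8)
The plan is to show that $W(\lambda)$ is injective by verifying the vanishing of $\Ext^1_{\mathcal{OLA}}(-,W(\lambda))$ against all simple objects, which Lemma \ref{extst} already makes available under our hypothesis, and then to bootstrap this from simple objects to finite-length objects and finally to all of $\mathcal{OLA}$. The first step is immediate: the condition $\lambda_{fin}^+=\{\lambda\}$ says exactly that there is no eligible weight $\nu$ with $\lambda<_{fin}\nu$. Applying Lemma \ref{extst} with the roles so that $\Ext^1_{\mathcal{OLA}}(L(\nu),W(\lambda))\neq 0$ forces $\lambda<_{fin}\nu$, we conclude that $\Ext^1_{\mathcal{OLA}}(L(\nu),W(\lambda))=0$ for every eligible $\nu$. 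Since by Theorem \ref{simple} every simple object of $\mathcal{OLA}$ is of the form $L(\nu)$ for an eligible weight $\nu$, this gives $\Ext^1_{\mathcal{OLA}}(S,W(\lambda))=0$ for all simple $S$.

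Next I would propagate this vanishing to all finite-length objects. Given $0\to X'\to X\to S\to 0$ with $S$ simple and $X'$ of strictly smaller length, the long exact sequence for $\Ext^1_{\mathcal{OLA}}(-,W(\lambda))$ sandwiches $\Ext^1_{\mathcal{OLA}}(X,W(\lambda))$ between $\Ext^1_{\mathcal{OLA}}(S,W(\lambda))$ and $\Ext^1_{\mathcal{OLA}}(X',W(\lambda))$, both of which vanish by the first step and the inductive hypothesis. By induction on length, $\Ext^1_{\mathcal{OLA}}(X,W(\lambda))=0$ for every finite-length $X$. Equivalently, for any inclusion $A\subseteq B$ in $\mathcal{OLA}$ with $B$ of finite length, every morphism $f\colon A\to W(\lambda)$ extends to $B$: the pushout of $0\to A\to B\to B/A\to 0$ along $f$ defines a class in $\Ext^1_{\mathcal{OLA}}(B/A,W(\lambda))=0$, so the pushout splits and $f$ extends.

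To reach arbitrary objects I would invoke a Baer-type criterion resting on local finiteness. Every object of $\mathcal{OLA}$ is the directed union of its finitely generated subobjects, and by Proposition \ref{fl} each of these has finite length; hence $\mathcal{OLA}$ is locally finite. Given an embedding $A\hookrightarrow B$ and a morphism $f\colon A\to W(\lambda)$, apply Zorn's lemma to the poset of pairs $(A',f')$ with $A\subseteq A'\subseteq B$ and $f'|_A=f$, ordered by extension; chains have upper bounds by taking unions. A maximal pair $(A_0,f_0)$ must satisfy $A_0=B$: otherwise choose a finite-length subobject $B_1\subseteq B$ with $B_1\not\subseteq A_0$, and note $A_0\cap B_1\subseteq B_1$ are both of finite length, so by the previous step $f_0|_{A_0\cap B_1}$ extends to $g\colon B_1\to W(\lambda)$; then $f_0$ and $g$ agree on $A_0\cap B_1$ and glue, via $(A_0+B_1)/A_0\cong B_1/(A_0\cap B_1)$, to a morphism $A_0+B_1\to W(\lambda)$ strictly extending $(A_0,f_0)$, contradicting maximality. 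Thus $f$ extends to all of $B$ and $W(\lambda)$ is injective.

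The only genuinely non-formal point is this last step. Because objects of $\mathcal{OLA}$ may have infinite length, one cannot conclude injectivity from $\Ext^1$-vanishing against simples by a naive inductive argument; the local splittings on finite-length pieces need not glue automatically, and the Zorn/Baer argument above, which depends essentially on the local finiteness furnished by Proposition \ref{fl}, is precisely what circumvents this inverse-limit obstruction. I expect that to be the crux, with everything else being formal homological algebra once the first step supplies the input from Lemma \ref{extst}.
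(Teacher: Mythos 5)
Your proof is correct and follows essentially the same route as the paper: the paper's own (two-line) proof likewise invokes Proposition \ref{fl} to reduce injectivity to the vanishing of $\operatorname{Ext}^1_{\mathcal{OLA}}(L(\mu),W(\lambda))$ for all eligible $\mu$, which is then immediate from Lemma \ref{extst} under the hypothesis $\lambda_{fin}^+=\{\lambda\}$. The only difference is that you spell out what the paper leaves implicit, namely the induction on length and the Zorn/Baer-type passage from finite-length subobjects to arbitrary objects via local finiteness.
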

\begin{proof} By Proposition \ref{fl}, it suffices to check that $\Ext^1_{\mathcal{OLA}}(L(\mu),W(\lambda))=0$ for every eligible weight $\mu$. Thus, the statement
  is an immediate corollary of Lemma \ref{extst}.
  \end{proof}

\subsection{Injective objects}
Let us prove now that $\mathcal {OLA}$ has enough injective objects. Recall that $\fs$ denotes the subalgebra generated by $\mathfrak{h}$ and by all root spaces corresponding to  finite roots. Let
$L_{\fs}(\mu)$ be the simple $\mathfrak{b}\cap\mathfrak{s}$-highest weight module in the category $\bar{ \mathcal{O}_\fs}$ studied in \cite{Nam}. Since $\mu$ is almost dominant, $L_{\fs}(\mu)$
has an (indecomposable) injective envelope $I_{\fs}(\mu)$, see \cite{Nam}. Furthermore, let
$$\tilde W_{\fs}(\nu):=\Gamma_\fh(\operatorname{Coind}^\fs_{\fs\cap\bar\fb}\mathbb C_{\mu}).$$
It follows from \cite{Nam} that $I_{\fs}(\mu)$ has a finite filtration
  $$0=I_\fs(\mu)^0\subset I_\fs(\mu)^1\subset\dots\subset I_\fs(\mu)^k=I_\fs(\mu),$$
  such that $I_\fs(\mu)^i/I_\fs(\mu)^{i-1}\simeq \tilde W_\fs(\mu_i)$ with $\mu_1=\mu$ and $\mu_i>_{fin}\mu$ for $i>1$.

Set $\bar\fp:=\bar\fb+\fs$ and 
$$\tilde I(\mu):=\Gamma_\fh(\operatorname{Coind}^\fg_{\fs\cap\bar\fb}I_\fs(\mu)).$$
Since $\tilde W(\nu)\simeq \Gamma_\fh(\operatorname{Coind}^\fg_{\bar\fp}\mathbb C_\nu)$, we obtain that $\tilde I(\mu)$ has a finite filtration
\begin{equation}\label{tagfilt}
  0=\tilde I(\mu)^0\subset \tilde I(\mu)^1\subset\dots\subset \tilde I(\mu)^k=\tilde I(\mu),
  \end{equation}
  such that $\tilde I(\mu)^i/\tilde I(\mu)^{i-1}\simeq \tilde W(\mu_i)$ with $\mu_1=\mu$ and $\mu_i>_{fin}\mu$ for $i>1$.

Now{,} consider  $L(\lambda) $  for an arbitrary eligible weight. If $\lambda\notin\mu_{fin}^+$  then
$\Ext^1_{\fg,\fh}(L(\lambda),\tilde I(\mu))=0$ by Lemma \ref{extst}, while $\lambda\in \mu^+_{fin}$ implies $d(\lambda)=d(\mu)$. Therefore Shapiro's lemma implies
$$\Ext^1_{\fg,\fh}(L(\lambda),\tilde I(\mu))=\Ext^1_{\bar\fp,\fh}(L(\lambda),I_\fs(\mu)).$$
Furthermore, there is an isomorphism of  $\fs$-modules $L(\lambda)=L_{\fs}(\lambda)\oplus \bar \fr L(\lambda)$ where $\bar\fr$ is the nil-radical of $\bar\fp$. We have  $\Ext^1_{\bar\fp,\fh}(\bar\fr L(\lambda), I_{\fs}(\mu))=0$ as $d(\nu)<d(\mu)$  for any
weight $\nu$ of $\bar\fr L(\lambda)$. Consequently,
$$\Ext^1_{\bar\fp,\fh}(L(\lambda),I_\fs(\mu))=\Ext^1_{\fs,\fh}(L_{\fs}(\lambda),I_\fs(\mu))=0.$$
As a result, we obtain  $\Ext^1_{\fg,\fh}(L(\lambda),\tilde I(\mu))=0$ for any $\lambda$, and hence $I(\mu):=\Phi{(} \tilde I(\mu){)}$ is an injective object in $\mathcal{OLA}$
with socle $L(\mu)$.

\begin{proposition}\label{filtration} For any eligible weight $\mu$, the injective module $I(\mu)$ admits a finite filtration
  $$0=I(\mu)^0\subset I(\mu)^1\subset\dots\subset I(\mu)^k=I(\mu),$$
  such that $I(\mu)^i/I(\mu)^{i-1}\simeq W(\mu_i)$ with $\mu_1=\mu$ and $\mu_i>_{fin}\mu$ for $i>1$.
\end{proposition}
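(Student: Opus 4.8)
The plan is to apply the functor $\Phi$ to the filtration (\ref{tagfilt}) of $\tilde I(\mu)$ and to check that $\Phi$ keeps it exact, so that the successive quotients become $W(\mu_i)=\Phi(\tilde W(\mu_i))$. First I would set $I(\mu)^i:=\Phi(\tilde I(\mu)^i)$. For a submodule $A\subset B$ one has $A^{\fg_{n,n}}=A\cap B^{\fg_{n,n}}$, hence $\Phi(A)=A\cap\Phi(B)$; thus the $I(\mu)^i$ are genuine submodules of $I(\mu)=\Phi(\tilde I(\mu))$ forming an increasing chain with $I(\mu)^0=0$ and $I(\mu)^k=I(\mu)$. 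Since taking $\fg_{n,n}$-invariants is left exact and filtered colimits are exact, $\Phi$ is left exact; applying it to $0\to\tilde I(\mu)^{i-1}\to\tilde I(\mu)^i\to\tilde W(\mu_i)\to 0$ therefore produces a canonical injection $j_i\colon I(\mu)^i/I(\mu)^{i-1}\hookrightarrow\Phi(\tilde W(\mu_i))=W(\mu_i)$, with $\mu_1=\mu$ and $\mu_i>_{fin}\mu$ for $i>1$ inherited from (\ref{tagfilt}).

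All that then remains is to show that each $j_i$ is surjective, i.e. that $\Phi$ is exact on (\ref{tagfilt}); this is the heart of the matter, as $\Phi$ is not exact in general (the obstruction to lifting a $\fg_{n,n}$-invariant vector of $\tilde W(\mu_i)$ to a $\fg_{m,m}$-invariant vector of $\tilde I(\mu)^i$ is a class in $\varinjlim_m H^1(\fg_{m,m},\tilde I(\mu)^{i-1})$). I would establish surjectivity by a multiplicity count after passing to $\fs$. Since $\mu_i\geq_{fin}\mu$, the difference $\mu_i-\mu$ is a sum of positive simple roots and hence $d(\mu_i)=d(\mu)$ (cf. the proof of Lemma \ref{lem:ext}); consequently $I(\mu)$ and all $W(\mu_i)$ lie in $\mathcal{OLA}^{d(\mu)}$. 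Applying the exact faithful functor $(\cdot)^+\colon\mathcal{OLA}^{d(\mu)}\to\mathcal{OLA}(\fs)$ to the chain $\{I(\mu)^i\}$ and to the maps $j_i$ yields a filtration of $I(\mu)^+$ with subquotients $(I(\mu)^i/I(\mu)^{i-1})^+$ and injections $j_i^+\colon(I(\mu)^i/I(\mu)^{i-1})^+\hookrightarrow W(\mu_i)^+$.

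The key step is the identification of these $\fs$-objects with the modules coming from \cite{Nam}, namely $I(\mu)^+\cong I_\fs(\mu)$ and $W(\mu_i)^+\cong\tilde W_\fs(\mu)$ (more precisely $\tilde W_\fs(\mu_i)$), which should follow from the constructions $\tilde I(\mu)=\Gamma_\fh\operatorname{Coind}^\fg_{\fs\cap\bar\fb}I_\fs(\mu)$ and $\tilde W(\mu_i)\cong\Gamma_\fh\operatorname{Coind}^\fg_{\bar\fp}\mathbb C_{\mu_i}$ by an argument parallel to Lemmas \ref{lem:KL} and \ref{lem:multred}: passing to the degree-$d(\mu)$ weight spaces strips off the $\bar\fr$-directions and recovers the coinduction inside $\fs$. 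Granting this, \cite{Nam} furnishes a \emph{finite} filtration of $I_\fs(\mu)$ with quotients $\tilde W_\fs(\mu_i)$, so $[I_\fs(\mu):L_\fs(\nu)]=\sum_{i=1}^{k}[\tilde W_\fs(\mu_i):L_\fs(\nu)]$ for every $\nu$. Comparing this with $[I(\mu)^+:L_\fs(\nu)]=\sum_i[(I(\mu)^i/I(\mu)^{i-1})^+:L_\fs(\nu)]$ (from the exact filtration above) and with the term-wise inequalities coming from $j_i^+$, I get term-wise equality of multiplicities. Hence $\operatorname{coker}(j_i^+)=(\operatorname{coker}j_i)^+$ has vanishing multiplicities, so by Lemma \ref{lem:multred} $[\operatorname{coker}j_i:L(\nu)]=0$ for all eligible $\nu$, and since $\operatorname{coker}j_i\in\mathcal{OLA}$ has an exhausting socle filtration by Proposition \ref{soc}, it must be zero. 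Thus each $j_i$ is an isomorphism and $I(\mu)^i/I(\mu)^{i-1}\cong W(\mu_i)$, as required.

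The main obstacle I anticipate is precisely the identification $I(\mu)^+\cong I_\fs(\mu)$ and $W(\mu_i)^+\cong\tilde W_\fs(\mu_i)$: one must verify that forming the top-degree weight spaces commutes with the $\Gamma_\fh\circ\Phi$ construction and with coinduction, which is the same bookkeeping used in Lemma \ref{lem:KL}. As an alternative to this whole second half, one could instead prove directly that $\varinjlim_m H^1(\fg_{m,m},\tilde W(\nu))=0$ for every eligible $\nu$ --- via Shapiro's lemma for the pair $(\fg_{m,m},\fg_{m,m}\cap\bar\fb)$, exactly as in the injectivity computation preceding the proposition --- and then deduce $\varinjlim_m H^1(\fg_{m,m},\tilde I(\mu)^{i-1})=0$ by induction along (\ref{tagfilt}), which makes $\Phi$ exact on (\ref{tagfilt}) outright and gives the surjectivity of all $j_i$ at once.
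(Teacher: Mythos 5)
Your opening moves (setting $I(\mu)^i:=\Phi(\tilde I(\mu)^i)$ and using left exactness of $\Phi$ to produce injections $j_i\colon I(\mu)^i/I(\mu)^{i-1}\hookrightarrow W(\mu_i)$) are correct and frame the problem exactly as the paper does: everything reduces to exactness of $\Phi$ on (\ref{tagfilt}). The multiplicity count you then propose has a genuine gap, in fact two. First, the claim that $I(\mu)$ and the $W(\mu_i)$ lie in $\mathcal{OLA}^{d(\mu)}$ is false: the equality $d(\mu_i)=d(\mu)$ concerns only the highest weights, whereas by Corollary \ref{cor:filtration} and Proposition \ref{cor:stand} each $W(\mu_i)$ has an infinite canonical filtration (\ref{canonical}) with a nonzero layer for every $k\geq 0$, so its simple constituents do not all have degree $d(\mu)$. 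One can still take degree-$d(\mu)$ weight spaces (an exact functor on weight modules), but then Lemma \ref{lem:multred} does not apply to $\operatorname{coker}j_i$, and the inference from $(\operatorname{coker}j_i)^+=0$ to $[\operatorname{coker}j_i:L(\nu)]=0$ for all $\nu$ collapses: vanishing of the top-degree part is perfectly compatible with $\operatorname{coker}j_i$ being a nonzero module all of whose constituents have degree strictly below $d(\mu)$. Ruling this out would require knowing that $W(\mu_i)$ has no nonzero quotient concentrated in lower degrees (equivalently, that it is generated by its top-degree layer), which neither the paper nor your argument establishes. Second, the identifications with the objects of \cite{Nam} are not the right ones: Proposition \ref{cor:stand} (with $k=0$) gives $W(\mu_i)^+\simeq W_{\fs}(\mu_i)=\Phi_{\fs}(\tilde W_{\fs}(\mu_i))$, and $(\cdot)^+$ lands in $\mathcal{OLA}(\fs)$, i.e.\ its values satisfy the large annihilator condition, while Nampaisarn's category imposes no such condition. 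So either you must first prove that $I_{\fs}(\mu)$ and the $\tilde W_{\fs}(\mu_i)$ satisfy the large annihilator condition for $\fs$ (asserted nowhere), or the comparison has to run through $\Phi_{\fs}(I_{\fs}(\mu))$ and $\Phi_{\fs}(\tilde W_{\fs}(\mu_i))$ --- and then the multiplicity identity you need is precisely the statement that $\Phi_{\fs}$ is exact on Nampaisarn's filtration, i.e.\ the proposition being proved, transported from $\fg$ to $\fs$. The main route is therefore circular or incomplete.

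Your closing alternative is, in substance, the paper's actual proof: the paper establishes $\Ext^1_{\fg_{n,n},\fh_{n,n}}(\mathbb C,\tilde W(\mu_i))=0$, concludes that $\Phi_n=\Hom_{\fg_{n,n}}(\mathbb C,\cdot)$ carries (\ref{tagfilt}) to a filtration of $\Phi_n(\tilde I(\mu))$ with quotients $\Phi_n(\tilde W(\mu_i))$, and passes to the direct limit; your induction along the filtration is the same bookkeeping. But the point you wave at --- ``Shapiro's lemma for the pair $(\fg_{m,m},\fg_{m,m}\cap\bar\fb)$'' --- is exactly where the work lies, and it does not apply as stated: the restriction of $\tilde W(\mu_i)$ to $\fg_{m,m}$ is \emph{not} coinduced from $\fg_{m,m}\cap\bar\fb$, since the directions of $\fg$ transverse to $\fg_{m,m}+\bar\fb$ survive. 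The paper deals with this by first proving the $\fg_{n,n}$-module isomorphism $\tilde W(\mu_i)\simeq\Gamma_{\fh}(\Hom_{\mathbb C}({\bf S}(\fb/(\fg_{n,n}\cap\fb)),\tilde W_{\fg_{n,n}}(0)))$, noting that ${\bf S}(\fb/(\fg_{n,n}\cap\fb))$ is an object of $\tilde{\mathbb T}_{\fg_{n,n}}$, and only then deducing the $\Ext^1$-vanishing from the vanishing of $\Ext^1_{\fg_{n,n},\fh_{n,n}}(L,\tilde W_{\fg_{n,n}}(0))$ for $L\in\mathcal{OLA}_{\fg_{n,n}}$. Without this identification (or a substitute for it), the vanishing of $\lim_{\longrightarrow}H^1(\fg_{m,m},\tilde W(\nu))$ remains unproved, so even your fallback, while the right strategy, is missing its key step.
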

\begin{proof} The idea is  to apply  $\Phi$ to (\ref{tagfilt}). If $n$ is sufficiently large, there is an isomorphism of $\fg_{n,n}$-modules:
  $$\tilde W(\mu_i)\simeq \Gamma_\fh(\Hom_{\mathbb C}({\bf S}(\fb/(\fg_{n,n}\cap\fb)), \tilde W_{\fg_{n,n}}(0)))$$
  where $ \tilde W_{\fg_{n,n}}(0)$ is the obvious analogue of $\tilde W(0)$. Moreover, ${\bf S}(\fb/(\fg_{n,n}\cap\fb))$ is an object of $\tilde {\mathbb T}_{\fg_{n,n}}$.
  Since $\Ext^1_{\fg_{n,n},\fh_{n,n}}(L,\tilde W_{\fg_{n,n}}(0))$ for any object $L$ of $\mathcal{OLA}_{\fg_{n,n}}$, we get
    $\Ext^1_{\fg_{n,n},\fh_{n,n}}(\mathbb C,\tilde W(\mu_i))=0$. Hence $\Phi_n=\Hom_{\fg_{n,n}}(\mathbb C,\cdot)$ induces {a} filtration  
$$ 0=\Phi_n(\tilde I(\mu)^0)\subset \Phi_n{(}\tilde I(\mu)^1{)}\subset\dots\subset \Phi_n{(}\tilde I(\mu)^k{)}=\Phi_n{(}\tilde I(\mu){)},$$
such that $\Phi_n{(}\tilde I(\mu)^i{)}/\Phi_n{(}\tilde I(\mu)^{i-1}{)}\simeq  \Phi_n{(}\tilde W(\mu_i){)}$ with $\mu_1=\mu$ and $\mu_i>_{fin}\mu$ for $i>1$.
The statement follows by passing to the direct limit.
\end{proof}

\begin{proposition}\label{resolution} For any $\lambda\in\mathbb C\tilde I$, the module $W(\lambda)$ has a finite injective resolution
  $R^{\cdot}(\lambda)$ of length not greater
  than $|\lambda_{{fin}}^+|$ and satisfying the following properties:
\begin{enumerate}
\item if $I(\mu)$ appears in $R^{\cdot}(\lambda)$ then $\mu\geq_{fin}\lambda$;
\item the multiplicity of $I(\lambda)$ in  $R^{\cdot}(\lambda)$ equals $1$;
  \item the multiplicity of $I(\mu)$ in  $R^{\cdot}(\lambda)$ is finite for every $\mu$.
\end{enumerate}
\end{proposition}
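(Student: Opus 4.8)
The plan is to argue by induction on the cardinality of the finite up-set $\lambda_{fin}^+=\{\nu\mid \lambda\leq_{fin}\nu\}$, feeding the standard filtration of the injective hull $I(\lambda)$ from Proposition \ref{filtration} into the injective form of the horseshoe lemma. The whole argument is driven by one combinatorial fact: if $\mu_i>_{fin}\lambda$ then $(\mu_i)_{fin}^+\subsetneq\lambda_{fin}^+$ (any $\nu\geq_{fin}\mu_i$ satisfies $\nu>_{fin}\lambda$, while $\lambda\in\lambda_{fin}^+\setminus(\mu_i)_{fin}^+$), so passing from $W(\lambda)$ to the pieces appearing above it strictly decreases the size of the relevant up-set. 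Since these sets are finite, the induction is well-founded.

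For the base case I would take $\lambda_{fin}^+=\{\lambda\}$. Then Corollary \ref{cor:stinj} gives that $W(\lambda)$ is injective, and since $\operatorname{soc}W(\lambda)=L(\lambda)$ by Lemma \ref{lem:coVerma}(a) we have $W(\lambda)\simeq I(\lambda)$; the one-term complex $R^{\cdot}(\lambda)$ with $R^0(\lambda)=I(\lambda)$ is then the required resolution, of length $0$, with $I(\lambda)$ occurring exactly once and no other injectives. This clearly satisfies (1)--(3) and the length bound.

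For the inductive step I would start from the finite standard filtration of $I(\lambda)$ supplied by Proposition \ref{filtration}, whose bottom term is $I(\lambda)^1\simeq W(\lambda)$. This yields a short exact sequence $0\to W(\lambda)\to I(\lambda)\to C\to 0$, where $C:=I(\lambda)/W(\lambda)$ carries the quotient filtration with successive quotients $W(\mu_i)$ for $i\geq 2$, all satisfying $\mu_i>_{fin}\lambda$. By the combinatorial fact above each $|(\mu_i)_{fin}^+|<|\lambda_{fin}^+|$, so the inductive hypothesis furnishes a finite injective resolution $R^{\cdot}(\mu_i)$ for every piece, each term of which is a finite direct sum of indecomposable injectives $I(\nu)$ with $\nu\geq_{fin}\mu_i$. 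Iterating the injective horseshoe lemma along the finite filtration of $C$ produces a finite injective resolution $R^{\cdot}(C)$ with $R^{j}(C)=\bigoplus_{i\geq 2}R^{j}(\mu_i)$, and splicing it onto $0\to W(\lambda)\to I(\lambda)\to C\to 0$ gives $R^{\cdot}(\lambda)$ with $R^0(\lambda)=I(\lambda)$ and $R^{j}(\lambda)=R^{j-1}(C)$ for $j\geq 1$.

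Finally I would read off the claims. Each term is a finite direct sum of the $I(\nu)$, so the complex is finite and every $I(\mu)$ has finite multiplicity, giving (3); finiteness of $\lambda_{fin}^+$ moreover bounds the number of distinct $I(\nu)$ that can occur. Property (1) holds because $R^0(\lambda)=I(\lambda)$ contributes only $I(\lambda)$ while every $I(\nu)$ occurring in some $R^{\cdot}(\mu_i)$ has $\nu\geq_{fin}\mu_i>_{fin}\lambda$ by the inductive form of (1); the same estimate shows $I(\lambda)$ cannot reappear in positive degree, which is (2). The length satisfies $\ell(\lambda)=1+\max_i\ell(\mu_i)\leq 1+\bigl(|\lambda_{fin}^+|-2\bigr)=|\lambda_{fin}^+|-1\leq|\lambda_{fin}^+|$, using $|(\mu_i)_{fin}^+|\leq|\lambda_{fin}^+|-1$ and the inductive bound $\ell(\mu_i)\leq|(\mu_i)_{fin}^+|-1$. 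I expect the only real obstacle to be bookkeeping: confirming that the horseshoe lemma is available in $\mathcal{OLA}$ (it is, since $\mathcal{OLA}$ has enough injectives, established in the preceding subsection) and that its iteration along the finite filtration of $C$ genuinely yields cohomological terms that are finite direct sums of the $I(\nu)$; the triangularity and multiplicity statements then follow automatically.
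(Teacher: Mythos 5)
Your proof is correct and takes essentially the same route as the paper: the paper's entire proof is the single remark that the statement is an immediate consequence of Proposition \ref{filtration}, and your induction on $|\lambda_{fin}^+|$, feeding the filtration $0\to W(\lambda)\to I(\lambda)\to C\to 0$ into the horseshoe lemma and splicing, is precisely the standard unwinding of that claim. Your bookkeeping (the strict decrease $|(\mu_i)_{fin}^+|<|\lambda_{fin}^+|$, the identification $W(\lambda)\simeq I(\lambda)$ in the base case via Corollary \ref{cor:stinj} and Lemma \ref{lem:coVerma}(a), and the triangularity argument giving (1)--(3)) is accurate, and in fact yields the slightly sharper length bound $|\lambda_{fin}^+|-1$.
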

\begin{proof} Immediate consequence of Proposition \ref{filtration}.
  \end{proof}

  \begin{corollary}\label{extst2}

    (a) If $\Ext_{\mathcal{OLA}}^i(L(\lambda),W(\mu))\neq 0$ then $\mu\leq_{fin}\lambda$;

    (b)  $\dim\Ext_{\mathcal{OLA}}^i(L(\lambda),W(\mu))<\infty $  { for all $i\geq 0$};

    (c)  $\Ext_{\mathcal{OLA}}^i(L(\lambda),W(\mu))=0$ for $i>|\mu_{{fin}}^+|$.  
    \end{corollary}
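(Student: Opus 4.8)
The plan is to obtain all three statements at once from the finite injective resolution of $W(\mu)$ furnished by Proposition \ref{resolution}. Applying that proposition with $\mu$ in place of $\lambda$, the standard object $W(\mu)$ admits an injective resolution
\[
0\to W(\mu)\to R^0(\mu)\to R^1(\mu)\to\cdots\to R^k(\mu)\to 0
\]
of length $k\leq|\mu_{fin}^+|$, in which each term $R^i(\mu)$ is a direct sum of indecomposable injectives $I(\nu)$ with $\nu\geq_{fin}\mu$, the multiplicity of every $I(\nu)$ being finite. Since the category has enough injectives (as established just before Proposition \ref{filtration}), I would compute the Ext-groups as the cohomology of the complex obtained by applying $\Hom_{\fg}(L(\lambda),-)$:
\[
\Ext^i_{\mathcal{OLA}}(L(\lambda),W(\mu))=H^i\big(\Hom_{\fg}(L(\lambda),R^{\bullet}(\mu))\big).
\]

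The one computation that must be carried out carefully is $\Hom_{\fg}(L(\lambda),I(\nu))$. Any nonzero homomorphism from the simple module $L(\lambda)$ has simple image, which must lie in the socle of $I(\nu)$; but $I(\nu)$ has simple socle $L(\nu)$. Hence $\Hom_{\fg}(L(\lambda),I(\nu))=\Hom_{\fg}(L(\lambda),L(\nu))$, which is $\mathbb{C}$ when $\lambda=\nu$ (distinct eligible weights yield non-isomorphic simples, since they have distinct highest weights) and $0$ otherwise. Consequently $\dim\Hom_{\fg}(L(\lambda),R^i(\mu))$ equals the multiplicity of $I(\lambda)$ as a summand of $R^i(\mu)$, which is finite by property (3) of Proposition \ref{resolution}.

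From this the three claims follow immediately. For (c), since $R^i(\mu)=0$ for $i>k$ and $k\leq|\mu_{fin}^+|$, the complex vanishes in degrees $i>|\mu_{fin}^+|$, whence $\Ext^i_{\mathcal{OLA}}(L(\lambda),W(\mu))=0$ there. For (b), each cohomology space $H^i$ is a subquotient of $\Hom_{\fg}(L(\lambda),R^i(\mu))$, which is finite-dimensional by the previous paragraph, so $\dim\Ext^i_{\mathcal{OLA}}(L(\lambda),W(\mu))<\infty$. For (a), nonvanishing of $H^i$ forces $\Hom_{\fg}(L(\lambda),R^i(\mu))\neq 0$, so $I(\lambda)$ appears in $R^i(\mu)$; by property (1) of Proposition \ref{resolution} this gives $\lambda\geq_{fin}\mu$, that is, $\mu\leq_{fin}\lambda$. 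I do not anticipate a genuine obstacle here: the corollary is essentially a formal consequence of Proposition \ref{resolution} together with the socle computation, and the only delicate point is to confirm that the resolution of the \emph{second} argument $W(\mu)$ (rather than of $W(\lambda)$) carries exactly the positivity and finiteness data recorded in Proposition \ref{resolution}, which is immediate by relabeling.
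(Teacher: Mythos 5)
Your proof is correct and is precisely the argument the paper intends: the corollary is stated there without proof as an immediate consequence of Proposition \ref{resolution}, and your computation — resolving $W(\mu)$ injectively, applying $\Hom_{\fg}(L(\lambda),-)$, and using that $\Hom_{\fg}(L(\lambda),I(\nu))=\delta_{\lambda\nu}\mathbb{C}$ via the simple socle of $I(\nu)$ — fills in exactly the routine details. Note also that since $\mu_{fin}^+$ is finite, each term of the resolution is in fact a finite direct sum of indecomposable injectives, which makes the finiteness in (b) even more transparent.
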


\begin{proposition}\label{BGG} (Analogue of BGG reciprocity) The multiplicity $(I(\mu):W(\lambda))$ equals  $m(\lambda,\mu)$. 
\end{proposition}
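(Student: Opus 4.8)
The plan is to compute $\dim\Hom_{\fg}(M_n(\lambda),I(\mu))$ in two different ways, for $n$ sufficiently large, and to identify the two answers with $m(\lambda,\mu)$ and $(I(\mu):W(\lambda))$ respectively. Since $I(\mu)$ carries the finite filtration of Proposition \ref{filtration} with standard subquotients $W(\mu_i)$, only finitely many weights $\mu_i$ occur, so I may fix a single $n$ large enough that Lemma \ref{lem:coVerma}(b),(c) apply simultaneously to every $W(\mu_i)$ and that $[M_n(\lambda):L(\mu)]=m(\lambda,\mu)$ by Lemma \ref{lem:stabilization}.

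For the first computation, recall that $I(\mu)$ is injective in $\mathcal{OLA}$ with simple socle $L(\mu)$, so $\Hom_{\fg}(-,I(\mu))$ is exact on $\mathcal{OLA}$ and $\dim\Hom_{\fg}(L(\nu),I(\mu))=\delta_{\nu\mu}$ (a nonzero map from a simple object has simple image inside $\operatorname{soc}I(\mu)=L(\mu)$, and $\End_{\fg}L(\mu)=\mathbb C$ as $L(\mu)$ is a highest weight module with one-dimensional highest weight space). As $M_n(\lambda)$ has finite length by Lemma \ref{parfl}, applying the exact functor $\Hom_{\fg}(-,I(\mu))$ to a composition series and adding dimensions gives $\dim\Hom_{\fg}(M_n(\lambda),I(\mu))=[M_n(\lambda):L(\mu)]=m(\lambda,\mu)$.

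For the second computation, I apply $\Hom_{\fg}(M_n(\lambda),-)$ to the short exact sequences $0\to I(\mu)^{i-1}\to I(\mu)^{i}\to W(\mu_i)\to 0$ coming from the filtration. Since $\Ext^1_{\mathcal{OLA}}(M_n(\lambda),W(\mu_i))=0$ for the chosen $n$ by Lemma \ref{lem:coVerma}(c), an induction on $i$ (the base case being $I(\mu)^1=W(\mu_1)$) shows $\Ext^1_{\mathcal{OLA}}(M_n(\lambda),I(\mu)^{i})=0$ for all $i$; hence the connecting maps vanish and each sequence stays short exact on $\Hom$. Therefore $\dim\Hom_{\fg}(M_n(\lambda),I(\mu))=\sum_i\dim\Hom_{\fg}(M_n(\lambda),W(\mu_i))=\sum_i\delta_{\lambda,\mu_i}=(I(\mu):W(\lambda))$, where the middle equality is Lemma \ref{lem:coVerma}(b). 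Comparing the two computations yields $(I(\mu):W(\lambda))=m(\lambda,\mu)$.

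The only delicate point is the uniform choice of $n$: the stabilization of Lemma \ref{lem:stabilization} and the vanishing statements of Lemma \ref{lem:coVerma} each hold only for $n$ large, but the finiteness of the filtration of $I(\mu)$ makes this harmless. As a byproduct the argument shows $(I(\mu):W(\lambda))=\dim\Hom_{\fg}(M_n(\lambda),I(\mu))$ is independent of the chosen $W$-filtration, so the standard multiplicity is well defined.
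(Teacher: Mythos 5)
Your proof is correct and follows essentially the same route as the paper's: the paper likewise establishes $[M_n(\lambda):L(\mu)]=\dim\Hom_{\fg}(M_n(\lambda),I(\mu))=(I(\mu):W(\lambda))$, with the first equality from injectivity of $I(\mu)$ and the second from the $W$-filtration of Proposition \ref{filtration} together with Lemma \ref{lem:coVerma}(b),(c). Your write-up merely makes explicit the details the paper leaves implicit, in particular the uniform choice of large $n$ across the finitely many $\mu_i$ and the induction killing the connecting maps, which is a worthwhile clarification but not a different argument.
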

\begin{proof} Follows from the identity
  $$[M_n(\lambda):L(\mu)]=\dim\Hom_{\fg}(M_n(\lambda),I(\mu))=(I(\mu):W(\lambda)),$$
  where the second equality is a consequence of Lemma \ref{lem:coVerma}, (c).
  \end{proof}

\subsection{Jordan--H\"older multiplicities for standard objects}
Now we calculate the multiplicities of $[W(\lambda):L(\nu)]$.
We start {by} computing $\Phi_n(W(\lambda))$.

Recall the Lie subalgebra $\fs_n\subset \fk_n$. Consider the $\fs_n$-module
$$R(n,p):=\begin{cases}\bigoplus_{\mu\in\mathcal{P},\:|\mu|=p}\mathbb S_\mu(V^L_n)^*\boxtimes \mathbb S_\mu(V^R_n)\,\,\text{for}\,\,\fg=\mathfrak{sl}(\infty)\\
  \bigoplus_{\mu\in \mathcal{P},\:|\mu|=p}\mathbb S_{2\mu}(V_n^*)\,\,\text{for}\,\,\fg=\mathfrak{o}(\infty)\\
  \bigoplus_{\mu \in \mathcal{P},\:|\mu|=p}\mathbb S_{(2\mu)'}(V^*_n)\,\,\text{for}\,\,\fg=\mathfrak{sp}(\infty)
\end{cases}$$
where
$V_n$, $V^L_n$ and $V_n^R$ are introduced in the preamble to Section 4, $\mathcal{P}$ stands for the set of all partitions, and the superscript $'$ indicates conjugating a partition (transposing the corresponding Young diagram). Fix a decomposition of $\fh_n$-modules $\bar\fb\cap\fk_n=(\bar\fb\cap\fs_{n})\oplus\mathfrak{z}_n$   and set $\mathfrak z_n R(n,p)=0$ {in order} to define a $\bar\fb\cap\fk_n$-module structure  on $R(n,p)$.

\begin{lemma}\label{lem:phi} For sufficiently large $n$ there is an isomorphism of $\fk_n$-modules
  $$\Phi_n(W(\lambda))\simeq \bigoplus_{p\geq 0}\operatorname{Coind}^{\fk_n}_{\bar\fb\cap\fk_n}(R(n,p)\otimes \mathbb C_\lambda).$$
  \end{lemma}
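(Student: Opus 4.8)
The plan is to pass to $\fg_{n,n}$-invariants and reduce the whole computation to a multiplicity count inside $\mathbb T_{\fg_{n,n}}$ together with a coinduction over $\fk_n$.

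First I would observe that $\Phi_n(W(\lambda))=\Phi_n(\tilde W(\lambda))=\tilde W(\lambda)^{\fg_{n,n}}$. Indeed, since $\fg_{m,m}\subset\fg_{n,n}$ for $m\ge n$, the operation $(\cdot)^{\fg_{n,n}}$ is idempotent along the directed system defining $\Phi$, so $\Phi_n\circ\Phi=\Phi_n$; as $W(\lambda)=\Phi(\tilde W(\lambda))$ this gives the claim. Everything thus reduces to understanding $\tilde W(\lambda)^{\fg_{n,n}}$ as a $\fk_n$-module, where $\fk_n$ acts on the invariants because $[\fk_n,\fg_{n,n}]=0$. To analyze it I would use the $\fg_{n,n}$-module realization already exploited in the proof of Proposition \ref{filtration}: for $n$ large (so that $\lambda|_{\fh\cap\fg_{n,n}}=0$) one has an isomorphism of $\fg_{n,n}$-modules $\tilde W(\lambda)\cong\Gamma_\fh\operatorname{Hom}_{\mathbb C}({\bf S}(\fb/(\fg_{n,n}\cap\fb)),\tilde W_{\fg_{n,n}}(0))$. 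As a $\fg_{n,n}$-module $\fb/(\fg_{n,n}\cap\fb)\cong \fm_n\oplus\fh_n$, and $\fm_n=\fr_n\oplus\fm_n^{\fg_{n,n}}$ with $\fm_n^{\fg_{n,n}}=\fn\cap\fk_n$ acting trivially, so ${\bf S}(\fb/(\fg_{n,n}\cap\fb))\cong{\bf S}(\fr_n)\otimes{\bf S}(\fn\cap\fk_n)\otimes{\bf S}(\fh_n)$, with only the first factor carrying a nontrivial $\fg_{n,n}$-action.

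Applying $(\cdot)^{\fg_{n,n}}$ and using $\operatorname{Hom}_{\mathbb C}(\cdot,\tilde W_{\fg_{n,n}}(0))^{\fg_{n,n}}=\operatorname{Hom}_{\fg_{n,n}}(\cdot,\tilde W_{\fg_{n,n}}(0))$ together with the fact that $\tilde W_{\fg_{n,n}}(0)$ is injective in $\mathcal{OLA}_{\fg_{n,n}}$ with socle $\mathbb C$ (Lemma \ref{lem:coVerma} and the $\operatorname{Ext}^1$-vanishing in the proof of Proposition \ref{filtration}), this $\operatorname{Hom}$-functor computes the \emph{total} composition multiplicity $[\,\cdot:\mathbb C\,]$ of the trivial $\fg_{n,n}$-module, up to a dualization of the $\fs_n$-multiplicity spaces coming from $\operatorname{Hom}(X\otimes U,\cdot)=\operatorname{Hom}(X,\cdot)\otimes U^*$. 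It is essential here that this is the total multiplicity and not the (vanishing) space of honest invariant tensors. The invariant-theoretic heart is then the identification of $\operatorname{Hom}_{\fg_{n,n}}({\bf S}(\fr_n),\tilde W_{\fg_{n,n}}(0))$ as an $\fs_n$-module: decomposing ${\bf S}(\fr_n)$ by the Cauchy formula for $\fg=\mathfrak{sl}(\infty)$ (resp.\ by $\operatorname{O}/\operatorname{Sp}$-Howe duality for $\mathfrak o(\infty),\mathfrak{sp}(\infty)$) and inserting the tensor-module facts of \cite{DPS} that $[\,\mathbb S_\mu(\bar V_n)\otimes\mathbb S_\nu((\bar V_n)_*):\mathbb C\,]=\delta_{\mu\nu}$ (resp.\ that $[\,\mathbb S_\lambda(\bar V_n):\mathbb C\,]\ne0$ exactly when $\lambda=2\mu$, resp.\ $\lambda=(2\mu)'$), the surviving multiplicity spaces assemble, degree $p$ by degree $p$, into precisely $\bigoplus_p R(n,p)$; the dualization noted above is exactly what yields $R(n,p)$ and not its dual.

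Finally I would reassemble the $\fk_n$-module structure. The leftover factor ${\bf S}(\fn\cap\fk_n)^*\otimes{\bf S}(\fh_n)^*\cong{\bf S}(\bar\fn\cap\fk_n)\otimes{\bf S}(\fh_n)$ is precisely the PBW free part of the coinduction from $\bar\fb\cap\fk_n$ (whose complement in $\fk_n$ is $\fn\cap\fk_n$), so that ${\bf S}(\bar\fn\cap\fk_n)\otimes R(n,p)\otimes\mathbb C_\lambda\cong\operatorname{Coind}^{\fk_n}_{\bar\fb\cap\fk_n}(R(n,p)\otimes\mathbb C_\lambda)$, with $\mathfrak z_n$ acting by $0$ on $R(n,p)\otimes\mathbb C_\lambda$ as prescribed, since the $\mathfrak z_n$-directions are the cross-block part of $\fm_n^{\fg_{n,n}}$ and feed the free part rather than the generators. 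Summing over $p$ gives the asserted isomorphism. The step I expect to be the main obstacle is exactly this last one: because the $\fg_{n,n}$-invariants functor does not commute with $\operatorname{Coind}^{\fg}_{\bar\fb}$ through any naive transitivity (there is no subalgebra intermediate between $\bar\fb$ and $\fg$ realizing $\fk_n$, as $[\fk_n,\bar\fm_n]$ leaks into $\fm_n$), one must check by hand that the residual left $\fk_n$-action on the invariants coincides with the coinduced $\fk_n$-action and that $\mathfrak z_n$ annihilates the trivial-multiplicity generators. Making the injectivity/multiplicity input for $\tilde W_{\fg_{n,n}}(0)$ precise is the remaining point requiring care.
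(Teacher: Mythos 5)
Your reduction to computing $\tilde W(\lambda)^{\fg_{n,n}}$ and your multiplicity count (Cauchy decomposition of ${\bf S}(\fr_n)$ plus the facts from \cite{DPS} such as $\operatorname{Hom}_{\fg_{n,n}}(\mathbb S_\mu((\bar V_n)_*)\otimes\mathbb S_\nu(\bar V_n),\mathbb C)=\delta_{\mu\nu}\mathbb C$) reproduce the combinatorial half of the paper's argument, and they would correctly identify $\Phi_n(W(\lambda))$ as an $\fh_n$-weight module. But there is a genuine gap, and it is exactly the one you flag at the end: the lemma asserts an isomorphism of $\fk_n$-modules, while your starting point --- the realization $\tilde W(\lambda)\cong\Gamma_\fh\operatorname{Hom}_{\mathbb C}({\bf S}(\fb/(\fg_{n,n}\cap\fb)),\tilde W_{\fg_{n,n}}(0))$ borrowed from the proof of Proposition \ref{filtration} --- is an isomorphism of $\fg_{n,n}$-modules only. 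Taking $\fg_{n,n}$-invariants of both sides therefore identifies the two sides as vector spaces (at best as $\fh_n$-modules), but transports no information about the $\fk_n$-action on $\tilde W(\lambda)^{\fg_{n,n}}$. In particular your ``reassembly'' step --- declaring ${\bf S}(\fn\cap\fk_n)^*\otimes{\bf S}(\fh_n)^*$ to be ``the PBW free part of the coinduction'' --- is only a weight-space statement: a coinduced $\fk_n$-module is not ${\bf S}(\bar\fn\cap\fk_n)$ tensored with its generators under any obvious action. The verification you defer (that the residual $\fk_n$-action coincides with the coinduced one, and that $\mathfrak{z}_n$ kills the generators) is not a routine check that can be completed from your starting isomorphism; the required $\fk_n$-equivariance was already discarded the moment you invoked a $\fg_{n,n}$-only identification, so the argument cannot be finished along this route.

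The paper closes exactly this hole by performing the PBW regrouping \emph{before} taking invariants, at the level of the coinduced module itself: using $\fn=(\fn\cap\fg_{n,n})\oplus(\fn\cap\fk_n)\oplus\fr_n$, one first establishes an isomorphism of $\fk_n\oplus\fg_{n,n}$-modules
$$\operatorname{Coind}^\fg_{\bar\fb}\mathbb C_\lambda\simeq\operatorname{Coind}^{\fk_n}_{\bar\fb\cap\fk_n}\operatorname{Hom}_{\mathbb C}\bigl({\bf S}(\fr_n),\operatorname{Coind}^{\fg_{n,n}}_{\bar\fb\cap\fg_{n,n}}\mathbb C_\lambda\bigr),$$
where the $\bar\fb\cap(\fk_n\oplus\fg_{n,n})$-structure on the inner $\operatorname{Hom}$ comes from $\fr_n\simeq\fg/(\fk_n+\fg_{n,n}+\bar\fb)$; it is this step that keeps the $\fk_n$-action (including the prescription $\mathfrak{z}_nR(n,p)=0$) attached through the whole computation. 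Since $\Phi_n$ depends only on the $\fg_{n,n}$-structure, it commutes with the outer $\operatorname{Coind}^{\fk_n}_{\bar\fb\cap\fk_n}$, and the proof then reduces to the identification $\operatorname{Hom}_{\fg_{n,n}}({\bf S}(\fr_n),\mathbb C_\lambda)\simeq\bigoplus_{p\geq0}R(n,p)\otimes\mathbb C_\lambda$ --- the same invariant-theoretic computation you carry out. In short: your combinatorics is right, but the equivariance bookkeeping must be done first, not last.
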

  \begin{proof} First, we have isomorphisms of $\fk_n\oplus \fg_{n,n}$-modules
    $$\operatorname{Coind}^\fg_{\bar\fb}\mathbb C_\lambda\simeq
    \operatorname{Coind}^{\fk_n\oplus\fg_{n,n}}_{\bar\fb\cap(\fk_n\oplus\fg_{n,n})}\operatorname{Hom}_{\mathbb C}({\bf S}(\fr_n),\mathbb C_\lambda)\simeq$$
    $$\operatorname{Coind}^{\fk_n}_{\bar{\fb}\cap\fk_n}\operatorname{Hom}_{\mathbb C}({\bf S}(\fr_n),\operatorname{Coind}^{\fg_{n,n}}_{\bar{\fb}\cap{\fg}_{n,n}}\mathbb C_\lambda),$$
 where the structure of $\bar\fb\cap(\fk_n\oplus\fg_{n,n})$-module on  $\operatorname{Hom}_{\mathbb C}({\bf S}(\fr_n),\mathbb C_\lambda)$   comes from the isomorphism $\fr_n\simeq\fg/(\fk_n+\fg_{n,n}+\bar\fb)$.
    
    Recall that the result of application of $\Phi_n$ depends only on the restriction to $\fg_{n,n}$. Therefore
    $$\Phi_n(\operatorname{Coind}^{\fk_n}_{\bar{\fb}\cap\fk_n}\operatorname{Hom}_{\mathbb C}({\bf S}(\fr_n),\operatorname{Coind}^{\fg_{n,n}}_{\bar{\fb}\cap{\fg}_{n,n}}\mathbb C_\lambda))
    \simeq \operatorname{Coind}^{\fk_n}_{\bar{\fb}\cap\fk_n}\Phi_n(\operatorname{Hom}_{\mathbb C}({\bf S}(\fr_n),\operatorname{Coind}^{\fg_{n,n}}_{\bar{\fb}\cap{\fg}_{n,n}}\mathbb C_\lambda)).$$
    Furthermore, we have

    $$\Phi_n(\operatorname{Hom}_{\mathbb C}({\bf S}(\fr_n),\operatorname{Coind}^{\fg_{n,n}}_{\bar{\fb}\cap{\fg}_{n,n}}\mathbb C_\lambda))\simeq$$
    $$\operatorname{Hom}_{\fg_{n,n}}({\bf S}(\fr_n),\operatorname{Coind}^{\fg_{n,n}}_{\bar{\fb}\cap{\fg}_{n,n}}\mathbb C_\lambda)\simeq
    \operatorname{Hom}_{\bar{\fb}\cap{\fg}_{n,n}}({\bf S}(\fr_n),\mathbb C_\lambda) .$$
    Since ${\bf S}(\fr_n)$ is a direct sum of objects from $\mathbb T_{\fg_{n,n}}$ and $\mathbb C_{\lambda}$ is a trivial $\fg_{n,n}$-module, we have
    $$\operatorname{Hom}_{\bar{\fb}\cap{\fg}_{n,n}}({\bf S}(\fr_n),\mathbb C_\lambda) \simeq \operatorname{Hom}_{{\fg}_{n,n}}({\bf S}(\fr_n),\mathbb C_\lambda ).$$

  Next, we observe the following $\fs_n\oplus\fg_{n,n}$-module isomorphism
  $$\fr_n\simeq \begin{cases}V_n^L\boxtimes (\bar {V_n})_*\oplus (V_n^R)^*\boxtimes \bar {V_n}\,\,\text{for}\,\,\fg=\mathfrak{sl}(\infty)\\
V_n\boxtimes \bar {V_n}\,\,\text{for}\,\,\fg=\mathfrak{o}(\infty),\mathfrak{sp}(\infty).
\end{cases}$$

To finish the proof we have to show that  
\begin{equation}\label{5.2}
\operatorname{Hom}_{\fg_{n,n}}({\bf S}(\fr_n),\mathbb C_\lambda)\simeq  \bigoplus_{p\geq 0} R(n,p)\otimes\mathbb{C}_{\lambda}.\end{equation}

If $\fg=\mathfrak{sl}(\infty)$ then
$${\bf S}(V_n^L\boxtimes (\bar {V}_n)_*\oplus (V_n^R)^*\boxtimes \bar {V}_n)=\bigoplus_{\mu,\nu\in\mathcal{P}}(\mathbb{S}_\mu (V_n^L)\boxtimes\mathbb{S}_\mu((\bar {V_n})_*))\otimes(\mathbb{S}_\nu(V_n^R)^*\boxtimes \mathbb{S}_\nu(\bar {V_n})).$$
Since $$\operatorname{Hom}_{\mathfrak g_{n,n}}(\mathbb{S}_\mu(\bar {(V_n})_*)\otimes  \mathbb{S}_\nu(\bar {V_n}),\mathbb C)=\begin{cases}\mathbb C\,\text{for}\,\mu=\nu\\ 0\,\text{for}\,\mu\neq\nu,\end{cases}$$
we obtain
$$\operatorname{Hom}_{\fg_{n,n}}({\bf S}(\fr_n),\mathbb C_\lambda)=\bigoplus_{\mu \in \mathcal{P}}\mathbb{S}_\mu (V_n^L)^*\otimes \mathbb{S}_\mu(V_n^R)\otimes \mathbb C_\lambda.$$

If $\fg=\mathfrak{o}(\infty)$, $\mathfrak{sp}(\infty)$  then
$${\bf S}(V_n\boxtimes \bar {V}_n)=\bigoplus_{\nu \in \mathcal{P}}\mathbb{S}_\nu(V_n)\boxtimes \mathbb{S}_\nu(\bar {V_n}).$$
Since
$$\operatorname{Hom}_{\mathfrak g_{n,n}}(\mathbb{S}_\nu(\bar {V}_n),\mathbb C)=\begin{cases}\begin{cases}\mathbb C\,\text{if}\,\nu=2\mu\\ 0\,\text{otherwise}\end{cases}\text{ for }\fg=\mathfrak{o}(\infty)\\\begin{cases}\mathbb C\,\text{if}\,\nu=(2\mu)'\\ 0\,\text{otherwise}\end{cases}\text{ for }\fg=\mathfrak{sp}(\infty),\end{cases}$$
we obtain
$$\operatorname{Hom}_{\fg_{n,n}}({\bf S}(\fr_n),\mathbb C_\lambda)=\begin{cases}\bigoplus_{\mu \in \mathcal{P}}\mathbb S_{2\mu}(V_n^*)\otimes\mathbb C_\lambda \text{ for }\fg=\mathfrak{o}(\infty), \\\bigoplus_{\mu \in \mathcal{P}}\mathbb S_{(2\mu)'}(V_n^*)\otimes\mathbb C_\lambda \text{ for }\fg=\mathfrak{sp}(\infty).\end{cases}$$

In both cases we have now established (\ref{5.2}), and the statement follows.
\end{proof}

 Let $W_{\fs}(\lambda)$ be a standard object in the category $\mathcal{OLA}(\fs)$: its definition is the obvious analogue of the definition of $W(\lambda)$. Next, we define the $\fs$-modules $R(\infty,k)$ by setting

 $$R(\infty,k):=\begin{cases}\bigoplus_{\mu \in \mathcal{P},\:|\mu|=k}\mathbb S_\mu(V^L_*)\boxtimes \mathbb S_\mu(V^R)\,\,\text{for}\,\,\fg=\mathfrak{sl}(\infty)\\
  \bigoplus_{\mu \in \mathcal{P},\:|\mu|=k}\mathbb S_{2\mu}(V_*)\,\,\text{for}\,\,\fg=\mathfrak{o}(\infty)\\
  \bigoplus_{\mu \in \mathcal{P},\:|\mu|=k}\mathbb S_{(2\mu)'}(V_*)\,\,\text{for}\,\,\fg=\mathfrak{sp}(\infty),
\end{cases}$$
where $$V^L_*:= \lim_{\longrightarrow} (V^L_n)^*, \;V^R:=\lim_{\longrightarrow}V^R_n .$$

We are now ready to describe the  canonical filtration (\ref{canonical}) of the standard objects $W(\lambda)$.
Let  $\Gamma_{\fh_n}$ denote  the endofunctor of $\fh_n$-semisimple vectors on the category $\fk_n$-mod.
Define the ${\fk_n}$-module
  $$S(n,p,\lambda):=\Gamma_{\fh_n}(\operatorname{Coind}^{\fk_n}_{\bar\fb\cap\fk_n}(R(n,p)\otimes \mathbb C_\lambda)).$$

\begin{proposition}\label{cor:stand} 
 There are isomorphisms of $\fg$-modules
  $$W(\lambda)\simeq\lim_{\longrightarrow}{(}\bigoplus_{p\geq 0}S(n,p,\lambda){)},$$
  $$D_k(W(\lambda))\simeq\lim_{\longrightarrow}{(}\bigoplus_{0\leq p\leq k-1}S(n,p,\lambda){)},$$
  and
  $$\left(D_{k+1}(W(\lambda))/D_{k}(W(\lambda))\right)^+\simeq R(\infty,k)\otimes W_{\fs}(\lambda).$$
  \end{proposition}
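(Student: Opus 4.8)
The plan is to deduce all three isomorphisms from Lemma \ref{lem:phi} together with the fact that $\Phi$ restricts to the identity on $\mathcal{OLA}$, keeping careful track of the degree function $d$. First I would prove the top isomorphism. Since $W(\lambda)\in\mathcal{OLA}$ and $\Phi|_{\mathcal{OLA}}=\mathrm{id}$, we have $W(\lambda)=\lim_{\longrightarrow}\Phi_n(W(\lambda))$. The module $\Phi_n(W(\lambda))=W(\lambda)^{\fg_{n,n}}$ is a submodule of the $\fh$-semisimple module $W(\lambda)$, hence $\fh_n$-semisimple, so applying $\Gamma_{\fh_n}$ to the isomorphism of Lemma \ref{lem:phi} changes nothing and yields $\Phi_n(W(\lambda))\simeq\bigoplus_{p\geq 0}S(n,p,\lambda)$ as $\fk_n$-modules (using that $\Gamma_{\fh_n}$ commutes with direct sums). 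Passing to the direct limit gives the first isomorphism.

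The next ingredient is degree bookkeeping, which drives everything else. A direct weight computation shows that $R(n,p)$ (respectively $R(\infty,k)$) is pure of degree $-p$ (respectively $-k$), so the top weights of $S(n,p,\lambda)$ have degree $d(\lambda)-p$. Crucially, the finite roots have $d$-degree $0$ while the infinite (cross) roots spanning $\fk_n/\fs_n$ strictly lower $d$; hence the functor $(\cdot)^+$, which extracts the top-$d$ weight spaces, annihilates all cross-root directions and retains only the $\fs_n$-spreading. Recall also that the degree of a simple object is that of its highest weight, while its lower weight spaces realize strictly smaller values of $d$.

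The third isomorphism is the heart of the matter. Since each graded piece $D_{j}(W(\lambda))/D_{j-1}(W(\lambda))$ is pure of degree $d(\lambda)-j+1$, only the piece $D_{k+1}/D_k$ contributes to the degree-$(d(\lambda)-k)$ weight spaces of $W(\lambda)$, so $(D_{k+1}/D_k)^+$ is exactly the degree-$(d(\lambda)-k)$ part of $W(\lambda)$, which by the first isomorphism comes from the $p=k$ summand. Applying $(\cdot)^+$ to $S(n,k,\lambda)$ and using that the top-degree part of a $\fk_n$-coinduced module is the corresponding $\fs_n$-coinduced module, I would obtain
$$(S(n,k,\lambda))^+\simeq\Gamma_{\fh_n}\operatorname{Coind}^{\fs_n}_{\bar\fb\cap\fs_n}(R(n,k)\otimes\mathbb C_\lambda)\simeq R(n,k)\otimes W_{\fs_n}(\lambda),$$
the last step being the tensor (projection) identity, valid because $R(n,k)$ is the restriction of an $\fs_n$-module. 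Passing to the limit ($R(n,k)\to R(\infty,k)$, $W_{\fs_n}(\lambda)\to W_\fs(\lambda)$) and invoking Lemma \ref{lem:multred} to transport the identification into $\mathcal{OLA}(\fs)$ yields $(D_{k+1}/D_k)^+\simeq R(\infty,k)\otimes W_\fs(\lambda)$.

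Finally, for the middle isomorphism I would use that the canonical filtration is uniquely characterized by the degrees of its graded pieces (Corollary \ref{cor:filtration} and Lemma \ref{lem:filtration}): having shown that the $p=k$ layer accounts precisely for the degree-$(d(\lambda)-k)$ constituents, the submodule assembled from the summands $p\leq k-1$ has all constituents of degree $\geq d(\lambda)-k+1$ and complementary quotient of strictly smaller degree, whence it must coincide with $D_k(W(\lambda))$; passing to the limit gives the stated isomorphism. The main obstacle is exactly this last reconciliation: the decomposition $\Phi_n(W(\lambda))=\bigoplus_p S(n,p,\lambda)$ is only a decomposition of $\fk_n$-modules and is not preserved by the transition maps of the direct system — indeed it cannot be, since $W(\lambda)$ is indecomposable — so the $p$-grading genuinely mixes as $n$ grows. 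Disentangling this $\fk_n$-isotypic grading from the intrinsic degree filtration in the limit, using $(\cdot)^+$ to strip away the degree-lowering cross-root directions and isolate the factor $R(\infty,k)$ from $W_\fs(\lambda)$, is the delicate step.
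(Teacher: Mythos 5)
Your treatment of the first isomorphism and your computation of $(S(n,k,\lambda))^+$ agree with the paper's proof, and your sign bookkeeping (the weights of $R(n,p)$ have degree $-p$) is correct. But the logical core of your argument fails. For the third isomorphism you claim that, since each piece $D_j(W(\lambda))/D_{j-1}(W(\lambda))$ lies in $\mathcal{OLA}^{d(\lambda)-j+1}$, ``only the piece $D_{k+1}/D_k$ contributes to the degree-$(d(\lambda)-k)$ weight spaces of $W(\lambda)$,'' so that $(D_{k+1}/D_k)^+$ equals the degree-$(d(\lambda)-k)$ part of $W(\lambda)$. This is false: membership in $\mathcal{OLA}^{d}$ constrains the degrees of the \emph{simple constituents}, not of the \emph{weights}. (Your parenthetical that lower weight spaces of a simple module have strictly smaller degree is also wrong: finite roots have degree zero.) In general the socle $L(\lambda)\subset D_1(W(\lambda))$ itself has nonzero weight spaces of degree $d(\lambda)-k$ for every $k\geq 0$ (e.g.\ weights $\lambda$ minus $k$ infinite positive roots; for $\fg=\mathfrak{sl}(\infty)$ and $\lambda=\varepsilon_1$ the module $L(\lambda)=V$ already has weights of degree $d(\lambda)-1$), so every $D_j$ with $j\leq k+1$ contributes to those weight spaces. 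Consequently $(D_{k+1}/D_k)^+$ cannot be read off from the weight spaces of $W(\lambda)$: one must first know the subquotient $D_{k+1}/D_k$ as a module, i.e.\ one needs the middle isomorphism \emph{before} the third, whereas you derive the middle one from the third.

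And the middle isomorphism is exactly what you leave unproved: you need the partial sums $\bigoplus_{p\leq k-1}S(n,p,\lambda)$ to assemble into $\fg$-submodules of $W(\lambda)$ in the limit, you correctly observe that the $p$-grading is not preserved by the transition maps of the direct system, and there you stop, calling this ``the delicate step.'' That step is the heart of the paper's proof, and the missing idea is that the transition maps are \emph{triangular} with respect to the $p$-grading: $\Hom_{\fk_n}(S(n,p,\lambda),S(n+1,q,\lambda))=0$ on one side of $p=q$, which the paper deduces from a comparison of supports. Triangularity means that, although the grading is destroyed in the limit, the associated filtration survives: the subspaces $F_p:=\lim_{\longrightarrow}\bigl(\bigoplus_{p'\leq p-1}S(n,p',\lambda)\bigr)$ are genuine $\fg$-submodules forming an exhaustive ascending filtration with $F_p/F_{p-1}\simeq \lim_{\longrightarrow}S(n,p-1,\lambda)$. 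One then checks that $\lim_{\longrightarrow}S(n,p,\lambda)$ lies in $\mathcal{OLA}^{d(\lambda)-p}$ --- using the filtration of $S(n,p,\lambda)$ by the modules $W_{\fk_n}(\lambda+\gamma)$, $\gamma\in\operatorname{supp}R(n,p)$, together with the finite-root argument of Lemma \ref{order1} --- so that $F_\bullet$ coincides with the canonical filtration $D_\bullet$ by the uniqueness statement of Lemma \ref{lem:filtration}. With the middle isomorphism established this way, your $(\cdot)^+$ computation on $\lim_{\longrightarrow}S(n,k,\lambda)$ does yield the third isomorphism, essentially as in the paper.
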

  \begin{proof} The first isomorphism follows from Lemma \ref{lem:phi} and the identity
    $\Phi_n\circ\Gamma_{\fh}=\Gamma_{\fh_n}\circ\Phi_n$. 

    To verify the existence of the second isomorphism, we first observe that
    $$\operatorname{Hom}_{\fk_n}(S(n,p,\lambda), S(n+1,q,\lambda))= 0\,\,\text{if}\,\,p>q,$$ as follows from a direct comparison of supports.
    Hence $W(\lambda)$ has an ascending exhaustive filtration $0=F_0\subset F_1\subset F_2\subset\dots$ with
    $$F_p/F_{p-1}\simeq \lim_{\longrightarrow}S(n,p-1,\lambda).$$ 

We claim that
    $F_p=D_p(W(\lambda))$. To prove this it suffices to check that $\displaystyle\lim_{\longrightarrow}S(n,p,\lambda)$ is an object of $\mathcal{OLA}^{d+p}$, where $d=d(\lambda)$.
    Indeed, $S(n,p,\lambda)$ has a filtration with quotients isomorphic to $W_{\fk_n}(\lambda+\gamma)$ for all weights $\gamma$ of $R(n,p)$. Note that $d(\gamma)=p$.
    If $$[\lim_{\longrightarrow}S(n,p,\lambda):L(\mu)]\neq 0,$$ then there exists a weight $\gamma$ of $R(n,p)$ such that
    $[W_{\fk_n}(\lambda+\gamma):L_{\fk_n}(\mu)]\neq 0$ for  all sufficiently large $n$. Since the character
    of $W_{\fk_n}(\lambda)$ coincides with  the character of $M_{\fk_n}(\lambda)$, by the same argument as in Lemma \ref{order1}
    we obtain that $\lambda+\gamma=\mu$ or $\lambda+\gamma-\mu$ is a sum of positive finite roots. Hence $d(\mu)=d(\lambda+\gamma)=d+p$.

    Finally, let's establish the third isomorphism. Define the functor $T:\mathcal {O}^d_{\fk_n}\to \mathcal{O}_{\fs_n}$ by setting
    $$T_d(N):=\oplus_{\nu\in\operatorname{supp} N, \;d(\nu)=d}N_{\nu}.$$
    Then $M^+=\displaystyle\lim_{\longrightarrow}{(}T_d\circ\Phi_n(M){)}$ for $M\in \mathcal{OLA}^d$. In particular,
    $$\left(D_{k+1}(W(\lambda))/D_{k}(W(\lambda))\right)^+=\lim_{\longrightarrow}{(}T_{d+k}(S(n,k,\lambda){)}\simeq\lim_{\longrightarrow}R(n,k)\otimes W_{\fs}(\lambda)=R(\infty,k)\otimes W_{\fs}(\lambda).$$
    \end{proof}

Before stating the main result of this subsection we need to introduce some further notation. Consider the $\fs$-module
$$R:=\bigoplus_{k\geq 0}R(\infty,k),$$
and denote by $\mathcal R$ (respectively, $\mathcal R_k$) the support of $R$  (respectively, $R(\infty,k)$).

If $\fg=\mathfrak{sl}(\infty)$, then all $\gamma\in\mathcal R_k$ are of the form $\gamma^L+\gamma^R$,
 where $\gamma^L=\sum_{i\in \mathbb Z_{>0}}a_i\varepsilon_i$  and
 $\gamma^R=\sum_{i\in \mathbb Z_{<0}}b_i\varepsilon_i$ for some $a_i\in\mathbb Z_{\leq 0}, b_i\in\mathbb Z_{\geq 0}$ such that $-\sum a_i=\sum b_i=k$.
For $\fg=\mathfrak{o}(\infty),\mathfrak{sp}(\infty)$,  every $\gamma\in\mathcal R_k$ can be written uniquely in the form
 $\gamma=\sum_{i>0} a_i\varepsilon_i$ with non-positive integers $a_i$ such that $\sum a_i=-2k$.

  Let $\mu$ be a partition. By $K(\mu,\gamma)$ we denote the multiplicity of a weight $\gamma$ in the
$\mathfrak{sl}(\infty)$-module $\mathbb{S}_{\mu}(V)$. If $\gamma$ is also a partition then $K(\mu,\gamma)$ are Kostka numbers by definition. In fact, $K(\mu,\gamma)$ are always Kostka numbers as  $K(\mu,\gamma)= K(\mu, w(\gamma))$ for $w\in \mathcal{W}$, and for any given $\gamma\in$ supp$\:\mathbb{S}_{\mu}(V)$ there is a suitable $w\in \mathcal{W}$ for which $w(\gamma)$ is a partition. By $\mathcal P_{ev}$ we denote the set of even partitions and by $\mathcal P'_{ev}$ the set of all partitions
whose conjugates are even partitions. 
 
\begin{proposition}\label{mult}

\begin{enumerate}[(a)]
\item If $\fg=\mathfrak{sl}(\infty)$, then
 $$[W(\lambda):L(\nu)]=\sum_{\mu\in\mathcal P,\gamma\in \mathcal R}  K(\mu,-\gamma^L)K(\mu,\gamma^R)m(\lambda+\gamma,\nu).$$
\item If $\fg=\mathfrak{o}(\infty)$, then
 $$[W(\lambda):L(\nu)]=\sum_{\mu\in\mathcal P_{ev},\gamma\in\mathcal R} K(\mu,-\gamma)m(\lambda+\gamma,\nu).$$
\item If $\fg=\mathfrak{sp}(\infty)$, then
 $$[W(\lambda):L(\nu)]=\sum_{\mu\in\mathcal P'_{ev},\gamma\in\mathcal R} K(\mu,-\gamma)m(\lambda+\gamma,\nu).$$
\end{enumerate}

\end{proposition}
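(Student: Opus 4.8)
The plan is to combine the canonical filtration of $W(\lambda)$ provided by Corollary \ref{cor:filtration} with the explicit description of its graded pieces in Proposition \ref{cor:stand}, and then to reduce every multiplicity to a weight--multiplicity computation inside the Schur functors defining $R(\infty,k)$. First I would record that, since $W(\lambda)$ is indecomposable with simple socle (Lemma \ref{lem:coVerma}(a)) and its simple constituents are graded by degree (Corollary \ref{cor:ind}), it carries the exhausting canonical filtration $D_\bullet(W(\lambda))$ of Corollary \ref{cor:filtration}, whence
$$[W(\lambda):L(\nu)]=\sum_{k\geq 0}\bigl[D_{k+1}(W(\lambda))/D_k(W(\lambda)):L(\nu)\bigr].$$
Writing $d=d(\lambda)$, the $k$-th quotient lies in $\mathcal{OLA}^{d-k}$, so by the exact faithful functor $(\cdot)^+$ and Lemma \ref{lem:multred} its multiplicity of $L(\nu)$ equals the multiplicity of $L_\fs(\nu)$ in its $(\cdot)^+$-image, which by the third isomorphism of Proposition \ref{cor:stand} is $R(\infty,k)\otimes W_\fs(\lambda)$. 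Hence
$$[W(\lambda):L(\nu)]=\sum_{k\geq 0}\bigl[R(\infty,k)\otimes W_\fs(\lambda):L_\fs(\nu)\bigr],$$
and the whole problem is transported into the category $\mathcal{OLA}(\fs)$.

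Next I would establish the tensor identity for standard objects in $\mathcal{OLA}(\fs)$: since $R(\infty,k)$ is an integrable $\fs$-module with finite-dimensional weight spaces, the module $R(\infty,k)\otimes W_\fs(\lambda)$ admits a filtration whose successive quotients are the standard objects $W_\fs(\lambda+\gamma)$, each occurring with multiplicity $\dim R(\infty,k)_\gamma$ as $\gamma$ runs over $\mathcal R_k$. This is exactly the $(\cdot)^+$-image, passed to the direct limit, of the finite-level filtration of $S(n,p,\lambda)$ by the modules $W_{\fk_n}(\lambda+\gamma)$ already produced in the proof of Proposition \ref{cor:stand}. Combined with the identification $[W_\fs(\lambda+\gamma):L_\fs(\nu)]=m(\lambda+\gamma,\nu)$ --- which holds because both sides reduce, via Lemma \ref{lem:KL} and its proof technique, to the same finite-dimensional Kazhdan--Lusztig multiplicity over $\fs_n$ for large $n$ --- this yields
$$[W(\lambda):L(\nu)]=\sum_{k\geq 0}\;\sum_{\gamma\in\mathcal R_k}\dim R(\infty,k)_\gamma\cdot m(\lambda+\gamma,\nu).$$
Note the degree bookkeeping is automatic: every $\gamma\in\mathcal R_k$ has $d(\gamma)=-k$, so $W_\fs(\lambda+\gamma)$ sits in degree $d-k$, matching the degree of $D_{k+1}/D_k$, and $m(\lambda+\gamma,\nu)$ vanishes unless $d(\lambda+\gamma)=d(\nu)$.

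It then remains to identify the weight multiplicities $\dim R(\infty,k)_\gamma$ with the Kostka numbers in the statement, a direct reading of the Schur-functor definition of $R(\infty,k)$ together with the fact (recalled before Proposition \ref{mult}) that the multiplicity of a weight $\gamma$ in $\mathbb S_\mu(V)$ is $K(\mu,\gamma)$. For $\fg=\mathfrak{sl}(\infty)$ one sums over $|\mu|=k$ the modules $\mathbb S_\mu(V^L_*)\boxtimes\mathbb S_\mu(V^R)$; writing $\gamma=\gamma^L+\gamma^R$ and using that $V^L_*$ is of conatural type and $V^R$ of natural type, the $\gamma$-weight space has dimension $\sum_{|\mu|=k}K(\mu,-\gamma^L)K(\mu,\gamma^R)$, giving (a) after summing over $k$. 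For $\fg=\mathfrak{o}(\infty)$ (respectively $\mathfrak{sp}(\infty)$) one has $R(\infty,k)=\bigoplus_{|\mu|=k}\mathbb S_{2\mu}(V_*)$ (respectively $\mathbb S_{(2\mu)'}(V_*)$), so the $\gamma$-weight multiplicity is $K(2\mu,-\gamma)$ (respectively $K((2\mu)',-\gamma)$); reindexing the sum by the even partition $2\mu\in\mathcal P_{ev}$ (respectively by its conjugate in $\mathcal P'_{ev}$) produces (b) and (c).

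I expect the tensor identity of the second paragraph to be the main obstacle: one must check that tensoring the co-Verma-type standard object $W_\fs(\lambda)$ with the \emph{infinite-dimensional} integrable module $R(\infty,k)$ still yields a standard filtration with the naive weight multiplicities, and that the direct limit over $n$ commutes with taking $\mathcal{OLA}(\fs)$-multiplicities. The safest route is to run the entire computation at the finite level $\fk_n$, where the filtration of $S(n,p,\lambda)$ by $W_{\fk_n}(\lambda+\gamma)$ is available from Proposition \ref{cor:stand}, and only afterward apply $(\cdot)^+$ and pass to the limit; the residual bookkeeping --- matching degrees, confirming stabilization of multiplicities, and identifying the finite-level $W_{\fk_n}$-multiplicities with the $m(\lambda+\gamma,\nu)$ --- is then routine given the earlier lemmas.
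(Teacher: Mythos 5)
Your proposal is correct and follows essentially the same route as the paper's own proof: both reduce $[W(\lambda):L(\nu)]$ via the canonical filtration, Lemma \ref{lem:multred} and the third isomorphism of Proposition \ref{cor:stand} to the multiplicity $[R(\infty,k)\otimes W_{\fs}(\lambda):L_{\fs}(\nu)]$, then use the filtration of this tensor product by standard objects $W_{\fs}(\lambda+\gamma)$ with multiplicities equal to the weight multiplicities $c_k(\gamma)$ of $R(\infty,k)$, finally identified with the stated Kostka numbers. The extra care you devote to the tensor identity and to the equality $[W_{\fs}(\lambda+\gamma):L_{\fs}(\nu)]=m(\lambda+\gamma,\nu)$ only makes explicit what the paper leaves implicit, namely the finite-level filtration of $S(n,p,\lambda)$ constructed in the proof of Proposition \ref{cor:stand} together with Lemma \ref{lem:KL}.
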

\begin{proof} The proposition follows from Proposition \ref{cor:stand} and Lemma \ref{lem:multred}. Indeed, let $d(\nu)=d(\lambda)+k$. Then
  $$[W(\lambda):L(\nu)]=[\left(D_{k+1}(W(\lambda))/D_k(W(\lambda))\right)^+:L_\fs(\nu)]=[R(\infty,k)\otimes W_{\fs}(\lambda):L_\fs(\nu)].$$
  Since $R(\infty,k)\otimes W_{\fs}(\lambda)$  has a filtration with quotients isomorphic to $ W_{\fs}(\lambda+\gamma)$ where $\gamma$ runs  over $\mathcal R_k$, the multiplicity $(R(\infty,k)\otimes W_{\fs}(\lambda):W_\fs(\lambda+\gamma))$ equals the multiplicity $c_k(\gamma)$ of the weight $\gamma$ in $R(\infty,k)$. Therefore
  \begin{equation}\label{eq:mult}[R(\infty,k)\otimes W_{\fs}(\lambda):L_\fs(\nu)]=\sum_{\gamma} c_k(\gamma)[W_{\fs}(\lambda+\gamma):L(\nu)]=\sum_{\gamma}c_k(\gamma)m(\lambda+\gamma,\nu).
    \end{equation}
  The statement now follows from an explicit calculation of $c_k(\gamma)$:
  $$c_k(\gamma)=\begin{cases} \sum_{\mu\in \mathcal{P},\:|\mu|=k}K(\mu,-\gamma^L)K(\mu,\gamma^R)\,\,\text{for}\,\,\fg=\mathfrak{sl}(\infty),\\
    \sum_{\mu\in\mathcal{P}_{ev},\: |\mu|=2k}K(\mu,-\gamma)\,\,\text{for}\,\,\fg=\mathfrak{o}(\infty),\\
    \sum_{\mu\in\mathcal{P}'_{ev},\: |\mu|=2k}K(\mu,-\gamma)\,\,\text{for}\,\,\fg=\mathfrak{sp}(\infty).
      \end{cases}$$
  \end{proof}

\subsection{Highest weight category}
We are now ready to define a new partial order $\leq_{inf}$  on the set of eligible weights. This is the partial order needed for the structure of highest weight category on $\mathcal{OLA}$.
We write $\mu\lhd_{inf}\nu$ if one of the following holds:
\begin{enumerate}[(i)]
\item $\mu=\nu+\gamma$ for some $\gamma\in\mathcal R$,
  \item $\mu\leq_{fin}\nu$.
  \end{enumerate}
 
  By definition, the partial order $\leq_{inf}$ is the reflexive and transitive closure of the relation $\lhd_{inf}$.
 \begin{remark}
   Note that $\mu\leq_{inf}\nu$ whenever $\mu\leq_{fin}\nu$. Furthermore, $\mu\leq_{inf}\nu$ implies $d(\mu)\leq d(\nu)$.
   Finally, it is a consequence of the formula (\ref{eq:mult}) that
  \begin{equation}\label{eq:mult2}
    [W(\lambda):L(\nu)]\neq 0\,\,\,\Rightarrow\,\,\,\nu\leq_{inf}\lambda. 
  \end{equation}
  \end{remark}
   The condition (\ref{eq:mult2}) justifies introducing the partial order $\leq_{inf}$ as the inequality $\nu\leq_{fin}\lambda$ does not necessarily hold when $[W(\lambda):L(\nu]\neq 0$.  
  \begin{lemma}\label{locfin} The order $\leq_{inf}$ is interval-finite. 
  \end{lemma}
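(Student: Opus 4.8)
The plan is to grade the interval by the degree function $d$ and then control each graded piece by combining the explicit description of $\mathcal R$ with the finiteness of the $\leq_{fin}$-up-sets $\mu_{fin}^+$. First I would record that along any $\lhd_{inf}$-step the degree $d$ is weakly monotone: a type~(ii) step preserves $d$, while a type~(i) step $\mu=\nu+\gamma$ with $\gamma\in\mathcal R_k$ lowers it by exactly $k$ (this is the Remark preceding the lemma). Hence for $\nu$ in an interval $[\mu,\lambda]_{inf}$ one has $d(\mu)\le d(\nu)\le d(\lambda)$ with $d(\nu)-d(\mu)\in\mathbb Z_{\ge0}$, so only finitely many degrees occur and it suffices to bound each degree slice. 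I would also note that the number, counted with the weights $k$, of type~(i) steps in any chain realizing $\mu\leq_{inf}\nu\leq_{inf}\lambda$ is bounded: writing $p(\nu)=\sum_{i>0}\nu_i$ and $q(\nu)=\sum_{i<0}\nu_i$ for $\fg=\mathfrak{sl}(\infty)$ (and the single analogous sum for $\fg=\mathfrak{o}(\infty),\mathfrak{sp}(\infty)$), each type~(i) step changes $p$ and $q$ in opposite directions by $k$ while each type~(ii) step preserves both; hence $p$ and $q$ are monotone along the chain and squeezed between their values at $\mu$ and $\lambda$.

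Second, a two-sided dominance bound: every generator of $\lhd_{inf}$ moves a weight by an element of an explicit cone — a type~(ii) step subtracts a positive finite root $\varepsilon_i-\varepsilon_j$ (within a block), and a type~(i) step adds a $\gamma\in\mathcal R$, whose block-parts are described explicitly after Proposition~\ref{cor:stand}. Reading these off, $\mu\leq_{inf}\nu\leq_{inf}\lambda$ forces, in each block, $P_N(\mu)\le P_N(\nu)\le P_N(\lambda)$ for all $N$, where $P_N(\cdot)$ denotes the $N$-th partial sum of the block-coordinates. This squeezes the partial sums of $\nu$ into a fixed bounded range but does not by itself bound $\nu$, since $\mathcal R_k$ is infinite and the shifts may be supported at arbitrarily large indices. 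The decisive extra ingredient is the linkage built into $\leq_{fin}$: whenever $\nu\leq_{fin}\xi$ one has $\nu+\rho\in\mathscr{W}(\xi+\rho)$, i.e.\ $\nu+\rho$ and $\xi+\rho$ share the same block-wise multiset of coordinates. I would isolate the following sub-lemma: for a fixed finite set of weights and a fixed lower bound $(P_N(\mu))_N$, there are only finitely many finite-support weights $\nu$ lying in one of the orbits $\mathscr{W}(\xi+\rho)$ and satisfying $P_N(\mu)\le P_N(\nu)$ for all $N$. Its proof is the heart of the matter: a shift $\gamma\in\mathcal R_k$ supported at a large index $i$ replaces the value $-i$ in the multiset of $\xi+\rho$ by a strictly smaller one, and then even the partial-sum-maximizing rearrangement of that multiset drops below $P_N(\mu)$ near $N=i$; hence such a $\gamma$ can only be supported at indices bounded in terms of $\mu$ and $\lambda$, leaving finitely many possibilities.

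Finally I would assemble these by induction on the (bounded) number $T$ of type~(i) steps. For $T=0$ the interval lies in a single degree slice and $\mu\leq_{fin}\nu\leq_{fin}\lambda$, so $\nu\in\mu_{fin}^+$, which is finite by hypothesis. For the inductive step I peel off the topmost type~(i) step of a chain from $\nu$ to $\lambda$, writing $\nu\leq_{fin}\beta$ with $\beta=\alpha+\gamma$, $\gamma\in\mathcal R$, where $\alpha$ lies in the $(T-1)$-piece and is therefore confined to a finite set by the induction hypothesis. Since $\nu\leq_{fin}\beta$ gives $\nu+\rho\in\mathscr{W}(\alpha+\gamma+\rho)$, the sub-lemma, applied with the finite set of possible $\alpha$ and the lower bound coming from $\mu$, restricts $\gamma$ to finitely many choices and then restricts $\nu$ to a finite set. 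The main obstacle throughout is exactly the infinitude of the shift set $\mathcal R_k$; everything hinges on showing that the simultaneous presence of the lower bound $\mu$ and the multiset (linkage) constraint forces these shifts to be supported within a bounded range of indices. The arguments for $\mathfrak{o}(\infty)$ and $\mathfrak{sp}(\infty)$ are identical, using the single block and the even/transpose-even partition constraints recorded for $\mathcal R$.
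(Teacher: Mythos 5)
Your skeleton (grading the interval by the degree $d$, inducting over degree slices, and reducing everything to control of the $\mathcal R$-shifts) is reasonable, and your base case and the bottom slice are fine; but the sub-lemma you yourself call ``the heart of the matter'' is false, and its proposed mechanism points in the wrong direction. Take $\fg=\mathfrak{o}(\infty)$, $\lambda=0$, $\mu=-\varepsilon_1-\varepsilon_2$, so that $\rho=\sum_{i\geq 1}-i\varepsilon_i$ and $P_N(\tilde\mu)=-N(N+1)/2-\min(N,2)$, where $P_N$ denotes the $N$-th partial sum of coordinates of a $\rho$-shifted weight. In your inductive step for the slice of degree $-1$ you allow $\alpha=\lambda=0$ and any $\gamma\in\mathcal R_1$, i.e.\ $\gamma_{ij}=-\varepsilon_i-\varepsilon_j$ with $i<j$ arbitrary, and you constrain $\nu$ only by $\nu\leq_{fin}\alpha+\gamma_{ij}$ together with $P_N(\nu)\geq P_N(\mu)$ for all $N$. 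But $\nu_{ij}:=\alpha+\gamma_{ij}$ itself satisfies both constraints for every $i<j$: the multiset of $\tilde\nu_{ij}$ is $\{-1,-2,\dots\}$ with $-i,-j$ replaced by $-i-1,-j-1$, it is already weakly decreasing, and its partial sums equal $-N(N+1)/2$ for $N<i$, $-N(N+1)/2-1$ for $i\leq N<j$, and $-N(N+1)/2-2$ for $N\geq j$, hence are never below $P_N(\tilde\mu)$. So your relaxed candidate set is infinite, and the claimed drop ``below $P_N(\mu)$ near $N=i$'' never occurs. Indeed the effect is the opposite of what you assert: pushing the bounded total deficit $2k$ of $\gamma$ out to large indices leaves the early partial sums untouched (hence larger), while the deficit of $\mu$ sits at small indices, so the lower bound coming from $\mu$ becomes easier, not harder, to satisfy as the support of $\gamma$ moves out. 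What actually excludes $\nu_{ij}$ from $[\mu,\lambda]_{inf}$ is exactly the information you discard: at equal degree, $\mu\leq_{inf}\nu$ forces $\mu\leq_{fin}\nu$, so $\tilde\nu$ must be a $\mathscr{W}$-permutation of $\tilde\mu$, which fails for $(i,j)\neq(1,2)$. Since your treatment of the intermediate slices keeps only the partial-sum shadow of the constraint from below, the argument has a genuine gap precisely where the difficulty lies.

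For comparison, the paper's proof propagates a coordinatewise, not a partial-sum, bound: the condition $\operatorname{Re}\tilde\kappa_j\geq m$ for all $j\leq i$ is preserved along each upward step of $\leq_{inf}$, whether that step is a reflection $s_\alpha$ with $\tilde\mu-\tilde\kappa\in\langle\alpha\rangle_{\mathbb Z_{>0}}$ or a shift by $-\gamma$ with $\gamma\in\mathcal R$ (which has nonnegative coordinates); applying this with $m=-i$ for all $i>n$, the paper concludes that every $\kappa$ in the interval satisfies $\tilde\kappa_i=-i$ for all $i$ beyond a bound $n$ depending only on the endpoints, i.e.\ its support is confined to a fixed finite window, and finiteness then follows from interval-finiteness of the usual order for the finite-dimensional algebra $\fk_n$. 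Note that your weights $\nu_{ij}$ violate this coordinatewise condition (at $j=i$ one has $\operatorname{Re}(\tilde\nu_{ij})_i=-i-1<-i$), which is exactly why the coordinatewise bound succeeds where partial sums cannot; any repair of your induction would have to carry such per-coordinate or multiset (linkage) information from $\mu$ through every type (i) step, rather than partial sums alone.
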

  \begin{proof} Let $\fg=\mathfrak{o}(\infty)$ or $\mathfrak{sp}(\infty)$. Then we can take
    $\rho=\sum_{i\geq 1}-i\varepsilon_i$. For an eligible weight $\lambda$, set $\tilde\lambda=\lambda+\rho$ and write
    $\tilde\lambda=\sum_{i\geq 1}\tilde\lambda_i\varepsilon_i$. Let $i\in\mathbb Z_{>0}$ and $m\in\mathbb Z$ be such that
    \begin{equation}\label{eq:cond} \operatorname{Re}\tilde\lambda_j\geq m\,\,\text{ for all}\,\, j\leq i.
      \end{equation}
      We claim that if $\kappa\leq_{inf}\mu$ and (\ref{eq:cond}) holds for $\kappa$ that it also holds for $\mu$. Indeed, it suffices to check this in
      two situations:
     \begin{itemize}
     \item $\tilde\mu=s_\alpha(\tilde\kappa)$ for some reflection $s_{\alpha}\in\mathcal W$ such that $\tilde\mu-\tilde\kappa\in\langle\alpha\rangle_{\mathbb Z_{>0}}$.
       \item $\tilde\mu=\tilde\kappa-\gamma$ for some $\gamma\in\mathcal R$.
       \end{itemize}
       In both cases the checking is straightforward and we leave it to the reader.

       Now we note that for any eligible $\lambda$ and $\mu$ there exists $n\in \mathbb Z_{>0}$ such that condition(\ref{eq:cond}) holds for  both $\lambda$ and $\mu$ whenever $i> n$
        and  $m=-i$.
       Then, if $\lambda\leq_{inf}\kappa\leq_{inf}\mu$ we have $\tilde\lambda_i=\tilde\kappa_i=\tilde\mu_i=-i$ for any $i> n$. 
Therefore, in order to check that for fixed $\lambda$ and $\mu$ there are at most finitely many $\kappa$ satisfying  $\lambda\leq_{inf}\kappa\leq_{inf}\mu$, it suffices to establish that there are at most finitely many possibilities for the restriction 
       $\kappa|_{\fh_n}$. But this follows from the well-known interval-finiteness of the standard weight order for the finite-dimensional reductive Lie algebra $\fk_n$.
       
In  the case of $\mathfrak{sl}(\infty)$ we apply the same argument to the weights  $\lambda^L$ and $\lambda^R$ separately. 
    \end{proof}

Finally, the implication (\ref{eq:mult2}) together  with Lemma \ref{locfin} yields the following.
    \begin{corollary} The category $\mathcal{OLA}$ is a highest weight category {according to Definition 3.1 in \cite{CPS},} with standard objects $W(\lambda)$ and partial order $\leq_{inf}$.
      \end{corollary}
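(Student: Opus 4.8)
The plan is to prove the corollary by assembly: I would verify, one axiom at a time, that $\mathcal{OLA}$ meets each clause of Definition 3.1 in \cite{CPS}, taking the poset of weights to be the set of eligible weights $\langle\tilde I\rangle_{\mathbb C}$ with the order $\leq_{inf}$, the simple objects to be the $L(\lambda)$, and the standard objects (in the sense of \cite{CPS}, i.e.\ the objects filtering the injectives) to be the $W(\lambda)$. No new computation should be needed, since every ingredient has already been produced in the preceding subsections; the task is purely to line the results up against the definition.

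First I would settle the indexing data. By Theorem \ref{simple} and the discussion following it, the simple objects of $\mathcal{OLA}$ are exactly the modules $L(\lambda)$ for $\lambda$ eligible, so the simples are genuinely parametrized by the poset. The required local finiteness of the poset is furnished by Lemma \ref{locfin}, which states that $\leq_{inf}$ is interval-finite.

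Next I would check the conditions on the standard objects $W(\lambda)$. Lemma \ref{lem:coVerma}(a) gives that $W(\lambda)$ is indecomposable with simple socle $L(\lambda)$, hence an embedding $L(\lambda)\hookrightarrow W(\lambda)$. The implication (\ref{eq:mult2}) shows that every composition factor $L(\nu)$ of $W(\lambda)$ satisfies $\nu\leq_{inf}\lambda$, and Proposition \ref{mult}, evaluated at $\nu=\lambda$ (where only the degree-zero contribution $\gamma=0\in\mathcal R_0$, $\mu=\emptyset$, survives and yields $m(\lambda,\lambda)=1$), gives $[W(\lambda):L(\lambda)]=1$. Thus all constituents of $W(\lambda)/L(\lambda)$ lie strictly below $\lambda$, as required. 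Finiteness of the multiplicities $[W(\lambda):L(\nu)]$ follows from the finiteness of each $m(\lambda+\gamma,\nu)$ together with interval-finiteness, which bounds the relevant $\nu$.

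Finally I would invoke the injective structure. Before Proposition \ref{filtration} we constructed, for each eligible $\mu$, an injective object $I(\mu)$ of $\mathcal{OLA}$ with socle $L(\mu)$, so $\mathcal{OLA}$ has enough injectives and $I(\mu)$ is the injective envelope of $L(\mu)$. Proposition \ref{filtration} equips $I(\mu)$ with a \emph{finite} filtration whose successive quotients are $W(\mu_i)$ with $\mu_1=\mu$ and $\mu_i>_{fin}\mu$ for $i>1$; by the Remark preceding Lemma \ref{locfin}, $\mu_i>_{fin}\mu$ implies $\mu_i>_{inf}\mu$, so $W(\mu)$ occurs exactly once at the bottom and every further quotient is $W(\mu_i)$ with $\mu_i>_{inf}\mu$, each $W(\nu)$ appearing finitely often (with multiplicity $m(\nu,\mu)$, by Proposition \ref{BGG}). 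These are precisely the requirements of Definition 3.1 in \cite{CPS}, so the corollary follows. Since all the analytic and combinatorial work is already done, the only genuinely delicate point is conceptual: the injective filtration of Proposition \ref{filtration} is phrased in terms of the finer order $\leq_{fin}$, whereas the composition factors of the $W(\lambda)$ force the use of the coarser order $\leq_{inf}$, and one must confirm that the two orders are compatible in the direction needed. That compatibility is exactly what the Remark preceding Lemma \ref{locfin} secures, and it is the hinge of the argument.
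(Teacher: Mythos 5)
Your proposal is correct and takes essentially the same approach as the paper: the paper's proof is the one-line assembly ``the implication (\ref{eq:mult2}) together with Lemma \ref{locfin} yields the following,'' leaving implicit exactly the ingredients you spell out (Lemma \ref{lem:coVerma}, the injective envelopes $I(\mu)$ with their filtrations from Proposition \ref{filtration}, Proposition \ref{BGG}, and the compatibility $\leq_{fin}\Rightarrow\leq_{inf}$ from the Remark). Your explicit check of each clause of Definition 3.1 of \cite{CPS}, including the multiplicity-one statement $[W(\lambda):L(\lambda)]=1$, is just the unabridged version of that argument.
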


\subsection{Blocks of $\mathcal{OLA}$}
Recall that $\langle\tilde I \rangle_{\mathbb C}$ is the set of eligible weights. Let $Q=\langle\Delta \rangle_{\mathbb Z}$ denote the root lattice.  For $\kappa\in\langle\tilde I \rangle_{\mathbb C}/Q$ we define
$\mathcal {OLA}_\kappa$ as the full subcategory of $\mathcal {OLA}$ consisting of modules  $M$ with   $\operatorname{supp}M\subset\kappa$.
Then obviously 
$$\mathcal{OLA}=\Pi_{\kappa\in \langle\tilde I \rangle_{\mathbb C}/Q}\mathcal {OLA}_\kappa.$$

The following theorem claims that blocks of $\mathcal{OLA}$ are "maximal possible" as two simple objects of $\mathcal{OLA}$ are in different blocks if and only if their supports are not linked by elements of the root lattice. This result is a generalization of the description of blocks of the category $\mathbb{T}_{\fg}$  \cite{DPS}, and  is in sharp contrast with the description of blocks in the classical BGG category $\mathcal{O}$.
\begin{theorem}\label{thm:blocks} The subcategory $\mathcal {OLA}_\kappa$ is indecomposable for any $\kappa \in \langle \tilde I\rangle_{\mathbb{C}}/Q$.
\end{theorem}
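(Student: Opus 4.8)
The plan is to show that any two simple objects $L(\lambda)$ and $L(\mu)$ with $\lambda,\mu\in\kappa$ (equivalently $\lambda-\mu\in Q$) lie in the same block, by connecting them through a chain of indecomposable standard objects. Recall from Lemma \ref{lem:coVerma}(a) that each $W(\lambda)$ is indecomposable with simple socle $L(\lambda)$; consequently all simple constituents of $W(\lambda)$ belong to the block of $L(\lambda)$. Hence, whenever $[W(\lambda):L(\nu)]\neq 0$, the simples $L(\lambda)$ and $L(\nu)$ are linked. Note also that, by Theorem \ref{simple}, every simple object of $\mathcal{OLA}_\kappa$ is some $L(\mu)$ with $\mu\in\kappa$, so it suffices to link all such $L(\mu)$.

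The key step is the following nonvanishing: for every eligible weight $\lambda$ and every $\gamma\in\mathcal R$, the simple object $L(\lambda+\gamma)$ occurs in $W(\lambda)$, i.e. $[W(\lambda):L(\lambda+\gamma)]\neq 0$. I would extract this from Proposition \ref{mult}: setting $\nu=\lambda+\gamma$ and isolating, in the multiplicity formula, the summand indexed by the same $\gamma$ together with the diagonal contribution $m(\lambda+\gamma,\lambda+\gamma)=1$, one obtains a term equal to $c_k(\gamma)>0$, where $k=-d(\gamma)$ and $c_k(\gamma)$ is the multiplicity of $\gamma$ in $R(\infty,k)$ (so $c_k(\gamma)>0$ precisely because $\gamma\in\mathcal R_k$, cf. the explicit computation of $c_k(\gamma)$ in the proof of Proposition \ref{mult}). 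Since all the occurring Jordan--H\"older multiplicities $m(\lambda+\gamma',\nu)$ are nonnegative, there is no cancellation and $[W(\lambda):L(\lambda+\gamma)]\geq c_k(\gamma)>0$. Therefore $L(\lambda)$ and $L(\lambda+\gamma)$ lie in the same block for every $\gamma\in\mathcal R$; as the block relation is symmetric, the same holds for $L(\lambda)$ and $L(\lambda-\gamma)$ (using that $L(\lambda)$ is a constituent of $W(\lambda-\gamma)$, the term with index $\gamma$ again yielding a positive diagonal contribution).

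It then remains to verify that translations by elements of $\mathcal R$ connect the entire coset $\lambda+Q$, i.e. that $\langle\mathcal R\rangle_{\mathbb Z}=Q$. This is a type-by-type check on the supports $\mathcal R_k$ described before Proposition \ref{mult}. For $\mathfrak{sl}(\infty)$ the weights $-\varepsilon_i+\varepsilon_j$ with $i>0>j$ lying in $\mathcal R_1$ already produce, through their differences, all the finite roots $\varepsilon_a-\varepsilon_b$ with $a,b$ both positive or both negative, while $-\varepsilon_i+\varepsilon_j$ itself is an infinite root; together these span $Q$. For $\mathfrak{o}(\infty)$ the weights $-\varepsilon_i-\varepsilon_j$ of ${\bf S}^2(V_*)$, and for $\mathfrak{sp}(\infty)$ the weights $-\varepsilon_i-\varepsilon_j$ ($i\neq j$) of ${\bf\Lambda}^2(V_*)$, give $-(\varepsilon_i+\varepsilon_j)$ and, through differences and sums, all of $Q$ (including $-2\varepsilon_i$ in the symplectic case, obtained as $(-\varepsilon_i-\varepsilon_j)+(-\varepsilon_i-\varepsilon_l)-(-\varepsilon_j-\varepsilon_l)$). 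Granting this, for $\lambda,\mu\in\kappa$ I write $\lambda-\mu=\sum_r \pm\gamma_r$ with $\gamma_r\in\mathcal R$ and obtain a chain $\mu=\nu_0,\nu_1,\dots,\nu_N=\lambda$ with $\nu_r-\nu_{r-1}=\pm\gamma_r$; each consecutive pair is linked by the key step, whence $L(\mu)$ and $L(\lambda)$ lie in the same block.

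Since all simple objects of $\mathcal{OLA}_\kappa$ are thereby shown to lie in a single block, the category $\mathcal{OLA}_\kappa$ admits no nontrivial decomposition as a product of Serre subcategories, and is therefore indecomposable. The conceptual heart of the argument is the nonvanishing $[W(\lambda):L(\lambda+\gamma)]\neq 0$, which crosses the $S_{min}/S_{max}$ divide that finite linkage (Corollary \ref{cor:mult}) alone cannot bridge; the identity $\langle\mathcal R\rangle_{\mathbb Z}=Q$ is routine bookkeeping. The main obstacle I anticipate is making the positivity argument in the key step fully airtight, i.e. confirming that no other summand in Proposition \ref{mult} can cancel the diagonal term; but since every quantity appearing there is a genuine (nonnegative) multiplicity, this is immediate.
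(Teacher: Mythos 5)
Your proof is correct and follows essentially the same route as the paper: both arguments combine the multiplicity formula (\ref{eq:mult}) (giving $[W(\lambda):L(\lambda+\gamma)]\neq 0$ via the diagonal term $m(\lambda+\gamma,\lambda+\gamma)=1$) with the indecomposability of $W(\lambda)$ from Lemma \ref{lem:coVerma}(a), and then the fact that these translations generate $Q$. The only cosmetic difference is that the paper works just with $\gamma\in\mathcal R_1$ and notes $\langle\mathcal R_1\rangle_{\mathbb Z}=Q$, whereas you allow all of $\mathcal R$ and spell out the no-cancellation and lattice-spanning checks in more detail.
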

\begin{proof} We start by noticing that $\langle\mathcal R_1\rangle_{\mathbb Z}=Q$. Hence it suffices to prove that for any
  $\lambda \in \langle \tilde I\rangle_{\mathbb{C}}$ and any $\gamma\in \mathcal R_1$, the simple modules $L(\lambda)$ and $L(\lambda+\gamma)$ belong to the same block.
  This follows immediately from (\ref{eq:mult}) with $k=1$ since $[W(\lambda):L(\lambda)]=[W(\lambda):L(\lambda+\gamma)]=1$ and $W(\lambda)$ is indecomposable.
  \end{proof}

A block $\mathcal{OLA_\kappa}$ is  {\it integral} if it contains $L(\lambda)$ for some $\lambda\in\langle\tilde I\rangle_{\mathbb Z}$ (equivalently, such that $\frac{2(\lambda,\alpha)}{(\alpha,\alpha)}\in \mathbb Z$ for any  $\alpha\in \Delta$).
\begin{corollary}
The integral blocks of $\mathcal{OLA}$ are parametrized by $\mathbb Z$ for $\fg=\mathfrak{sl}(\infty)$, and by $\mathbb Z / 2 \mathbb Z$ for $\fg=\mathfrak{o}(\infty)$, $\mathfrak{sp}(\infty)$.
\end{corollary}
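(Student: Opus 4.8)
The plan is to reduce the whole statement to a lattice computation of $\langle\tilde I\rangle_{\mathbb Z}$ modulo the root lattice $Q=\langle\Delta\rangle_{\mathbb Z}$. By Theorem \ref{thm:blocks} the blocks of $\mathcal{OLA}$ are exactly the indecomposable subcategories $\mathcal{OLA}_\kappa$ indexed by the cosets $\kappa\in\langle\tilde I\rangle_{\mathbb C}/Q$, and each coset is actually realized: by Theorem \ref{simple} every eligible weight $\lambda$ is the highest weight of a simple object $L(\lambda)$ of $\mathcal{OLA}$, whose support lies in $\lambda+Q$. A block is integral precisely when its coset $\kappa$ meets $\langle\tilde I\rangle_{\mathbb Z}$. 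Since each root is an integral combination of the $\varepsilon_i$ and each $\varepsilon_i$ (or its negative) lies in $\tilde I$, we have $Q\subset\langle\tilde I\rangle_{\mathbb Z}$; hence the integral blocks are in natural bijection with the image of $\langle\tilde I\rangle_{\mathbb Z}$ in $\langle\tilde I\rangle_{\mathbb C}/Q$, which is the quotient group $\langle\tilde I\rangle_{\mathbb Z}/Q$. Everything therefore comes down to identifying this quotient in each case.

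First I would treat $\fg=\mathfrak{sl}(\infty)$. Here $\langle\tilde I\rangle_{\mathbb Z}$ is the free abelian group on $\{\varepsilon_i\}$, and the total-coefficient map $\sum_i a_i\varepsilon_i\mapsto\sum_i a_i$ is a surjective homomorphism onto $\mathbb Z$. Its kernel is the sublattice of vectors with zero coefficient sum, which is exactly $Q=\langle A_\infty\rangle_{\mathbb Z}$, since the roots $\varepsilon_i-\varepsilon_j$ have coefficient sum $0$ and already generate this sublattice. Thus $\langle\tilde I\rangle_{\mathbb Z}/Q\cong\mathbb Z$, giving the asserted parametrization by $\mathbb Z$.

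Next I would handle $\fg=\mathfrak{o}(\infty)$ in type $D_\infty$ (using the reduction permitted by Corollary \ref{cor:BD}) and $\fg=\mathfrak{sp}(\infty)$ in type $C_\infty$. Now $\langle\tilde I\rangle_{\mathbb Z}$ is the free abelian group on $\{\varepsilon_i\mid i>0\}$, and the total-coefficient map followed by reduction modulo $2$ gives a surjection onto $\mathbb Z/2\mathbb Z$. All roots $\varepsilon_i-\varepsilon_j$ and $\pm(\varepsilon_i+\varepsilon_j)$ (and, in type $C$, also $\pm2\varepsilon_i$) have even coefficient sum, so $Q$ lies in the kernel. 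Conversely $2\varepsilon_i=(\varepsilon_i+\varepsilon_j)+(\varepsilon_i-\varepsilon_j)\in Q$, and these elements together with the differences $\varepsilon_i-\varepsilon_j$ generate the full even-coefficient-sum sublattice; hence $Q$ equals it and $\langle\tilde I\rangle_{\mathbb Z}/Q\cong\mathbb Z/2\mathbb Z$.

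The argument is essentially a lattice computation, so I do not expect a serious obstacle. The one point requiring care is the precise determination of $Q$ in types $C$ and $D$, namely checking that $Q$ is exactly the even-coefficient-sum sublattice and not a proper subgroup of it; this is what the expression of $2\varepsilon_i$ as a sum of two roots secures. It is worth noting in passing that in type $B_\infty$ the short roots $\pm\varepsilon_i$ would themselves belong to $Q$, forcing $Q=\langle\tilde I\rangle_{\mathbb Z}$ and a single integral block, which is precisely why the $B_\infty$ case is set aside in favor of $D_\infty$ through Corollary \ref{cor:BD}.
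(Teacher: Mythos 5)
Your proposal is correct and is exactly the argument the paper intends: the corollary is stated without proof as an immediate consequence of Theorem \ref{thm:blocks}, and the implicit content is precisely your lattice computation identifying $\langle\tilde I\rangle_{\mathbb Z}/Q$ with $\mathbb Z$ for $\mathfrak{sl}(\infty)$ and with $\mathbb Z/2\mathbb Z$ in types $C_\infty$ and $D_\infty$ (the $B_\infty$ case being excluded via Corollary \ref{cor:BD}, as you note). Your spelled-out verification that $Q$ equals the zero-sum (respectively, even-sum) sublattice, and that every integral coset is realized by some $L(\lambda)$, fills in the details the paper leaves to the reader.
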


\section{Annihilators in $U(\fg)$ of objects of $\mathcal{OLA}$}

In this short final section we discuss the annihilators in $U(\fg)$ of the objects of $\mathcal{OLA}$. We restrict ourselves to the case $\fg=\mathfrak{sl}(\infty)$. Recall that, according to Theorem 7.1 in \cite{PP2}, the primitive ideals of $U(\mathfrak{sl}(\infty))$ are parametrized by quadruples $(x, y, Y_l, Y_r)$  where $x, y$ run over $\mathbb{Z}_{\geq0}$ and $Y_l$, $Y_r$ are arbitrary partitions. The parameter $x$ comes from the characteristic pro-variety of the ideal \cite{PP1} and is called \emph{rank}, while the parameter $y$ is the \emph{Grassmann number}. In the paper \cite{PP3} an algorithm for computing the annihilator of an arbitrary  simple highest weight $\mathfrak{sl}(\infty)$-module is presented. A significant difference with the case of a finite-dimensional Lie algebra is that the annihilators of most simple highest weight $\mathfrak{sl}(\infty)$-modules equal zero in $U(\mathfrak{sl}(\infty))$.

Furthermore, it is a direct observation based  on Theorem 7.1 in \cite{PP2} that, for  a simple object $L(\lambda)$ of $\mathcal{OLA}$ the annihilator Ann$_{U(\fg)}L(\lambda)$ is nonzero and has the form $I(x,0,Y_l,Y_r)$ for some $x$, $Y_l$ and $Y_r$. In particular,  the annihilators of simple objects of $\mathcal{OLA}$ have Grassmann number equal to zero.
\begin{corollary}\label{last}
Let $\fg=\mathfrak{sl}(\infty)$. If $M$ is a finitely generated object of $\mathcal{OLA}$, then $\operatorname{Ann}_{U(\fg)}M\neq0$.
\end{corollary}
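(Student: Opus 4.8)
The plan is to reduce the assertion to the already-known case of simple objects via a composition series, and then to exploit the fact that $U(\fg)$ is an integral domain. First, by Proposition \ref{fl}, the finitely generated module $M$ has finite length, so it admits a finite composition series
$$0=M_0\subset M_1\subset\dots\subset M_k=M,$$
with simple quotients $M_i/M_{i-1}\simeq L(\lambda_i)$ for suitable eligible weights $\lambda_i$. The second ingredient is the observation recorded just before the corollary: for every simple object $L(\lambda)$ of $\mathcal{OLA}$ the two-sided ideal $\operatorname{Ann}_{U(\fg)}L(\lambda)$ is nonzero, being of the form $I(x,0,Y_l,Y_r)$.

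I would glue these together by the elementary fact that for any short exact sequence $0\to A\to B\to C\to 0$ of $\fg$-modules one has $\operatorname{Ann}_{U(\fg)}A\cdot\operatorname{Ann}_{U(\fg)}C\subseteq\operatorname{Ann}_{U(\fg)}B$: if $a$ annihilates $C=B/A$ then $aB\subseteq A$, so any element annihilating $A$ kills $aB$. Iterating this along the composition series yields, in a suitable order,
$$\operatorname{Ann}_{U(\fg)}L(\lambda_1)\cdot\operatorname{Ann}_{U(\fg)}L(\lambda_2)\cdots\operatorname{Ann}_{U(\fg)}L(\lambda_k)\subseteq\operatorname{Ann}_{U(\fg)}M.$$

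Finally, I would invoke that $U(\fg)$, as the enveloping algebra of a Lie algebra, is an integral domain by the PBW theorem. Consequently a product of finitely many nonzero two-sided ideals is again nonzero: choosing a nonzero element $a_i\in\operatorname{Ann}_{U(\fg)}L(\lambda_i)$ for each $i$, the product $a_1a_2\cdots a_k$ is nonzero and lies in the left-hand side above. Hence $\operatorname{Ann}_{U(\fg)}M\neq 0$.

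The only delicate point, and the step I would flag as the crux, is exactly this last one: in a non-Noetherian algebra such as $U(\mathfrak{sl}(\infty))$ the nonvanishing of the annihilators of the composition factors does not by itself force the annihilator of $M$ to be nonzero, since a product of nonzero ideals could a priori collapse. It is precisely the domain property that prevents this collapse. Everything else in the argument is formal.
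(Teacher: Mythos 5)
Your proof is correct, and it follows the paper's outline in its first two steps: reduce to a finite composition series via Proposition \ref{fl}, then use the observation that every simple object $L(\lambda)$ of $\mathcal{OLA}$ has nonzero annihilator of the form $I(x,0,Y_l,Y_r)$. Where you genuinely diverge is in the gluing step. The paper finishes by invoking Theorem 5.3 of \cite{PP2}, leaving as an exercise that \emph{the intersection of finitely many primitive ideals of $U(\fg)$ is nonzero}; you instead use the containment $\operatorname{Ann}_{U(\fg)}L(\lambda_1)\cdots\operatorname{Ann}_{U(\fg)}L(\lambda_k)\subseteq\operatorname{Ann}_{U(\fg)}M$ together with the fact that $U(\fg)$ is a domain (PBW holds for arbitrary Lie algebras over a field, and $\operatorname{gr}U(\fg)\simeq{\bf S}(\fg)$ is a domain), so that a product of nonzero two-sided ideals cannot vanish. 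Your route buys two things: it is completely self-contained, requiring no input from the structure theory of primitive ideals beyond the nonvanishing of each $\operatorname{Ann}_{U(\fg)}L(\lambda_i)$; and it works with the correct ideal-theoretic object, since for a finite-length module it is the \emph{product} of the annihilators of the composition factors, not their intersection, that lands inside $\operatorname{Ann}_{U(\fg)}M$ (the intersection sits on the wrong side: $\operatorname{Ann}_{U(\fg)}M\subseteq\bigcap_i\operatorname{Ann}_{U(\fg)}L(\lambda_i)$, as the example of $\mathbb{C}[x]/(x^2)$ over a one-dimensional abelian Lie algebra shows). The paper's formulation presumably intends the same product argument, with the cited theorem of \cite{PP2} supplying finer information about how these primitive ideals intersect, but as literally worded your version is the tighter one. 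Your flagged "delicate point" is exactly right and is resolved exactly as you say.
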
 
\begin{proof} By Proposition \ref{fl}, any finitely generated module in $\mathcal{OLA}$ has finite length. By the above observation, the annihilator in $U(\fg)$ of any simple module in $\mathcal{OLA}$ is nonzero. Finally, it is an exercise to check, using Theorem 5.3 in \cite{PP2},  that the intersection of finitely many primitive ideals of $U(\fg)$ is nonzero.
\end{proof}
We conjecture that Corollary \ref{last} holds for $\fg=\mathfrak{o}(\infty), \mathfrak{sp}(\infty)$, but in these cases the algorithm for computing the primitive ideal of a simple highest weight module is still in progress.

 If $L(\lambda)\in \mathcal{OLA}$ is integrable, then Ann$_{U(\fg)}L(\lambda)=I(0,0,\lambda^1,\lambda^2)$ where $\lambda^1$ and $\lambda^2$ are the two partitions comprising $\lambda$, see Subsection \ref{Simple modules}. Moreover, a simple module $L(\lambda)\in\mathcal{OLA}$ is not integrable precisely when Ann$_{U(\fg)}L(\lambda)=I(x,0,Y_l,Y_r)$ for $x\neq 0$. This follows from a result of A. Sava \cite{Sava} but also from a direct application of the algorithm of \cite{PP2}. 
In fact, all primitive ideals of the form $ I(x,0,Y_l, Y_r)$ are annihilators of simple objects of $\mathcal{OLA}$.       Indeed,  the reader will verify immediately using Theorem 7.1 in \cite{PP3} that, given $x\in \mathbb Z_{\geq 0}$ and partitions $Y_l=(y^l_1, y^l_2, \dots,y^l_k)$, $Y_r=(y^r_1, y^r_2, \dots,y^r_s)$, we have
$$\operatorname{Ann}_{U(\fg)}L(\lambda) = I(x,0,Y_l, Y_r)$$
for $\lambda:= \lambda^L+\lambda^R$, $\lambda^L=\sum^x_{i=1} a_i \varepsilon_i + \sum^k_{i=1}y^l_i\varepsilon_{x+i}$, $\lambda^R= -\sum^s_{i=1}y^r_{s+1-i} \varepsilon_{-i}$, where $a_1,\dots a_x$ are complex numbers satisfying the conditions $a_i\not \in \mathbb{Z}$, $a_i-a_j\not\in \mathbb Z$ for all $i,j$.


\begin{thebibliography}{20}


\bibitem[BGG]{BGG} J. Bernstein, I.  Gelfand, S. Gelfand, {A category of $\mathfrak{g}$-modules}, Funktional Anal. i Prilozhen {\bf5} (1971), No. 2 , 1--8; English translation, Functional Anal. and Appl. {\bf10} (1976), 87--92.
\bibitem [CP]{CP} K. Coulembier, I. Penkov,  {On an infinite limit of BGG Categories O}, arXiv:1802.06343.
\bibitem[CPS]{CPS} E. Cline, B. Parshall, L. Scott, {Finite-dimensional algebras and highest weight categories}, J. Reine Angew. Math. \textbf{391} (1988), 85-99.
\bibitem[DP]{DP} I. Dimitrov, I. Penkov, {Weight modules of direct limit Lie algebras}, IMRN 1999, no. 5, 223-249.

\bibitem [DPS] {DPS} E. Dan-Cohen, I. Penkov, V. Serganova, {A Koszul category of representations of finitary Lie algebras}, Advances of Mathematics {\bf 289} (2016), 250--278.


\bibitem [Fe] {Fe}  S. Fernando, {Lie algebra modules with finite dimensional weight spaces I}, Transactions of AMS \textbf{322} (1990), 757--781.
\bibitem [K] {K} V. Kac, {Constructing groups associated to  infinite-dimensional Lie algebras}. In: Infinite-dimensional groups with applications, MSRI Publications, vol. 4, 1985, 167-216.    
\bibitem[Mac]{Mac} G. Mackey, {On infinite-dimensional linear spaces},  Transactions of AMS {\bf57} (1945), 155--207.
	\bibitem[N]{Nam}
	T.~Nampaisarn, On categories O for root-reductive Lie algebras,  arXiv:1711.11234.
\bibitem[PP1]{PP1} I. Penkov, A. Petukhov, {On ideals in the enveloping algebra of a locally simple Lie algebra},  Int. Math. Res. Notices  {\bf 2015}, 5196-5228.
\bibitem[PP2]{PP2} I. Penkov, A. Petukhov, {Primitive ideals of $U(\mathfrak{sl}(\infty))$}, Bulletin LMS {50 (2018), 443-448}.
\bibitem[PP3]{PP3} I. Penkov, A. Petukhov, {Primitive ideals of $U(\mathfrak{sl}(\infty))$ and the Robinson-Schensted algorithm at infinity}, {to appear in Representation of Lie Algebraic Systems and Nilpotent orbits, Progress in Mathematics, Birkhauser}, arXiv:1801.06692.
\bibitem[PS]{PS} I. Penkov, V. Serganova, {Tensor representations of Mackey Lie algebras and their dense subalgebras}. In: Developments and Retrospectives in Lie Theory: Algebraic Methods, Developments in Mathematics, vol. 38, Springer Verlag, 2014, 291-330.
\bibitem [PStyr] {PStyr} I. Penkov, K. Styrkas, {Tensor representations of infinite-dimensional root-reductive Lie algebras}. In: Developments and Trends in Infinite-Dimensional Lie Theory, Progress in Mathematics, vol. 288, Birkh\"auser, 2011, 127--150.
\bibitem [PSZ] {PSZ} I. Penkov, V. Serganova, G. Zuckerman, {On the existence of $(\mathfrak{g},\mathfrak{k})$-modules of finite type}, Duke Math. J. \textbf{125} (2004), 329--349.
\bibitem [S]{Sava} A. Sava, {Annihilators of simple tensor modules}, master{'}s thesis, Jacobs University Bremen, 2012, arXiv: 1201.3829.
\end{thebibliography}
\end{document}